\definecolor{fst-blue}{RGB}{0,128,255}
\definecolor{fst-orange}{RGB}{255,128,0}
\definecolor{fst-purple}{RGB}{85,0,130}
\definecolor{fst-red}{RGB}{255,0,130}
\definecolor{fst-gray}{RGB}{200,200,200}
\definecolor{fst-black}{RGB}{20,20,20}
\definecolor{fst-yellow}{RGB}{255,200,80}
\definecolor{fst-green}{RGB}{128,255,0}
\definecolor{darkpastelgreen}{rgb}{0.01, 0.75, 0.24}
\definecolor{darkpastelblue}{rgb}{0.47, 0.62, 0.8}
\definecolor{darktangerine}{rgb}{1.0, 0.66, 0.07}
\definecolor{bostonuniversityred}{rgb}{0.8, 0.0, 0.0}
\definecolor{darkgray}{rgb}{0.66, 0.66, 0.66}
\newtheorem{theorem}{Theorem}[section]
\newtheorem{lemma}[theorem]{Lemma}
\newtheorem{proposition}[theorem]{Proposition}
\newtheorem{corollary}[theorem]{Corollary}
\theoremstyle{definition}
\newtheorem{remark}[theorem]{Remark}
\theoremstyle{definition}
\newtheorem*{definition*}{Definition}
\numberwithin{equation}{section}
\begin{document}

\title[An indefinite parameter-dependent Neumann problem]{On the number of positive solutions \\ to an indefinite parameter-dependent \\ Neumann problem}

\author[G.~Feltrin]{Guglielmo Feltrin}
\address{
Department of Mathematics, Computer Science and Physics, University of Udine\\
Via delle Scienze 206, 33100 Udine, Italy}
\email{guglielmo.feltrin@uniud.it}

\author[E.~Sovrano]{Elisa Sovrano}
\address{
\'{E}cole des Hautes \'{E}tudes en Sciences Sociales,
Centre d'Analyse et de Math\'{e}matique Sociales (CAMS), CNRS\\
54 Boulevard Raspail, 75006 Paris, France}
\email{elisa.sovrano@ehess.fr}

\author[A.~Tellini]{Andrea Tellini}
\address{
Department of Applied Mathematics in Industrial Engineering,
E.T.S.I.D.I., Universidad Polit\'{e}cnica de Madrid\\
Ronda de Valencia 3, 28012 Madrid, Spain}
\email{andrea.tellini@upm.es}

\thanks{Work written under the auspices of the Grup\-po Na\-zio\-na\-le per l'Anali\-si Ma\-te\-ma\-ti\-ca, la Pro\-ba\-bi\-li\-t\`{a} e le lo\-ro Appli\-ca\-zio\-ni (GNAMPA) of the Isti\-tu\-to Na\-zio\-na\-le di Al\-ta Ma\-te\-ma\-ti\-ca (INdAM). The second author has been supported by the Fondation Sciences Math\'ematiques de Paris (FSMP) through the project: ``Reaction-Diffusion Equations in Population Genetics: a study of the influence of geographical barriers on traveling waves and non-constant stationary solutions''. The third author has received funding from project PGC2018-097104-B-100 of the Spanish Ministry of Science, Innovation and Universities and from the Escuela T\'{e}cnica Superior de Ingenier\'{i}a y Dise\~{n}o Industrial of the Universidad Polit\'{e}cnica de Madrid.
\\
\textbf{Preprint -- January 2021}} 

\subjclass{34B08, 34B18, 34C23, 35Q92, 92D25.}

\keywords{Indefinite weight, Neumann problem, positive solutions, bifurcation diagrams, non-existence, existence, multiplicity.}

\date{}

\dedicatory{}

\begin{abstract}
We study the second-order boundary value problem
\begin{equation*}
\begin{cases}
\, -u''=a_{\lambda,\mu}(t) \, u^{2}(1-u), & t\in(0,1), \\
\, u'(0)=0, \quad u'(1)=0,
\end{cases}
\end{equation*}
where $a_{\lambda,\mu}$ is a step-wise indefinite weight function, precisely $a_{\lambda,\mu}\equiv\lambda$ in $[0,\sigma]\cup[1-\sigma,1]$ and $a_{\lambda,\mu}\equiv-\mu$ in $(\sigma,1-\sigma)$, for some $\sigma\in\left(0,\frac{1}{2}\right)$, with $\lambda$ and $\mu$ positive real parameters. We investigate the topological structure of the set of positive solutions which lie in $(0,1)$ as $\lambda$ and $\mu$ vary. Depending on $\lambda$ and based on a phase-plane analysis and on time-mapping estimates, our findings lead to three different (from the topological point of view) global bifurcation diagrams of the solutions in terms of the parameter $\mu$. Finally, for the first time in the literature, a qualitative bifurcation diagram concerning the number of solutions in the $(\lambda,\mu)$-plane is depicted. The analyzed Neumann problem has an application in the analysis of stationary solutions to reaction-diffusion equations in population genetics driven by migration and selection.
\end{abstract} 

\maketitle

\newpage

\tableofcontents

\section{Introduction and main results}\label{section-1}

In this paper, we are concerned with the parameter-dependent Neumann problem
\begin{equation}\label{eq-main}
\begin{cases}
\, -u''(t)=a_{\lambda,\mu}(t) \, g(u(t)), & t\in(0,1), \\
\, 0 \leq u(t) \leq 1, & t\in[0,1], \\
\, u'(0)=0, \quad u'(1)=0,
\end{cases}
\end{equation}
where the nonlinear term $g\colon [0,1]\to\mathbb{R}$ is 
\begin{equation}\label{eq-nonlin}
g(s)=s^2(1-s), \quad s\in[0,1],
\end{equation}
and the weight $a_{\lambda,\mu}\colon[0,1]\to\mathbb{R}$ is a step-wise function defined as
\begin{equation}\label{eq-weight}
a_{\lambda,\mu}(t)=
\begin{cases}
\, \lambda, & \text{if $t\in[0,\sigma]\cup[1-\sigma,1]$,} \\
\, -\mu, & \text{if $t\in(\sigma,1-\sigma)$,}
\end{cases}
\end{equation}
with $\lambda>0$, $\mu>0$, and $\sigma\in\left(0,\frac{1}{2}\right)$. 

The presence of a sign-changing weight term $a_{\lambda,\mu}$ is a necessary condition for the existence of non-constant positive solutions to~\eqref{eq-main} - it can be seen by integrating the differential equation in $(0,1)$ - and places~\eqref{eq-main} into the family of nonlinear problems with indefinite weight (cf.,~\cite{Fe-18} for an extensive bibliography on the subject). The logistic-type nonlinear term $g$ leads to the existence of two trivial solutions, namely $u\equiv0$ and $u\equiv1$. For this reason, we seek only non-constant positive solutions whose notion is made precise as follows.

\begin{definition*}
A \textit{solution} to problem~\eqref{eq-main} is a function $u\in \mathcal{C}^1([0,1])$ with $u'$ absolutely continuous, $0<u(t)<1$ for all $t\in[0,1]$, and such that $u$ solves the equation in~\eqref{eq-main} for a.e. $t\in(0,1)$ and satisfies the Neumann boundary conditions.
\end{definition*}

The particular form of problem~\eqref{eq-main} is relevant in the field of population genetics to study stationary solutions of reaction-diffusion equations driven by migration and selection processes acting on a single diallelic locus, as firstly introduced in~\cite{Na-75} (see also \cite{BrHe-90,Fl-75,He-81}). In this framework, $u(t)\in[0,1]$ denotes the frequency of one of the two alleles involved and $t$ represents the spatial variable. Moreover, the term $g(s)$ models the selection in case of complete dominance, and $a_{\lambda,\mu}(t)$ represents the environmental diversity which is reflected by the sign-changing in the direction of selection.

From the mathematical point of view, the last decade has experienced a huge interest in indefinite weight problems with a logistic-type nonlinearity from different perspectives
both in the ODE setting (e.g.,~\cite{BoFeSo-20,FeGi-20,FeSo-18non,FeSo-18na,LGMMTe-14,NaSuDu-19,Na-16,Na-20,NaSu-20,So-17jomb}) and in the PDE one (e.g.,~\cite{LGMMTe-13,LoNa-02,LoNaNi-13,LoNiSu-10,Na-16,NaNiSu-10,OmSo-20}).
The main questions addressed were the existence, the uniqueness as well as the multiplicity of non-constant positive solutions.
However, focusing on either a fixed number of sign-changes of the weight term or on the number of inflection points of the logistic term, no complete description of global bifurcation diagrams has yet been provided. We notice that some first steps in this direction are given in~\cite{FeSo-18non,NaSuDu-19,So-17jomb} through numerical investigations.

In the present work, we pursue the studies undertaken in~\cite{FeSo-18non,FeSo-18na}, where a general nonlinearity superlinear at $u=0$ is considered in place of $g$. Taking advantage of the precise expression of~\eqref{eq-main} and using the connection times approach (see~\cite{LGTe-14,LGTeZa-14,Te-18}), we  establish the shape of the bifurcation diagrams for the solutions of~\eqref{eq-main} by using $\mu$ and $\lambda$ as the main and the secondary bifurcation parameter, respectively.
Moreover, we obtain general non-existence, existence, and multiplicity results, depending on the values of $\mu$ and $\lambda$. Our first result in this sense provides sufficient conditions for the non-existence of solutions. 

\begin{theorem}\label{th-intro1}
The following assertions hold: 
\begin{enumerate}
\item[$(i)$] for all $\lambda>0$, there exists $\mu_0^{*}(\lambda)>0$ such that no solution to problem~\eqref{eq-main} exists for all $\mu\in(0,\mu_0^{*}(\lambda))$; 
\item[$(ii)$] there exists $\lambda^{*}>0$ such that for all $\lambda\in(0,\lambda^{*})$ there exists $\mu^{**}_{0}(\lambda)>\mu_0^{*}(\lambda)$ such that no solution to problem~\eqref{eq-main} exists for all $\mu>\mu_0^{**}(\lambda)$. 
\end{enumerate}
\end{theorem}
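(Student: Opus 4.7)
My plan is to recast existence of a solution to~\eqref{eq-main} as a finite-dimensional algebraic system via the \emph{time-map} approach on each of the three sub-intervals on which $a_{\lambda,\mu}$ is constant, and then rule out that system in each of the two parameter regimes. Since $u''=-\lambda g(u)\le 0$ on $[0,\sigma]\cup[1-\sigma,1]$ and $u''=\mu g(u)\ge 0$ on $(\sigma,1-\sigma)$, any solution is concave on the outer pieces and convex on the middle one; together with the Neumann conditions this forces interior maxima $\alpha:=u(0),\rho:=u(1)\in(0,1)$ and a unique interior minimum $\gamma>0$ in $(\sigma,1-\sigma)$. Setting $\beta:=u(\sigma)$, $\delta:=u(1-\sigma)$ and $G(s):=s^{3}/3-s^{4}/4$, energy conservation yields the time maps
\begin{equation*}
T_1(s_1,s_2;\lambda)=\frac{1}{\sqrt{2\lambda}}\int_{s_2}^{s_1}\frac{du}{\sqrt{G(s_1)-G(u)}},\quad T_2(s_1,s_2;\mu)=\frac{1}{\sqrt{2\mu}}\int_{s_2}^{s_1}\frac{du}{\sqrt{G(u)-G(s_2)}},
\end{equation*}
and a solution of~\eqref{eq-main} is equivalent to a tuple $(\alpha,\beta,\gamma,\delta,\rho)\in(0,1)^{5}$ satisfying the time constraints $\sigma=T_1(\alpha,\beta;\lambda)=T_1(\rho,\delta;\lambda)$, $1-2\sigma=T_2(\beta,\gamma;\mu)+T_2(\delta,\gamma;\mu)$ together with the two velocity-matching identities $\lambda(G(\alpha)-G(\beta))=\mu(G(\beta)-G(\gamma))$ and $\lambda(G(\rho)-G(\delta))=\mu(G(\delta)-G(\gamma))$ arising from the continuity of $u'$ at $\sigma$ and $1-\sigma$.

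\textbf{Part (i).} I fix $\lambda>0$ and argue by contradiction along $\mu_n\to 0^{+}$. Writing $T_2=\tilde T_2/\sqrt{\mu}$ with $\tilde T_2$ independent of $\mu$, and using that at least one of the two middle times is $\le(1-2\sigma)/2$, the corresponding $\tilde T_2\to 0$; away from the equilibria $\{0,1\}$ of $g$ (where $\tilde T_2$ diverges), this forces the corresponding pair of endpoints to coalesce, say $\beta_n-\gamma_n\to 0$, so $G(\beta_n)-G(\gamma_n)\to 0$. The matching then gives $G(\alpha_n)-G(\beta_n)\to 0$, hence $\alpha_n-\beta_n\to 0$, contradicting $T_1(\alpha_n,\beta_n;\lambda)=\sigma>0$ in the interior regime. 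The boundary cases ($\alpha_n,\beta_n,\gamma_n\to 1$ or $\to 0$) are handled by explicit linearization at the respective equilibria of $g$: near $u=1$ (where $g(u)\sim 1-u$) the linearized problem reduces to the characteristic equation $\sqrt{\lambda}\tanh(\sigma\sqrt{\lambda})=\sqrt{\mu}\tan((1/2-\sigma)\sqrt{\mu})$, whose right-hand side vanishes as $\mu\to 0^{+}$ while the left-hand side is a fixed positive constant; near $u=0$ (where $g(u)\sim u^{2}$) the scaling $u=\alpha v$ converts the outer equation into a universal autonomous limit problem whose rescaled time map has order $1/\sqrt{\alpha}$, again incompatible with $\mu_n\to 0$ through the matching. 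This establishes $\mu_0^{\ast}(\lambda)>0$.

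\textbf{Part (ii).} For $\lambda<\lambda^{\ast}$ and $\mu_n\to\infty$, the identity $(1-2\sigma)\sqrt{\mu_n}=\tilde T_2(\beta_n,\gamma_n)+\tilde T_2(\delta_n,\gamma_n)\to\infty$ forces $\gamma_n$ to approach an equilibrium of $g$. The case $\gamma_n\to 1$ is ruled out (for $\lambda$ small) by a careful analysis of the branches of the characteristic equation above. For $\gamma_n\to 0$, the matching and $G\le 1/12$ give $G(\beta_n)-G(\gamma_n)\le\lambda/(12\mu_n)\to 0$, so $\beta_n,\delta_n\to 0$; then either $\alpha_n$ stays bounded away from $0$, in which case the outer time constraint becomes, in the limit, $\sigma\sqrt{\lambda}=\tilde T_1^{\ast}(\alpha_0):=\int_0^{\alpha_0}du/\sqrt{2(G(\alpha_0)-G(u))}$, and the strictly positive infimum $\min_{(0,1)}\tilde T_1^{\ast}>0$ defines $\lambda^{\ast}:=(\min\tilde T_1^{\ast}/\sigma)^{2}$; or $\alpha_n\to 0$ as well, and the $u=0$ scaling combined with the matching determines $\alpha_n$ as an explicit function of $(\lambda,\mu_n)$ that violates $\alpha_n<1$ for $\mu_n$ large. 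Combining the sub-cases yields $\mu_0^{\ast\ast}(\lambda)$; the strict inequality $\mu_0^{\ast\ast}(\lambda)>\mu_0^{\ast}(\lambda)$ is then verified by direct comparison of the two thresholds.

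\textbf{Main obstacle.} The technical heart of the argument is the precise asymptotic analysis of the time maps $T_1,T_2$ in the three degenerate regimes (endpoint coalescence $s_1\to s_2$, quadratic vanishing of $g$ at $0$, and linear vanishing of $g$ at $1$), performed sharply enough in $(\lambda,\mu)$ to produce \emph{explicit} thresholds and the strict comparison $\mu_0^{\ast\ast}(\lambda)>\mu_0^{\ast}(\lambda)$. This in turn rests on monotonicity/continuity properties of $T_1,T_2$ in their endpoint arguments, on a careful enumeration of solution ``shapes'' (including non-symmetric ones, where $\alpha\ne\rho$ and $\beta\ne\delta$), and on matching the different scaling laws $\tilde T_1\sim 1/\sqrt{\alpha}$ (near $u=0$) and $\tilde T_1\sim\operatorname{arccosh}(\,\cdot\,)$ (near $u=1$) across the interfaces at $t=\sigma$ and $t=1-\sigma$.
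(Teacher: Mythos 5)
Your reduction to the five unknowns $(\alpha,\beta,\gamma,\delta,\rho)$ with two outer time constraints, one middle time constraint and the two energy-matching identities is a faithful reformulation of the phase-plane setup of the paper, and your $\lambda^{*}$, defined by $\sigma\sqrt{\lambda^{*}}=\min_{(0,1)}\tilde T_1^{*}$, is exactly the threshold of Proposition~\ref{pr-2.1}; the interior-coalescence argument in part $(i)$ and the sub-case ``$\alpha_n$ bounded away from $0$'' in part $(ii)$ are sound. The genuine gap lies in the degenerate regime in which the solutions collapse to the zero of $g$ at $u=0$, which is where the whole difficulty of the theorem sits. In part $(i)$, once $\mu_n\to0^{+}$ forces $\alpha_n,\beta_n,\gamma_n,\delta_n,\rho_n\to0$, no contradiction can come from the outer pieces: the outer time is pinned to $\sigma$ by construction, and since the outer time map diverges as the initial datum tends to $0$, the constraint $T_1(\alpha_n,\beta_n;\lambda)=\sigma$ is perfectly compatible with $\alpha_n-\beta_n\to0$; your appeal to an outer ``rescaled time map of order $1/\sqrt{\alpha}$'' therefore does not bite. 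What must actually be proved is that the time spent in $(\sigma,1-\sigma)$ by such trajectories behaves like $2\lambda\sigma/\mu_n$ (the quantity $\mathcal{L}_0$ of \eqref{eq-aux-time1}, which in the paper requires the uniform parametrization of $\Gamma_0$ near the origin in Proposition~\ref{pr-2.2}$(i)$, the limit in Proposition~\ref{pr-3.1}$(i)$ and a comparison/barrier argument), hence diverges and cannot equal $1-2\sigma$. A softer substitute exists (integrating the equation over $[0,1]$ and comparing $\min_{[0,\sigma]\cup[1-\sigma,1]}g(u_n)$ with $\max_{(\sigma,1-\sigma)}g(u_n)$ yields $\mu_n\geq\lambda\sigma/(2(1-2\sigma))$ whenever $u_n\to0$ uniformly), but no such argument appears in your sketch.

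The same regime is also where part $(ii)$ is left open. In the sub-case $\alpha_n\to0$ with $\mu_n\to+\infty$, your claim that ``the matching determines $\alpha_n$ as an explicit function of $(\lambda,\mu_n)$ that violates $\alpha_n<1$'' is not correct as stated: nothing pins $\alpha_n$. The true dichotomy is that either the entry datum $(\beta_n,u_n'(\sigma))$ is asymptotically on the manifold $\mathcal{M}_{\mu_n}$ — which, through $u_n'(\sigma)^2\approx\lambda^2\sigma^2\alpha_n^4$ versus $2\mu_n G(\beta_n)\approx\tfrac{2}{3}\mu_n\alpha_n^3$, would force $\alpha_n\gtrsim c\,\mu_n$, impossible — or it is not, and then the middle crossing time is $O(\lambda\sigma/\mu_n)\to0\neq1-2\sigma$; establishing this dichotomy uniformly in small $\alpha$ and large $\mu$ is precisely the content of the paper's exterior-barrier construction and of the monotonicity and limit results for $T_p$ (Proposition~\ref{pr-3.1} and Lemma~\ref{le-A.1}), and it cannot be waved through. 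Your treatment of the near-$1$ case in $(ii)$ is also off target: the characteristic equation $\sqrt{\mu}\tan\bigl((\tfrac12-\sigma)\sqrt{\mu}\bigr)=\sqrt{\lambda}\tanh(\sigma\sqrt{\lambda})$ has roots with $\mu$ arbitrarily large, so ``analysis of its branches for $\lambda$ small'' excludes nothing; what rules this case out (for every $\lambda$, no smallness needed) is positivity of $1-u_n$ on the middle interval combined with Sturm comparison, since $(1-u_n)''\leq-\tfrac{\mu_n}{4}(1-u_n)$ there once $u_n\geq\tfrac12$, which is incompatible with an interval of fixed length $1-2\sigma$ for large $\mu_n$. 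In short, your scaffolding mirrors the paper's time-map strategy, but the decisive asymptotic estimates near the degenerate equilibrium $u=0$ — the ``technical heart'' you yourself defer — are missing or misstated, so the proof is incomplete.
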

\noindent This result means, in particular, that no solution exists in a whole neighborhood of $(\lambda,\mu)=(\lambda,0)$ and in a whole neighborhood of $(\lambda,\mu)=(0,\mu)$. 
Thus, we provide a positive answer to~\cite[Conjecture~A]{FeSo-18non} in this framework.

As a counterpart of Theorem~\ref{th-intro1}, we obtain the following result of existence of solutions for problem~\eqref{eq-main}, with high multiplicity for some ranges of the parameters $\lambda$ and $\mu$.

\begin{theorem}\label{th-intro2}
	With the quantities given in Theorem~\ref{th-intro1}, the following assertions hold: 
	\begin{enumerate}
		\item[$(i)$] for all $\lambda\in(0,\lambda^{*})$, there exist $\mu^{*}_{1}(\lambda),\mu^{*}_{2}(\lambda),\mu^{**}_{2}(\lambda)\in(0,+\infty)$ satisfying
		\begin{equation*}
		\mu^{*}_{0}(\lambda)\leq\mu^{*}_{1}(\lambda)<\mu^{*}_{2}(\lambda)<\mu^{**}_{2}(\lambda)\leq\mu^{**}_{0}(\lambda),
		\end{equation*}
		such that
		\begin{itemize}[leftmargin=30pt,labelsep=5pt]
			\item[$(i.a)$] for all $\mu\in(\mu^{*}_{1}(\lambda),\mu^{**}_2(\lambda))$, problem~\eqref{eq-main} admits at least one solution;
			\item[$(i.b)$] for all $\mu\in(\mu^{*}_2(\lambda),\mu^{**}_2(\lambda))$, problem~\eqref{eq-main} admits at least two solutions;
		\end{itemize}
		\item[$(ii)$] for all $\lambda\in[\lambda^{*},+\infty)$, there exist $\mu^{*}_{1}(\lambda),\mu^{*}_{2}(\lambda),\mu^{*}_{4}(\lambda),\mu^{*}_{8}(\lambda)\in(0,+\infty)$ satisfying
		\begin{equation*}
		\mu^{*}_{0}(\lambda)\leq\mu^{*}_{1}(\lambda)<\mu^{*}_{2}(\lambda)\leq\mu^{*}_{4}(\lambda)\leq\mu^{*}_{8}(\lambda),
		\end{equation*}
		such that  
		\begin{itemize}[leftmargin=30pt,labelsep=5pt]
			\item[$(ii.a)$] for all $\mu>\mu^{*}_1(\lambda)$, problem~\eqref{eq-main} admits at least one solutions;
			\item[$(ii.b)$] for all $\mu>\mu^{*}_2(\lambda)$, problem~\eqref{eq-main} admits at least two solutions;
			\item[$(ii.c)$] for all $\mu>\mu^{*}_4(\lambda)$, problem~\eqref{eq-main} admits at least four solutions;
			\item[$(ii.d)$] for all $\mu>\mu^{*}_8(\lambda)$, problem~\eqref{eq-main} admits at least eight solutions.
		\end{itemize}
	\end{enumerate}
\end{theorem}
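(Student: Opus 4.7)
The strategy is the connection-times/shooting method of \cite{LGTe-14,LGTeZa-14,Te-18}, combined with the symmetry of the weight $a_{\lambda,\mu}$ about $t=1/2$. On each of the three constancy intervals the equation is autonomous and Hamiltonian, with conserved energies $\tfrac{1}{2}(u')^{2}+\lambda G(u)$ on the two outer intervals and $\tfrac{1}{2}(u')^{2}-\mu G(u)$ in the middle, where $G(s):=\int_{0}^{s}g(r)\,dr=s^{3}/3-s^{4}/4$. Since $g>0$ on $(0,1)$, any solution is strictly concave on $[0,\sigma]\cup[1-\sigma,1]$ and strictly convex on $(\sigma,1-\sigma)$; combined with $u'(0)=u'(1)=0$ this forces $u$ to be strictly decreasing on $[0,\sigma]$, strictly increasing on $[1-\sigma,1]$, and to have exactly one interior minimum $u_{\min}$ in $(\sigma,1-\sigma)$. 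A solution is therefore determined by the triple $(u(0),u_{\min},u(1))\in(0,1)^{3}$ subject to three matching equations coming from energy conservation at $t=\sigma$, $t=1-\sigma$ and the prescribed subinterval lengths, which I encode by appropriate time-map integrals.

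\emph{Symmetric branches and the scalar shooting equation.} I first look for symmetric solutions $u(t)=u(1-t)$, parametrized by $\alpha=u(0)=u(1)\in(0,1)$ with $u_{\min}$ attained at $t=1/2$. Energy matching at $t=\sigma$ expresses $u(\sigma)$ and $u_{\min}$ explicitly in terms of $\alpha,\lambda,\mu$, and the problem collapses to the scalar equation $\tau_{\lambda}^{+}(\alpha)+\tau_{\mu}^{-}(\alpha;\lambda)=\tfrac{1}{2}$, where $\tau_{\lambda}^{+}(\alpha)=\int_{u(\sigma)}^{\alpha}du/\sqrt{2\lambda(G(\alpha)-G(u))}$ is the time spent on $[0,\sigma]$ and $\tau_{\mu}^{-}$ is its analogue on $[\sigma,1/2]$ under the middle-interval flow. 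Studying the asymptotics of these integrals as $\alpha\to 0^{+}$ (using $g(s)\sim s^{2}$) and as $\alpha\to 1^{-}$ (where the separatrix of the equilibrium $(1,0)$ of the $+\lambda$-field produces logarithmic blow-up), together with monotonicity in $\mu$, an intermediate-value argument yields two thresholds $\mu_{1}^{*}(\lambda)\le\mu_{2}^{*}(\lambda)$ at which first one and then a second zero of $\tau_{\lambda}^{+}+\tau_{\mu}^{-}-\tfrac{1}{2}$ is produced, giving (i.a)--(i.b) and (ii.a)--(ii.b); the non-existence part of Theorem~\ref{th-intro1}(i) automatically forces $\mu_{1}^{*}\ge\mu_{0}^{*}$.

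\emph{Asymmetric branches and doubling.} To reach the multiplicities $4$ and $8$ in (ii), I enlarge the shooting problem to allow $u(0)\ne u(1)$ and impose the two matching conditions across $(\sigma,1-\sigma)$ as a two-dimensional system in $(u(0),u(1))\in(0,1)^{2}$. By the mirror involution $u\mapsto u(1-\cdot)$, every non-symmetric solution comes paired with its mirror image, so such solutions contribute in pairs. The goal is to show that for $\lambda\ge\lambda^{*}$ the symmetric branches undergo successive pitchfork-type bifurcations: at $\mu=\mu_{4}^{*}(\lambda)$ a first non-symmetric pair branches off (total~$4$), and at $\mu=\mu_{8}^{*}(\lambda)$ two further non-symmetric pairs branch off (total~$8$); these are detected by showing that the derivative of the two-dimensional shooting map in the antisymmetric direction vanishes and changes sign at the asserted values. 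The critical $\lambda^{*}$ is characterized as the smallest $\lambda$ for which $\sigma$ is large enough to activate such antisymmetric modes on $[0,\sigma]$; below $\lambda^{*}$, only the two symmetric branches exist, they form a closed loop in $\mu$, and this loop is contained in $[\mu_{0}^{*}(\lambda),\mu_{0}^{**}(\lambda)]$ by Theorem~\ref{th-intro1}, giving~(i) with $\mu_{2}^{**}\le\mu_{0}^{**}$.

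\emph{Main obstacle.} The technical heart of the proof is the quantitative control of the connection-time integrals near the separatrix $\{H=\lambda/12\}$ of the $+\lambda$-field, where $\tau_{\lambda}^{+}$ has a logarithmic singularity, together with the derivation of sharp monotonicity/convexity properties of $\tau_{\mu}^{-}$ with respect to its parameters. These are what is needed to establish the ordering $\mu_{2}^{*}\le\mu_{4}^{*}\le\mu_{8}^{*}$ and to guarantee the right number of pitchfork bifurcations with the correct sign of the relevant Jacobian determinants. Characterizing $\lambda^{*}$ as a concrete compatibility condition between $\sigma$ and the period function of the $+\lambda$-phase portrait is the most delicate quantitative point.
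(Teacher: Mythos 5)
Your reduction to a one–parameter shooting problem and the IVT counting for one and two solutions is in the same spirit as the paper, but the two steps on which your multiplicity claims really rest are gaps. First, for parts $(i)$ and $(ii.a)$–$(ii.b)$ you invoke ``monotonicity in $\mu$'' of the connection-time integrals to produce the thresholds $\mu_1^*,\mu_2^*,\mu_2^{**}$. Such monotonicity is not established anywhere (the integrand and the endpoints $m(\cdot)$, and the level set itself, all depend on $\mu$), and the paper deliberately avoids it: there $\mu_1^*$ and $\mu_2^{**}$ are defined as the maximum, respectively minimum, of the level sets $\{\mu:\min_s T_1(s,\mu)=1-2\sigma\}$ and $\{\mu:\max_s T_{\max}(s,\mu)=1-2\sigma\}$, and the required inequalities are obtained by continuity-plus-extremality contradiction arguments together with the limiting estimates $T_i>1-2\sigma$ for small $\mu$ and $T_i<1-2\sigma$ (or $T_i\to0$) for large $\mu$; monotonicity in $\mu$ is explicitly left open in Remark~\ref{rem-conjecture}. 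Second, and more seriously, your route to multiplicities $4$ and $8$ through pitchfork bifurcations of a two-dimensional shooting map is only a program: you give no argument that the antisymmetric derivative vanishes and changes sign at some $\mu_4^*,\mu_8^*$, and even granting a non-degenerate pitchfork, a local bifurcation only yields the extra asymmetric pairs for $\mu$ \emph{near} the bifurcation value, whereas the theorem asserts at least four (resp.\ eight) solutions for \emph{all} $\mu>\mu_4^*$ (resp.\ $\mu>\mu_8^*$); bridging local branching to a global count would need global bifurcation/continuation arguments you do not supply.

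The paper obtains $4$ and $8$ solutions without any bifurcation-theoretic input, by counting crossings of the three connection times $T_1,T_2,T_3$ (first, second and third intersection of the level line of $H_\mu$ through a point of $\Gamma_0$ with $\Gamma_1$) with the level $1-2\sigma$: for $\lambda\ge\lambda^*$ these maps have vertical asymptotes at the interior points $s_0^{\mathcal M}(\mu)$, $s_1^{\mathcal M}(\mu)$ for \emph{every} $\mu$, and the tangency values $\ell^0_{1,2}(\mu)$, $\ell^0_3(\mu)$ tend to $0$ as $\mu\to+\infty$, so defining $\mu_4^*$ and $\mu_8^*$ through $\ell^0_{1,2}(\mu)=1-2\sigma$ and $\ell^0_3(\mu)=1-2\sigma$ and applying the intermediate value theorem on each monotone-to-asymptote piece yields the four and eight distinct initial values $u(0)$ at once, globally in $\mu$. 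Relatedly, your characterization of $\lambda^*$ (``$\sigma$ large enough to activate antisymmetric modes on $[0,\sigma]$'') does not match the actual mechanism: by Proposition~\ref{pr-2.1}, $\lambda^*$ is the value at which $\min_s T_0(s,\lambda)=\sigma$, i.e.\ the threshold beyond which $\overline{\Gamma_0}$ reaches $\{0\}\times(-\infty,0)$, which forces $\Gamma_0\cap\mathcal{M}_\mu\ne\emptyset$ for every $\mu$ and hence the persistent asymptotes responsible for high multiplicity at all large $\mu$; identifying and exploiting this is precisely what your sketch leaves unresolved.
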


Accordingly, the above result provides a global description of the number of solutions of~\eqref{eq-main} as $(\lambda,\mu)$ varies in $(0,+\infty)\times\left(\mu^{*}_{1}(\lambda),\mu^{**}_{2}(\lambda)\right)$, where $\mu^{**}_{2}(\lambda)$ is meant as $+\infty$, for $\lambda\in[\lambda^{*},+\infty)$. We stress that Theorem~\ref{th-intro2} not only includes the high multiplicity for $\lambda\in[\lambda^{*},+\infty)$ and large $\mu$, as stated in \cite[Theorem~1.3]{FeSo-18non}, but, in addition, deals with the existence and multiplicity of the solutions of~\eqref{eq-main} in the whole $(\lambda,\mu)$-plane by dividing $\{(\lambda,\mu)\colon \lambda>0,\, \mu\in(\mu^{*}_{1}(\lambda),\mu^{**}_2(\lambda))\}$ in sub-regions in terms of the minimal number of solutions of~\eqref{eq-main}. As a consequence, apparently for the first time in the literature, we obtain the global bifurcation diagram in the $(\lambda,\mu)$-plane depicted in Figure~\ref{fig-bif-plane}. 
Another novelty of our results is that the value $\lambda^{*}$ is sharp in the sense that it divides the region of non-existence of solutions from that of high multiplicity when $\mu$ is large.
Moreover, as it will be apparent in Section~\ref{section-6}, the same value $\lambda^{*}$ discerns the behavior of the bifurcation diagrams with $\mu$ as bifurcation parameter, by distinguishing between those made by bounded branches connecting $0$ and $1$ (for $0<\lambda<\lambda^*$), and those with unbounded connected components (for $\lambda\in[\lambda^{*},+\infty)$). 

\begin{figure}[htb]
\begin{tikzpicture}
\begin{axis}[legend style={at={(axis cs:72,30)},anchor=north west},
  tick label style={font=\scriptsize},
  axis y line=middle, 
  axis x line=middle,
  xtick={0,15},
  ytick={0},
  xticklabels={$0$,$\lambda^*$}, 
  yticklabels={},
  xlabel={\small $\lambda$},
  ylabel={\small $\mu$},
every axis x label/.style={
    at={(ticklabel* cs:1.0)},
    anchor=west,
},
every axis y label/.style={
    at={(ticklabel* cs:1.0)},
    anchor=south,
}, set layers,
  width=10cm,
  height=6cm,
  xmin=-5,
  xmax=70,
  ymin=-3,
  ymax=30]
\addplot [fill=darkgray!50, draw=none, on layer=axis background, area legend] coordinates {(60, 0.00) (70, 0.00) (70.00, 0.06) (60, 0.06)};   
\addplot[draw=black, line width=0.6pt, smooth, samples=100, domain=4:70, forget plot] {0.25*x};	
\addplot [fill=bostonuniversityred!50, draw=none, on layer=axis background, area legend]coordinates {(0,0) (70.00, 17.50) (70.00, 0.00)};
\addplot [fill=white, draw=none, on layer=axis background, forget plot] coordinates {(0.00,0.00) (0.29,0.07) (0.40,0.10) (0.46,0.11) (0.58,0.14) (0.69,0.17) (0.80,0.20) (0.88,0.22) (0.92,0.23) (1.08,0.26) (1.17,0.29) (1.38,0.34) (1.62,0.39) (1.76,0.43) (1.84,0.45) (2.11,0.51) (2.36,0.57) (2.76,0.66) (3.34,0.79) (3.57,0.84) (3.68,0.87) (4.05,0.95) (4.79,1.11) (5.15,1.18) (5.53,1.26) (6.03,1.37) (6.45,1.45) (7.07,1.58) (7.28,1.62) (7.37,1.64) (7.62,1.68) (8.54,1.86) (9.21,1.98) (9.83,2.10) (10.88,2.29) (11.05,2.33) (11.12,2.34) (11.29,2.37) (11.97,2.48) (12.44,2.56) (12.89,2.63) (13.76,2.79) (13.93,2.81) (14.74,2.94) (15.11,3.00) (15.66,3.09) (16.11,3.16) (16.46,3.21) (16.58,3.23) (16.83,3.26) (17.83,3.41) (18.42,3.50) (18.79,3.55) (19.21,3.61) (19.60,3.66) (20.26,3.75) (20.61,3.80) (21.18,3.88) (21.68,3.95) (22.01,3.99) (22.11,4.00) (22.28,4.02) (23.43,4.17) (23.95,4.22) (24.86,4.35) (24.89,4.35) (25.79,4.46) (26.30,4.52) (27.42,4.65) (27.63,4.67) (27.75,4.68) (28.21,4.74) (28.55,4.77) (29.22,4.85) (29.47,4.87) (29.89,4.92) (30.70,5.00) (31.32,5.06) (32.18,5.16) (32.33,5.17) (33.16,5.25) (33.68,5.30) (34.71,5.40) (35.00,5.43) (35.18,5.45) (35.92,5.52) (35.98,5.53) (36.69,5.59) (36.84,5.60) (37.07,5.62) (38.22,5.72) (38.68,5.76) (39.39,5.83) (39.75,5.86) (40.46,5.92) (40.53,5.93) (41.29,5.99) (41.68,6.02) (42.37,6.07) (42.84,6.11) (43.94,6.20) (44.21,6.22) (44.40,6.24) (45.13,6.29) (45.42,6.32) (45.96,6.36) (46.05,6.36) (46.18,6.37) (47.54,6.47) (47.89,6.49) (48.40,6.53) (49.12,6.58) (49.74,6.61) (50.60,6.69) (50.70,6.69) (50.94,6.71) (51.58,6.75) (52.30,6.80) (52.78,6.83) (53.42,6.86) (53.90,6.90) (54.94,6.97) (55.26,6.99) (55.50,7.00) (57.10,7.10) (57.11,7.10) (57.11,7.10) (57.13,7.11) (58.03,7.16) (58.73,7.20) (58.95,7.21) (59.23,7.23) (60.36,7.29) (60.79,7.30) (61.35,7.35) (61.99,7.38) (62.63,7.42) (63.47,7.46) (63.62,7.47) (64.47,7.50) (65.26,7.56) (65.56,7.57) (66.32,7.61) (66.91,7.64) (67.65,7.68) (68.16,7.69) (68.56,7.72) (69.73,7.78) (71.00,7.79) (71.00,0) (0,0)};
\addplot [draw=black, line width=0.6pt, smooth, forget plot]coordinates {(0.00,0.00) (0.29,0.07) (0.40,0.10) (0.46,0.11) (0.58,0.14) (0.69,0.17) (0.80,0.20) (0.88,0.22) (0.92,0.23) (1.08,0.26) (1.17,0.29) (1.38,0.34) (1.62,0.39) (1.76,0.43) (1.84,0.45) (2.11,0.51) (2.36,0.57) (2.76,0.66) (3.34,0.79) (3.57,0.84) (3.68,0.87) (4.05,0.95) (4.79,1.11) (5.15,1.18) (5.53,1.26) (6.03,1.37) (6.45,1.45) (7.07,1.58) (7.28,1.62) (7.37,1.64) (7.62,1.68) (8.54,1.86) (9.21,1.98) (9.83,2.10) (10.88,2.29) (11.05,2.33) (11.12,2.34) (11.29,2.37) (11.97,2.48) (12.44,2.56) (12.89,2.63) (13.76,2.79) (13.93,2.81) (14.74,2.94) (15.11,3.00) (15.66,3.09) (16.11,3.16) (16.46,3.21) (16.58,3.23) (16.83,3.26) (17.83,3.41) (18.42,3.50) (18.79,3.55) (19.21,3.61) (19.60,3.66) (20.26,3.75) (20.61,3.80) (21.18,3.88) (21.68,3.95) (22.01,3.99) (22.11,4.00) (22.28,4.02) (23.43,4.17) (23.95,4.22) (24.86,4.35) (24.89,4.35) (25.79,4.46) (26.30,4.52) (27.42,4.65) (27.63,4.67) (27.75,4.68) (28.21,4.74) (28.55,4.77) (29.22,4.85) (29.47,4.87) (29.89,4.92) (30.70,5.00) (31.32,5.06) (32.18,5.16) (32.33,5.17) (33.16,5.25) (33.68,5.30) (34.71,5.40) (35.00,5.43) (35.18,5.45) (35.92,5.52) (35.98,5.53) (36.69,5.59) (36.84,5.60) (37.07,5.62) (38.22,5.72) (38.68,5.76) (39.39,5.83) (39.75,5.86) (40.46,5.92) (40.53,5.93) (41.29,5.99) (41.68,6.02) (42.37,6.07) (42.84,6.11) (43.94,6.20) (44.21,6.22) (44.40,6.24) (45.13,6.29) (45.42,6.32) (45.96,6.36) (46.05,6.36) (46.18,6.37) (47.54,6.47) (47.89,6.49) (48.40,6.53) (49.12,6.58) (49.74,6.61) (50.60,6.69) (50.70,6.69) (50.94,6.71) (51.58,6.75) (52.30,6.80) (52.78,6.83) (53.42,6.86) (53.90,6.90) (54.94,6.97) (55.26,6.99) (55.50,7.00) (57.10,7.10) (57.11,7.10) (57.11,7.10) (57.13,7.11) (58.03,7.16) (58.73,7.20) (58.95,7.21) (59.23,7.23) (60.36,7.29) (60.79,7.30) (61.35,7.35) (61.99,7.38) (62.63,7.42) (63.47,7.46) (63.62,7.47) (64.47,7.50) (65.26,7.56) (65.56,7.57) (66.32,7.61) (66.91,7.64) (67.65,7.68) (68.16,7.69) (68.56,7.72) (69.73,7.78) (70.00,7.79)};
\node at (axis cs:61.2,4.7) {\scriptsize{$\mu^{*}_0(\lambda)$}};
\addplot [fill=darkgray!50, draw=none, on layer=axis background, forget plot] coordinates {(0.00,0.00) (0.33,0.06) (0.46,0.08) (0.57,0.10) (0.66,0.11) (0.77,0.13) (0.92,0.16) (0.98,0.17) (1.14,0.20) (1.15,0.20) (1.31,0.23) (1.38,0.24) (1.54,0.26) (1.64,0.28) (1.84,0.31) (1.98,0.34) (2.30,0.39) (2.32,0.39) (2.64,0.45) (2.76,0.47) (3.03,0.51) (3.31,0.56) (3.68,0.61) (3.98,0.66) (4.61,0.76) (4.79,0.79) (5.33,0.87) (5.53,0.90) (5.93,0.96) (6.70,1.08) (7.37,1.18) (7.41,1.18) (8.07,1.28) (8.73,1.37) (9.21,1.44) (9.46,1.47) (10.13,1.57) (10.20,1.58) (10.85,1.67) (11.05,1.69) (11.44,1.74) (12.26,1.85) (12.89,1.93) (13.67,2.03) (14.07,2.08) (14.74,2.17) (15.10,2.21) (15.66,2.28) (16.39,2.37) (16.54,2.39) (16.58,2.39) (16.65,2.40) (17.99,2.55) (18.42,2.60) (19.16,2.69) (19.44,2.72) (20.26,2.80) (20.91,2.88) (21.63,2.96) (22.11,3.01) (22.39,3.04) (23.03,3.10) (23.55,3.16) (23.87,3.19) (23.95,3.20) (24.07,3.21) (25.37,3.34) (25.79,3.38) (26.46,3.44) (26.87,3.48) (27.63,3.55) (28.38,3.63) (28.82,3.67) (29.47,3.73) (29.90,3.76) (31.15,3.88) (31.32,3.89) (31.42,3.90) (31.93,3.95) (32.24,3.97) (32.96,4.03) (33.16,4.05) (33.46,4.07) (34.50,4.16) (35.00,4.19) (35.74,4.26) (36.05,4.29) (36.69,4.34) (36.84,4.35) (37.60,4.41) (38.00,4.44) (38.68,4.49) (39.17,4.53) (40.23,4.61) (40.53,4.63) (40.73,4.65) (41.45,4.70) (41.91,4.74) (42.30,4.76) (42.37,4.77) (42.46,4.77) (43.89,4.87) (44.21,4.90) (44.65,4.93) (45.48,4.98) (46.05,5.01) (46.84,5.08) (47.07,5.09) (47.66,5.13) (47.89,5.15) (48.67,5.20) (49.01,5.22) (49.74,5.25) (50.28,5.30) (51.17,5.35) (51.58,5.38) (51.88,5.40) (53.32,5.48) (53.42,5.49) (53.50,5.49) (54.02,5.53) (54.34,5.54) (55.12,5.59) (55.26,5.60) (55.45,5.61) (56.75,5.68) (57.11,5.69) (57.57,5.73) (58.38,5.77) (58.95,5.80) (59.69,5.84) (60.01,5.86) (60.79,5.89) (61.65,5.95) (61.79,5.96) (62.63,6.00) (63.29,6.03) (63.88,6.06) (64.47,6.08) (64.94,6.11) (65.97,6.17) (66.32,6.18) (66.59,6.20) (68.05,6.27) (68.16,6.27) (68.25,6.28) (69.02,6.32) (69.08,6.32) (69.91,6.36) (70.00,6.36) (70.00,0) (0,0)};
\addplot [color=black, line width=0.6pt, smooth, forget plot]coordinates {(0.00,0.00) (0.33,0.06) (0.46,0.08) (0.57,0.10) (0.66,0.11) (0.77,0.13) (0.92,0.16) (0.98,0.17) (1.14,0.20) (1.15,0.20) (1.31,0.23) (1.38,0.24) (1.54,0.26) (1.64,0.28) (1.84,0.31) (1.98,0.34) (2.30,0.39) (2.32,0.39) (2.64,0.45) (2.76,0.47) (3.03,0.51) (3.31,0.56) (3.68,0.61) (3.98,0.66) (4.61,0.76) (4.79,0.79) (5.33,0.87) (5.53,0.90) (5.93,0.96) (6.70,1.08) (7.37,1.18) (7.41,1.18) (8.07,1.28) (8.73,1.37) (9.21,1.44) (9.46,1.47) (10.13,1.57) (10.20,1.58) (10.85,1.67) (11.05,1.69) (11.44,1.74) (12.26,1.85) (12.89,1.93) (13.67,2.03) (14.07,2.08) (14.74,2.17) (15.10,2.21) (15.66,2.28) (16.39,2.37) (16.54,2.39) (16.58,2.39) (16.65,2.40) (17.99,2.55) (18.42,2.60) (19.16,2.69) (19.44,2.72) (20.26,2.80) (20.91,2.88) (21.63,2.96) (22.11,3.01) (22.39,3.04) (23.03,3.10) (23.55,3.16) (23.87,3.19) (23.95,3.20) (24.07,3.21) (25.37,3.34) (25.79,3.38) (26.46,3.44) (26.87,3.48) (27.63,3.55) (28.38,3.63) (28.82,3.67) (29.47,3.73) (29.90,3.76) (31.15,3.88) (31.32,3.89) (31.42,3.90) (31.93,3.95) (32.24,3.97) (32.96,4.03) (33.16,4.05) (33.46,4.07) (34.50,4.16) (35.00,4.19) (35.74,4.26) (36.05,4.29) (36.69,4.34) (36.84,4.35) (37.60,4.41) (38.00,4.44) (38.68,4.49) (39.17,4.53) (40.23,4.61) (40.53,4.63) (40.73,4.65) (41.45,4.70) (41.91,4.74) (42.30,4.76) (42.37,4.77) (42.46,4.77) (43.89,4.87) (44.21,4.90) (44.65,4.93) (45.48,4.98) (46.05,5.01) (46.84,5.08) (47.07,5.09) (47.66,5.13) (47.89,5.15) (48.67,5.20) (49.01,5.22) (49.74,5.25) (50.28,5.30) (51.17,5.35) (51.58,5.38) (51.88,5.40) (53.32,5.48) (53.42,5.49) (53.50,5.49) (54.02,5.53) (54.34,5.54) (55.12,5.59) (55.26,5.60) (55.45,5.61) (56.75,5.68) (57.11,5.69) (57.57,5.73) (58.38,5.77) (58.95,5.80) (59.69,5.84) (60.01,5.86) (60.79,5.89) (61.65,5.95) (61.79,5.96) (62.63,6.00) (63.29,6.03) (63.88,6.06) (64.47,6.08) (64.94,6.11) (65.97,6.17) (66.32,6.18) (66.59,6.20) (68.05,6.27) (68.16,6.27) (68.25,6.28) (69.02,6.32) (69.08,6.32) (69.91,6.36) (70.00,6.36)};
\node at (axis cs:50.8,8.0) [rotate=4]{\scriptsize{$\mu^{*}_1(\lambda)$}}; 
\addplot [preaction={fill, darkpastelblue!50}, pattern color=darkpastelblue!20, pattern=crosshatch dots, draw=none, on layer=axis background, area legend]coordinates {(0,0) (70.00, 17.50) (70.00, 30.00) (0, 30.00)};
\node at (axis cs:61.2,16.6) [rotate=18]{\scriptsize{$\mu^{*}_2(\lambda)$}};
\addplot[draw=black, line width=0.6pt, smooth, samples=100, domain=15:70, forget plot] {0.32*x};	
\node at (axis cs:50.5,17.6) [rotate=22]{\scriptsize{$\mu^{*}_4(\lambda)$}};
\addplot [fill=darktangerine!50, draw=none, on layer=axis background, area legend]coordinates  {(15, 0.32*15) (70, 0.32*70) (70, 0.4*70) (15, 0.4*15)};
\addplot[draw=black, line width=0.6pt, smooth, samples=100, domain=15:70, forget plot] {0.4*x};	
\node at (axis cs:61.2,25.9) [rotate=27]{\scriptsize{$\mu^{*}_8(\lambda)$}};
\addplot [fill=darkpastelgreen!50, draw=none, on layer=axis background, area legend]coordinates {(15, 0.4*15) (70, 0.4*70) (70,30) (15, 30)};
\addplot [fill=white, draw=none, on layer=axis background, forget plot] coordinates {(0.00,0.00) (0.06,0.01) (0.06,0.01) (0.07,0.01) (0.12,0.02) (0.13,0.03) (0.14,0.03) (0.18,0.04) (0.19,0.04) (0.22,0.04) (0.23,0.05) (0.26,0.05) (0.29,0.06) (0.29,0.06) (0.32,0.07) (0.35,0.07) (0.39,0.08) (0.43,0.09) (0.47,0.10) (0.52,0.11) (0.58,0.12) (0.59,0.12) (0.65,0.13) (0.70,0.15) (0.78,0.16) (0.87,0.18) (0.94,0.20) (1.03,0.22) (1.16,0.25) (1.17,0.25) (1.29,0.28) (1.40,0.30) (1.45,0.32) (1.55,0.34) (1.63,0.36) (1.74,0.39) (1.81,0.40) (1.86,0.41) (2.04,0.46) (2.07,0.47) (2.09,0.47) (2.20,0.50) (2.26,0.52) (2.32,0.53) (2.33,0.53) (2.34,0.54) (2.55,0.59) (2.59,0.60) (2.64,0.62) (2.78,0.66) (2.84,0.68) (2.94,0.70) (3.00,0.72) (3.10,0.75) (3.19,0.78) (3.23,0.79) (3.24,0.79) (3.36,0.83) (3.45,0.86) (3.55,0.89) (3.62,0.91) (3.67,0.93) (3.85,0.98) (3.88,0.99) (3.90,1.00) (4.00,1.03) (4.12,1.07) (4.14,1.08) (4.17,1.09) (4.34,1.15) (4.40,1.17) (4.48,1.20) (4.56,1.23) (4.66,1.27) (4.78,1.31) (4.80,1.32) (4.91,1.37) (4.99,1.40) (5.12,1.45) (5.17,1.47) (5.21,1.48) (5.37,1.55) (5.42,1.57) (5.43,1.58) (5.45,1.59) (5.63,1.67) (5.69,1.69) (5.78,1.73) (5.84,1.76) (5.95,1.81) (6.05,1.86) (6.12,1.89) (6.21,1.94) (6.26,1.96) (6.46,2.07) (6.46,2.07) (6.47,2.07) (6.47,2.07) (6.67,2.18) (6.72,2.21) (6.82,2.27) (6.87,2.29) (6.98,2.36) (7.07,2.41) (7.19,2.49) (7.24,2.52) (7.27,2.53) (7.35,2.59) (7.46,2.66) (7.50,2.68) (7.57,2.73) (7.66,2.79) (7.73,2.84) (7.76,2.86) (7.85,2.93) (7.98,3.02) (8.02,3.05) (8.04,3.07) (8.08,3.10) (8.22,3.21) (8.28,3.26) (8.41,3.36) (8.41,3.36) (8.53,3.47) (8.59,3.52) (8.70,3.62) (8.76,3.68) (8.79,3.71) (8.88,3.79) (8.94,3.85) (9.05,3.96) (9.11,4.02) (9.22,4.14) (9.28,4.20) (9.31,4.23) (9.42,4.36) (9.44,4.39) (9.57,4.53) (9.61,4.58) (9.67,4.66) (9.77,4.78) (9.83,4.86) (9.92,4.98) (10.06,5.17) (10.07,5.20) (10.09,5.21) (10.17,5.33) (10.22,5.42) (10.34,5.61) (10.37,5.64) (10.39,5.69) (10.51,5.88) (10.60,6.04) (10.65,6.12) (10.69,6.21) (10.78,6.37) (10.86,6.53) (10.91,6.63) (10.96,6.72) (11.04,6.89) (11.12,7.07) (11.16,7.16) (11.19,7.24) (11.28,7.44) (11.38,7.68) (11.40,7.73) (11.41,7.76) (11.51,8.01) (11.51,8.02) (11.60,8.28) (11.62,8.32) (11.64,8.38) (11.72,8.63) (11.77,8.77) (11.78,8.79) (11.82,8.94) (11.86,9.05) (11.90,9.18) (11.92,9.26) (11.93,9.31) (11.98,9.48) (12.01,9.59) (12.08,9.83) (12.10,9.93) (12.16,10.12) (12.19,10.27) (12.21,10.34) (12.27,10.56) (12.28,10.62) (12.28,10.64) (12.33,10.86) (12.36,10.97) (12.39,11.12) (12.41,11.22) (12.44,11.33) (12.45,11.38) (12.47,11.49) (12.51,11.70) (12.55,11.90) (12.59,12.07) (12.64,12.34) (12.65,12.41) (12.66,12.45) (12.67,12.53) (12.72,12.83) (12.74,12.93) (12.78,13.14) (12.79,13.21) (12.82,13.45) (12.85,13.61) (12.89,13.89) (12.91,13.97) (12.91,14.00) (12.93,14.13) (12.94,14.22) (12.97,14.41) (12.98,14.48) (13.00,14.62) (13.03,14.81) (13.04,15.00) (13.08,15.22) (13.09,15.32) (13.12,15.52) (13.13,15.64) (13.18,16.01) (13.18,16.03) (13.18,16.05) (13.19,16.12) (13.23,16.47) (13.23,16.55) (13.25,16.67) (13.27,16.90) (13.29,17.07) (13.32,17.32) (13.32,17.33) (13.34,17.59) (13.36,17.76) (13.38,17.97) (13.39,18.10) (13.40,18.19) (13.44,18.61) (13.44,18.62) (13.45,18.63) (13.45,18.66) (13.48,19.07) (13.49,19.14) (13.50,19.23) (13.52,19.51) (13.53,19.66) (13.55,19.85) (13.56,19.96) (13.57,20.17) (13.59,20.40) (13.60,20.47) (13.61,20.69) (13.62,20.86) (13.64,21.07) (13.64,21.21) (13.66,21.31) (13.68,21.67) (13.68,21.72) (13.69,21.76) (13.71,22.03) (13.72,22.22) (13.72,22.24) (13.72,22.27) (13.75,22.68) (13.75,22.76) (13.76,22.86) (13.77,23.14) (13.78,23.28) (13.79,23.45) (13.80,23.60) (13.81,23.79) (13.83,24.04) (13.83,24.06) (13.83,24.31) (13.86,24.53) (13.86,24.62) (13.86,24.83) (13.88,25.00) (13.89,25.19) (13.89,25.34) (13.90,25.47) (13.92,25.77) (13.92,25.86) (13.93,25.94) (13.95,26.34) (13.95,26.38) (13.95,26.41) (13.96,26.64) (13.97,26.69) (13.97,26.88) (13.97,26.90) (13.97,26.92) (13.99,27.36) (13.99,27.41) (14.00,27.48) (14.01,27.83) (14.02,27.93) (14.02,28.05) (14.03,28.31) (14.03,28.45) (14.05,28.61) (14.05,28.79) (14.06,28.97) (14.07,29.17) (14.07,29.27) (14.07,29.48) (14.09,29.74) (14.09,29.74) (14.09,29.77) (14.10,30.00) (0,30.00) (0,0)};
\addplot [draw=black, line width=0.6pt, smooth, forget plot] coordinates {(0.00,0.00) (0.06,0.01) (0.06,0.01) (0.07,0.01) (0.12,0.02) (0.13,0.03) (0.14,0.03) (0.18,0.04) (0.19,0.04) (0.22,0.04) (0.23,0.05) (0.26,0.05) (0.29,0.06) (0.29,0.06) (0.32,0.07) (0.35,0.07) (0.39,0.08) (0.43,0.09) (0.47,0.10) (0.52,0.11) (0.58,0.12) (0.59,0.12) (0.65,0.13) (0.70,0.15) (0.78,0.16) (0.87,0.18) (0.94,0.20) (1.03,0.22) (1.16,0.25) (1.17,0.25) (1.29,0.28) (1.40,0.30) (1.45,0.32) (1.55,0.34) (1.63,0.36) (1.74,0.39) (1.81,0.40) (1.86,0.41) (2.04,0.46) (2.07,0.47) (2.09,0.47) (2.20,0.50) (2.26,0.52) (2.32,0.53) (2.33,0.53) (2.34,0.54) (2.55,0.59) (2.59,0.60) (2.64,0.62) (2.78,0.66) (2.84,0.68) (2.94,0.70) (3.00,0.72) (3.10,0.75) (3.19,0.78) (3.23,0.79) (3.24,0.79) (3.36,0.83) (3.45,0.86) (3.55,0.89) (3.62,0.91) (3.67,0.93) (3.85,0.98) (3.88,0.99) (3.90,1.00) (4.00,1.03) (4.12,1.07) (4.14,1.08) (4.17,1.09) (4.34,1.15) (4.40,1.17) (4.48,1.20) (4.56,1.23) (4.66,1.27) (4.78,1.31) (4.80,1.32) (4.91,1.37) (4.99,1.40) (5.12,1.45) (5.17,1.47) (5.21,1.48) (5.37,1.55) (5.42,1.57) (5.43,1.58) (5.45,1.59) (5.63,1.67) (5.69,1.69) (5.78,1.73) (5.84,1.76) (5.95,1.81) (6.05,1.86) (6.12,1.89) (6.21,1.94) (6.26,1.96) (6.46,2.07) (6.46,2.07) (6.47,2.07) (6.47,2.07) (6.67,2.18) (6.72,2.21) (6.82,2.27) (6.87,2.29) (6.98,2.36) (7.07,2.41) (7.19,2.49) (7.24,2.52) (7.27,2.53) (7.35,2.59) (7.46,2.66) (7.50,2.68) (7.57,2.73) (7.66,2.79) (7.73,2.84) (7.76,2.86) (7.85,2.93) (7.98,3.02) (8.02,3.05) (8.04,3.07) (8.08,3.10) (8.22,3.21) (8.28,3.26) (8.41,3.36) (8.41,3.36) (8.53,3.47) (8.59,3.52) (8.70,3.62) (8.76,3.68) (8.79,3.71) (8.88,3.79) (8.94,3.85) (9.05,3.96) (9.11,4.02) (9.22,4.14) (9.28,4.20) (9.31,4.23) (9.42,4.36) (9.44,4.39) (9.57,4.53) (9.61,4.58) (9.67,4.66) (9.77,4.78) (9.83,4.86) (9.92,4.98) (10.06,5.17) (10.07,5.20) (10.09,5.21) (10.17,5.33) (10.22,5.42) (10.34,5.61) (10.37,5.64) (10.39,5.69) (10.51,5.88) (10.60,6.04) (10.65,6.12) (10.69,6.21) (10.78,6.37) (10.86,6.53) (10.91,6.63) (10.96,6.72) (11.04,6.89) (11.12,7.07) (11.16,7.16) (11.19,7.24) (11.28,7.44) (11.38,7.68) (11.40,7.73) (11.41,7.76) (11.51,8.01) (11.51,8.02) (11.60,8.28) (11.62,8.32) (11.64,8.38) (11.72,8.63) (11.77,8.77) (11.78,8.79) (11.82,8.94) (11.86,9.05) (11.90,9.18) (11.92,9.26) (11.93,9.31) (11.98,9.48) (12.01,9.59) (12.08,9.83) (12.10,9.93) (12.16,10.12) (12.19,10.27) (12.21,10.34) (12.27,10.56) (12.28,10.62) (12.28,10.64) (12.33,10.86) (12.36,10.97) (12.39,11.12) (12.41,11.22) (12.44,11.33) (12.45,11.38) (12.47,11.49) (12.51,11.70) (12.55,11.90) (12.59,12.07) (12.64,12.34) (12.65,12.41) (12.66,12.45) (12.67,12.53) (12.72,12.83) (12.74,12.93) (12.78,13.14) (12.79,13.21) (12.82,13.45) (12.85,13.61) (12.89,13.89) (12.91,13.97) (12.91,14.00) (12.93,14.13) (12.94,14.22) (12.97,14.41) (12.98,14.48) (13.00,14.62) (13.03,14.81) (13.04,15.00) (13.08,15.22) (13.09,15.32) (13.12,15.52) (13.13,15.64) (13.18,16.01) (13.18,16.03) (13.18,16.05) (13.19,16.12) (13.23,16.47) (13.23,16.55) (13.25,16.67) (13.27,16.90) (13.29,17.07) (13.32,17.32) (13.32,17.33) (13.34,17.59) (13.36,17.76) (13.38,17.97) (13.39,18.10) (13.40,18.19) (13.44,18.61) (13.44,18.62) (13.45,18.63) (13.45,18.66) (13.48,19.07) (13.49,19.14) (13.50,19.23) (13.52,19.51) (13.53,19.66) (13.55,19.85) (13.56,19.96) (13.57,20.17) (13.59,20.40) (13.60,20.47) (13.61,20.69) (13.62,20.86) (13.64,21.07) (13.64,21.21) (13.66,21.31) (13.68,21.67) (13.68,21.72) (13.69,21.76) (13.71,22.03) (13.72,22.22) (13.72,22.24) (13.72,22.27) (13.75,22.68) (13.75,22.76) (13.76,22.86) (13.77,23.14) (13.78,23.28) (13.79,23.45) (13.80,23.60) (13.81,23.79) (13.83,24.04) (13.83,24.06) (13.83,24.31) (13.86,24.53) (13.86,24.62) (13.86,24.83) (13.88,25.00) (13.89,25.19) (13.89,25.34) (13.90,25.47) (13.92,25.77) (13.92,25.86) (13.93,25.94) (13.95,26.34) (13.95,26.38) (13.95,26.41) (13.96,26.64) (13.97,26.69) (13.97,26.88) (13.97,26.90) (13.97,26.92) (13.99,27.36) (13.99,27.41) (14.00,27.48) (14.01,27.83) (14.02,27.93) (14.02,28.05) (14.03,28.31) (14.03,28.45) (14.05,28.61) (14.05,28.79) (14.06,28.97) (14.07,29.17) (14.07,29.27) (14.07,29.48) (14.09,29.74) (14.09,29.74) (14.09,29.77) (14.10,30.00)};
\node at (axis cs:11,23) [rotate=85]{\scriptsize{$\mu^{**}_0(\lambda)$}};
\addplot [fill=darkgray!50, draw=none, on layer=axis background, forget plot] coordinates {(0.00,0.00) (0.03,0.01) (0.04,0.01) (0.05,0.02) (0.07,0.02) (0.08,0.03) (0.09,0.03) (0.10,0.03) (0.11,0.04) (0.11,0.04) (0.13,0.04) (0.14,0.05) (0.15,0.05) (0.16,0.06) (0.18,0.06) (0.19,0.06) (0.20,0.07) (0.23,0.08) (0.23,0.08) (0.26,0.09) (0.28,0.09) (0.31,0.10) (0.33,0.11) (0.37,0.13) (0.38,0.13) (0.39,0.13) (0.46,0.16) (0.52,0.18) (0.55,0.19) (0.61,0.21) (0.65,0.23) (0.74,0.26) (0.77,0.27) (0.78,0.27) (0.87,0.31) (0.91,0.32) (0.92,0.32) (0.93,0.33) (1.04,0.37) (1.07,0.38) (1.12,0.40) (1.17,0.42) (1.22,0.44) (1.28,0.46) (1.30,0.47) (1.31,0.47) (1.38,0.50) (1.43,0.52) (1.50,0.54) (1.53,0.56) (1.55,0.57) (1.67,0.61) (1.68,0.62) (1.68,0.62) (1.69,0.62) (1.81,0.67) (1.84,0.68) (1.88,0.70) (1.93,0.72) (1.99,0.75) (2.06,0.78) (2.07,0.78) (2.14,0.81) (2.19,0.83) (2.27,0.87) (2.30,0.88) (2.31,0.89) (2.39,0.92) (2.44,0.94) (2.45,0.95) (2.47,0.95) (2.56,1.00) (2.60,1.02) (2.66,1.04) (2.69,1.05) (2.72,1.07) (2.76,1.09) (2.81,1.11) (2.86,1.14) (2.91,1.16) (2.93,1.17) (3.04,1.22) (3.06,1.23) (3.06,1.23) (3.07,1.24) (3.18,1.29) (3.21,1.31) (3.27,1.34) (3.30,1.35) (3.37,1.39) (3.42,1.42) (3.48,1.44) (3.52,1.47) (3.55,1.48) (3.64,1.53) (3.67,1.54) (3.67,1.55) (3.68,1.55) (3.79,1.61) (3.83,1.63) (3.89,1.67) (3.91,1.67) (3.92,1.68) (3.98,1.71) (4.03,1.74) (4.11,1.79) (4.13,1.80) (4.15,1.81) (4.19,1.84) (4.26,1.88) (4.29,1.89) (4.32,1.91) (4.38,1.95) (4.44,1.98) (4.50,2.02) (4.54,2.05) (4.59,2.08) (4.62,2.09) (4.70,2.14) (4.73,2.17) (4.74,2.17) (4.77,2.19) (4.85,2.24) (4.90,2.27) (4.93,2.30) (4.96,2.32) (4.99,2.34) (5.05,2.37) (5.08,2.39) (5.16,2.45) (5.19,2.47) (5.20,2.48) (5.23,2.49) (5.31,2.55) (5.36,2.59) (5.42,2.63) (5.46,2.66) (5.51,2.70) (5.53,2.71) (5.59,2.76) (5.64,2.80) (5.66,2.81) (5.71,2.84) (5.75,2.88) (5.79,2.91) (5.82,2.93) (5.86,2.97) (5.96,3.04) (5.97,3.05) (5.97,3.05) (5.98,3.06) (6.08,3.14) (6.12,3.17) (6.19,3.23) (6.22,3.25) (6.28,3.30) (6.30,3.32) (6.35,3.37) (6.41,3.41) (6.43,3.43) (6.49,3.49) (6.51,3.51) (6.58,3.57) (6.62,3.60) (6.69,3.67) (6.72,3.70) (6.73,3.71) (6.77,3.75) (6.82,3.80) (6.89,3.86) (6.93,3.90) (7.01,3.98) (7.03,4.00) (7.04,4.01) (7.07,4.04) (7.13,4.10) (7.19,4.17) (7.23,4.21) (7.30,4.29) (7.33,4.32) (7.35,4.33) (7.40,4.39) (7.43,4.42) (7.50,4.50) (7.53,4.53) (7.58,4.59) (7.63,4.64) (7.65,4.68) (7.72,4.76) (7.76,4.80) (7.81,4.86) (7.82,4.87) (7.84,4.90) (7.91,4.99) (7.96,5.05) (8.01,5.11) (8.08,5.20) (8.10,5.23) (8.11,5.24) (8.17,5.32) (8.19,5.35) (8.20,5.36) (8.27,5.45) (8.28,5.47) (8.31,5.51) (8.37,5.60) (8.42,5.66) (8.42,5.66) (8.46,5.73) (8.52,5.82) (8.55,5.86) (8.57,5.89) (8.64,5.99) (8.72,6.12) (8.72,6.12) (8.72,6.12) (8.72,6.12) (8.81,6.26) (8.88,6.37) (8.89,6.39) (8.91,6.43) (8.98,6.53) (9.03,6.62) (9.06,6.67) (9.09,6.73) (9.14,6.82) (9.18,6.89) (9.22,6.96) (9.26,7.04) (9.30,7.11) (9.34,7.17) (9.38,7.26) (9.43,7.35) (9.46,7.41) (9.49,7.47) (9.54,7.56) (9.58,7.65) (9.61,7.72) (9.64,7.78) (9.69,7.87) (9.72,7.96) (9.76,8.03) (9.80,8.11) (9.83,8.19) (9.86,8.27) (9.90,8.36) (9.95,8.46) (9.97,8.52) (9.99,8.57) (10.04,8.69) (10.10,8.83) (10.11,8.86) (10.12,8.88) (10.18,9.03) (10.19,9.06) (10.24,9.18) (10.25,9.20) (10.26,9.22) (10.31,9.37) (10.35,9.49) (10.38,9.55) (10.41,9.64) (10.44,9.73) (10.46,9.80) (10.50,9.91) (10.56,10.08) (10.57,10.09) (10.57,10.10) (10.59,10.15) (10.63,10.28) (10.67,10.41) (10.69,10.46) (10.71,10.55) (10.75,10.65) (10.76,10.71) (10.80,10.84) (10.84,10.97) (10.86,11.02) (10.86,11.03) (10.87,11.05) (10.92,11.23) (10.94,11.32) (10.95,11.33) (10.97,11.42) (10.99,11.48) (11.02,11.59) (11.03,11.62) (11.03,11.63) (11.04,11.68) (11.08,11.82) (11.11,11.94) (11.13,12.02) (11.17,12.17) (11.19,12.22) (11.19,12.24) (11.21,12.32) (11.24,12.42) (11.25,12.48) (11.27,12.55) (11.29,12.63) (11.30,12.70) (11.33,12.80) (11.34,12.84) (11.34,12.86) (11.35,12.91) (11.38,13.05) (11.41,13.16) (11.43,13.26) (11.48,13.47) (11.48,13.47) (11.48,13.47) (11.48,13.47) (11.53,13.68) (11.54,13.78) (11.57,13.90) (11.59,14.00) (11.61,14.08) (11.62,14.12) (11.63,14.20) (11.66,14.34) (11.67,14.39) (11.69,14.51) (11.70,14.56) (11.73,14.69) (11.74,14.78) (11.79,15.00) (11.79,15.00) (11.79,15.00) (11.79,15.00) (11.83,15.22) (11.84,15.31) (11.87,15.45) (11.87,15.48) (11.89,15.61) (11.91,15.68) (11.92,15.77) (11.94,15.87) (11.95,15.91) (11.95,15.92) (11.95,15.94) (11.98,16.14) (12.00,16.22) (12.02,16.37) (12.03,16.40) (12.05,16.53) (12.06,16.60) (12.09,16.82) (12.09,16.83) (12.09,16.84) (12.10,16.84) (12.13,17.07) (12.14,17.14) (12.16,17.28) (12.16,17.30) (12.18,17.45) (12.20,17.54) (12.22,17.71) (12.23,17.76) (12.23,17.78) (12.24,17.87) (12.27,18.02) (12.27,18.06) (12.28,18.13) (12.30,18.26) (12.31,18.37) (12.32,18.44) (12.33,18.50) (12.34,18.55) (12.35,18.67) (12.36,18.75) (12.39,18.96) (12.39,18.98) (12.39,18.99) (12.40,19.03) (12.42,19.24) (12.43,19.29) (12.44,19.37) (12.45,19.48) (12.46,19.59) (12.48,19.73) (12.49,19.77) (12.50,19.90) (12.51,19.98) (12.53,20.17) (12.54,20.20) (12.54,20.23) (12.55,20.33) (12.57,20.48) (12.57,20.51) (12.58,20.56) (12.59,20.73) (12.60,20.82) (12.62,20.95) (12.62,20.98) (12.63,21.12) (12.65,21.23) (12.66,21.34) (12.67,21.43) (12.67,21.49) (12.70,21.72) (12.70,21.73) (12.70,21.74) (12.70,21.78) (12.72,22.00) (12.73,22.04) (12.73,22.10) (12.75,22.26) (12.76,22.35) (12.77,22.48) (12.77,22.51) (12.79,22.65) (12.80,22.77) (12.81,22.86) (12.81,22.96) (12.82,23.03) (12.84,23.23) (12.84,23.27) (12.85,23.29) (12.86,23.42) (12.87,23.55) (12.87,23.57) (12.87,23.60) (12.89,23.81) (12.90,23.88) (12.90,23.97) (12.91,24.07) (12.92,24.18) (12.93,24.34) (12.93,24.34) (12.95,24.49) (12.96,24.60) (12.96,24.70) (12.97,24.80) (12.98,24.86) (12.99,25.07) (12.99,25.10) (13.00,25.13) (13.01,25.29) (13.02,25.39) (13.02,25.41) (13.02,25.43) (13.04,25.66) (13.04,25.71) (13.05,25.79) (13.06,25.93) (13.06,26.02) (13.07,26.15) (13.08,26.19) (13.08,26.33) (13.10,26.46) (13.10,26.51) (13.11,26.63) (13.11,26.73) (13.12,26.86) (13.13,26.94) (13.13,27.00) (13.15,27.22) (13.15,27.24) (13.15,27.27) (13.16,27.40) (13.16,27.43) (13.17,27.54) (13.17,27.55) (13.17,27.57) (13.19,27.81) (13.19,27.86) (13.19,27.92) (13.20,28.08) (13.21,28.16) (13.22,28.27) (13.22,28.35) (13.23,28.47) (13.24,28.62) (13.24,28.62) (13.24,28.62) (13.25,28.78) (13.26,28.90) (13.26,28.97) (13.27,29.08) (13.27,29.17) (13.28,29.32) (13.29,29.39) (13.29,29.44) (13.30,29.66) (13.30,29.69) (13.30,29.72) (13.32,29.91) (13.32,29.99) (13.32,30.00) (0,30.00) (0,0)};
\addplot [draw=black, line width=0.6pt, smooth, forget plot] coordinates {(0.00,0.00) (0.03,0.01) (0.04,0.01) (0.05,0.02) (0.07,0.02) (0.08,0.03) (0.09,0.03) (0.10,0.03) (0.11,0.04) (0.11,0.04) (0.13,0.04) (0.14,0.05) (0.15,0.05) (0.16,0.06) (0.18,0.06) (0.19,0.06) (0.20,0.07) (0.23,0.08) (0.23,0.08) (0.26,0.09) (0.28,0.09) (0.31,0.10) (0.33,0.11) (0.37,0.13) (0.38,0.13) (0.39,0.13) (0.46,0.16) (0.52,0.18) (0.55,0.19) (0.61,0.21) (0.65,0.23) (0.74,0.26) (0.77,0.27) (0.78,0.27) (0.87,0.31) (0.91,0.32) (0.92,0.32) (0.93,0.33) (1.04,0.37) (1.07,0.38) (1.12,0.40) (1.17,0.42) (1.22,0.44) (1.28,0.46) (1.30,0.47) (1.31,0.47) (1.38,0.50) (1.43,0.52) (1.50,0.54) (1.53,0.56) (1.55,0.57) (1.67,0.61) (1.68,0.62) (1.68,0.62) (1.69,0.62) (1.81,0.67) (1.84,0.68) (1.88,0.70) (1.93,0.72) (1.99,0.75) (2.06,0.78) (2.07,0.78) (2.14,0.81) (2.19,0.83) (2.27,0.87) (2.30,0.88) (2.31,0.89) (2.39,0.92) (2.44,0.94) (2.45,0.95) (2.47,0.95) (2.56,1.00) (2.60,1.02) (2.66,1.04) (2.69,1.05) (2.72,1.07) (2.76,1.09) (2.81,1.11) (2.86,1.14) (2.91,1.16) (2.93,1.17) (3.04,1.22) (3.06,1.23) (3.06,1.23) (3.07,1.24) (3.18,1.29) (3.21,1.31) (3.27,1.34) (3.30,1.35) (3.37,1.39) (3.42,1.42) (3.48,1.44) (3.52,1.47) (3.55,1.48) (3.64,1.53) (3.67,1.54) (3.67,1.55) (3.68,1.55) (3.79,1.61) (3.83,1.63) (3.89,1.67) (3.91,1.67) (3.92,1.68) (3.98,1.71) (4.03,1.74) (4.11,1.79) (4.13,1.80) (4.15,1.81) (4.19,1.84) (4.26,1.88) (4.29,1.89) (4.32,1.91) (4.38,1.95) (4.44,1.98) (4.50,2.02) (4.54,2.05) (4.59,2.08) (4.62,2.09) (4.70,2.14) (4.73,2.17) (4.74,2.17) (4.77,2.19) (4.85,2.24) (4.90,2.27) (4.93,2.30) (4.96,2.32) (4.99,2.34) (5.05,2.37) (5.08,2.39) (5.16,2.45) (5.19,2.47) (5.20,2.48) (5.23,2.49) (5.31,2.55) (5.36,2.59) (5.42,2.63) (5.46,2.66) (5.51,2.70) (5.53,2.71) (5.59,2.76) (5.64,2.80) (5.66,2.81) (5.71,2.84) (5.75,2.88) (5.79,2.91) (5.82,2.93) (5.86,2.97) (5.96,3.04) (5.97,3.05) (5.97,3.05) (5.98,3.06) (6.08,3.14) (6.12,3.17) (6.19,3.23) (6.22,3.25) (6.28,3.30) (6.30,3.32) (6.35,3.37) (6.41,3.41) (6.43,3.43) (6.49,3.49) (6.51,3.51) (6.58,3.57) (6.62,3.60) (6.69,3.67) (6.72,3.70) (6.73,3.71) (6.77,3.75) (6.82,3.80) (6.89,3.86) (6.93,3.90) (7.01,3.98) (7.03,4.00) (7.04,4.01) (7.07,4.04) (7.13,4.10) (7.19,4.17) (7.23,4.21) (7.30,4.29) (7.33,4.32) (7.35,4.33) (7.40,4.39) (7.43,4.42) (7.50,4.50) (7.53,4.53) (7.58,4.59) (7.63,4.64) (7.65,4.68) (7.72,4.76) (7.76,4.80) (7.81,4.86) (7.82,4.87) (7.84,4.90) (7.91,4.99) (7.96,5.05) (8.01,5.11) (8.08,5.20) (8.10,5.23) (8.11,5.24) (8.17,5.32) (8.19,5.35) (8.20,5.36) (8.27,5.45) (8.28,5.47) (8.31,5.51) (8.37,5.60) (8.42,5.66) (8.42,5.66) (8.46,5.73) (8.52,5.82) (8.55,5.86) (8.57,5.89) (8.64,5.99) (8.72,6.12) (8.72,6.12) (8.72,6.12) (8.72,6.12) (8.81,6.26) (8.88,6.37) (8.89,6.39) (8.91,6.43) (8.98,6.53) (9.03,6.62) (9.06,6.67) (9.09,6.73) (9.14,6.82) (9.18,6.89) (9.22,6.96) (9.26,7.04) (9.30,7.11) (9.34,7.17) (9.38,7.26) (9.43,7.35) (9.46,7.41) (9.49,7.47) (9.54,7.56) (9.58,7.65) (9.61,7.72) (9.64,7.78) (9.69,7.87) (9.72,7.96) (9.76,8.03) (9.80,8.11) (9.83,8.19) (9.86,8.27) (9.90,8.36) (9.95,8.46) (9.97,8.52) (9.99,8.57) (10.04,8.69) (10.10,8.83) (10.11,8.86) (10.12,8.88) (10.18,9.03) (10.19,9.06) (10.24,9.18) (10.25,9.20) (10.26,9.22) (10.31,9.37) (10.35,9.49) (10.38,9.55) (10.41,9.64) (10.44,9.73) (10.46,9.80) (10.50,9.91) (10.56,10.08) (10.57,10.09) (10.57,10.10) (10.59,10.15) (10.63,10.28) (10.67,10.41) (10.69,10.46) (10.71,10.55) (10.75,10.65) (10.76,10.71) (10.80,10.84) (10.84,10.97) (10.86,11.02) (10.86,11.03) (10.87,11.05) (10.92,11.23) (10.94,11.32) (10.95,11.33) (10.97,11.42) (10.99,11.48) (11.02,11.59) (11.03,11.62) (11.03,11.63) (11.04,11.68) (11.08,11.82) (11.11,11.94) (11.13,12.02) (11.17,12.17) (11.19,12.22) (11.19,12.24) (11.21,12.32) (11.24,12.42) (11.25,12.48) (11.27,12.55) (11.29,12.63) (11.30,12.70) (11.33,12.80) (11.34,12.84) (11.34,12.86) (11.35,12.91) (11.38,13.05) (11.41,13.16) (11.43,13.26) (11.48,13.47) (11.48,13.47) (11.48,13.47) (11.48,13.47) (11.53,13.68) (11.54,13.78) (11.57,13.90) (11.59,14.00) (11.61,14.08) (11.62,14.12) (11.63,14.20) (11.66,14.34) (11.67,14.39) (11.69,14.51) (11.70,14.56) (11.73,14.69) (11.74,14.78) (11.79,15.00) (11.79,15.00) (11.79,15.00) (11.79,15.00) (11.83,15.22) (11.84,15.31) (11.87,15.45) (11.87,15.48) (11.89,15.61) (11.91,15.68) (11.92,15.77) (11.94,15.87) (11.95,15.91) (11.95,15.92) (11.95,15.94) (11.98,16.14) (12.00,16.22) (12.02,16.37) (12.03,16.40) (12.05,16.53) (12.06,16.60) (12.09,16.82) (12.09,16.83) (12.09,16.84) (12.10,16.84) (12.13,17.07) (12.14,17.14) (12.16,17.28) (12.16,17.30) (12.18,17.45) (12.20,17.54) (12.22,17.71) (12.23,17.76) (12.23,17.78) (12.24,17.87) (12.27,18.02) (12.27,18.06) (12.28,18.13) (12.30,18.26) (12.31,18.37) (12.32,18.44) (12.33,18.50) (12.34,18.55) (12.35,18.67) (12.36,18.75) (12.39,18.96) (12.39,18.98) (12.39,18.99) (12.40,19.03) (12.42,19.24) (12.43,19.29) (12.44,19.37) (12.45,19.48) (12.46,19.59) (12.48,19.73) (12.49,19.77) (12.50,19.90) (12.51,19.98) (12.53,20.17) (12.54,20.20) (12.54,20.23) (12.55,20.33) (12.57,20.48) (12.57,20.51) (12.58,20.56) (12.59,20.73) (12.60,20.82) (12.62,20.95) (12.62,20.98) (12.63,21.12) (12.65,21.23) (12.66,21.34) (12.67,21.43) (12.67,21.49) (12.70,21.72) (12.70,21.73) (12.70,21.74) (12.70,21.78) (12.72,22.00) (12.73,22.04) (12.73,22.10) (12.75,22.26) (12.76,22.35) (12.77,22.48) (12.77,22.51) (12.79,22.65) (12.80,22.77) (12.81,22.86) (12.81,22.96) (12.82,23.03) (12.84,23.23) (12.84,23.27) (12.85,23.29) (12.86,23.42) (12.87,23.55) (12.87,23.57) (12.87,23.60) (12.89,23.81) (12.90,23.88) (12.90,23.97) (12.91,24.07) (12.92,24.18) (12.93,24.34) (12.93,24.34) (12.95,24.49) (12.96,24.60) (12.96,24.70) (12.97,24.80) (12.98,24.86) (12.99,25.07) (12.99,25.10) (13.00,25.13) (13.01,25.29) (13.02,25.39) (13.02,25.41) (13.02,25.43) (13.04,25.66) (13.04,25.71) (13.05,25.79) (13.06,25.93) (13.06,26.02) (13.07,26.15) (13.08,26.19) (13.08,26.33) (13.10,26.46) (13.10,26.51) (13.11,26.63) (13.11,26.73) (13.12,26.86) (13.13,26.94) (13.13,27.00) (13.15,27.22) (13.15,27.24) (13.15,27.27) (13.16,27.40) (13.16,27.43) (13.17,27.54) (13.17,27.55) (13.17,27.57) (13.19,27.81) (13.19,27.86) (13.19,27.92) (13.20,28.08) (13.21,28.16) (13.22,28.27) (13.22,28.35) (13.23,28.47) (13.24,28.62) (13.24,28.62) (13.24,28.62) (13.25,28.78) (13.26,28.90) (13.26,28.97) (13.27,29.08) (13.27,29.17) (13.28,29.32) (13.29,29.39) (13.29,29.44) (13.30,29.66) (13.30,29.69) (13.30,29.72) (13.32,29.91) (13.32,29.99) (13.32,30.00)};
\node at (axis cs:12.8,7.2) [rotate=70]{\scriptsize{$\mu^{**}_2(\lambda)$}};
\node at (axis cs:-2,-2) {\scriptsize{0}};
\addplot [mark=none,dashed,color=black] coordinates {(15,0) (15,30)};
\legend{
\scriptsize{$\# sol.= 0$}, \scriptsize{$\# sol.\geq 1$}, \scriptsize{$\# sol.\geq 2$}, \scriptsize{$\# sol.\geq 4$}, \scriptsize{$\# sol.\geq 8$}
}
\end{axis}
\end{tikzpicture}
\caption{Qualitative bifurcation digram of the solutions to~\eqref{eq-main} in the $(\lambda,\mu)$-plane. The curves $\mu^{*}_0(\lambda)$ and $\mu^{**}_0(\lambda)$  define the non-existence region (gray). The curves $\mu^{*}_1(\lambda)$ and $\mu^{**}_2(\lambda)$ mark out regions of existence: at least one solution in between $\mu^{*}_{1}(\lambda)$ and $\mu^{*}_{2}(\lambda)$ (red) and at least two solutions in between $\mu^*_2(\lambda)$ and $\mu^{**}_{2}(\lambda)$ (blue). For $\lambda\in[\lambda^{*},+\infty)$, at least four solutions in the region above $\mu^*_4(\lambda)$ (yellow), and at least eight solutions in the one above $\mu^*_8(\lambda)$ (green).}
\label{fig-bif-plane}
\end{figure}
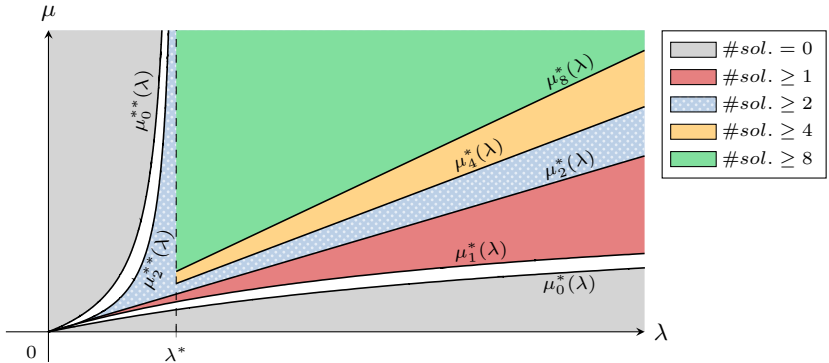

The indefinite weight problem~\eqref{eq-main} bears some similarities with the ones studied in the field of population genetics in~\cite{LoNa-02,LoNiSu-10,Na-16,Na-18,Na-20,NaNiSu-10,NaSu-20} that include the one-parameter Neumann problem
\begin{equation}\label{eq-aux-intro}
\begin{cases}
\, -u''(t)=\lambda \, \tilde a(t)\,g(u(t)), & t\in(0,1), \\
\, 0 \leq u(t) \leq 1, & t\in[0,1], \\
\, u'(0)=0, \quad u'(1)=0,
\end{cases}
\end{equation}
where $\tilde{a}\colon[0,1]\to\mathbb{R}$ is a sign-changing function. Concerning problem~\eqref{eq-aux-intro}, the main questions addressed have involved the uniqueness and the multiplicity of the solutions depending on the sign of $\int_0^1\tilde{a}(t)\,\mathrm{d}t$ (see the conjectures contained in~\cite{LoNa-02}). Indeed, when $\int_0^1\tilde{a}(t)\,\mathrm{d}t<0$, \cite[Theorem~1.3]{NaNiSu-10} is a general result stated in a high dimensional setting which ensures the existence of at least two solutions to~\eqref{eq-aux-intro} for $\lambda$ large enough. On the other hand, when $\int_0^1\tilde{a}(t)\,\mathrm{d}t\geq0$, the existence of at most one solution to~\eqref{eq-aux-intro} has been established in~\cite{Na-16,Na-18} under some additional assumptions on~$\tilde{a}$ and for $\lambda$ large enough. Despite that, in both cases, high multiplicity of solutions for problem~\eqref{eq-aux-intro} has been proved for some specific symmetric weights $\tilde{a}$ that change sign twice and $\lambda$ large enough: when $\int_0^1\tilde{a}(t)\,\mathrm{d}t<0$, \cite[Theorems~1.1,~1.2, and~1.3]{Na-20} ensure the existence of at least eight solutions to~\eqref{eq-aux-intro}; when $\int_0^1\tilde{a}(t)\,\mathrm{d}t\geq0$, \cite[Theorems~1.1 and~1.2]{NaSu-20} ensure the existence of at least three solutions to~\eqref{eq-aux-intro}.

To pursue the study of these one-parameter Neumann problems, we reduce the number of parameters in problem~\eqref{eq-main} by taking $\mu$ as a linear function of $\lambda$, that is $\mu=\lambda K$ for some $K>0$. Thus, we consider~\eqref{eq-aux-intro} with
\begin{equation}\label{eq-weight-intro}
\tilde{a}(t)=\tilde{a}_K(t):=\begin{cases}
\, 1, & \text{if $t\in[0,\sigma]\cup[1-\sigma,1]$,} \\
\, -K, & \text{if $t\in(\sigma,1-\sigma)$,}
\end{cases}
\end{equation}
and we prove the following result.

\begin{theorem}\label{th-intro3}
Let $\tilde{a}$ be as in~\eqref{eq-weight-intro}. Then, there exist $K_4,K_8\in(0,+\infty)$, with $K_4<K_8$, such that the following assertions hold:
\begin{enumerate}
\item[$(i)$] for all $K>0$, there exists $\lambda_1^{*}(K)>0$ such that problem~\eqref{eq-aux-intro} admits at least one solution for all $\lambda>\lambda_1^{*}(K)$;
\item[$(ii)$] for all $K>\frac{2\sigma}{1-2\sigma}$, problem~\eqref{eq-aux-intro} admits at least two solutions for all $\lambda\in[\lambda^{*},+\infty)$, where $\lambda^*$ is as in Theorem~\ref{th-intro1};
\item[$(iii)$] for all $K>K_4$, there exists $\lambda^{*}_4(K)>0$ such that problem~\eqref{eq-aux-intro} admits at least four solutions for all $\lambda>\lambda^{*}_4(K)$;
\item[$(iv)$] for all $K>K_8$, there exists $\lambda^{*}_8(K)>0$ such that problem \eqref{eq-aux-intro} admits at least eight solutions for all $\lambda>\lambda^{*}_8(K)$.
\end{enumerate}
\end{theorem}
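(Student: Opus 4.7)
The key reduction is that a function $u$ solves problem~\eqref{eq-aux-intro} with weight $\tilde{a}=\tilde{a}_K$ given by~\eqref{eq-weight-intro} and parameter $\lambda>0$ if and only if it solves the two-parameter problem~\eqref{eq-main} with $(\lambda,\mu)=(\lambda,\lambda K)$, since by construction $\lambda\,\tilde{a}_K(t)\equiv a_{\lambda,\lambda K}(t)$. The plan is therefore to apply Theorem~\ref{th-intro2} along the ray $\{(\lambda,\lambda K):\lambda>0\}$ in the $(\lambda,\mu)$-plane and to control when this ray crosses each of the threshold curves $\mu_1^{*},\mu_2^{*},\mu_4^{*},\mu_8^{*}$. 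The technical input reduces to asymptotic (or uniform) estimates on the ratios $\mu_j^{*}(\lambda)/\lambda$, which in turn follow from the connection-times/time-map description of these curves developed in Section~\ref{section-6}.

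For part $(i)$ I would first establish that $\mu_1^{*}(\lambda)/\lambda\to 0$ as $\lambda\to+\infty$: the time-map estimates that define $\mu_1^{*}(\lambda)$ yield a rate at which the small-solution branch approaches the trivial equilibrium, forcing the quotient to vanish. Given any $K>0$, this provides $\lambda_1^{*}(K)>0$ such that $\lambda K>\mu_1^{*}(\lambda)$ for every $\lambda>\lambda_1^{*}(K)$, and Theorem~\ref{th-intro2}$(i.a)$--$(ii.a)$ then supplies a solution.

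For part $(ii)$, note that $K>\frac{2\sigma}{1-2\sigma}$ is exactly the sign condition $\int_0^1 \tilde{a}_K(t)\,\mathrm{d}t<0$, which is classical in this setting. I would prove the uniform inequality
\begin{equation*}
\mu_2^{*}(\lambda)\leq\frac{2\sigma}{1-2\sigma}\,\lambda\quad\text{for all } \lambda\geq\lambda^{*},
\end{equation*}
relying on the explicit time-map characterization of $\mu_2^{*}(\lambda)$; the number $\frac{2\sigma}{1-2\sigma}$ is precisely the asymptotic slope of $\mu_2^{*}$ that one extracts from balancing the connection times on $[0,\sigma]\cup[1-\sigma,1]$ against the one on $(\sigma,1-\sigma)$. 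For $K$ strictly larger, the ray $\mu=\lambda K$ lies strictly above $\mu_2^{*}(\lambda)$ throughout $[\lambda^{*},+\infty)$, and Theorem~\ref{th-intro2}$(ii.b)$ yields two solutions.

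For parts $(iii)$ and $(iv)$ I would define
\begin{equation*}
K_4:=\limsup_{\lambda\to+\infty}\frac{\mu_4^{*}(\lambda)}{\lambda},\qquad K_8:=\limsup_{\lambda\to+\infty}\frac{\mu_8^{*}(\lambda)}{\lambda},
\end{equation*}
and show that both are finite and positive, with $K_4<K_8$ (consistent with the ordering $\mu_4^{*}\leq\mu_8^{*}$ coming from Theorem~\ref{th-intro2}). Then for $K>K_4$ (respectively $K>K_8$) there exists $\lambda_4^{*}(K)$ (respectively $\lambda_8^{*}(K)$) beyond which the ray $\mu=\lambda K$ exceeds $\mu_4^{*}(\lambda)$ (respectively $\mu_8^{*}(\lambda)$), and Theorem~\ref{th-intro2}$(ii.c)$--$(ii.d)$ yields four (respectively eight) solutions. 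The main obstacle in the whole argument lies in the quantitative control of the ratios $\mu_j^{*}(\lambda)/\lambda$, in particular in proving $\mu_1^{*}(\lambda)/\lambda\to 0$ (so that part $(i)$ holds for \emph{every} $K>0$) and the sharp slope bound $\mu_2^{*}(\lambda)/\lambda\leq\frac{2\sigma}{1-2\sigma}$ on $[\lambda^{*},+\infty)$; both are delicate asymptotic properties of the connection times underlying the construction of these thresholds.
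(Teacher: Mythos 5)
Your reduction to the ray $\mu=K\lambda$ and the appeal to Theorem~\ref{th-intro2} is sound, and parts $(i)$--$(ii)$ go through essentially as in the paper. In fact they are easier than you suggest: for $(ii)$ there is nothing to prove about the slope, since by its very definition \eqref{eq-mu2} one has $\mu_2^{*}(\lambda)=\frac{2\lambda\sigma}{1-2\sigma}$, so $K>\frac{2\sigma}{1-2\sigma}$ means exactly $K\lambda>\mu_2^{*}(\lambda)$ and Theorem~\ref{th-intro2}$(ii.b)$ applies for every $\lambda\geq\lambda^{*}$; for $(i)$, $\mu_1^{*}(\lambda)$ is even uniformly bounded in $\lambda$, because $\min_{s}T_1(s,\mu)\leq\mathcal{L}_1(\mu)\leq\pi/\sqrt{\mu}$ by \eqref{eq-aux-time2}, so the set in \eqref{eq-mu1} is contained in $\bigl(0,\pi^{2}/(1-2\sigma)^{2}\bigr]$ (the paper proves $(i)$ by the equivalent observation that $\mathcal{L}_1\to0$ along the ray).

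The genuine gap is in $(iii)$--$(iv)$. Setting $K_4:=\limsup_{\lambda\to+\infty}\mu_4^{*}(\lambda)/\lambda$ and $K_8:=\limsup_{\lambda\to+\infty}\mu_8^{*}(\lambda)/\lambda$ is only a reformulation of the statement: the whole content of these two items is that these quantities are \emph{finite}, i.e.\ that along some linear ray $\mu=K\lambda$ the four- and eight-solution thresholds are eventually crossed, and you give no argument for this. In view of \eqref{eq-mu4} and \eqref{eq-mu8}, finiteness amounts to showing that, with $\mu=K\lambda$ and $K$ large, the tangency connection times $\ell_{1,2}^{0}$ and $\ell_{3}^{0}$ fall below $1-2\sigma$ as $\lambda\to+\infty$; this is where all the work of the paper's proof lies (Steps~1--7 of Section~\ref{section-7}): the asymptotics of $s_0^{\mathcal{M}}(\lambda)$ and of the tangency point $s_0^{\tau}(\lambda)$ via the auxiliary function $f_K$, and the identification of the limits $\Theta_1(K)$ and $\Theta_2(K)$ in \eqref{eq-7.Theta1} and \eqref{eq-7.Theta2}, together with $\Theta_i(K)\to0$ as $K\to+\infty$ and $\Theta_i(K)\to+\infty$ as $K\to0^{+}$. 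That analysis is also what yields $K_4<K_8$ (via the strict inequality $\Theta_1<\Theta_2$); your ordering $\mu_4^{*}\leq\mu_8^{*}$ only gives $K_4\leq K_8$ for the limsups. Finally, your closing assessment places the main difficulty in proving $\mu_1^{*}(\lambda)/\lambda\to0$ and the slope bound for $\mu_2^{*}$, but as noted these are immediate; the hard quantitative step, which your proposal defers entirely, is precisely the finiteness of the ratios for $\mu_4^{*}$ and $\mu_8^{*}$.
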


\noindent We notice that the inequality $K>\frac{2\sigma}{1-2\sigma}$ in case~$(ii)$ implies that $\int_0^1\tilde{a}_K(t)\,\mathrm{d}t<0$ and so we enter in the framework treated in~\cite{Na-20}. Compared with the results contained in~\cite{Na-20}, Theorem~\ref{th-intro3} produces another high multiplicity result with a different weight function. Moreover, our theorem enriches the analysis of problem~\eqref{eq-aux-intro} by establishing intermediate multiplicity results, with possibly four solutions. We also refer to Corollary~\ref{th-intro3bis} for an equivalent version of Theorem~\ref{th-intro3} in the spirit of \cite{Na-20}, i.e., keeping $K$ fixed and varying $\sigma$ instead.

\medskip

The paper is organized as follows. In Section~\ref{section-2} and in Section~\ref{section-3}, we analyze the behavior of the solutions of the equations $u''=-\lambda g(u)$ and $u''=\mu g(u)$, respectively. We also present some preliminary results about the time-maps associated with these two equations in the phase-plane $(u,u')$ that are further discussed in Appendices~\ref{appendix-A} and~\ref{appendix-B}. In Section~\ref{section-4}, on the lines of~\cite{LGTe-14,LGTeZa-14}, we derive some technical time-map estimates to be exploited in Section~\ref{section-5} to prove Theorems~\ref{th-intro1} and~\ref{th-intro2}. In Section~\ref{section-6}, we discuss the shape of the global bifurcation diagrams of the solutions (with bifurcation parameter $\mu$), depending on $\lambda$. At last, in Section~\ref{section-7}, we study the one parameter problem~\eqref{eq-aux-intro}-\eqref{eq-weight-intro} and, among other results, we give the proof of Theorem~\ref{th-intro3}.

\section{Phase-plane analysis in $\mathopen{[}0,\sigma\mathclose{]}$ and $\mathopen{[}1-\sigma,1\mathclose{]}$}\label{section-2}

In this section, we analyze the equation in~\eqref{eq-main} in the intervals $\mathopen{[}0,\sigma\mathclose{]}$ and $\mathopen{[}1-\sigma,1\mathclose{]}$, that is where the weight function $a_{\lambda,\mu}\equiv\lambda>0$. Accordingly, we study
\begin{equation}\label{eq-0sigma}
u''=-\lambda g(u).
\end{equation}
Equivalently, we deal with the following planar system
\begin{equation}\label{syst-0sigma}
\begin{cases}
\, u'=v, \\
\, v' = -\lambda g(u),
\end{cases}
\end{equation}
whose associated energy is
\begin{equation}\label{eq-energy-lambda}
H_{\lambda}(u,v) := v^{2}+2\lambda G(u),
\end{equation}
where $G$ is the primitive of the function $g$ defined in~\eqref{eq-nonlin} vanishing at $u=0$, that is
\begin{equation}\label{def-G}
G(u)= \int_{0}^{u} g(\xi) \,\mathrm{d}\xi = \dfrac{u^{3}}{3} - \dfrac{u^{4}}{4}, \quad u\in[0,1].
\end{equation}
We remark that, since we search solutions $(u,v)$ of \eqref{syst-0sigma} with $u(t)\in(0,1)$ for all $t\in[0,1]$, we have to consider only the strip $[0,1]\times\mathbb{R}$ in the phase-plane $(u,v)$.

First, we focus our attention on the interval $[0,\sigma]$. The analysis in the interval $[1-\sigma,1]$ is analogous and we give the corresponding results at the end of this section. Since we are interested in solutions to the Neumann problem~\eqref{eq-main}, for all $s\in(0,1)$, let us consider the initial value problem
\begin{equation}\label{eq-initial0}
\begin{cases}
\, u'=v, \\
\, v' = -\lambda g(u), \\
\, u(0)=s, \\
\, v(0)=0,
\end{cases}
\end{equation}
and let $(u_{s},v_{s})$ be the unique solution of~\eqref{eq-initial0}, considered in its maximal interval of existence (contained in $\mathbb{R}$). 
For all $s\in(0,1)$, we denote by $T_{0}(s)$ the time taken by $(u_{s},v_{s})$ to go from the point $(s,0)$ to the line $\{0\}\times(-\infty,0)$ moving along the level line $H_{\lambda}(u,v)=2\lambda G(s)$ in the $(u,v)$-plane. We observe that $T_{0}(s)$ is well defined since $u_{s}$ is concave, due to the sign of $g$.

We set
\begin{equation*}
\mathcal{I}^{0}_{\lambda} = \bigl{\{} s\in(0,1) \colon T_{0}(s)>\sigma \bigr{\}}.
\end{equation*}
We stress that $\mathcal{I}^{0}_{\lambda}$ is the set of initial values $s\in(0,1)$ such that~\eqref{eq-initial0} has a positive solution defined in the whole interval $[0,\sigma]$.
The following proposition characterises $\mathcal{I}^{0}_{\lambda}$ in dependence of $\lambda$.

\begin{proposition}\label{pr-2.1}
There exist $\lambda^{*}>0$ and $s^{*}\in(0,1)$ such that
\begin{itemize}
\item[$(i)$] for all $0<\lambda<\lambda^{*}$, $\mathcal{I}^{0}_{\lambda}=(0,1)$;
\item[$(ii)$] for $\lambda=\lambda^{*}$, $\mathcal{I}^{0}_{\lambda}=(0,1)\setminus\{s^{*}\}$;
\item[$(iii)$] for all $\lambda>\lambda^{*}$, there exist $s_{0},s_{1}\in(0,1)$ with $s_{0}<s^{*}<s_{1}$ such that $\mathcal{I}^{0}_{\lambda}=(0,s_{0})\cup(s_{1},1)$.
\end{itemize}
\end{proposition}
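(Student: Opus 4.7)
The plan is to reduce the condition $T_0(s)>\sigma$ defining $\mathcal{I}^0_\lambda$ to an inequality between a $\lambda$-independent time-map and a threshold depending on $\lambda$. Using energy conservation along the orbit of~\eqref{eq-initial0} lying on $\{H_\lambda(u,v)=2\lambda G(s)\}$, on the branch $v<0$ we have $v=-\sqrt{2\lambda(G(s)-G(u))}$, so that
\begin{equation*}
T_0(s) = \frac{1}{\sqrt{2\lambda}} \int_0^s \frac{du}{\sqrt{G(s)-G(u)}} =: \frac{\tau_0(s)}{\sqrt{\lambda}}.
\end{equation*}
All $\lambda$-dependence is packed into the scaling factor, so the condition $T_0(s)>\sigma$ becomes $\tau_0(s)>\sigma\sqrt{\lambda}$, and $\mathcal{I}^0_\lambda$ is a super-level set of the fixed function $\tau_0\in C((0,1))$.

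The next step is to establish the global shape of $\tau_0$. The change of variables $u=s\xi$ combined with $G(u)=u^3/3-u^4/4$ yields
\begin{equation*}
\tau_0(s) = \frac{1}{\sqrt{2s}} \int_0^1 \frac{d\xi}{\sqrt{\dfrac{1-\xi^3}{3}-\dfrac{s(1-\xi^4)}{4}}}.
\end{equation*}
From this representation I would read off the two boundary limits $\tau_0(s)\to+\infty$ as $s\to 0^+$ and as $s\to 1^-$: in the first case the integral converges to a positive constant while $s^{-1/2}\to+\infty$ (this reflects the degenerate zero $g(u)\sim u^2$ at $u=0$); in the second, the factorization $3\xi^4-4\xi^3+1=(1-\xi)^2(1+2\xi+3\xi^2)$ shows the integrand diverges logarithmically near $\xi=1$ (reflecting the saddle at $u=1$, near whose separatrix the orbit slows down). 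The most delicate step is then the strict unimodality claim: $\tau_0$ attains its minimum $\tau_0^*$ at a unique point $s^*\in(0,1)$, with $\tau_0$ strictly decreasing on $(0,s^*)$ and strictly increasing on $(s^*,1)$. Existence of a minimum is immediate from continuity and the boundary behavior; the hard part is uniqueness together with strict one-sided monotonicity. I would differentiate the above representation under the integral sign and show that $\tau_0'(s)$ has a unique zero on $(0,1)$, leveraging the explicit polynomial form of $g$ and $G$ (in the spirit of classical Opial--Schaaf-type monotonicity arguments for time-maps, typically by reducing the sign of $\tau_0'$ to a monotonicity property of a suitable ratio of $g$ and $G$). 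This is where I would rely on the time-map analysis developed in the remainder of Section~\ref{section-2} and in Appendix~\ref{appendix-A}.

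Granting strict unimodality, the proposition follows by setting $\lambda^*:=(\tau_0^*/\sigma)^2>0$. For $\lambda\in(0,\lambda^*)$ we have $\sigma\sqrt{\lambda}<\tau_0^*\leq\tau_0(s)$ for every $s\in(0,1)$, hence $\mathcal{I}^0_\lambda=(0,1)$, proving $(i)$. For $\lambda=\lambda^*$ the strict inequality $\tau_0(s)>\sigma\sqrt{\lambda}$ fails only at the unique minimizer $s^*$, proving $(ii)$. For $\lambda>\lambda^*$, unimodality and the intermediate value theorem give unique $s_0<s^*<s_1$ in $(0,1)$ with $\tau_0(s_0)=\tau_0(s_1)=\sigma\sqrt{\lambda}$, and strict one-sided monotonicity gives $\tau_0(s)>\sigma\sqrt{\lambda}$ precisely on $(0,s_0)\cup(s_1,1)$, proving $(iii)$.
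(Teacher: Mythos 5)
Your overall strategy is the same as the paper's: the factorization $T_0(s)=\tau_0(s)/\sqrt{\lambda}$ is exactly the paper's formula $T_0(s)=\hat{I}(s)/\sqrt{2\lambda s}$ with the $\lambda$-dependence isolated, the boundary limits at $s=0$ and $s=1$ are obtained by the same arguments (boundedness of the integral plus the $s^{-1/2}$ factor, respectively the factorization $3\xi^4-4\xi^3+1=(1-\xi)^2(3\xi^2+2\xi+1)$), and your definition $\lambda^*=(\tau_0^*/\sigma)^2$ coincides with the paper's characterization of $\lambda^*$ as the unique value with $\min_s T_0(s,\lambda^*)=\sigma$. Parts $(i)$--$(iii)$ then follow exactly as you say.

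The genuine gap is the step you yourself flag as the hard one: you never prove that $\tau_0'$ has a unique zero (equivalently, strict unimodality of $\tau_0$), you only announce that you ``would'' show it by an Opial--Schaaf-type argument, and you defer to ``the remainder of Section~\ref{section-2} and Appendix~\ref{appendix-A}''. That reliance is misplaced: Appendix~\ref{appendix-A} concerns the monotonicity of the time maps $T_p$ and $T_l$ for the equation $u''=\mu g(u)$ on $(\sigma,1-\sigma)$, and nothing elsewhere in the paper supplies the unimodality of $T_0$; it is established precisely inside the proof of Proposition~\ref{pr-2.1}. The missing argument is short and does not need any general time-map machinery: writing $\tau_0(s)=\hat{I}(s)/\sqrt{2s}$ with
\begin{equation*}
\hat{I}(s)=\int_0^1\Bigl(\tfrac{1-\xi^3}{3}-\tfrac{s(1-\xi^4)}{4}\Bigr)^{-\frac12}\,\mathrm{d}\xi,
\end{equation*}
one has $\tau_0'(s)=\dfrac{\tau_0(s)}{2s\hat{I}(s)}\bigl(2s\hat{I}'(s)-\hat{I}(s)\bigr)$, and differentiation under the integral sign gives $\hat{I}'(s)>0$ and $\hat{I}''(s)>0$, so that $\eta(s):=2s\hat{I}'(s)-\hat{I}(s)$ satisfies $\eta'(s)=\hat{I}'(s)+2s\hat{I}''(s)>0$; hence $\eta$, and therefore $\tau_0'$, changes sign at most once, which combined with the two boundary blow-ups yields a unique minimizer $s^*$ with strict monotonicity on either side. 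Until you supply this (or an equivalent) computation, your proof of the key structural fact on which $(i)$--$(iii)$ rest is incomplete.
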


\begin{proof}
Preliminarily, we explicitly define the function $s\mapsto T_{0}(s)$.
The equation of the level line associated with~\eqref{eq-energy-lambda} passing through an arbitrary point $(s,0)\in[0,1]\times\{0\}$ is
\begin{equation*}
v^{2} + 2 \lambda G(u) = 2 \lambda G(s).
\end{equation*}
Since $g(u)\geq0$ for all $u\in[0,1]$, from~\eqref{eq-initial0} we infer that $v_{s}(t)\leq0$ for all $t$. 
As a consequence, we have
\begin{equation*}
v = - \sqrt{2 \lambda \bigl{(} G(s)-G(u) \bigr{)}}
\end{equation*}
and thus
\begin{equation*}
\begin{aligned}
T_0(s) &= \int_{0}^{T_0(s)} 1 \,\mathrm{d}t 
= - \int_{0}^{T_0(s)} \dfrac{u'(t) \, \mathrm{d}t}{\sqrt{2 \lambda \bigl{(} G(s)-G(u(t)) \bigr{)}}}
\\
&=- \int_{s}^{u\left(T_0(s)\right)} \dfrac{\mathrm{d}u}{\sqrt{2 \lambda \bigl{(} G(s)-G(u) \bigr{)}}}
\\
&= \dfrac{1}{\sqrt{2\lambda}} \int_{0}^{s} \dfrac{\mathrm{d}u}{\sqrt{\dfrac{u^{4}-s^{4}}{4} - \dfrac{u^{3}-s^{3}}{3}}}.
\end{aligned}
\end{equation*}
Performing the change of variable $u=s \xi$, we obtain
\begin{equation*}
T_0(s) 
= \dfrac{1}{\sqrt{2\lambda}} \int_{0}^{1} \dfrac{s \, \mathrm{d}\xi}{\sqrt{\dfrac{s^{3}-(s\xi)^{3}}{3}-\dfrac{s^{4}-(s\xi)^{4}}{4}}}
= \dfrac{1}{\sqrt{2\lambda s}} \int_{0}^{1} \dfrac{\mathrm{d}\xi}{\sqrt{\dfrac{1-\xi^{3}}{3}-\dfrac{s(1-\xi^{4})}{4}}}.
\end{equation*}
Therefore, we have
\begin{equation}\label{eq-formulaT_0}
T_0(s) = \dfrac{1}{\sqrt{2\lambda s}} \hat{I}(s), \quad \text{for all $s\in(0,1)$},
\end{equation}
where
\begin{equation*}
\hat{I}(s) = \int_{0}^{1} \dfrac{\mathrm{d}\xi}{\sqrt{\dfrac{1-\xi^{3}}{3}-\dfrac{s(1-\xi^{4})}{4}}}.
\end{equation*}

We study the limit of $T_0(s)$ as $s\to 0^{+}$ and $s\to 1^{-}$.
As a consequence of the fact that $\hat{I}(0)\in(0,+\infty)$, we deduce
\begin{equation}\label{eq-limitTs-0}
\lim_{s\to 0^{+}} T_0(s) = + \infty.
\end{equation}
Moreover, observing that
\begin{equation}\label{eq-2.5}
\sqrt{\dfrac{1-\xi^{3}}{3}-\dfrac{1-\xi^{4}}{4}}=\dfrac{1-\xi}{2\sqrt{3}}\sqrt{3\xi^{2}+2\xi+1}, \quad \text{in $[0,1]$,}
\end{equation}
we immediately obtain that the improper integral $\hat{I}(1)$ diverges and so
\begin{equation}\label{eq-limitTs-1}
\lim_{s\to 1^{-}} T_0(s) = + \infty.
\end{equation}

Next, we analyze the monotonicity of the function $T_0(s)$. Accordingly, by differentiating \eqref{eq-formulaT_0}, we obtain
\begin{equation}\label{eq-der_T}
\begin{aligned}
T_0'(s) 
&= - \dfrac{\lambda}{(2\lambda s)^{\frac{3}{2}}}\hat{I}(s) + \dfrac{1}{\sqrt{2\lambda s}} \hat{I}'(s)
= \biggl{(} - \dfrac{1}{2s} + \dfrac{\hat{I}'(s)}{\hat{I}(s)} \biggr{)}T_0(s)
\\
&=\dfrac{T_0(s)}{2s\hat{I}(s)} \bigl{(} 2s\hat{I}'(s)-\hat{I}(s)\bigr{)},
\quad \text{for all $s\in(0,1)$.}
\end{aligned}
\end{equation}
We compute
\begin{equation*}
\hat{I}'(s) = \dfrac{1}{2}\int_{0}^{1} 
\dfrac{1-\xi^{4}}{4}
\biggl{(}\dfrac{1-\xi^{3}}{3}-\dfrac{s(1-\xi^{4})}{4}\biggr{)}^{\!-\frac{3}{2}} \, \mathrm{d}\xi >0, \quad \text{for all $s\in(0,1)$,}
\end{equation*}
and
\begin{equation*}
\hat{I}''(s) = \dfrac{3}{4}\int_{0}^{1} 
\biggl{(}\dfrac{1-\xi^{4}}{4}\biggr{)}^{\!2}
\biggl{(}\dfrac{1-\xi^{3}}{3}-\dfrac{s(1-\xi^{4})}{4}\biggr{)}^{\!-\frac{5}{2}} \, \mathrm{d}\xi > 0, \quad \text{for all $s\in(0,1)$.}
\end{equation*}
The function $\eta(s) := 2s\hat{I}'(s)-\hat{I}(s)$ is such that
\begin{equation*}
\eta'(s)= 2\hat{I}'(s)+2s\hat{I}''(s)-\hat{I}'(s)=\hat{I}'(s)+2s\hat{I}''(s)>0, \quad \text{for all $s\in(0,1)$,}
\end{equation*}
then $\eta$ vanishes at most in a point (independent of $\lambda$).
As a consequence, from \eqref{eq-der_T}, we deduce that the positive function $T_0$ has at most a critical point (independent of $\lambda$).
From \eqref{eq-limitTs-0} and \eqref{eq-limitTs-1}, we conclude that $T_{0}$ has a unique minimum point, which we denote by $s^{*}$ (we stress again that it does not depend of $\lambda$). In Figure~\ref{fig-02}, we represent the graph of $T_{0}$.

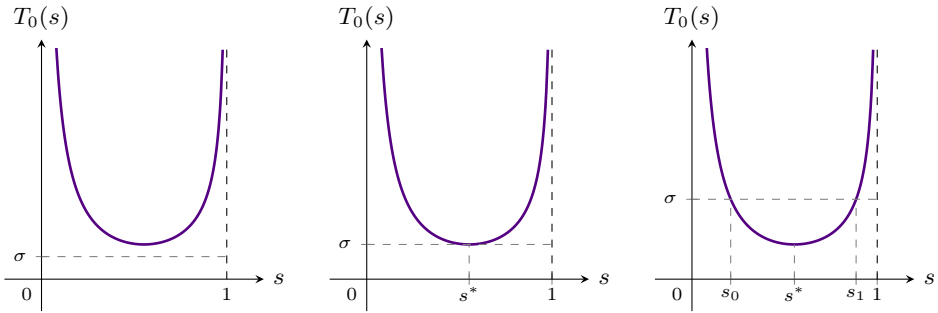
\begin{figure}[htb]
\centering
\begin{tikzpicture}
\begin{axis}[
  tick label style={font=\scriptsize},
  axis y line=middle, 
  axis x line=middle,
  xtick={0},
  ytick={0,0.56485},
  xticklabels={},
  yticklabels={,$\sigma$},
  xlabel={\small $s$},
  ylabel={\small $T_{0}(s)$},
every axis x label/.style={
    at={(ticklabel* cs:1.0)},
    anchor=west,
},
every axis y label/.style={
    at={(ticklabel* cs:1.0)},
    anchor=south,
},
  width=5cm,
  height=5.2cm,
  xmin=-0.2,
  xmax=1.2,
  ymin=-0.8,
  ymax=6]
\addplot[domain=0.08:0.978,color=fst-purple,line width=1pt,smooth,samples=100]{1/(x*sqrt(1-x)*(2-x))-1};
\addplot [mark=none,dashed,color=black] coordinates {(1,0) (1,5.8)};
\addplot [mark=none,dashed,color=gray] coordinates {(0,0.56485) (1,0.56485)};
\node at (axis cs:-0.08,-0.38) {\scriptsize{0}};
\node at (axis cs:1,-0.38) {\scriptsize{1}};
\end{axis}
\end{tikzpicture}
\quad
\begin{tikzpicture}
\begin{axis}[
  tick label style={font=\scriptsize},
  axis y line=middle, 
  axis x line=middle,
  xtick={0,0.553,1},
  ytick={0,0.869},
  xticklabels={},
  yticklabels={,$\sigma$},
  xlabel={\small $s$},
  ylabel={\small $T_{0}(s)$},
every axis x label/.style={
    at={(ticklabel* cs:1.0)},
    anchor=west,
},
every axis y label/.style={
    at={(ticklabel* cs:1.0)},
    anchor=south,
},
  width=5cm,
  height=5.2cm,
  xmin=-0.2,
  xmax=1.2,
  ymin=-0.8,
  ymax=6]
\addplot[domain=0.08:0.978,color=fst-purple,line width=1pt,smooth,samples=100]{1/(x*sqrt(1-x)*(2-x))-1};
\addplot [mark=none,dashed,color=black] coordinates {(1,0) (1,5.8)};
\addplot [mark=none,dashed,color=gray] coordinates {(0.553,0) (0.553,0.869)};
\addplot [mark=none,dashed,color=gray] coordinates {(0,0.869) (1,0.869)};
\node at (axis cs:-0.08,-0.38) {\scriptsize{0}};
\node at (axis cs:1,-0.38) {\scriptsize{1}};
\node at (axis cs:0.553,-0.38) {\scriptsize{$s^{*}$}};
\end{axis}
\end{tikzpicture}
\quad
\begin{tikzpicture}
\begin{axis}[
  tick label style={font=\scriptsize},
  axis y line=middle, 
  axis x line=middle,
  xtick={0,0.209,0.553,0.886,1},
  ytick={0,2},
  xticklabels={},
  yticklabels={,$\sigma$},
  xlabel={\small $s$},
  ylabel={\small $T_{0}(s)$},
every axis x label/.style={
    at={(ticklabel* cs:1.0)},
    anchor=west,
},
every axis y label/.style={
    at={(ticklabel* cs:1.0)},
    anchor=south,
},
  width=5cm,
  height=5.2cm,
  xmin=-0.2,
  xmax=1.2,
  ymin=-0.8,
  ymax=6]
\addplot[domain=0.08:0.978,color=fst-purple,line width=1pt,smooth,samples=100]{1/(x*sqrt(1-x)*(2-x))-1};
\addplot [mark=none,dashed,color=black] coordinates {(1,0) (1,5.8)};
\addplot [mark=none,dashed,color=gray] coordinates {(0.209,0) (0.209,2)};
\addplot [mark=none,dashed,color=gray] coordinates {(0.553,0) (0.553,0.869)};
\addplot [mark=none,dashed,color=gray] coordinates {(0.886,0) (0.886,2)};
\addplot [mark=none,dashed,color=gray] coordinates {(0,2) (1,2)};
\node at (axis cs:-0.08,-0.38) {\scriptsize{0}};
\node at (axis cs:1,-0.38) {\scriptsize{1}};
\node at (axis cs:0.209,-0.38) {\scriptsize{$s_{0}$}};
\node at (axis cs:0.553,-0.38) {\scriptsize{$s^{*}$}};
\node at (axis cs:0.886,-0.38) {\scriptsize{$s_{1}$}};
\end{axis}
\end{tikzpicture}
\caption{Qualitative graph of the function $T_{0}$ when $\lambda\in(0,\lambda^{*})$ (left), $\lambda=\lambda^{*}$ (center), and $\lambda>\lambda^{*}$ (right). The points $s^{*}$, $s_{0}$, and $s_{1}$ are defined in Proposition~\ref{pr-2.1}.}
\label{fig-02}
\end{figure}

At last, for all fixed $s\in(0,1)$, from \eqref{eq-formulaT_0} we have that the function $\lambda\mapsto T_0(s,\lambda)$ is strictly decreasing and it converges to $+\infty$ as $\lambda\to 0^{+}$, and to $0$ as $\lambda\to+\infty$. Thus, there exists a unique value of $\lambda$, that we denote by $\lambda^{*}$, such that 
\begin{equation*}
\min_{s\in(0,1)} T_0(s,\lambda^{*})=T_0(s^{*},\lambda^{*})=\sigma .
\end{equation*}
With this position, properties $(i)$, $(ii)$, and $(iii)$ immediately follow. We notice that the points $s_{0}=s_{0}(\lambda)$ and $s_{1}=s_{1}(\lambda)$ depend on $\lambda$ and, by the properties of $T_0$, they satisfy $0 < s_{0} < s^{*} < s_{1} < 1$ for all $\lambda>\lambda^{*}$. The proof is complete.
\end{proof}

For $\lambda>0$, we introduce the following set in the $(u,v)$-plane:
\begin{equation}\label{def-Gamma0}
\Gamma_{0}=\Gamma_{0}(\lambda)=\bigl{\{} \bigl{(} u_{s}(\sigma),v_{s}(\sigma) \bigr{)} \colon s\in \mathcal{I}_{\lambda}^{0} \bigr{\}}.
\end{equation}
Notice that $\Gamma_{0}\subseteq (0,1)\times(-\infty,0)$ and it is a continuous curve parametrized by $s\in \mathcal{I}_{\lambda}^{0}$. Moreover, from Proposition~\ref{pr-2.1} we have the following (see also Figure~\ref{fig-03}). 

\begin{corollary}\label{cor-2.1}
There exists $\lambda^{*}>0$ such that
\begin{itemize}
\item for $\lambda\in(0,\lambda^{*})$, $\overline{\Gamma_0(\lambda)}\cap (\{0\}\times(-\infty,0))=\emptyset$;
\item for $\lambda=\lambda^{*}$, $\overline{\Gamma_0(\lambda)}\cap(\{0\}\times(-\infty,0))=\{(0,-\xi^{*})\}$ for some $\xi^{*}\in(0,+\infty)$;
\item for $\lambda>\lambda^{*}$, $\overline{\Gamma_0(\lambda)}\cap (\{0\}\times(-\infty,0)) = \{(0,-\xi_{0}),(0,-\xi_{1})\}$ for some $0<\xi_{0}<\xi_{1}$;
\end{itemize}
where $\overline{\Gamma_0(\lambda)}$ denotes the closure of $\Gamma_0(\lambda)$ in $\mathbb{R}^{2}$.
\end{corollary}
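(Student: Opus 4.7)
The plan is to parameterize $\Gamma_0(\lambda)$ via the continuous map $\Phi\colon \mathcal{I}_\lambda^0 \to (0,1)\times(-\infty,0)$ defined by $\Phi(s) := (u_s(\sigma), v_s(\sigma))$, and then identify its limiting values as $s$ approaches a boundary point of $\mathcal{I}_\lambda^0$ in $[0,1]$. By continuous dependence on initial conditions applied on the compact interval $[0,\sigma]$, $\Phi$ is continuous, so every point in $\overline{\Gamma_0(\lambda)} \setminus \Gamma_0(\lambda)$ arises as $\lim \Phi(s_n)$ for some sequence $s_n \in \mathcal{I}_\lambda^0$ converging to a boundary point. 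The key tool is the energy identity
\begin{equation*}
v_s(\sigma) \;=\; -\sqrt{2\lambda\bigl(G(s)-G(u_s(\sigma))\bigr)},
\end{equation*}
valid because $v_s\leq 0$ along each trajectory (cf.\ the proof of Proposition~\ref{pr-2.1}). If $\Phi(s_n) \to (0,-\xi)$ with $\xi>0$, this identity forces $u_{s_n}(\sigma)\to 0$, i.e.\ $T_0(s_n)\to\sigma^+$, and $s_n\to\bar s$ with $G(\bar s) = \xi^2/(2\lambda) > 0$.

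Next I would rule out the two endpoint cases $\bar s = 0$ and $\bar s = 1$. The former would force $\xi^2 = 2\lambda G(0) = 0$, contradicting $\xi>0$. The latter is excluded by continuous dependence on $[0,\sigma]$ applied to the equilibrium solution $u\equiv 1$: we would obtain $u_{s_n}(\sigma)\to 1\neq 0$. Hence the only admissible limit points $\bar s$ lie in $(0,1)$ and satisfy $T_0(\bar s)=\sigma$; equivalently, they are the boundary points in $(0,1)$ of the set $\{s\in(0,1)\colon T_0(s)>\sigma\}=\mathcal{I}_\lambda^0$.

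Proposition~\ref{pr-2.1} then directly yields the three cases: the set $\{s\in(0,1)\colon T_0(s)=\sigma\}$ is empty for $\lambda\in(0,\lambda^*)$, equals $\{s^*\}$ for $\lambda=\lambda^*$, and equals $\{s_0,s_1\}$ for $\lambda>\lambda^*$. Taking one-sided limits inside $\mathcal{I}_\lambda^0$ and applying $\Phi$ gives the limit values $\xi^* = \sqrt{2\lambda^* G(s^*)}$ and $\xi_j = \sqrt{2\lambda G(s_j)}$ ($j=0,1$), all strictly positive because $G>0$ on $(0,1)$. Since $G$ is strictly increasing on $(0,1)$ (as $G'=g>0$ there) and $s_0<s_1$, the ordering $0<\xi_0<\xi_1$ follows, yielding $(iii)$. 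No serious technical obstacle is anticipated; the only mild point deserving care is that when $\bar s\in\{s^*,s_0,s_1\}$ one must check that approaching $\bar s$ from either side of $\mathcal{I}_\lambda^0$ produces the same image under $\Phi$, which is immediate because the flow of the ODE depends continuously on $s$ on the time interval $[0,\sigma]$.
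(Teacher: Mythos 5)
Your argument is correct and follows essentially the same route as the paper, which deduces Corollary~\ref{cor-2.1} directly from Proposition~\ref{pr-2.1} (the paper leaves the details implicit, while you spell out the continuity of $s\mapsto(u_s(\sigma),v_s(\sigma))$, the energy identity giving $\xi=\sqrt{2\lambda G(\bar s)}$, and the exclusion of the endpoints $\bar s=0,1$). The identification of the boundary points of $\mathcal{I}^0_\lambda$ with $\{s^*\}$ or $\{s_0,s_1\}$ and the ordering $\xi_0<\xi_1$ via the monotonicity of $G$ match the paper's reasoning (cf.\ \eqref{vsi-order}).
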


\begin{figure}[htb]
\centering
\begin{tikzpicture}
\begin{axis}[
  tick label style={font=\scriptsize},
  axis y line=middle, 
  axis x line=middle,
  xtick={1},
  ytick={0},
  xticklabels={},
  yticklabels={},
  xlabel={\small $u$},
  ylabel={\small $v$},
every axis x label/.style={
    at={(ticklabel* cs:1.0)},
    anchor=west,
},
every axis y label/.style={
    at={(ticklabel* cs:1.0)},
    anchor=south,
},
  width=5cm,
  height=5cm,
  xmin=-0.2,
  xmax=1.2,
  ymin=-7,
  ymax=2]
\addplot [mark=none,dashed,color=black] coordinates {(1,-8) (1,3)};
\addplot [color=fst-blue,line width=1.2pt,smooth] coordinates {(0,0) (0.0106773, -0.00428203) (0.0205276, -0.0165198) (0.0296083, -0.0358854) (0.0379719, -0.0616512) (0.045667,-0.0931756) (0.0527386, -0.129892) (0.0592282, -0.171296) (0.0651747, -0.216941) (0.0706141, -0.26643) (0.0755804,-0.319404) (0.0801051, -0.375545) (0.0842181, -0.434566) (0.0879475, -0.496207) (0.0913198, -0.560235) (0.0943603,-0.626439) (0.0970927, -0.694624) (0.09954, -0.764617) (0.101724, -0.836256) (0.103665, -0.909394) (0.105383, -0.983896) (0.106898,-1.05964) (0.108228, -1.1365) (0.109392, -1.21438) (0.110405,-1.29318) (0.111287, -1.3728) (0.112052, -1.45316) (0.112717, -1.53417) (0.113298, -1.61575) (0.11381, -1.69784) (0.114268,-1.78035) (0.114688, -1.86322) (0.115085, -1.94638) (0.115473, -2.02978) (0.115867, -2.11335) (0.116281, -2.19703) (0.116731,-2.28075) (0.117231, -2.36447) (0.117795, -2.44813) (0.118438,-2.53167) (0.119176, -2.61502) (0.120023, -2.69815) (0.120995,-2.78098) (0.122108, -2.86346) (0.123377, -2.94553) (0.124818, -3.02713) (0.126448, -3.10819) (0.128285, -3.18866) (0.130346, -3.26847) (0.132648, -3.34754) (0.135212, -3.42581) (0.138056,-3.5032) (0.1412, -3.57962) (0.144666, -3.65498) (0.148476, -3.7292) (0.152654, -3.80217) (0.157223, -3.87378) (0.16221,-3.94392) (0.167641, -4.01243) (0.173547, -4.07919) (0.179957, -4.14401) (0.186904, -4.20672) (0.194424, -4.26712) (0.202555, -4.32495) (0.211337, -4.37997) (0.220814, -4.43185) (0.231034, -4.48024) (0.242049, -4.52475) (0.253916, -4.56489) (0.266696, -4.60012) (0.280458, -4.62979) (0.295278, -4.65316) (0.311237,-4.66933) (0.328429, -4.67725) (0.346956, -4.67568) (0.366935, -4.66312) (0.388495, -4.63779) (0.411782, -4.59754) (0.436964, -4.53979) (0.464229, -4.46136) (0.493796, -4.35841) (0.525913,-4.2262) (0.560873, -4.05888) (0.599011, -3.84915) (0.640726,-3.5879) (0.686484, -3.26358) (0.736843, -2.86152) (0.792471, -2.36282) (0.854176, -1.74294) (0.922946, -0.969639) (1, 0)};
\node at (axis cs:-0.06,-0.38) {\scriptsize{0}};
\node at (axis cs:1.06,-0.38) {\scriptsize{1}};
\end{axis}
\end{tikzpicture} 
\quad
\begin{tikzpicture}
\begin{axis}[
  tick label style={font=\scriptsize},
  axis y line=middle, 
  axis x line=middle,
  xtick={1},
  ytick={0},
  xticklabels={},
  yticklabels={},
  xlabel={\small $u$},
  ylabel={\small $v$},
every axis x label/.style={
    at={(ticklabel* cs:1.0)},
    anchor=west,
},
every axis y label/.style={
    at={(ticklabel* cs:1.0)},
    anchor=south,
},
  width=5cm,
  height=5cm,
  xmin=-0.2,
  xmax=1.2,
  ymin=-7,
  ymax=2]
\addplot [mark=none,dashed,color=black] coordinates {(1,-8) (1,3)};
\addplot [color=fst-blue,line width=1.2pt,smooth] coordinates {(0,0) (0.0104975, -0.00602399) (0.0198375, -0.0230038) (0.0281162,-0.0494937) (0.03542, -0.0842683) (0.0418266, -0.126285) (0.047406, -0.174653) (0.0522222, -0.228608) (0.056333, -0.287494) (0.0597915, -0.350744) (0.0626464, -0.417867) (0.0649426, -0.488435) (0.0667214, -0.562079) (0.0680212, -0.638473) (0.0688779, -0.717332) (0.0693247, -0.798407) (0.0693929, -0.881476) (0.069112, -0.966345) (0.0685095, -1.05284) (0.0676118, -1.14081) (0.0664436, -1.23011) (0.0650288, -1.32062) (0.0633899, -1.41223) (0.0615487, -1.50483) (0.0595259, -1.59834) (0.0573419, -1.69266) (0.0550159, -1.78772) (0.0525669, -1.88345) (0.0500135, -1.97977) (0.0473734, -2.07663) (0.0446645, -2.17395) (0.0419039, -2.27167) (0.0391088, -2.36975) (0.0362961, -2.46813) (0.0334824, -2.56674) (0.0306845, -2.66553) (0.0279189, -2.76446) (0.0252022, -2.86346) (0.0225512, -2.96247) (0.0199823, -3.06146) (0.0175127, -3.16035) (0.0151592, -3.2591) (0.0129391, -3.35764) (0.01087, -3.45592) (0.00896969, -3.55389) (0.00725646, -3.65147) (0.00574895, -3.74863) (0.00446632, -3.84528) (0.00342826, -3.94138) (0.00265512, -4.03686) (0.00216796, -4.13165) (0.00198863, -4.22571) (0.00213983, -4.31896) (0.00264529, -4.41135) (0.00352987, -4.5028) (0.00481964, -4.59325) (0.00654211, -4.68265) (0.00872636, -4.77091) (0.0114033, -4.85798) (0.0146057, -4.94377) (0.0183688, -5.02821) (0.0227304, -5.11122) (0.027731, -5.19269) (0.033415, -5.27254) (0.0398301, -5.35063) (0.0470287, -5.42682) (0.0550684, -5.50094) (0.0640126, -5.57278) (0.0739315, -5.6421) (0.0849034, -5.70858) (0.0970161, -5.77181) (0.110368, -5.8313) (0.125072, -5.88642) (0.141254, -5.93635) (0.159061, -5.98004) (0.178661, -6.01616) (0.200249, -6.04296) (0.224054, -6.05817) (0.250344, -6.05882) (0.279438, -6.04099) (0.311718, -5.99951) (0.347643, -5.92742) (0.387775, -5.81536) (0.432805, -5.65056) (0.483594, -5.4154) (0.541231, -5.0852) (0.607109, -4.62489) (0.683044, -3.98363) (0.771445, -3.08612) (0.875582, -1.81796) (1,0)};
\node at (axis cs:-0.06,-0.38) {\scriptsize{0}};
\node at (axis cs:1.06,-0.38) {\scriptsize{1}};
\end{axis}
\end{tikzpicture}
\quad
\begin{tikzpicture}
\begin{axis}[
  tick label style={font=\scriptsize},
  axis y line=middle, 
  axis x line=middle,
  xtick={1},
  ytick={0},
  xticklabels={},
  yticklabels={},
  xlabel={\small $u$},
  ylabel={\small $v$},
every axis x label/.style={
    at={(ticklabel* cs:1.0)},
    anchor=west,
},
every axis y label/.style={
    at={(ticklabel* cs:1.0)},
    anchor=south,
},
  width=5cm,
  height=5cm,
  xmin=-0.2,
  xmax=1.2,
  ymin=-7,
  ymax=2]
\addplot [mark=none,dashed,color=black] coordinates {(1,-8) (1,3)};
\addplot [color=fst-blue,line width=1.2pt,smooth] coordinates {(0,0) (0.0104177, -0.006791) (0.0195334, -0.0258159) (0.0274633, -0.0553109) (0.0343106, -0.0938043) (0.040167,-0.140063) (0.0451148, -0.193049) (0.0492278, -0.251884) (0.0525729, -0.315827) (0.0552106, -0.384244) (0.0571959, -0.456596) (0.0585793, -0.532422) (0.0594069, -0.611324) (0.0597213, -0.692961) (0.059562, -0.77704) (0.0589653, -0.863305) (0.0579653, -0.951536) (0.0565939, -1.04154) (0.0548809, -1.13315) (0.0528543, -1.22621) (0.0505407, -1.32061) (0.0479654, -1.41622) (0.0451523, -1.51294) (0.0421244, -1.61068) (0.0389035, -1.70935) (0.0355109, -1.80889) (0.0319669, -1.90922) (0.0282913, -2.01028) (0.0245034, -2.112) (0.0206219, -2.21433) (0.0166652, -2.31721) (0.0126514, -2.42059) (0.00859816, -2.52442) (0.00452311, -2.62863) (0.000443676, -2.73317)};
\addplot [color=fst-blue,line width=1.2pt,smooth] coordinates {(0.00127753, -6.01124) (0.0113713, -6.08494) (0.0226405, -6.15628) (0.0351888, -6.22499) (0.0491345, -6.29067) (0.0646136, -6.3528) (0.0817831, -6.41064) (0.100826, -6.46319) (0.121958, -6.50907) (0.145433, -6.54638) (0.171555, -6.57251) (0.20069, -6.58387) (0.233283, -6.57544) (0.269883, -6.54021) (0.311172, -6.46829) (0.358009, -6.34558) (0.411493, -6.15164) (0.473055, -5.85646) (0.544587, -5.415) (0.628649, -4.75812) (0.72879, -3.77663) (0.850077, -2.29201) (1, 0)}; 
\node at (axis cs:-0.06,-0.38) {\scriptsize{0}};
\node at (axis cs:1.06,-0.38) {\scriptsize{1}};
\end{axis}
\end{tikzpicture} 
\caption{Qualitative representation in the $(u,v)$-plane of the curve $\Gamma_{0}(\lambda)$ defined in~\eqref{def-Gamma0} when $\lambda\in(0,\lambda^{*})$ (left), $\lambda=\lambda^{*}$ (center), and $\lambda>\lambda^{*}$ (right).} 
\label{fig-03}
\end{figure}
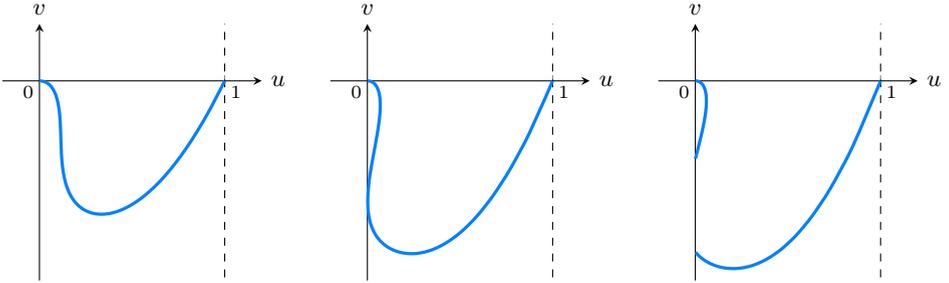

With the notation of Corollary~\ref{cor-2.1}, we observe that
\begin{equation}\label{vsi-order}
v_{s_{0}}(\sigma)=-\xi_{0}>-\xi_{1}=v_{s_{1}}(\sigma).
\end{equation}
Indeed, since the energy in~\eqref{eq-energy-lambda} is conserved and $G$ is increasing, we have
\begin{equation*}
(v_{s_{0}}(\sigma))^2=2\lambda G(s_{0})<2\lambda G(s_{1})=(v_{s_{1}}(\sigma))^2,
\end{equation*}
thus~\eqref{vsi-order} follows, since $v_{s_{i}}(\sigma)<0$, $i=0,1$.

The next result describes the behavior of the curve $\Gamma_0$ in the $(u,v)$-plane near the points $(0,0)$ and $(1,0)$. 

\begin{proposition}\label{pr-2.2}
Let $\lambda>0$. The curve $\Gamma_0=\Gamma_{0}(\lambda)$ satisfies the following properties:
\begin{itemize}
\item[$(i)$] there exists a neighborhood $\mathcal{U}_0=\mathcal{U}_0(\lambda)=[0,x_{0}(\lambda))\times(-\varepsilon(\lambda),\varepsilon(\lambda))$ of $(0,0)$ such that $\Gamma_0\cap\mathcal{U}_0$ can be parametrized, in the $(u,v)$-plane, as
\begin{equation*} 
\bigl{\{}(x,- \lambda \sigma x^2) \colon x\in[0,x_0(\lambda))\bigr{\}};
\end{equation*}
\item[$(ii)$] there exists a neighborhood $\mathcal{U}_1=\mathcal{U}_1(\lambda)=(x_{1}(\lambda),1]\times(-\varepsilon(\lambda),\varepsilon(\lambda))$ of $(1,0)$ such that $\Gamma_0\cap\mathcal{U}_1$ can be parametrized, in the $(u,v)$-plane, as
\begin{equation*}
\bigl{\{} (x,- \sqrt{\lambda} \tanh(\sqrt{\lambda}\sigma)(1-x)) \colon x\in(x_1(\lambda),1]\bigr{\}}.
\end{equation*}
\end{itemize}
\end{proposition}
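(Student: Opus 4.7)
The plan is to perform a careful perturbation expansion of the solution $(u_s,v_s)$ of~\eqref{eq-initial0} for $s$ close to the endpoints of $(0,1)$, and then to read off the local shape of the curve $\Gamma_0=\{(u_s(\sigma),v_s(\sigma))\colon s\in\mathcal{I}^0_\lambda\}$ near the limit points $(0,0)$ (corresponding to $s\to 0^+$) and $(1,0)$ (corresponding to $s\to 1^-$). In both cases, I will reparametrize $\Gamma_0$ by $x:=u_s(\sigma)$ and compute $v_s(\sigma)$ as a function of $x$ up to higher-order remainders absorbed into the choice of neighborhood $\mathcal{U}_0$ or $\mathcal{U}_1$.

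For part $(i)$, the key observation is that $T_0(s)\to+\infty$ as $s\to 0^+$ by~\eqref{eq-limitTs-0}, so on $[0,\sigma]$ the orbit remains close to its initial point $(s,0)$. Since $g(u)=u^2(1-u)=s^2+O(s^3)$ uniformly along the orbit for $s$ small, a bootstrap on the integral form of~\eqref{syst-0sigma} yields
\[
u_s(\sigma)=s-\tfrac{1}{2}\lambda\sigma^2 s^2+O(s^3),\qquad v_s(\sigma)=-\lambda\sigma s^2+O(s^3).
\]
Setting $x:=u_s(\sigma)=s+O(s^2)$ and inverting to get $s=x+O(x^2)$, I obtain $v_s(\sigma)=-\lambda\sigma x^2+O(x^3)$, which is the asserted local description of $\Gamma_0$ near $(0,0)$. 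An equivalent derivation passes through the explicit time formula~\eqref{eq-formulaT_0} combined with the energy identity $v_s^2=2\lambda[G(s)-G(u_s)]$ and the scaling $\xi=s\eta$, which after expanding near $\xi=1$ gives $u_s(\sigma)/s=1-\tfrac{1}{2}\lambda\sigma^2 s+o(s)$ and then the desired relation for $v_s(\sigma)$.

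For part $(ii)$, I set $w(t):=1-u_s(t)$ and $\epsilon:=1-s$, which transforms~\eqref{eq-initial0} into the initial value problem $w''=\lambda(1-w)^2 w$ with $w(0)=\epsilon$, $w'(0)=0$. As $\epsilon\to 0^+$, $w$ stays small on $[0,\sigma]$ and the linearized problem $w''=\lambda w$, $w(0)=\epsilon$, $w'(0)=0$ has the explicit solution $w_\epsilon(t)=\epsilon\cosh(\sqrt{\lambda}\,t)$, $w_\epsilon'(t)=\epsilon\sqrt{\lambda}\sinh(\sqrt{\lambda}\,t)$. A Gronwall estimate applied to the integral equation for $w-w_\epsilon$ yields an $O(\epsilon^2)$ error on $[0,\sigma]$, so
\[
u_s(\sigma)=1-\epsilon\cosh(\sqrt{\lambda}\sigma)+O(\epsilon^2),\qquad v_s(\sigma)=-\epsilon\sqrt{\lambda}\sinh(\sqrt{\lambda}\sigma)+O(\epsilon^2).
\]
Solving for $\epsilon$ in terms of $x:=u_s(\sigma)$ gives $\epsilon=(1-x)/\cosh(\sqrt{\lambda}\sigma)+O((1-x)^2)$, and substituting yields $v_s(\sigma)=-\sqrt{\lambda}\tanh(\sqrt{\lambda}\sigma)(1-x)+O((1-x)^2)$, as claimed.

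The main obstacle is the rigorous control of the error in part $(i)$: because $g$ vanishes to second order at $u=0$, there is no non-degenerate linear ODE to compare against, and the bootstrap has to be carried out by hand (or, alternatively, one must go through the explicit time-map formula together with the energy identity as sketched above). Part $(ii)$ is considerably easier, since the linearization at $u=1$ is non-degenerate and smooth dependence of the solution on the initial datum $\epsilon$ provides the expansion directly.
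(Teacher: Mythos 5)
Your proposal is correct, and for part $(ii)$ it follows a genuinely different route from the paper. The paper proves both items purely through the time-map/energy representation: for $(i)$ it shows $u_s(t)/s\to 1$ uniformly on $[0,\sigma]$ from the integral identity~\eqref{eq-2.t}, then passes to the limit in $u_s'(\sigma)=-\lambda\int_0^\sigma g(u_s)\,\mathrm{d}t$ by dominated convergence to get $\lim_{s\to0^+}u_s'(\sigma)/(u_s(\sigma))^2=-\lambda\sigma$; for $(ii)$ it changes variables in the same time integral, uses De l'H\^opital together with a $\liminf$/$\limsup$ argument to obtain $(1-u_s(t))/(1-s)\to\cosh(\sqrt{\lambda}t)$, and again dominated convergence to conclude $\lim_{s\to1^-}u_s'(\sigma)/(1-u_s(\sigma))=-\sqrt{\lambda}\tanh(\sqrt{\lambda}\sigma)$. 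Your treatment of $(ii)$ via $w=1-u$, the nondegenerate linearization $w''=\lambda w$ and Gronwall (equivalently, differentiable dependence on the initial datum, with the variational solution $\cosh(\sqrt{\lambda}t)$) is shorter and more standard, and yields an $O(\epsilon^2)$ remainder rather than only a limit; your treatment of $(i)$ is closer in spirit to the paper's, but you push it to a second-order expansion by a bootstrap ($0\le g(u_s)\le g(s)\le s^2$ fed back into the integral equation) and then invert $x=u_s(\sigma)$ — these are exactly the expansions the paper itself establishes later, in Appendix~\ref{appendix-B}, so your extra work buys the sharper $O(x^3)$, $O((1-x)^2)$ error bounds. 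One small caveat applies equally to your argument and the paper's: both establish the stated parametrizations only in the asymptotic sense ($v=-\lambda\sigma x^2(1+o(1))$ near $(0,0)$ and $v=-\sqrt{\lambda}\tanh(\sqrt{\lambda}\sigma)(1-x)(1+o(1))$ near $(1,0)$), not as exact identities on $\Gamma_0$; this is also how the proposition is used later (comparison with parabolas $\mathfrak{P}(k_\pm)$ for $k_-<\lambda\sigma<k_+$), so your reading is the intended one.
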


\begin{proof}
Let $s\in\mathcal{I}_{\lambda}^{0}$.
Proceeding as in the proof of Proposition~\ref{pr-2.1}, for all $t\in[0,\sigma]$, we have
\begin{equation}\label{eq-2.t}
t = \dfrac{1}{\sqrt{2\lambda}} \int_{u_{s}(t)}^{s} \dfrac{\mathrm{d}u}{\sqrt{ \dfrac{u^{4}-s^{4}}{4} - \dfrac{u^{3}-s^{3}}{3} }}
= \dfrac{1}{\sqrt{2\lambda s}} \int_{\frac{u_{s}(t)}{s}}^{1} \dfrac{\mathrm{d}\xi}{\sqrt{ \dfrac{1-\xi^{3}}{3}-\dfrac{s(1-\xi^{4})}{4}}}. 
\end{equation}
Since $t$ is finite and the integrand is positive, passing to the limit as $s\to 0^{+}$, we deduce that
\begin{equation}\label{eq-u_s-unif}
\lim_{s\to 0^{+}}\dfrac{u_{s}(t)}{s}=1, \quad \text{uniformly in $t\in[0,\sigma]$.}
\end{equation}
Integrating~\eqref{eq-0sigma} in $[0,\sigma]$, we have
\begin{equation*}
u_{s}'(\sigma)=-\lambda \int_{0}^{\sigma} g(u_{s}(t)) \,\mathrm{d}t
\end{equation*}
and so
\begin{equation*}
\lim_{s\to0^{+}} \dfrac{u_{s}'(\sigma)}{s^{2}} 
= -\lambda \lim_{s\to0^{+}} \int_{0}^{\sigma} \dfrac{g(u_{s}(t))}{s^{2}} \,\mathrm{d}t.
\end{equation*}
Since $u_{s}(t)\leq s$ for all $t\in[0,\sigma]$ and $g$ is strictly increasing in $[0,2/3]$, we obtain that
\begin{equation*}
0 \leq \dfrac{g(u_{s}(t))}{s^{2}} \leq \dfrac{g(s)}{s^{2}}\leq 1, \quad \text{for all $t\in[0,\sigma]$ and $s\in[0,2/3]$}.
\end{equation*}
The dominated convergence theorem implies that
\begin{align*}
\lim_{s\to0^{+}} \dfrac{u_{s}'(\sigma)}{s^{2}}
&= -\lambda \int_{0}^{\sigma} \lim_{s\to0^{+}} \dfrac{g(u_{s}(t))}{s^{2}} \,\mathrm{d}t
\\
&= - \lambda \int_{0}^{\sigma} \lim_{s\to0^{+}} \dfrac{g(u_{s}(t))}{(u_{s}(t))^2} \dfrac{(u_{s}(t))^2}{s^{2}} \,\mathrm{d}t
= - \lambda \sigma,
\end{align*}
where the last equality follows from \eqref{eq-u_s-unif} and the facts that 
$\lim_{s\to0^{+}}u_{s}(t)=0$ and $g(u)=u^{2}+o(u^{2})$ for $u\to 0$. 
Finally, we conclude that
\begin{equation}\label{limit-v_s-to-0}
\lim_{s\to0^{+}} \dfrac{u_{s}'(\sigma)}{(u_{s}(\sigma))^{2}} = - \lambda \sigma,
\end{equation}
and so statement $(i)$ is proved.

As for $\Gamma_{0}$ near $(1,0)$, since
\begin{equation}\label{eq-2.10}
t = \dfrac{1}{\sqrt{2\lambda}} \int_{u_{s}(t)}^{s} \dfrac{\mathrm{d}u}{\sqrt{\dfrac{u^{4}-s^{4}}{4} - \dfrac{u^{3}-s^{3}}{3}}}, \quad \text{for all $t\in[0,\sigma]$,}
\end{equation}
from the fact that the integrand behaves like $1/(u-s)$ for $u$ near $s$ (cf.,~\eqref{eq-2.5}), we deduce that
\begin{equation}\label{eq-conv_us_1}
\lim_{s\to 1^{-}} u_{s}(t) = 1, \quad \text{uniformly in $t\in[0,\sigma]$.}
\end{equation}
Performing the changes of variable $u=1-z$ and $z=(1-s)\xi$ in \eqref{eq-2.10}, we obtain
\begin{align}
t &= -\dfrac{1}{\sqrt{2\lambda}} \int_{1-u_{s}(t)}^{1-s} \dfrac{\mathrm{d}z}{\sqrt{ \dfrac{(1-z)^{4}-s^{4}}{4} - \dfrac{(1-z)^{3}-s^{3}}{3}}}
\\
&= \dfrac{1}{\sqrt{2\lambda}} \int_{1}^{\frac{1-u_{s}(t)}{1-s}} \dfrac{(1-s) \, \mathrm{d}\xi}{\sqrt{ \dfrac{\bigl{(}(1-(1-s)\xi\bigr{)}^{4}-s^{4}}{4} - \dfrac{\bigl{(}(1-(1-s)\xi\bigr{)}^{3}-s^{3}}{3}}},
\label{eq-1-u_s}
\end{align}
for all $t\in[0,\sigma]$.
By De l'H\^{o}pital's rule, we have that
\begin{align*}
\lim_{s\to 1^{-}}\dfrac{G(s)-G(1-(1-s)\xi)}{(1-s)^{2}} 
&= \lim_{s\to 1^{-}}\dfrac{g(s)-g(1-(1-s)\xi)\xi}{-2(1-s)}\\
&=\lim_{s\to 1^{-}}\dfrac{g'(s)-g'(1-(1-s)\xi)\xi^{2}}{2} \\
&= \dfrac{g'(1)(1-\xi^{2})}{2} = \dfrac{\xi^{2}-1}{2},
\end{align*}
thus, for every $t\in[0,\sigma]$, letting
\begin{equation*}
\ell_{t} := \liminf_{s\to 1^{-}} \dfrac{1-u_{s}(t)}{1-s} \in [0,+\infty]
\end{equation*}
and passing to the $\liminf$ as $s\to 1^{-}$ in \eqref{eq-1-u_s}, we deduce
\begin{equation*}
t = \dfrac{1}{\sqrt{\lambda}} \int_{1}^{\ell_{t}} \dfrac{1}{\sqrt{\xi^2-1}}\, \mathrm{d}\xi
= \dfrac{1}{\sqrt{\lambda}} \log \left(\ell_{t}+\sqrt{\ell_{t}^{2}-1} \right) = \dfrac{1}{\sqrt{\lambda}} \cosh^{-1}(\ell_{t}).
\end{equation*}
By inverting the above equality, we obtain that
\begin{equation*}
\ell_{t} = \cosh(\sqrt{\lambda}t)\in(1,\cosh(\sqrt{\lambda}\sigma)],\quad \text{for all $t\in[0,\sigma]$.}
\end{equation*}
Proceeding in the same way with the $\limsup$, we thus conclude that
\begin{equation}\label{eq-1-u_s-unif}
\lim_{s\to 1^{-}} \dfrac{1-u_{s}(t)}{1-s} = \cosh(\sqrt{\lambda}t),\quad \text{for all $t\in[0,\sigma]$.}
\end{equation}
Arguing as above, integrating equation \eqref{eq-0sigma} and using the dominated convergence theorem, we deduce that
\begin{equation}\label{eq-u_s'_1}
\lim_{s\to1^{-}} \dfrac{u_{s}'(t)}{1-s} = -\lambda \lim_{s\to 1^{-}} \int_{0}^{\sigma} \dfrac{g(u_{s}(\xi))}{1-s} \,\mathrm{d}\xi = - \sqrt{\lambda} \sinh(\sqrt{\lambda}\sigma),
\end{equation}
where the last equality follows from \eqref{eq-conv_us_1}, \eqref{eq-1-u_s-unif}, and the fact that $g(u)=1-u+o(1-u)$ for $u\to 1$.
Finally, from \eqref{eq-1-u_s-unif} and \eqref{eq-u_s'_1}, we have
\begin{equation*}
\lim_{s\to1^{-}} \dfrac{u_{s}'(\sigma)}{1-u_{s}(\sigma)} = -\sqrt{\lambda} \tanh(\sqrt{\lambda}\sigma).
\end{equation*}
Then, statement $(ii)$ is proved.
\end{proof}

\medskip

We conclude this section by stating the analogous results in the interval $[1-\sigma,1]$. As above, for every $s\in(0,1)$, we introduce the initial value problem
\begin{equation}\label{eq-initial1}
\begin{cases}
\, u'=v, \\
\, v' = -\lambda g(u), \\
\, u(1)=s, \\
\, v(1)=0.
\end{cases}
\end{equation}
Let $(\hat{u}_{s},\hat{v}_{s})$ be the unique solution of~\eqref{eq-initial1}, considered in its maximal interval of existence (contained in $\mathbb{R}$). 
For all $s\in(0,1)$, we denote by $\hat{T}_{0}(s)$ the time taken by $(\hat{u}_{s},\hat{v}_{s})$ to go from the point $(s,0)$ to the line $\{0\}\times(0,+\infty)$ moving backwards along the level line $H_{\lambda}(u,v)=2\lambda G(s)$ in the $(u,v)$-plane. 

By setting
\begin{equation*}
\mathcal{I}^{1}_{\lambda} = \bigl{\{} s\in(0,1) \colon \hat{T}_{0}(s)>\sigma \bigr{\}},
\end{equation*}
we state the symmetric statement of Proposition~\ref{pr-2.2} as follows.

\begin{proposition}\label{pr-2.3}
There exist $\lambda^{*}>0$ and $s^{*}\in(0,1)$ such that
\begin{itemize}
\item[$(i)$] for all $\lambda\in(0,\lambda^{*})$, $\mathcal{I}^{1}_{\lambda}=(0,1)$;
\item[$(ii)$] for $\lambda=\lambda^{*}$, $\mathcal{I}^{1}_{\lambda}=(0,1)\setminus\{s^{*}\}$;
\item[$(iii)$] for all $\lambda>\lambda^{*}$, there exist $s_{0},s_{1}\in(0,1)$ with $s_{0}<s^{*}<s_{1}$ such that $\mathcal{I}^{1}_{\lambda}=(0,s_{0})\cup(s_{1},1)$.
\end{itemize}
\end{proposition}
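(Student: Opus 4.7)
The plan is to reduce Proposition~\ref{pr-2.3} to Proposition~\ref{pr-2.1} by a time-reversal symmetry argument, avoiding the need to redo any of the estimates on the time-map. The key observation is that system~\eqref{syst-0sigma} is autonomous and invariant under the involution $(t,u,v)\mapsto(-t,u,-v)$, while the energy $H_{\lambda}$ in~\eqref{eq-energy-lambda} is even in $v$, so every level set of $H_{\lambda}$ is symmetric with respect to the $u$-axis.

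First, I would explicitly relate $(\hat u_s,\hat v_s)$ to $(u_s,v_s)$. Given $s\in(0,1)$, the functions $\tilde u(t):=u_{s}(1-t)$ and $\tilde v(t):=-v_{s}(1-t)$ solve system~\eqref{syst-0sigma} (a direct check, since $g$ depends only on $u$) and satisfy $\tilde u(1)=s$, $\tilde v(1)=0$; by uniqueness, $\hat u_s(t)=u_s(1-t)$ and $\hat v_s(t)=-v_s(1-t)$ on the common maximal interval of existence. In particular, the trajectories $(u_s,v_s)$ and $(\hat u_s,\hat v_s)$ lie on the same energy level set $\{H_{\lambda}=2\lambda G(s)\}$, and the latter is the reflection of the former across the $u$-axis.

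Next, I would identify the backward hitting time. By Proposition~\ref{pr-2.1}, when $s\in\mathcal{I}^{0}_{\lambda}$ the trajectory $(u_s,v_s)$ meets $\{0\}\times(-\infty,0)$ at $t=T_{0}(s)$, so the identity above gives
\begin{equation*}
\bigl(\hat u_s(1-T_{0}(s)),\hat v_s(1-T_{0}(s))\bigr)=\bigl(0,-v_s(T_{0}(s))\bigr)\in\{0\}\times(0,+\infty),
\end{equation*}
reached by moving backwards from $t=1$ along the common level line. Hence $\hat T_{0}(s)=T_{0}(s)$ for every $s\in(0,1)$, and therefore $\mathcal{I}^{1}_{\lambda}=\mathcal{I}^{0}_{\lambda}$. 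Proposition~\ref{pr-2.3} is then an immediate restatement of Proposition~\ref{pr-2.1} with exactly the same threshold $\lambda^{*}$ and critical initial datum $s^{*}$.

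There is essentially no hard step here; the only thing worth spelling out carefully is the bookkeeping on orientation, namely that moving backwards from $(s,0)$ at $t=1$ corresponds, after the reflection $v\mapsto -v$, to moving forwards from $(s,0)$ at $t=0$, so that the two hitting points on the $v$-axis are reflections of each other, as dictated by the evenness of $H_{\lambda}$ in $v$. I would make this remark explicit to avoid any ambiguity in the notion of \emph{backward time}, and then the conclusion is automatic.
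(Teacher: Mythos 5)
Your proposal is correct and is essentially the argument the paper intends: the paper states Proposition~\ref{pr-2.3} without proof as the ``symmetric statement'' of Proposition~\ref{pr-2.1}, and Remark~\ref{rem-2.1} records exactly the reflection symmetry $\Gamma_{1}=\{(u,-v)\colon (u,v)\in\Gamma_{0}\}$ with the same $\lambda^{*}$, $s^{*}$, $s_{0}$, $s_{1}$ that your identity $(\hat u_s(t),\hat v_s(t))=(u_s(1-t),-v_s(1-t))$ makes explicit. Your verification that $\hat T_{0}(s)=T_{0}(s)$, hence $\mathcal{I}^{1}_{\lambda}=\mathcal{I}^{0}_{\lambda}$, is a clean and complete way to justify the statement.
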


Similarly, setting
\begin{equation}\label{def-Gamma1}
\Gamma_{1}=\Gamma_{1}(\lambda)=\bigl{\{} (\hat{u}_{s}(1-\sigma),\hat{v}_{s}(1-\sigma)) \colon s\in \mathcal{I}^{1}_{\lambda} \bigr{\}},
\end{equation}
the following results correspond to Corollary~\ref{cor-2.1} and Proposition~\ref{pr-2.2}.

\begin{corollary}\label{cor-2.2}
There exists $\lambda^{*}>0$ such that
\begin{itemize}
\item for $\lambda\in(0,\lambda^{*})$, $\overline{\Gamma_1(\lambda)}\cap (\{0\}\times(0,+\infty))=\emptyset$;
\item for $\lambda=\lambda^{*}$, $\overline{\Gamma_1(\lambda)}\cap(\{0\}\times(0,+\infty))=\{(0,\xi^{*})\}$ for some $\xi^{*}\in(0,+\infty)$;
\item for $\lambda>\lambda^{*}$, $\overline{\Gamma_1(\lambda)}\cap (\{0\}\times(0,+\infty)) = \{(0,\xi_{0}),(0,\xi_{1})\}$ for some $0<\xi_{0}<\xi_{1}$.
\end{itemize}
\end{corollary}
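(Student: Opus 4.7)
The plan is to deduce Corollary~\ref{cor-2.2} directly from Corollary~\ref{cor-2.1} via a time-reversal symmetry. Equation~\eqref{eq-0sigma} is autonomous and second-order, hence invariant under $t\mapsto 1-t$. The first step would be to exploit this by setting, for each $s\in(0,1)$,
\[
w(t):=\hat{u}_s(1-t),
\]
and checking directly that $w(0)=s$, $w'(0)=-\hat{v}_s(1)=0$, and $w''(t)=-\lambda g(w(t))$, so that by uniqueness for the Cauchy problem~\eqref{eq-initial0} one has $w\equiv u_s$ on the common interval of existence. In particular,
\[
\hat{u}_s(1-t)=u_s(t), \qquad \hat{v}_s(1-t)=-v_s(t).
\]

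From this identity, the backward exit time $\hat{T}_0(s)$ coincides with the forward exit time $T_0(s)$, since the two trajectories trace the same level line of $H_\lambda$, reaching $u=0$ simultaneously with opposite signs of $v$. Consequently $\mathcal{I}^{1}_\lambda=\mathcal{I}^{0}_\lambda$, and evaluating at $t=\sigma$ yields
\[
\bigl(\hat{u}_s(1-\sigma),\hat{v}_s(1-\sigma)\bigr)=\bigl(u_s(\sigma),-v_s(\sigma)\bigr),
\]
so that, via definitions~\eqref{def-Gamma0} and~\eqref{def-Gamma1}, $\Gamma_1(\lambda)$ is exactly the reflection of $\Gamma_0(\lambda)$ across the $u$-axis in the phase-plane.

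The conclusion then follows from Corollary~\ref{cor-2.1} by applying the reflection $R(u,v):=(u,-v)$: since $R$ is a homeomorphism of $\mathbb{R}^2$, it commutes with closure and maps $\{0\}\times(-\infty,0)$ onto $\{0\}\times(0,+\infty)$. The three cases of the statement, with the same threshold $\lambda^*$ and points $\xi^*,\xi_0,\xi_1$ inherited from Corollary~\ref{cor-2.1}, are therefore immediate. There is no genuine obstacle in this plan: the only conceptual step is the verification of the time-reversal identity above, and the rest is a transparent symmetry reduction.
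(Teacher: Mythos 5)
Your proof is correct and follows essentially the same route as the paper: the authors state the results on $[1-\sigma,1]$ as symmetric analogues of those on $[0,\sigma]$ and record precisely the reflection identity $\Gamma_{1}=\{(u,-v)\colon(u,v)\in\Gamma_{0}\}$ in Remark~\ref{rem-2.1}, which is what your time-reversal computation $\hat{u}_s(1-t)=u_s(t)$, $\hat{v}_s(1-t)=-v_s(t)$ makes explicit. Your argument simply supplies the (straightforward) justification the paper leaves implicit, with the same threshold $\lambda^{*}$ and the same $\xi^{*},\xi_{0},\xi_{1}$ inherited from Corollary~\ref{cor-2.1}.
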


\begin{proposition}\label{pr-2.4}
Let $\lambda>0$. The curve $\Gamma_1=\Gamma_{1}(\lambda)$ satisfies the following properties:
\begin{itemize}
\item[$(i)$] there exists a neighborhood $\mathcal{U}_0=\mathcal{U}_0(\lambda)=[0,x_{0}(\lambda))\times(-\varepsilon(\lambda),\varepsilon(\lambda))$ of $(0,0)$ such that $\Gamma_1\cap\mathcal{U}_0$ can be parametrized, in the $(u,v)$-plane, as
\begin{equation*} 
\bigl{\{}(x, \lambda \sigma x^2) \colon x\in[0,x_0(\lambda))\bigr{\}};
\end{equation*}
\item[$(ii)$] there exists a neighborhood $\mathcal{U}_1=\mathcal{U}_1(\lambda)=(x_{1}(\lambda),1]\times(-\varepsilon(\lambda),\varepsilon(\lambda))$ of $(1,0)$ such that $\Gamma_1\cap\mathcal{U}_1$ can be parametrized, in the $(u,v)$-plane, as
\begin{equation*}
\bigl{\{} (x, \sqrt{\lambda} \tanh(\sqrt{\lambda}\sigma)(1-x)) \colon x\in(x_1(\lambda),1]\bigr{\}}.
\end{equation*}
\end{itemize}
\end{proposition}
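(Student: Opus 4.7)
My plan is to deduce Proposition~\ref{pr-2.4} from Proposition~\ref{pr-2.2} by exploiting the time-reversal symmetry of the autonomous equation~\eqref{eq-0sigma}, rather than redoing the computations from scratch. Specifically, for any $s\in(0,1)$, I would set $\tilde{u}(t):=\hat{u}_{s}(1-t)$. A direct differentiation gives $\tilde{u}'(t)=-\hat{v}_{s}(1-t)$ and $\tilde{u}''(t)=-\lambda g(\tilde{u}(t))$, while $\tilde{u}(0)=\hat{u}_{s}(1)=s$ and $\tilde{u}'(0)=-\hat{v}_{s}(1)=0$. Uniqueness for the Cauchy problem~\eqref{eq-initial0} then forces $\tilde{u}\equiv u_{s}$, whence
\begin{equation*}
\hat{u}_{s}(t)=u_{s}(1-t), \qquad \hat{v}_{s}(t)=-v_{s}(1-t),
\end{equation*}
for all $t$ in the common interval of existence. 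Evaluating at $t=1-\sigma$ yields the identity $(\hat{u}_{s}(1-\sigma),\hat{v}_{s}(1-\sigma))=(u_{s}(\sigma),-v_{s}(\sigma))$, so the curve $\Gamma_{1}(\lambda)$ is precisely the reflection of $\Gamma_{0}(\lambda)$ across the horizontal axis in the $(u,v)$-plane.

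Given this identification, both assertions $(i)$ and $(ii)$ would follow immediately from the corresponding items of Proposition~\ref{pr-2.2}: negating the $v$-coordinate in the two parametrizations there turns $-\lambda\sigma x^{2}$ into $\lambda\sigma x^{2}$ near $(0,0)$ and $-\sqrt{\lambda}\tanh(\sqrt{\lambda}\sigma)(1-x)$ into $\sqrt{\lambda}\tanh(\sqrt{\lambda}\sigma)(1-x)$ near $(1,0)$, which matches the statement exactly. The neighborhoods $\mathcal{U}_{0}$ and $\mathcal{U}_{1}$ can be chosen identically to those of Proposition~\ref{pr-2.2}, since reflection across the $u$-axis preserves horizontal strips symmetric about it.

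There is essentially no obstacle in this approach, because the heavy lifting---the limit computations via de l'Hôpital's rule, the application of dominated convergence, and the identification of the exact tangent curves---has already been carried out in the proof of Proposition~\ref{pr-2.2}. The alternative of redoing that proof directly for $\hat{u}_{s}$, by writing down the analogue of the time formula~\eqref{eq-2.t} on $[1-\sigma,1]$, establishing counterparts of the uniform limits~\eqref{eq-u_s-unif} and~\eqref{eq-1-u_s-unif}, and integrating~\eqref{eq-0sigma} on $[1-\sigma,1]$ starting from $\hat{u}_{s}'(1)=0$, would be perfectly feasible but entirely redundant. The sign reversal between the two statements is precisely the expected footprint of the involution $t\mapsto 1-t$.
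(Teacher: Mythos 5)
Your proof is correct and is essentially the route the paper itself takes: the paper states Proposition~\ref{pr-2.4} as the symmetric counterpart of Proposition~\ref{pr-2.2}, appealing (in Remark~\ref{rem-2.1}) to the fact that $\Gamma_{1}=\{(u,-v)\colon (u,v)\in\Gamma_{0}\}$, which is exactly the reflection identity you derive rigorously via the time-reversal $t\mapsto 1-t$ and uniqueness for the Cauchy problem. Your argument simply makes explicit the symmetry the authors leave implicit, so there is nothing to add.
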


\begin{remark}\label{rem-2.1}
By the symmetry of the weight function \eqref{eq-weight}, we stress that $\Gamma_{1}=\bigl{\{} (u,-v) \colon (u,v)\in\Gamma_{0} \bigr{\}}$ and thus the values $\lambda^{*}$, $s^{*}$, $s_{0}$, $s_{1}$, $\xi^{*}$, $\xi_{0}$, $\xi_{1}$ in Corollaries~\ref{cor-2.1} and~\ref{cor-2.2} are exactly the same. Moreover, the neighborhoods $\mathcal{U}_0$ and $\mathcal{U}_1$ can be taken to coincide in Propositions~\ref{pr-2.2} and~\ref{pr-2.4}.
\hfill$\lhd$
\end{remark}

\section{Phase-plane analysis in $\mathopen{(}\sigma,1-\sigma\mathclose{)}$}\label{section-3}

In this section, we analyze the equation in~\eqref{eq-main} in the interval $(\sigma,1-\sigma)$, that is where the weight function $a_{\lambda,\mu}\equiv-\mu<0$. Accordingly, we study 
\begin{equation}\label{eq-3.1}
u''(t)=\mu g(u)
\end{equation}
and we consider its associated energy 
\begin{equation}\label{eq-energy-mu}
H_\mu(u,v) :=v^2-2\mu G(u),
\end{equation}
where $G$ is defined as in~\eqref{def-G}. Let 
\begin{equation}\label{def-stable-manifold}
\mathcal{M}_{\mu}:=\bigl{\{} (u,v)\in [0,1]\times\mathbb{R} \colon v^{2} = 2\mu G(u) \bigr{\}}
\end{equation}
be the level line of $H_\mu$ passing through $(0,0)$.
We note that the curve $\mathcal{M}_{\mu}$ divides the strip $[0,1]\times\mathbb{R}$ into three connected open regions, which can be characterized by the sign of $H_{\mu}$. More precisely, $H_{\mu}(u,v)>0$ when $(u,v)$ belongs to the two unbounded regions ``outside'' $\mathcal{M}_{\mu}$, while $H_{\mu}(u,v)<0$ when $(u,v)$ belongs to the bounded region ``inside'' $\mathcal{M}_{\mu}$ (containing the segment $(0,1)\times\{0\}$). Clearly, by definition \eqref{def-stable-manifold}, $H_{\mu}(u,v)=0$ for all $(u,v)\in\mathcal{M}_{\mu}$.

First, since $\Gamma_{0}$ and  $\Gamma_{1}$ behave like a parabola near $(0,0)$ according to Proposition~\ref{pr-2.2}~$(i)$ and Proposition~\ref{pr-2.4}~$(i)$, we analyze the time $T_{p}=T_p(x)$ taken to reach the $u$-axis for the first time starting from points on the parabola
\begin{equation}\label{eq-parabola}
(x,y(x))=(x,-kx^2), \qquad x\in(0,1), \quad k>0,
\end{equation}
and moving along the level lines associated with~\eqref{eq-energy-mu}.

We observe that if $k^2\leq\frac{\mu}{6}$ the parabola \eqref{eq-parabola} does not intersect the manifold $\mathcal{M}_{\mu}$ in the strip $(0,1)\times\mathbb{R}$ of the $(u,v)$-plane. On the contrary, if $k^2>\frac{\mu}{6}$  there is a unique intersection whose abscissa is denoted by $x_p=x_p(\mu)$. We extend this definition in the case $k^2\leq\frac{\mu}{6}$ by setting $x_p=1$ so that $T_p$ is defined in $(0,x_p)$ for every $k>0$.

For $x\in(0,x_{p})$, let $m(x)$ be the abscissa of the intersection point between the level line of~\eqref{eq-energy-mu} through $(x,y(x))$ and the segment $(0,1)\times\{0\}$. From \eqref{eq-3.1} we have that $u$ is strictly decreasing along the level lines associated with~\eqref{eq-energy-mu} in $(0,1)\times(-\infty,0]$, as a consequence
\begin{equation}\label{eq-3.4}
0<m(x)<x<1.
\end{equation}
Moreover, $m(x)$ satisfies
\begin{equation}\label{eq-3.5}
(y(x))^2-2\mu G(x) = -2\mu G(m(x)).
\end{equation}
Hence, from \eqref{eq-energy-mu} and \eqref{eq-3.5}, the time $T_{p}$ we are interested in is given by
\begin{align}
T_p(x)&=\int_{m(x)}^x\frac{\mathrm{d}u}{\sqrt{(y(x))^2+2\mu(G(u)-G(x))}} \notag \\
&=\int_{m(x)-x}^0\frac{\mathrm{d}\tilde{u}}{\sqrt{(y(x))^2+2\mu(G(x+\tilde{u})-G(x))}} \notag \\
&=\int_{0}^{1}\frac{(x-m(x))\,\mathrm{d}\xi}{\sqrt{(y(x))^2+2\mu(G(x-(x-m(x))\xi)-G(x))}}, \label{eq-3.7}
\end{align}
where we have performed the changes of variable $u=x+\tilde{u}$ and $\tilde{u}=-(x-m(x))\xi$.

In the following result, we collect some properties of $T_{p}$.

\begin{proposition}\label{pr-3.1}
The function $T_p \colon (0,x_p)\to(0,+\infty)$ is continuous, strictly increasing, and satisfies:
\begin{itemize}
\item[$(i)$] $\displaystyle \lim_{x\to 0^{+}}T_p(x) = \tfrac{k}{\mu}$;
\item[$(ii)$] if $x_p<1$, then $\displaystyle \lim_{x\to (x_p)^{-}}T_p(x)=+\infty$.
\end{itemize}
\end{proposition}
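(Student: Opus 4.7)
The plan is to base the argument on the integral representation \eqref{eq-3.7} together with the defining relation $2\mu G(m(x)) = 2\mu G(x) - k^2 x^4$. Since $g(m) > 0$ for $m \in (0,1)$, the implicit function theorem gives $m \in C^{1}((0,x_p))$ with
\begin{equation*}
g(m(x))\, m'(x) = g(x) - \frac{2 k^{2} x^{3}}{\mu}.
\end{equation*}
Applying the change of variable $u = m(x) + (x - m(x))\, s$, $s \in [0,1]$, moves the integrable square-root singularity of the integrand in \eqref{eq-3.7} to the fixed endpoint $s = 1$, and continuity of $T_p$ on $(0,x_p)$ is then immediate from dominated convergence.

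For assertion $(i)$, I would use $G(u) = u^{3}/3 + O(u^{4})$ near $u = 0$. The defining relation yields $(m/x)^{3} = 1 - 3 k^{2} x /(2\mu) + O(x^{2})$, hence $x - m(x) \sim k^{2} x^{2}/(2\mu)$ as $x \to 0^{+}$. Expanding $G(x - (x-m)s) - G(m) \sim x^{2}(x-m)(1-s)$ to leading order, the regularised integrand converges uniformly on $[0,1]$ to $k/(2\mu\sqrt{1-s})$, whose integral equals $k/\mu$. For assertion $(ii)$, when $x_p < 1$ the definition $k^{2} x_p^{4} = 2\mu G(x_p)$ combined with $G(m(x)) = G(x) - k^{2} x^{4}/(2\mu)$ forces $m(x) \to 0^{+}$ as $x \to x_p^{-}$; using $G(u) - G(m) \leq 7 m^{3}/3$ on $[m, 2m]$, one obtains
\begin{equation*}
T_p(x) \geq \int_{m}^{2m} \frac{du}{\sqrt{14 \mu m^{3}/3}} = \frac{1}{\sqrt{14 \mu m / 3}} \to + \infty.
\end{equation*}

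The main technical obstacle is strict monotonicity. My plan is to differentiate after the regularising change of variable $y = \sqrt{G(u) - G(m)}$, which turns $T_p$ into the regular integral $T_p(x) = \sqrt{2/\mu}\int_{0}^{kx^{2}/\sqrt{2\mu}} dy/g(u(y; m(x)))$ (with $u$ determined implicitly by $G(u) = G(m) + y^{2}$). Using $\partial u/\partial m = g(m)/g(u)$ and converting back to a $u$-integral produces
\begin{equation*}
T_p'(x) = \frac{2 k x}{\mu\, g(x)} - \frac{g(m(x))\, m'(x)}{\sqrt{2\mu}} \int_{m(x)}^{x} \frac{g'(u)\, du}{g(u)^{2}\, \sqrt{G(u)-G(m(x))}}.
\end{equation*}
Since $g(m)\, m'(x) = g(x) - 2 k^{2} x^{3}/\mu$ vanishes and changes sign at the tangency point $x_{\min} = \mu/(2 k^{2} + \mu)$ (where the parabola \eqref{eq-parabola} is tangent to a level curve of $H_\mu$), the sign analysis splits into two regimes. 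For $x \in [x_{\min}, x_p)$, as long as the integration range $[m(x),x]$ stays in $\{g'\geq 0\}$, both terms contribute with the same sign and $T_p'(x) > 0$ is direct; the complementary sub-case $x > 2/3$, which can occur when $x_p > 2/3$, requires separate handling. The delicate regime is $x \in (0, x_{\min})$, where both terms are positive but the second is subtracted: here I would apply integration by parts using $g'(u)/g(u)^{2} = -(1/g(u))'$ to trade the divergent boundary term at $u = m$ against the singular part of the first term and expose the leading cancellation, then verify positivity of the resulting regular remainder. Organising this cancellation cleanly, together with the $x > 2/3$ sub-case, is where I expect the main difficulty of the proof to lie.
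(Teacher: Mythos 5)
Your treatment of continuity and of the limits $(i)$ and $(ii)$ is sound and essentially parallel to the paper's; indeed your lower bound $T_p(x)\geq \bigl(14\mu\, m(x)/3\bigr)^{-1/2}$ is a nice quantitative replacement for the paper's soft continuity argument in $(ii)$. (Two small slips: under $u=m(x)+(x-m(x))s$ the square-root singularity sits at $s=0$, not $s=1$, and the convergence of the regularised integrand cannot be uniform up to the singular endpoint --- dominated convergence is what is needed; the limit value $k/\mu$ is nevertheless correct.)

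The genuine gap is strict monotonicity, which is the main assertion of the proposition. Your formula for $T_p'(x)$ is correct, and the tangency abscissa $\mu/(\mu+2k^2)$ where $g(m)m'$ changes sign is correctly identified; note moreover that when $x_p<1$ one has $x_p=4\mu/(6k^2+3\mu)>\mu/(\mu+2k^2)$, so the ``delicate regime'' $x\in\bigl(0,\mu/(\mu+2k^2)\bigr)$ is always nonempty. In that regime (and in the sub-case $x>2/3$ of the other regime) you do not prove positivity: the proposed integration by parts based on $g'/g^2=-(1/g)'$ produces a boundary term and a remainder integral that both diverge at $u=m(x)$, and you only assert that after organising the cancellation the remaining expression should be positive --- this is precisely the hard step, and it is left unproved. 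The paper closes exactly this step by a different route: differentiating the fixed-interval representation \eqref{eq-3.7} gives $T_p'(x)=\int_0^1 N(x,\xi)\bigl((y(x))^2+2\mu(G(x-(x-m(x))\xi)-G(x))\bigr)^{-3/2}\,\mathrm{d}\xi$ with $N$ as in \eqref{eq-3.10}; the identity \eqref{eq-3.11} yields $N(x,1)=0$, and Lemma~\ref{le-A.1} shows $\partial_\xi N(x,\xi)<0$ through an explicit polynomial sign analysis (a concave quadratic in $\xi$, negative at $\xi=0$, with negative discriminant whenever its linear coefficient is positive), whence $N>0$ on $(0,1)$ and $T_p'>0$ for every $x\in(0,x_p)$, with no case splitting in $x$. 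To complete your route you would need an argument of comparable strength for the sign of your expression for $T_p'$ in the delicate regime; as it stands, the proposal does not establish the strict monotonicity.
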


\begin{proof}
The continuity follows directly from expression~\eqref{eq-3.7}. To prove the limit in~$(i)$, we first claim that the function $m(x)$ in a right neighborhood of $0$ behaves as follows
\begin{equation}\label{eq-3.8}
m(x)=x-\frac{k^2}{2\mu}x^2+o(x^2), \qquad \text{as $x\to0^{+}$.}
\end{equation}
Indeed, \eqref{eq-3.4} implies that
\begin{equation*}
\lim_{x\to 0^{+}} m(x)=0.
\end{equation*}
Recalling \eqref{eq-parabola}, we rewrite \eqref{eq-3.5} as
\begin{equation*}
\biggl{(}\frac{m(x)}{x}\biggr{)}^{\!3} \biggl{(}\frac{1}{3}-\frac{m(x)}{4}\biggr{)}=-\frac{k^2}{2\mu}x+\biggl{(}\frac{1}{3}-\frac{x}{4}\biggr{)}.
\end{equation*}
Thus, by taking the $\liminf$ and $\limsup$ as $x\to 0^{+}$ in the previous relation, we obtain that $l:=\liminf_{x\to 0^{+}}\frac{m(x)}{x}$ and $L:=\limsup_{x\to 0^{+}}\frac{m(x)}{x}$ satisfy $l^3=L^3=1$. Hence, it follows that
\begin{equation}\label{eq-3.9}
\lim_{x\to 0^{+}}\frac{m(x)}{x}=1,
\end{equation}
which verifies the first term in the expansion \eqref{eq-3.8}. From \eqref{eq-3.5}, we deduce that
\begin{equation*}
\frac{m(x)-x}{x^2}\left(\frac{1+\frac{m(x)}{x}+\frac{m(x)^2}{x^2}}{3}-\frac{x+m(x)+\frac{m(x)^2}{x}+\frac{m(x)^3}{x^2}}{4}\right)=-\frac{k^2}{2\mu},
\end{equation*}
which, thanks to \eqref{eq-3.9}, implies that
\begin{equation*}
\lim_{x\to 0^{+}}\frac{m(x)-x}{x^2}=-\frac{k^2}{2\mu}
\end{equation*}
and concludes the proof of \eqref{eq-3.8}. By computing
\begin{align*}
&G(x-(x-m(x))\xi)-G(x)= \\
&= \frac{-3x^2(x-m(x))\xi+3x(x-m(x))^2\xi^2-(x-m(x))^3\xi^3}{3} \\
&\quad -\frac{-4x^3(x-m(x))\xi+6x^2(x-m(x))^2\xi^2-4x(x-m(x))^3\xi^3+(x-m(x))^4\xi^4}{4},
\end{align*}
from \eqref{eq-3.8}, we obtain
\begin{equation*}
\lim_{x\to 0^{+}} \frac{G\left(x-(x-m(x))\xi\right)-G\left(x\right)}{x^4} = -\frac{k^2}{2\mu}\xi, \qquad \text{for every $\xi\in(0,1)$.}
\end{equation*}
Hence, it follows from \eqref{eq-3.7} that
\begin{align*}
\lim_{x\to 0^{+}} T_p(x) 
&=\lim_{x\to 0^{+}}\int_{0}^{1}\frac{\frac{x-m(x)}{x^{2}}}{\sqrt{\frac{(y(x))^{2}}{x^4}+2\mu\frac{G(x-(x-m(x))\xi)-G(x)}{x^4}}} \,\mathrm{d}\xi\\
& =\frac{k^{2}}{2\mu} \int_{0}^{1} \frac{\mathrm{d}\xi}{\sqrt{k^{2}(1-\xi)}}=\frac{k}{\mu},
\end{align*}
proving $(i)$.

\smallskip

Statement $(ii)$ directly follows by continuity, since the time taken to reach the $u$-axis on the manifold $\mathcal{M}_{\mu}$ is $+\infty$ and, as $x\to (x_p)^{-}$, the starting point approaches the intersection of the parabola and the manifold.

It remains to prove the monotonicity of $T_p(x)$. By differentiating \eqref{eq-3.7}, we obtain
\begin{equation*}
T_p'(x)=\int_{0}^{1} N(x,\xi)\bigl{(} (y(x))^{2}+2\mu(G(x-(x-m(x))\xi)-G(x))\bigr{)}^{\!-\frac{3}{2}}\,\mathrm{d}\xi,
\end{equation*}
where we have set
 \begin{align}\label{eq-3.10}
&N(x,\xi) = (1-m'(x))\bigl{(}(y(x))^{2}+2\mu(G(x-(x-m(x))\xi)-G(x))\bigr{)} \\
&\qquad -(x-m(x))\bigl{(}y(x)y'(x)+\mu (g(x-(x-m(x))\xi) (1-(1-m'(x))\xi) -g(x))\bigr{)}.
\end{align}
We aim to show that $N(x,\xi)>0$ for all $x\in(0,x_{p})$ and $\xi\in(0,1)$. To this end, we observe that, by differentiating \eqref{eq-3.5}, we get 
\begin{equation}\label{eq-3.11}
y(x)y'(x)-\mu g(x)=-\mu g(m(x))m'(x),
\end{equation}
thus \eqref{eq-3.10} and \eqref{eq-3.11} give that
\begin{align*}
N(x,1)&=(1-m'(x))\bigl{(}(y(x))^{2}+2\mu(G(m(x))-G(x))\bigr{)} \\
&\quad-(x-m(x))\bigl{(}y(x)y'(x)+\mu(g(m(x))m'(x)-g(x))\bigr{)} \\
&=0.
\end{align*}
Lemma~\ref{le-A.1} in Appendix~\ref{appendix-A} shows that $\partial_\xi N(x,\xi)<0$ for all $x\in(0,x_{p})$ and $\xi\in(0,1)$.
This concludes the proof.
\end{proof}

Second, according to Proposition~\ref{pr-2.2}~$(ii)$  and Proposition~\ref{pr-2.4}~$(ii)$, we analyze the time $T_{l}=T_l(x)$ taken to reach the $u$-axis for the first time starting from points on the line
\begin{equation}\label{eq-line}
(x,y(x))=(x,-k(1-x)), \qquad x\in(0,1), \quad k>0,
\end{equation}
and moving along the level lines associated with~\eqref{eq-energy-mu}.

We denote by $x_l=x_l(\mu)$ the abscissa of the unique intersection of such a line with the set $\mathcal{M}_{\mu}$. In analogy with the previous situation, for $x\in(x_{l},1)$, with $m(x)$ we still indicate the abscissa of the intersection point between the level line of~\eqref{eq-energy-mu} through $(x,y(x))$ and the segment $(0,1)\times\{0\}$. With this notation, the time-map $T_l(x)$ is defined for every $x\in(x_l,1)$ and is given by
\begin{align}
T_l(x)&=\int_{m(x)}^x\frac{\mathrm{d}u}{\sqrt{(y(x))^{2}+2\mu(G(u)-G(x))}} \notag \\
&=\int_{1-x}^{1-m(x)}\frac{\mathrm{d}\tilde{u}}{\sqrt{(y(x))^{2}+2\mu(G(1-\tilde{u})-G(x))}} \notag \\
&=\int_{1}^{\frac{1-m(x)}{1-x}}\frac{(1-x)\,\mathrm{d}\xi}{\sqrt{(y(x))^{2}+2\mu(G(1-(1-x)\xi)-G(x))}}. \label{eq-3.15}
\end{align}

The properties of $T_{l}$ are contained in the following result.

\begin{proposition}\label{pr-3.2}
The function $T_l \colon (x_l,1)\to(0,+\infty)$ is continuous, strictly decreasing, and satisfies:
\begin{enumerate}
\item[$(i)$] $\displaystyle \lim_{x\to 1^{-}}T_l(x)=\tfrac{1}{\sqrt{\mu}}\arctan \tfrac{k}{\sqrt{\mu}}$;
\item[$(ii)$] $\displaystyle \lim_{x\to (x_l)^{+}}T_l(x)=+\infty$.
\end{enumerate}
\end{proposition}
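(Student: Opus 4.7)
The plan is to mirror the structure used to prove Proposition~\ref{pr-3.1}, adapting each step from the parabola $(x,-kx^{2})$ near $(0,0)$ to the line $(x,-k(1-x))$ near $(1,0)$. Continuity of $T_{l}$ on $(x_{l},1)$ follows immediately from the integral representation~\eqref{eq-3.15}, since both $m(x)$ and the integrand depend continuously on $x$, and the only potential singularity of the integrand (when the energy vanishes) occurs exactly at $u=m(x)$ in a way integrable after a change of variable.

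For the limit $(i)$ as $x\to 1^{-}$, I would rescale the problem in the natural way suggested by the change of variable $u=1-(1-x)\xi$ already built into~\eqref{eq-3.15}. First, I would determine the asymptotics of $(1-m(x))/(1-x)$. Using the Taylor expansion $G(u)=\tfrac{1}{12}-\tfrac{1}{2}(1-u)^{2}+O((1-u)^{3})$ around $u=1$ (which follows from $g'(1)=-1$ and $g(1)=0$), the energy equality $(y(x))^{2}=2\mu(G(x)-G(m(x)))$ rewrites as
\begin{equation*}
k^{2}(1-x)^{2}=\mu\bigl{(}(1-m(x))^{2}-(1-x)^{2}\bigr{)}+O\bigl{(}(1-x)^{3}\bigr{)},
\end{equation*}
which yields $(1-m(x))/(1-x)\to\sqrt{1+k^{2}/\mu}$. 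Substituting this and the expansion $G(1-(1-x)\xi)-G(x)=\tfrac{1}{2}(1-x)^{2}(1-\xi^{2})+o((1-x)^{2})$ into~\eqref{eq-3.15}, the factor $(1-x)$ cancels and the dominated convergence theorem produces
\begin{equation*}
\lim_{x\to 1^{-}}T_{l}(x)=\int_{1}^{\sqrt{1+k^{2}/\mu}}\dfrac{\mathrm{d}\xi}{\sqrt{k^{2}+\mu(1-\xi^{2})}}=\dfrac{1}{\sqrt{\mu}}\arctan\dfrac{k}{\sqrt{\mu}},
\end{equation*}
where the last equality is a trigonometric substitution $\xi=\sqrt{(k^{2}+\mu)/\mu}\,\sin\theta$. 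This is the closed-form predicted by the linearization $u''=\mu(1-u)+o(1-u)$ around the equilibrium $(1,0)$.

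The limit $(ii)$ as $x\to (x_{l})^{+}$ follows from a continuity/compactness argument: as $x\to (x_{l})^{+}$ the starting point approaches the stable manifold $\mathcal{M}_{\mu}$, and since $(0,0)$ is an equilibrium of~\eqref{syst-0sigma} for $a_{\lambda,\mu}\equiv-\mu$, the travel time along $\mathcal{M}_{\mu}$ to reach the $u$-axis is infinite. Quantitatively, when $x=x_{l}$ the quantity $m(x)$ reaches $0$, and the integrand in~\eqref{eq-3.15} develops a non-integrable singularity $\sim 1/u$ at the lower endpoint; a lower bound by the corresponding integral on $\mathcal{M}_{\mu}$ forces $T_{l}(x)\to+\infty$.

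The main obstacle is the monotonicity. Exactly as in Proposition~\ref{pr-3.1}, differentiating~\eqref{eq-3.15} under the integral sign yields an expression of the form
\begin{equation*}
T_{l}'(x)=\int_{1}^{(1-m(x))/(1-x)}\tilde{N}(x,\xi)\bigl{(}(y(x))^{2}+2\mu(G(1-(1-x)\xi)-G(x))\bigr{)}^{\!-\frac{3}{2}}\,\mathrm{d}\xi,
\end{equation*}
where $\tilde{N}$ collects the contributions of $y'(x)=k$, $m'(x)$, the explicit $(1-x)$ factor, and the variation of the upper endpoint. Using~\eqref{eq-3.11} (which still holds, $m$ being defined by the same energy relation), one verifies that the boundary term vanishes at $\xi=(1-m(x))/(1-x)$, so the sign is governed by $\partial_{\xi}\tilde{N}$. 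Showing $\partial_{\xi}\tilde{N}(x,\xi)>0$ (so that $\tilde{N}(x,\xi)<0$ in the interior and hence $T_{l}'<0$) is the delicate algebraic step; I would isolate it as a technical lemma in Appendix~\ref{appendix-B}, in parallel with Lemma~\ref{le-A.1}, and reduce it to a polynomial-type inequality in the variables $x,\xi,m(x)$ by expanding $g$ and $G$ explicitly. This lemma is the only place where the precise cubic/quartic form of $G$ is used.
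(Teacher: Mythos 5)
Your plan follows the paper's own route almost step by step: continuity and point $(ii)$ exactly as in Proposition~\ref{pr-3.1}, the key asymptotics $(1-m(x))/(1-x)\to\sqrt{1+k^{2}/\mu}$ (relation \eqref{eq-3.13}) followed by passage to the limit in \eqref{eq-3.15} for point $(i)$, and the reduction of the monotonicity to a sign condition on $\partial_{\xi}N$, which is precisely the paper's Lemma~\ref{le-A.2} in Appendix~\ref{appendix-A}. The one genuine methodological difference is how you obtain \eqref{eq-3.13}: you expand the energy identity \eqref{eq-3.5} via the Taylor expansion of $G$ at $u=1$, which is more direct than the paper's argument through $m'(x)>0$ and the generalized De l'H\^{o}pital rule; note, however, that expanding $G(m(x))$ about $1$ presupposes $m(x)\to 1$, so you should first record this fact (immediate from \eqref{eq-3.5}, since $G(m(x))=G(x)-(y(x))^{2}/(2\mu)\to G(1)$ and $G$ is strictly increasing on $[0,1]$). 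Two further points: in $(ii)$ the singularity of the integrand along $\mathcal{M}_{\mu}$ is of order $u^{-3/2}$, not $1/u$ (harmless, both are non-integrable); more importantly, for the monotonicity you should not differentiate \eqref{eq-3.15} as written, because its upper limit $(1-m(x))/(1-x)$ varies with $x$ and the radicand vanishes exactly there (by the definition \eqref{eq-3.5} of $m(x)$), so the Leibniz boundary term is not a finite quantity that ``vanishes''; it is ill-defined. The clean fix, and what the paper does, is to differentiate the fixed-endpoint representation \eqref{eq-3.7}, which is equally valid for the line $y(x)=-k(1-x)$: there the relevant vanishing is $N(x,1)=0$ with $N$ as in \eqref{eq-3.10} (a consequence of \eqref{eq-3.11}, exactly as in Proposition~\ref{pr-3.1}), and the inequality $\partial_{\xi}N>0$ of Lemma~\ref{le-A.2} then gives $N(x,\cdot)<0$ on $(0,1)$, hence $T_{l}'<0$. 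With that adjustment, and with the algebraic lemma actually carried out (it is the line-analogue of Lemma~\ref{le-A.1}, which the paper proves as Lemma~\ref{le-A.2} in Appendix~\ref{appendix-A}, not Appendix~\ref{appendix-B}), your argument coincides with the paper's.
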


\begin{proof}
The continuity and $(ii)$ follow exactly as in Proposition~\ref{pr-3.1}. As for statement~$(i)$, we start by showing that
\begin{equation}\label{eq-3.13}
m(x)=1-\sqrt{1+\frac{k^{2}}{\mu}}(1-x)+o(1-x), \quad \text{as $x\to 1^{-}$.}
\end{equation}
Indeed, from \eqref{eq-3.11}, we obtain that
\begin{equation}\label{eq-3.14}
m'(x)=\frac{\mu g(x)-y(x)y'(x)}{\mu g(m(x))}>0, \quad \text{for all $x\in(x_{l},1)$,}
\end{equation}
since $g$ is positive in $(0,1)$, $y$ is negative and $y'$ is positive. Thus, $\lim_{x\to 1}m(x)$ exists and, by taking the limit in \eqref{eq-3.5}, it belongs to $G^{-1}(\{G(1)\})$. Since $G$ is strictly increasing in $[0,1]$, necessarily
\begin{equation*}
\lim_{x\to 1^{-}}m(x)=1.
\end{equation*}
If we denote
\begin{equation*}
l:=\liminf_{x\to 1^{-}} m'(x), \qquad L:=\limsup_{x\to 1^{-}} m'(x),
\end{equation*}
thanks to \eqref{eq-3.14} and the generalized De l'H\^{o}pital's rule, we deduce
\begin{align*}
\frac{\mu+k^{2}}{\mu l}&=\frac{\mu g'(1)-k^{2}}{\mu g'(1) l}
=\liminf_{x\to 1^{-}}\frac{\mu g'(x)-y'(x)^{2}-y(x)y''(x)}{\mu g'(m(x))m'(x)}\leq l
\\
&\leq L\leq \limsup_{x\to 1^{-}}\frac{\mu g'(x)-y'(x)^{2}-y(x)y''(x)}{\mu g'(m(x))m'(x)}=\frac{\mu g'(1)-k^{2}}{\mu g'(1) L} = \frac{\mu+k^{2}}{\mu L}.
\end{align*}
We thus have that all the inequalities in the previous relation are actually equalities and we conclude that
\begin{equation*}
\lim_{x\to 1^{-}}\frac{1-m(x)}{1-x}=\lim_{x\to 1^{-}}m'(x)=\sqrt{1+\frac{k^{2}}{\mu}}.
\end{equation*}
The proof of \eqref{eq-3.13} is complete.
From
\begin{align*}
&\lim_{x\to 1^{-}} \frac{G(1-(1-x)\xi)-G(x)}{(1-x)^{2}}
=\lim_{x\to 1^{-}}\frac{g(1-(1-x)\xi)\xi-g(x)}{-2(1-x)}=
\\
&=\lim_{x\to 1^{-}}\frac{g'(1-(1-x)\xi)\xi^{2}-g'(x)}{2}
=\frac{1-\xi^{2}}{2},
\qquad \text{for every $\xi\in(0,1)$,}
\end{align*}
by passing to the limit as $x\to 1^{-}$ in \eqref{eq-3.15}, we obtain
\begin{equation*}
\lim_{x\to 1^{-}}T_l(x)
=\int_{1}^{\sqrt{1+\frac{k^{2}}{\mu}}}\dfrac{\mathrm{d}\xi}{\sqrt{k^2+\mu (1-\xi^2)}}
=\frac{1}{\sqrt{\mu}}\arctan\frac{k}{\sqrt{\mu}},
\end{equation*}
which proves $(i)$. 

To study the monotonicity of $T_{l}$, we can repeat the computations of the proof of Proposition~\ref{pr-3.1} and conclude by Lemma~\ref{le-A.2}.
\end{proof}

\section{Analysis of connection times from $\Gamma_{0}$ to $\Gamma_{1}$}\label{section-4}

This section is devoted to the study of the time necessary to ``connect'', in the strip $[0,1]\times\mathbb{R}$ of the $(u,v)$-plane, the curve $\Gamma_{0}$ defined in~\eqref{def-Gamma0} with the curve $\Gamma_{1}$ defined in~\eqref{def-Gamma1} by moving along the level lines associated with~\eqref{eq-energy-mu}. We observe that connections are possible only inside the region ``embraced'' by $\mathcal{M}_{\mu}$ (see Figures~\ref{fig-05} and~\ref{fig-07}).
A fundamental property that we repeatedly use throughout this section is that $\Gamma_{0}$ and $\Gamma_{1}$ are mutually symmetric with respect to the axis $\{v=0\}$ (see Remark~\ref{rem-2.1}). 

Since the shapes of $\Gamma_{0}$ and $\Gamma_{1}$ depend on $\lambda$ in accord with Corollaries~\ref{cor-2.1} and~\ref{cor-2.2}, it is convenient to divide the analysis into two cases: $\lambda \in[\lambda^*,+\infty)$ and $\lambda \in(0,\lambda^{*})$.

\subsection{The case $\lambda\in[\lambda^*,+\infty)$}\label{section-4.1}

Let $\lambda\in[\lambda^*,+\infty)$ be fixed. 
From the behavior near $(0,0)$ of $\mathcal{M}_{\mu}$ defined in \eqref{def-stable-manifold}, which is given by
\begin{equation}\label{Mnear0}
|v| = \sqrt{2\mu G(u)} = \sqrt{\dfrac{2}{3}\mu} u^{\frac{3}{2}} + o(u^{\frac{3}{2}}), \quad \text{as $u\to0^{+}$,}
\end{equation}
and the properties of $\Gamma_{0}$ described in Corollary~\ref{cor-2.1}, we can ensure the existence of at least two intersections between $\Gamma_{0}$ and $\mathcal{M}_{\mu}$ in $(0,1)\times\mathbb{R}$. Accordingly, let $s_{0}^{\mathcal{M}},s_{1}^{\mathcal{M}}\in(0,1)$ with $s_{0}^{\mathcal{M}}<s_{1}^{\mathcal{M}}$ be such that
\begin{equation}\label{eq-4.1}
(u_{s_{0}^{\mathcal{M}}}(\sigma),v_{s_{0}^{\mathcal{M}}}(\sigma)),(u_{s_{1}^{\mathcal{M}}}(\sigma),v_{s_{1}^{\mathcal{M}}}(\sigma))\in \mathcal{M}_{\mu}.
\end{equation}
In principle, there could be more than two values $s\in(0,1)$ giving intersections between $\Gamma_{0}$ and $\mathcal{M}_{\mu}$; if it is the case, we consider only the closest to $0$ and to $1$, respectively. This implies that
\begin{equation}\label{uM-order}
u_{s_{0}^{\mathcal{M}}}(\sigma)<u_{s_{1}^{\mathcal{M}}}(\sigma).
\end{equation}
Indeed, thanks to our definition, we have that the points $(u_{s}(\sigma),v_{s}(\sigma))\in\Gamma_{0}$ lie outside the region ``embraced'' by $\mathcal{M}_{\mu}$ for $s\in(s_{0}^{\mathcal{M}},s_{0})\cup(s_{1},s_{1}^{\mathcal{M}})$, where $s_0$ and $s_1$ are the ones given by Proposition~\ref{pr-2.1}~$(iii)$. Therefore, if we assume by contradiction that~\eqref{uM-order} does not hold, by continuity and~\eqref{vsi-order} we have
\begin{equation*}
\left\{(u_{s}(\sigma),v_{s}(\sigma))\colon s\in[s_{0}^{\mathcal{M}},s_{0})\right\}\cap\left\{(u_{s}(\sigma),v_{s}(\sigma))\colon s\in(s_{1},s_{1}^{\mathcal{M}}]\right\}\neq\emptyset,
\end{equation*}
against the uniqueness of solution for the initial value problems~\eqref{eq-initial0}. 

Recalling the expression of the energy $H_{\mu}$ given in \eqref{eq-energy-mu}, we introduce the function
\begin{equation}\label{def-hs}
h_{\mu}(s)=H_{\mu}(u_{s}(\sigma),v_{s}(\sigma)), \quad s\in(0,s_{0}^{\mathcal{M}}]\cup[s_{1}^{\mathcal{M}},1),
\end{equation}
and we extend it by continuity in $s=0$ and $s=1$.
By the sign properties of $H_{\mu}$ in the regions separated by $\mathcal{M}_{\mu}$ (see the discussion in Section~\ref{section-3}), we deduce that $h_{\mu}(s)<0$ for all $s\in(0,s_{0}^{\mathcal{M}})\cup(s_{1}^{\mathcal{M}},1]$ and, moreover, $h_{\mu}$ vanishes in $0$, $s_{0}^{\mathcal{M}}$ and $s_{1}^{\mathcal{M}}$. 
Therefore, the function $h_{\mu}$ has at least one local minimum in $(0,s_{0}^{\mathcal{M}})$. 
At last, it is easy to infer that $s=1$ is a global minimum point of $h_{\mu}$. Indeed, by denoting $(m(u_{s}(\sigma)),0)$ the intersection point between the level line of~\eqref{eq-energy-mu} passing through $(u_{s}(\sigma),v_{s}(\sigma))$ and the $u$-axis, we have $\{h_{\mu}(s) \colon s\in(0,s_{0}^{\mathcal{M}})\cup(s_{1}^{\mathcal{M}},1)\}=\{-G(m(u_{s}(\sigma)))\colon s\in(0,s_{0}^{\mathcal{M}})\cup(s_{1}^{\mathcal{M}},1)\}$, by energy conservation, and the monotonicity of $G$ gives the assertion.

From now on, for sake of simplicity, we study the situation in which $h_{\mu}$ has a unique local minimum point in $(0,s_{0}^{\mathcal{M}})\cup(s_{1}^{\mathcal{M}},1)$. In this case, such a point belongs to $(0,s_{0}^{\mathcal{M}})$ and the  qualitative graph of $h_{\mu}$ is depicted in Figure~\ref{fig-05}~(left). The situation which includes multiple extrema points of $h_{\mu}$ can be treated similarly and contains the minimal configuration we investigate here.

Let us call $s_{0}^{\tau}$ the (local) minimum point of $h_{\mu}$ in $(0,s_{0}^{\mathcal{M}})$,
and $s_{1}^{\tau}$ the point in $(s_{1}^{\mathcal{M}},1)$ such that $h_{\mu}(s_{0}^{\tau})= h_{\mu}(s_{1}^{\tau})$. 
We stress that
\begin{equation*}
0 < s_{0}^{\tau} < s_{0}^{\mathcal{M}} < s_{1}^{\mathcal{M}} < s_{1}^{\tau} < 1
\end{equation*}
and that the level line $H_{\mu}(u,v)=H_{\mu}(u_{s_{0}^{\tau}}(\sigma),v_{s_{0}^{\tau}}(\sigma))$ is tangent to the curve $\Gamma_{0}$ in $(u_{s_{0}^{\tau}}(\sigma),v_{s_{0}^{\tau}}(\sigma))$. See Figure~\ref{fig-05}~(right) for a graphical representation.

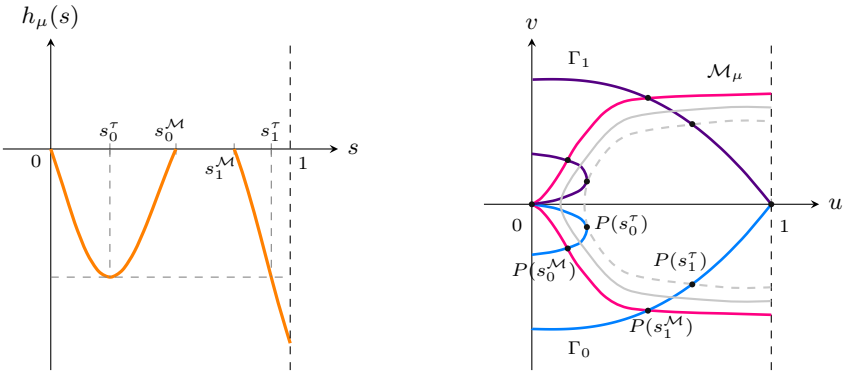
\begin{figure}[htb]
\begin{tikzpicture}
\begin{axis}[
  tick label style={font=\scriptsize},
  axis y line=middle, 
  axis x line=middle,
  xtick={0,0.247,0.523,0.766,0.921},
  ytick={0},
  xticklabels={},
  yticklabels={},
  xlabel={\small $s$},
  ylabel={\small $h_{\mu}(s)$},
every axis x label/.style={
    at={(ticklabel* cs:1.0)},
    anchor=west,
},
every axis y label/.style={
    at={(ticklabel* cs:1.0)},
    anchor=south,
},
  width=6cm,
  height=6cm,
  xmin=-0.2,
  xmax=1.2,
  ymin=-2,
  ymax=1]
\addplot [color=fst-orange,line width=1.2pt,smooth] coordinates {(0,0) (0.05, -0.328455) (0.1, -0.667727) (0.15, -0.937504) (0.2, -1.10656) (0.25, -1.15899) (0.3, -1.09412) (0.35, -0.926122) (0.4, -0.682006) (0.45, -0.398265) (0.5, -0.116374) (0.523, 0)};
\addplot [color=fst-orange,line width=1.2pt,smooth] coordinates {(0.766, 0) (0.8, -0.210962) (0.85, -0.579163) (0.9, -0.987341) (0.95, -1.39377) (1, -1.75699)};
\addplot [mark=none,dashed,color=black] coordinates {(1,-2) (1,2)};
\addplot [mark=none,dashed,color=gray] coordinates {(0.247,0) (0.247,-1.159)};
\addplot [mark=none,dashed,color=gray] coordinates {(0.921,0) (0.921,-1.159)};
\addplot [mark=none,dashed,color=gray] coordinates {(0,-1.159) (1,-1.159)};
\node[label={220:{\scriptsize{0}}}] at (axis cs:0.04,0.1) {};
\node[label={310:{\scriptsize{1}}}] at (axis cs:0.96,0.1) {};
\node[label={90:{\scriptsize{$s_{0}^{\tau}$}}}] at (axis cs:0.247,-0.1) {};
\node[label={90:{\scriptsize{$s_{0}^{\mathcal{M}}$}}}] at (axis cs:0.5,-0.1) {};
\node[label={270:{\scriptsize{$s_{1}^{\mathcal{M}}$}}}] at (axis cs:0.71,0.1) {};
\node[label={90:{\scriptsize{$s_{1}^{\tau}$}}}] at (axis cs:0.921,-0.1) {};
\end{axis}
\end{tikzpicture}
\qquad\qquad
\begin{tikzpicture}
\begin{axis}[
  tick label style={font=\scriptsize},
  axis y line=middle, 
  axis x line=middle,
  xtick={1},
  ytick={0},
  xticklabels={},
  yticklabels={},
  xlabel={\small $u$},
  ylabel={\small $v$},
every axis x label/.style={
    at={(ticklabel* cs:1.0)},
    anchor=west,
},
every axis y label/.style={
    at={(ticklabel* cs:1.0)},
    anchor=south,
},
  width=6cm,
  height=6cm,
  xmin=-0.2,
  xmax=1.2,
  ymin=-12,
  ymax=12]
\addplot [mark=none,dashed,color=black] coordinates {(1,-12) (1,12)};
\addplot [color=fst-red,line width=1pt,smooth] coordinates {(0,0) (0.0333333, 0.353815) (0.0666667, 0.987827) (0.1, 1.79072) (0.133333, 2.71948) (0.166667, 3.74743) (0.2, 4.6) (0.3,6.6) (0.4,7.5) (0.6, 7.8) (0.8,7.9) (1,8)};
\addplot [color=fst-red,line width=1pt,smooth] coordinates {(0,0) (0.0333333, -0.353815) (0.0666667, -0.987827) (0.1, -1.79072) (0.133333, -2.71948) (0.166667, -3.74743) (0.2, -4.6) (0.3,-6.6) (0.4,-7.5) (0.6, -7.8) (0.8,-7.9) (1,-8)};
\addplot [color=fst-blue,line width=1pt,smooth] coordinates {(0, -9) (0.0031357, -9.02216) (0.0178438, -9.0298) (0.0332339, -9.03671) (0.0493606, -9.04243) (0.0662844, -9.04639) (0.0840732, -9.04788) (0.102803, -9.04604) (0.12256, -9.03981) (0.14344, -9.02788) (0.165554, -9.00866) (0.189025, -8.98016) (0.213998, -8.93994) (0.240635, -8.88496) (0.269127, -8.81146) (0.299692, -8.71467) (0.332586, -8.58865) (0.368111, -8.42583) (0.40662, -8.21656) (0.448538, -7.94844) (0.49437, -7.60538) (0.54473, -7.1663) (0.600368, -6.6033) (0.662208, -5.87893) (0.731405, -4.94234) (0.809422, -3.72329) (0.898136, -2.12304) (1, 0)};
\addplot [color=fst-blue,line width=1pt,smooth] coordinates {(0,0) (0.1, -0.3) (0.2,-1) (0.23,-1.8) (0.2,-2.8) (0.1,-3.4) (0, -3.65)};
\addplot [color=fst-purple,line width=1pt,smooth] coordinates {(0,0) (0.1, 0.3) (0.2,1) (0.23,1.8) (0.2,2.8) (0.1,3.4) (0, 3.65)};
\addplot [color=fst-purple,line width=1pt,smooth] coordinates {(0.0031357, 9.02216) (0.0178438, 9.0298) (0.0332339, 9.03671) (0.0493606, 9.04243) (0.0662844, 9.04639) (0.0840732, 9.04788) (0.102803, 9.04604) (0.12256, 9.03981) (0.14344, 9.02788) (0.165554, 9.00866) (0.189025, 8.98016) (0.213998, 8.93994) (0.240635, 8.88496) (0.269127, 8.81146) (0.299692, 8.71467) (0.332586, 8.58865) (0.368111, 8.42583) (0.40662, 8.21656) (0.448538, 7.94844) (0.49437, 7.60538) (0.54473, 7.1663) (0.600368, 6.6033) (0.662208, 5.87893) (0.731405, 4.94234) (0.809422, 3.72329) (0.898136, 2.12304) (1, 0)};
\addplot [color=fst-gray,line width=0.8pt,smooth,dashed] coordinates {(1, -6) (0.8, -6) (0.4, -5) (0.25, -2.3) (0.22, 0) (0.25, 2.3) (0.4, 5) (0.8, 6) (1, 6)};
\addplot [color=fst-gray,line width=0.8pt,smooth] coordinates {(1, -7) (0.8, -7) (0.4, -6.2) (0.22, -3.2) (0.12, 0) (0.22, 3.2) (0.4, 6.2) (0.8, 7) (1, 7)};
\node[label={220:{\scriptsize{0}}}] at (axis cs:0.04,0.2) {};
\node[label={310:{\scriptsize{1}}}] at (axis cs:0.96,0.2) {};
\node[label={270:{\scriptsize{$\Gamma_{0}$}}}] at (axis cs:0.2,-8.6) {};
\node[label={90:{\scriptsize{$\Gamma_{1}$}}}] at (axis cs:0.2,8.6) {};
\node[label={90:{\scriptsize{$\mathcal{M}_{\mu}$}}}] at (axis cs:0.81,7.5) {};
\node[label={0:{\scriptsize{$P(s_{0}^{\tau})$}}}] at (axis cs:0.18,-1.5) {};
\node[label={90:{\scriptsize{$P(s_{1}^{\tau})$}}}] at (axis cs:0.62,-6.2) {};
\node[label={240:{\scriptsize{$P(s_{0}^{\mathcal{M}})$}}}] at (axis cs:0.25,-2.6) {};
\node[label={270:{\scriptsize{$P(s_{1}^{\mathcal{M}})$}}}] at (axis cs:0.54,-6.7) {};
\node[circle,fill,inner sep=0.8pt,fst-black] at (axis cs:0.23,1.65) {};
\node[circle,fill,inner sep=0.8pt,fst-black] at (axis cs:0.67,5.8) {};
\node[circle,fill,inner sep=0.8pt,fst-black] at (axis cs:0.15,3.2) {};
\node[circle,fill,inner sep=0.8pt,fst-black] at (axis cs:0.485,7.7) {};
\node[circle,fill,inner sep=0.8pt,fst-black] at (axis cs:0.23,-1.65) {};
\node[circle,fill,inner sep=0.8pt,fst-black] at (axis cs:0.67,-5.8) {};
\node[circle,fill,inner sep=0.8pt,fst-black] at (axis cs:0.15,-3.2) {};
\node[circle,fill,inner sep=0.8pt,fst-black] at (axis cs:0.485,-7.7) {};
\node[circle,fill,inner sep=0.8pt,fst-black] at (axis cs:0,0) {};
\node[circle,fill,inner sep=0.8pt,fst-black] at (axis cs:1,0) {};
\end{axis}
\end{tikzpicture}
\caption{For $\lambda\in[\lambda^*,+\infty)$, qualitative representations of the graph of the function $h_{\mu}$ defined in~\eqref{def-hs} (left) and of $\Gamma_0$ (blue), $\Gamma_1$ (violet), $\mathcal{M}_{\mu}$ (pink) along with some level lines of~\eqref{eq-energy-mu} (gray) in the $(u,v)$-plane (right). We set $P(s)=(u_{s}(\sigma),v_{s}(\sigma))$.}    
\label{fig-05}
\end{figure}

Since we can parametrize the curve $\Gamma_{0}\cap\{(u,v)\in(0,1)\times\mathbb{R}\colon H_{\mu}(u,v)>0\}$ using the parameter $s\in(0,s_{0}^{\mathcal{M}})\cup(s_{1}^{\mathcal{M}},1)$, we divide the study of the connection times into four cases according to whether $s$ belongs to one of the following intervals: $(0,s_{0}^{\tau})$, $(s_{0}^{\tau},s_{0}^{\mathcal{M}})$, $(s_{1}^{\mathcal{M}},s_{1}^{\tau})$, and $(s_{1}^{\tau},1)$. We treat $s=s_{0}^{\tau}$ and $s=s_{1}^{\tau}$ as limit cases.
The graphs of these connection times are shown in Figure~\ref{fig-06}.

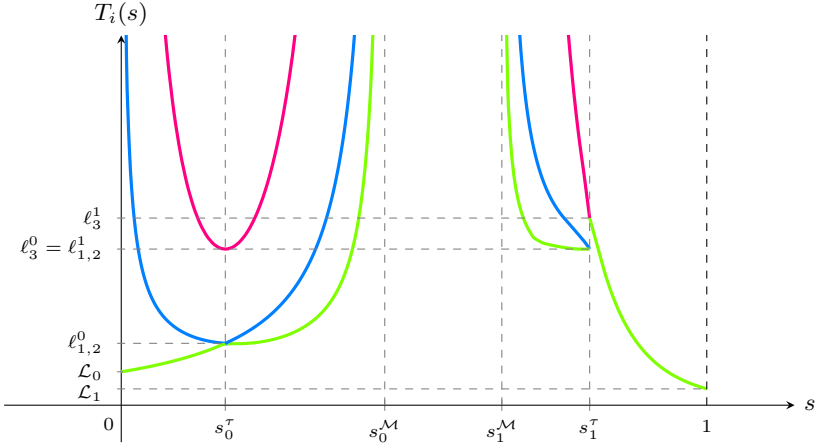
\begin{figure}[htb]
\begin{tikzpicture}
\begin{axis}[
  tick label style={font=\scriptsize},
  axis y line=middle, 
  axis x line=middle,
  xtick={0,0.1777,0.45,0.65,0.8},
  ytick={0},
  xticklabels={},
  yticklabels={},
  xlabel={\small $s$},
  ylabel={\small $T_{i}(s)$},
every axis x label/.style={
    at={(ticklabel* cs:1.0)},
    anchor=west,
},
every axis y label/.style={
    at={(ticklabel* cs:1.0)},
    anchor=south,
},
  width=12cm,
  height=7cm,
  xmin=-0.2,
  xmax=1.15,
  ymin=-0.9,
  ymax=9]
\addplot [color=fst-blue,line width=1.2pt,smooth] coordinates {(0.0045, 13.6009) (0.009, 8.40187) (0.0135, 6.3544) (0.018, 5.22222) (0.0225, 4.49225) (0.027, 3.97792) (0.0315, 3.59387) (0.036, 3.29513) (0.0405, 3.05559) (0.045, 2.85898) (0.0495, 2.6946) (0.054, 2.55511) (0.0585, 2.43527) (0.063, 2.33127) (0.0675, 2.24023) (0.072, 2.15996) (0.0765, 2.08875) (0.081, 2.02526) (0.0855, 1.96839) (0.09, 1.91726) (0.0945, 1.87116) (0.099, 1.82949) (0.1035, 1.79174) (0.108, 1.7575) (0.1125, 1.7264) (0.117, 1.69816) (0.1215, 1.6725) (0.126, 1.64921) (0.1305, 1.62808) (0.135, 1.60897) (0.1395, 1.59171) (0.144, 1.57618) (0.1485, 1.56228) (0.153, 1.5499) (0.1575, 1.53897) (0.162, 1.5294) (0.1665, 1.52114) (0.171, 1.51413) (0.1755, 1.50832) (0.18, 1.50368) (0.1845, 1.50017) (0.1777, 1.49617)}; 
\addplot [color=fst-green,line width=1.2pt,smooth] coordinates {(0.1777, 1.49617) (0.2025, 1.49696) (0.207, 1.49879) (0.2115, 1.50166) (0.216, 1.50558) (0.2205, 1.51055) (0.225, 1.51658) (0.2295, 1.52369) (0.234, 1.53189) (0.2385, 1.54122) (0.243, 1.5517) (0.2475, 1.56337) (0.252, 1.57628) (0.2565, 1.59046) (0.261, 1.60597) (0.2655, 1.62288) (0.27, 1.64126) (0.2745, 1.66118) (0.279, 1.68273) (0.2835, 1.70602) (0.288, 1.73116) (0.2925, 1.75827) (0.297, 1.7875) (0.3015, 1.81901) (0.306, 1.85298) (0.3105, 1.88962) (0.315, 1.92917) (0.3195, 1.97189) (0.324, 2.01808) (0.3285, 2.06811) (0.333, 2.12238) (0.3375, 2.18135) (0.342, 2.24558) (0.3465, 2.31571) (0.351, 2.39249) (0.3555, 2.47682) (0.36, 2.56978) (0.3645, 2.67267) (0.369, 2.78705) (0.3735, 2.91487) (0.378, 3.05854) (0.3825, 3.2211) (0.387, 3.40642) (0.3915, 3.61957) (0.396, 3.86721) (0.4005, 4.15839) (0.405, 4.50567) (0.4095, 4.92704) (0.414, 5.44919) (0.4185, 6.11361) (0.423, 6.98858) (0.4275, 8.19521) (0.432, 9.97148)};
\addplot [color=fst-green,line width=1.2pt,smooth] coordinates {(0, 0.8125) (0.0045, 0.823232) (0.009, 0.834184) (0.0135, 0.845361) (0.018, 0.856771) (0.0225, 0.868421) (0.027, 0.880319) (0.0315, 0.892473) (0.036, 0.904891) (0.0405, 0.917582) (0.045, 0.930556) (0.0495, 0.94382) (0.054, 0.957386) (0.0585, 0.971264) (0.063, 0.985465) (0.0675, 1.) (0.072, 1.01488) (0.0765, 1.03012) (0.081, 1.04573) (0.0855, 1.06173) (0.09, 1.07813) (0.0945, 1.09494) (0.099, 1.11218) (0.1035, 1.12987) (0.108, 1.14803) (0.1125, 1.16667) (0.117, 1.18581) (0.1215, 1.20548) (0.126, 1.22569) (0.1305, 1.24648) (0.135, 1.26786) (0.1395, 1.28986) (0.144, 1.3125) (0.1485, 1.33582) (0.153, 1.35985) (0.1575, 1.38462) (0.162, 1.41016) (0.1665, 1.43651) (0.171, 1.46371) (0.1777, 1.4918)}; 
\addplot [color=fst-blue,line width=1.2pt,smooth] coordinates {(0.1777, 1.4918) (0.18, 1.52083) (0.1845, 1.55085) (0.189, 1.5819) (0.1935, 1.61404) (0.198, 1.64732) (0.2025, 1.68182) (0.207, 1.71759) (0.2115, 1.75472) (0.216, 1.79327) (0.2205, 1.83333) (0.225, 1.875) (0.2295, 1.91837) (0.234, 1.96354) (0.2385, 2.01064) (0.243, 2.05978) (0.2475, 2.11111) (0.252, 2.16477) (0.2565, 2.22093) (0.261, 2.27976) (0.2655, 2.34146) (0.27, 2.40625) (0.2745, 2.47436) (0.279, 2.54605) (0.2835, 2.62162) (0.288, 2.70139) (0.2925, 2.78571) (0.297, 2.875) (0.3015, 2.9697) (0.306, 3.07031) (0.3105, 3.17742) (0.315, 3.29167) (0.3195, 3.41379) (0.324, 3.54464) (0.3285, 3.68519) (0.333, 3.83654) (0.3375, 4.) (0.342, 4.17708) (0.3465, 4.36957) (0.351, 4.57955) (0.3555, 4.80952) (0.36, 5.0625) (0.3645, 5.34211) (0.369, 5.65278) (0.3735, 6.) (0.378, 6.39063) (0.3825, 6.83333) (0.387, 7.33929) (0.3915, 7.92308) (0.396, 8.60417) (0.4005, 9.40909)};
\addplot [color=fst-red,line width=1.2pt,smooth] coordinates {(0.05625, 13.1918) (0.0675, 10.4071) (0.07875, 8.49608) (0.09, 7.12831) (0.10125, 6.12346) (0.1125, 5.37529) (0.12375, 4.8177) (0.135, 4.40787) (0.14625, 4.11737) (0.1575, 3.92711) (0.16875, 3.82448) (0.18, 3.80152) (0.19125, 3.85388) (0.2025, 3.98026) (0.21375, 4.18209) (0.225, 4.46353) (0.23625, 4.83168) (0.2475, 5.29703) (0.25875, 5.87428) (0.27, 6.58351) (0.28125, 7.45205) (0.2925, 8.5172) (0.30375, 9.83056)};
\addplot [color=fst-red,line width=1.2pt,smooth] coordinates {(0.656667, 587.754) (0.663333, 255.479) (0.67, 152.494) (0.676667, 103.686) (0.683333, 75.7382) (0.69, 57.9029) (0.696667, 45.6906) (0.703333, 36.9048) (0.71, 30.3484) (0.716667, 25.3157) (0.723333, 21.3653) (0.73, 18.2075) (0.736667, 15.645) (0.743333, 13.5389) (0.75, 11.7891) (0.756667, 10.3216) (0.763333, 9.08065) (0.77, 8.02372) (0.776667, 7.11769) (0.783333, 6.33652) (0.79, 5.65952) (0.8, 4.55448)};
\addplot [color=fst-green,line width=1.2pt,smooth] coordinates {(0.8, 4.55448) (0.823333, 3.3505) (0.83, 3.03783) (0.836667, 2.75975) (0.843333, 2.51176) (0.85, 2.29005) (0.856667, 2.09136) (0.863333, 1.91292) (0.87, 1.75231) (0.876667, 1.60747) (0.883333, 1.4766) (0.89, 1.35815) (0.896667, 1.25075) (0.903333, 1.15322) (0.91, 1.06451) (0.916667, 0.983704) (0.923333, 0.909994) (0.93, 0.842666) (0.936667, 0.781087) (0.943333, 0.724697) (0.95, 0.672994) (0.956667, 0.625536) (0.963333, 0.581924) (0.97, 0.541805) (0.976667, 0.504859) (0.983333, 0.470802) (0.99, 0.439377) (0.996667, 0.410353) (1, 0.396675)};
\addplot [color=fst-blue,line width=1.2pt,smooth] coordinates {(0.655, 41.1679) (0.66, 21.9394) (0.665, 15.5313) (0.67, 12.3284) (0.675, 10.4074) (0.68, 9.12745) (0.685, 8.21376) (0.69, 7.52899) (0.695, 6.9968) (0.7, 6.57143) (0.705, 6.22373) (0.71, 5.93427) (0.715, 5.68962) (0.72, 5.48016) (0.725, 5.29885) (0.73, 5.14041) (0.735, 5.0008) (0.74, 4.87688) (0.745, 4.76616) (0.75, 4.66667) (0.755, 4.57679) (0.76, 4.49522) (0.79, 4) (0.8, 3.8)};
\addplot [color=fst-green,line width=1.2pt,smooth] coordinates {(0.655, 11.4643) (0.66, 7.77634) (0.665, 6.3978) (0.67, 5.65816) (0.675, 5.19088) (0.68, 4.86639) (0.685, 4.62665) (0.69, 4.44158) (0.695, 4.29396) (0.7, 4.1732) (0.705, 4.07239) (0.72, 3.95) (0.74, 3.88) (0.76, 3.83) (0.78, 3.8) (0.8, 3.8)};
\node[label={220:{\scriptsize{0}}}] at (axis cs:0.02,0.1) {};
\node[label={270:{\scriptsize{1}}}] at (axis cs:1,0.1) {};
\node[label={270:{\scriptsize{$s_{0}^{\tau}$}}}] at (axis cs:0.1777,0.1) {};
\node[label={270:{\scriptsize{$s_{0}^{\mathcal{M}}$}}}] at (axis cs:0.45,0.1) {};
\node[label={270:{\scriptsize{$s_{1}^{\mathcal{M}}$}}}] at (axis cs:0.65,0.1) {};
\node[label={270:{\scriptsize{$s_{1}^{\tau}$}}}] at (axis cs:0.8,0.1) {};
\node[label={180:{\scriptsize{$\mathcal{L}_{0}$}}}] at (axis cs:0,0.8) {};
\node[label={180:{\scriptsize{$\mathcal{L}_{1}$}}}] at (axis cs:0,0.3) {};
\node[label={180:{\scriptsize{$\ell_{1,2}^{0}$}}}] at (axis cs:0,1.5059) {};
\node[label={180:{\scriptsize{$\ell_{3}^{0}=\ell_{1,2}^{1}$}}}] at (axis cs:0,3.8) {};
\node[label={180:{\scriptsize{$\ell_{3}^{1}$}}}] at (axis cs:0,4.555) {};
\addplot [mark=none,dashed,color=gray] coordinates {(-0.007,0.396675) (1,0.396675)};
\addplot [mark=none,dashed,color=gray] coordinates {(-0.007,0.8125) (0.003,0.8125)};
\addplot [mark=none,dashed,color=gray] coordinates {(-0.007,1.5059) (0.1777,1.5059)};
\addplot [mark=none,dashed,color=gray] coordinates {(-0.007,3.8) (0.8,3.8)};
\addplot [mark=none,dashed,color=gray] coordinates {(-0.007, 4.555) (0.8, 4.555)};
\addplot [mark=none,dashed,color=black] coordinates {(1,0) (1,12)};
\addplot [mark=none,dashed,color=gray] coordinates {(0.1777,0) (0.1777,10)};
\addplot [mark=none,dashed,color=gray] coordinates {(0.45,0) (0.45,10)};
\addplot [mark=none,dashed,color=gray] coordinates {(0.65,0) (0.65,10)};
\addplot [mark=none,dashed,color=gray] coordinates {(0.8,0) (0.8,10)};
\end{axis}
\end{tikzpicture}
\caption{For $\lambda\in[\lambda^*,+\infty)$, qualitative graphs of $T_{i}$, with $i=1,2,3$, which are the times necessary to connect $\Gamma_{0}$ to $\Gamma_{1}$, as functions of the initial data $u(0)=s$ of~\eqref{eq-initial0}: $T_1$~(green), $T_2$ (blue), and $T_3$ (pink).}    
\label{fig-06}
\end{figure}

\smallskip
\noindent
\textit{Case~1.} Let $s\in (0,s_{0}^{\tau})$. In this case, by the properties of $h_{\mu}$, there are three solutions $\xi^{s}_{1},$ $\xi^{s}_{2},$ and $\xi^{s}_{3}$ of $h_{\mu}(s)=h_{\mu}(\xi)$ with
\begin{equation}\label{eq-3points}
0 < \xi^{s}_{1}=s < s_{0}^{\tau} < \xi^{s}_{2} < s_{0}^{\mathcal{M}}  < s_{1}^{\mathcal{M}}< \xi^{s}_{3} < s_{1}^{\tau} < 1.
\end{equation}
Let $x_{i}(s)=u_{\xi^{s}_{i}}(\sigma)$ and $y_{i}(s)=-v_{\xi^{s}_{i}}(\sigma)$, so that the three points $(x_{i}(s),y_{i}(s))$, $i=1,2,3$, belong to the intersection between $\Gamma_{1}$ and the level lines of~\eqref{eq-energy-mu} passing through $(u_{s}(\sigma),v_{s}(\sigma))$. We remark that, for all $s\in(0,s_{0}^{\tau})$,
\begin{equation}\label{xi-order}
x_{1}(s)<x_{2}(s)<x_{3}(s).
\end{equation}
Indeed,  let $\bar{s}\in(0,s_{0}^{\tau})$ be fixed and let $\mathcal{O}$ denote the level line of~\eqref{eq-energy-mu} passing through $(u_{\bar{s}}(\sigma),v_{\bar{s}}(\sigma))$. Thanks to our minimality assumption, by comparing the values of the energy~\eqref{eq-energy-mu} on the points of $\Gamma_{0}$ with the value of the energy on $\mathcal{O}$ (see Figure~\ref{fig-05}), we have that the points $(u_{s}(\sigma),v_{s}(\sigma))\in\Gamma_{0}$ lie outside the region ``embraced'' by $\mathcal{O}$ for $s\in(0,\xi_{1}^{\bar{s}})\cup(\xi_{2}^{\bar{s}},s_{0}^{\mathcal{M}})\cup(s_{1}^{\mathcal{M}},\xi_{3}^{\bar{s}})$, while they lie inside for $s\in(\xi_{1}^{\bar{s}},\xi_{2}^{\bar{s}})\cup(\xi_{3}^{\bar{s}},1)$. Therefore, if we assume by contradiction that $x_{2}(\bar{s})\leq x_{1}(\bar{s})$, by continuity we have
\begin{equation*}
\left\{(u_{s}(\sigma),v_{s}(\sigma))\colon s\in(0,\xi_{1}^{\bar{s}}]\right\}\cap\left\{(u_{s}(\sigma),v_{s}(\sigma))\colon s\in[\xi_{2}^{\bar{s}},s_{0}^{\mathcal{M}})\right\}\neq\emptyset,
\end{equation*}
against the uniqueness of solution for the initial value problems~\eqref{eq-initial0}. A similar reasoning, together with~\eqref{uM-order}, shows that $x_{2}(\bar{s})<x_{3}(\bar{s})$.

The time necessary for a point $(u_{s}(\sigma),v_{s}(\sigma))\in\Gamma_0$ to reach $\Gamma_{1}$ the first, the second, and the third time is defined as follows
\begin{equation}\label{def-T123}
\begin{aligned}
T_{i}(s)&= \int_{m(u_{s}(\sigma))}^{u_{s}(\sigma)} \dfrac{\mathrm{d}u}{\sqrt{(v_{s}(\sigma))^2+2\mu(G(u)-G(u_{s}(\sigma)))}} 
\\& \quad + \int_{m(u_{s}(\sigma))}^{x_{i}(s)} \dfrac{\mathrm{d}u}{\sqrt{(v_{s}(\sigma))^2+2\mu(G(u)-G(u_{s}(\sigma)))}}, \quad i=1,2,3.
\end{aligned}
\end{equation}
Therefore, all the three connection times $T_{i}$, $i=1,2,3$, are continuous since $s\mapsto (u_{s}(\sigma),v_{s}(\sigma))$ is continuous. Moreover, due to \eqref{xi-order}, they satisfy, for all $s\in(0,s_{0}^{\tau})$,
\begin{equation}\label{eq-ineq-t}
0 < T_{1}(s) < T_{2}(s) < T_{3}(s).
\end{equation}

First, we claim that
\begin{equation}\label{eq-aux-time1}
\lim_{s\to 0^{+}} T_{1}(s) = \dfrac{2 \lambda \sigma}{\mu} =: \mathcal{L}_{0}.
\end{equation}
To prove the claim, for all $k\in(0,+\infty)$, let us define
\begin{equation*}
\mathfrak{P}(k):=\bigl{\{}(x,-k x^{2})\colon x\in[0,1] \bigr{\}},
\end{equation*}
and denote through $T_{\mathfrak{P}(k)}(P)$ the time necessary for a point $P=(x_{P},y_{P})\in \mathfrak{P}(k)$ to reach $(x_{P},-y_{P})$ by moving along the level lines of~\eqref{eq-energy-mu}.
By Proposition~\ref{pr-2.2}~$(i)$, we have that $\Gamma_{0}$ behaves like the parabola $\mathfrak{P}(\lambda\sigma)$ in a neighborhood of $(0,0)$. As a consequence, for every $k_{-},k_{+}\in(0,+\infty)$ with $k_{-}<\lambda \sigma<k_{+}$, there exist two neighborhoods $\mathcal{U}_{0}(k_{-})$ and $\mathcal{U}_{0}(k_{+})$ of $(0,0)$ such that in $\mathcal{U}_{0}(k_{-})\cap\mathcal{U}_{0}(k_{+})$ the curve $\Gamma_{0}$ is between the two parabolas $\mathfrak{P}(k_{-})$ and $\mathfrak{P}(k_{+})$. 
Therefore, we have that
\begin{equation*}
T_{\mathfrak{P}(k_{-})}(\zeta^{-}_{s}) < T_{1}(s) < T_{\mathfrak{P}(k_{+})}(\zeta^{+}_{s}), \quad \text{for all $s$ sufficiently close to $0$,}
\end{equation*}
where $\zeta^{-}_{s},\zeta^{+}_{s}$ are the abscissas of the first intersection points between the level line of~\eqref{eq-energy-mu} passing through $(u_{s}(\sigma),v_{s}(\sigma))$ and the parabolas $\mathfrak{P}(k_{-})$ and $\mathfrak{P}(k_{+})$, respectively (for $s$ small enough). Using the energy conservation, we have $\zeta^{\pm}_{s}\to0$ as $s\to0^{+}$. Next, from Proposition~\ref{pr-3.1}~$(i)$, we infer
\begin{equation*}
\dfrac{2 k_{-}}{\mu} = \lim_{s\to 0^{+}} T_{\mathfrak{P}(k_{-})}(\zeta^{-}_{s}) \leq \lim_{s\to 0^{+}} T_{1}(s) \leq \lim_{s\to 0^{+}} T_{\mathfrak{P}(k_{+})}(\zeta^{+}_{s}) = \dfrac{2 k_{+}}{\mu}.
\end{equation*}
From the arbitrariness of $k_{-}$ and $k_{+}$, \eqref{eq-aux-time1} follows and the claim is proved.

Next, we claim that
\begin{equation*}
\lim_{s\to 0^{+}} T_{2}(s) = \lim_{s\to 0^{+}} T_{3}(s) = + \infty.
\end{equation*}
Since $0<m(u_{s}(\sigma))<u_{s}(\sigma)<s$ for all $s\in(0,s_{0}^{\mathcal{M}})\cup(s_{1}^{\mathcal{M}},1)$,
we have that
\begin{equation*}
\lim_{s\to0^{+}} (u_{s}(\sigma),v_{s}(\sigma)) = (0,0),
\qquad
\lim_{s\to 0^{+}} m(u_{s}(\sigma)) = 0.
\end{equation*}
Recalling \eqref{eq-3points}, the facts that $h_{\mu}(s_{0}^{\mathcal{M}})=0$, $h_{\mu}<0$ in $(s_{0}^{\tau},s_{0}^{\mathcal{M}})$, and $h_{\mu}(s)\to0$ as $s\to0^{+}$ (see also Figure~\ref{fig-05}), we deduce that $\lim_{s\to0^{+}}\xi^{s}_{2}=s_{0}^{\mathcal{M}}$ and so 
\begin{equation*}
\lim_{s\to 0^{+}} x_{2}(s) = u_{s_{0}^{\mathcal{M}}}(\sigma) > 0.
\end{equation*}
Consequently, we have
\begin{align*}
\lim_{s\to 0^{+}} T_{3}(s) \geq \lim_{s\to 0^{+}} T_{2}(s) 
&\geq\lim_{s\to 0^{+}} \int_{m(u_{s}(\sigma))}^{x_{2}(s)} \dfrac{\mathrm{d}u}{\sqrt{(v_{s}(\sigma))^2+2\mu(G(u)-G(u_{s}(\sigma)))}}
\\
&= \int_{0}^{u_{s_{0}^{\mathcal{M}}}(\sigma)} \dfrac{\mathrm{d}u}{\sqrt{2\mu G(u)}}
= + \infty,
\end{align*}
since $G(u)= u^{3}/3 + o(u^{3})$ as $u\to0^{+}$.
Then, the claim is proved.

Third, since $\lim_{s\to (s_{0}^{\tau})^{-}} \xi^{s}_{1} = \lim_{s\to (s_{0}^{\tau})^{-}} \xi^{s}_{2} = s_{0}^{\tau}$, and $\lim_{s\to (s_{0}^{\tau})^{-}} \xi^{s}_{3} = s_{1}^{\tau}$, we have
\begin{equation*}
\lim_{s\to (s_{0}^{\tau})^{-}} x_{1}(s) = 
\lim_{s\to (s_{0}^{\tau})^{-}} x_{2}(s) =
u_{s_{0}^{\tau}}(\sigma), 
\qquad
\lim_{s\to (s_{0}^{\tau})^{-}} x_{3}(s) = u_{s_{1}^{\tau}}(\sigma).
\end{equation*}
Therefore, we obtain
\begin{equation*}
\lim_{s\to (s_{0}^{\tau})^{-}} T_{1}(s) = \lim_{s\to (s_{0}^{\tau})^{-}} T_{2}(s) = \ell^{0}_{1,2},
\qquad \lim_{s\to (s_{0}^{\tau})^{-}} T_{3}(s) = \ell^{0}_{3},
\end{equation*}
where
\begin{align}
\ell^{0}_{1,2} &:= 2\int_{m(u_{s_{0}^{\tau}}(\sigma))}^{u_{s_{0}^{\tau}}(\sigma)} \dfrac{\mathrm{d}u}{\sqrt{(v_{s_{0}^{\tau}}(\sigma))^2+2\mu(G(u)-G(u_{s_{0}^{\tau}}(\sigma)))}},
\label{def-l120}
\\
\ell^{0}_{3} &:= \int_{m(u_{s_{0}^{\tau}}(\sigma))}^{u_{s_{0}^{\tau}}(\sigma)} \dfrac{\mathrm{d}u}{\sqrt{(v_{s_{0}^{\tau}}(\sigma))^2+2\mu(G(u)-G(u_{s_{0}^{\tau}}(\sigma)))}}
\\ &\quad +\int_{m(u_{s_{0}^{\tau}}(\sigma))}^{u_{s_{1}^{\tau}}(\sigma)} \dfrac{\mathrm{d}u}{\sqrt{(v_{s_{0}^{\tau}}(\sigma))^2+2\mu(G(u)-G(u_{s_{0}^{\tau}}(\sigma)))}}.
\label{def-l30}
\end{align}
We notice that $0<\ell^{0}_{1,2}<\ell^{0}_{3}<+\infty$ since $u_{s_{0}^{\tau}}(\sigma)<u_{s_{1}^{\tau}}(\sigma)$.

\smallskip
\noindent
\textit{Case~2.} Let $s\in (s_{0}^{\tau},s_{0}^{\mathcal{M}})$. Here, by arguing as in Case~1, one can prove that there are three intersection points $(x_{i}(s),y_{i}(s))$, $i=1,2,3$,  between $\Gamma_{1}$ and the level line of~\eqref{eq-energy-mu} passing through $(u_{s}(\sigma),v_{s}(\sigma))$, and satisfying \eqref{xi-order}. Therefore, we define the times $T_{i}$, with $i=1,2,3$, as in \eqref{def-T123}, which are continuous and satisfy \eqref{eq-ineq-t} for $s$ in the considered range.
A simple argument shows that
\begin{equation*}
\lim_{s\to (s_{0}^{\tau})^{+}} T_{1}(s) = \lim_{s\to (s_{0}^{\tau})^{+}} T_{2}(s) = \ell^{0}_{1,2},
\qquad \lim_{s\to (s_{0}^{\tau})^{+}} T_{3}(s) = \ell^{0}_{3}.
\end{equation*}
From
\begin{align*}
&\lim_{s\to (s_{0}^{\mathcal{M}})^{-}} (x_{1}(s),y_{1}(s)) = (0,0),
\\
&\lim_{s\to (s_{0}^{\mathcal{M}})^{-}} (v_{s}(\sigma))^2- 2\mu G(u_{s}(\sigma)) = 
\lim_{s\to (s_{0}^{\mathcal{M}})^{-}} (y_{1}(s))^2- 2\mu G(x_{1}(s)) = 0,
\\
&\lim_{s\to (s_{0}^{\mathcal{M}})^{-}} u_{s}(\sigma) = u_{s_{0}^{\mathcal{M}}}(\sigma) \in(0,1),
\\
&\lim_{s\to (s_{0}^{\mathcal{M}})^{-}} m(u_{s}(\sigma))=m(u_{s_{0}^{\mathcal{M}}}(\sigma)) = 0,
\end{align*}
we have
\begin{equation*}
\lim_{s\to (s_{0}^{\mathcal{M}})^{-}} T_{1}(s) = \lim_{s\to (s_{0}^{\mathcal{M}})^{-}} T_{2}(s) = \lim_{s\to (s_{0}^{\mathcal{M}})^{-}} T_{3}(s) = + \infty.
\end{equation*}

\smallskip
\noindent
\textit{Case~3.} Let $s\in(s_{1}^{\mathcal{M}},s_{1}^{\tau})$. Here, by arguing as in Case~1, one can prove that there are three intersection points $(x_{i}(s),y_{i}(s))$, $i=1,2,3$,  between $\Gamma_{1}$ and the level line of~\eqref{eq-energy-mu} passing through $(u_{s}(\sigma),v_{s}(\sigma))$, and satisfying \eqref{xi-order}. Therefore, we define the times $T_{i}$, with $i=1,2,3$, as in~\eqref{def-T123}, which are continuous and satisfy~\eqref{eq-ineq-t} for $s$ in the considered range.
From
\begin{align*}
&\lim_{s\to (s_{1}^{\mathcal{M}})^{+}} (x_{1}(s),y_{1}(s)) = (0,0),
\\
&\lim_{s\to (s_{1}^{\mathcal{M}})^{+}} (v_{s}(\sigma))^2- 2\mu G(u_{s}(\sigma)) = 
\lim_{s\to (s_{1}^{\mathcal{M}})^{+}} (y_{1}(s))^2- 2\mu G(x_{1}(s)) = 0,
\\
&\lim_{s\to (s_{1}^{\mathcal{M}})^{+}} m(u_{s}(\sigma))=m(u_{s_{1}^{\mathcal{M}}}(\sigma)) = 0,
\\
&\lim_{s\to (s_{1}^{\mathcal{M}})^{+}} u_{s}(\sigma) = u_{s_{1}^{\mathcal{M}}}(\sigma) \in(0,1),
\end{align*}
we have
\begin{equation*}
\lim_{s\to (s_{1}^{\mathcal{M}})^{+}} T_{1}(s) = \lim_{s\to (s_{1}^{\mathcal{M}})^{+}} T_{2}(s) =\lim_{s\to (s_{1}^{\mathcal{M}})^{+}} T_{3}(s) =+\infty.
\end{equation*}
A simple argument shows that
\begin{equation*}
\lim_{s\to (s_{1}^{\tau})^{-}} T_{1}(s) = \lim_{s\to (s_{1}^{\tau})^{-}} T_{2}(s) = \ell^{1}_{1,2},
\qquad
\lim_{s\to (s_{1}^{\tau})^{-}} T_{3}(s) = \ell^{1}_{3},
\end{equation*}
where
\begin{align*}
\ell^{1}_{1,2} &:= \int_{m(u_{s_{1}^{\tau}}(\sigma))}^{u_{s_{1}^{\tau}}(\sigma)} \dfrac{\mathrm{d}u}{\sqrt{(v_{s_{1}^{\tau}}(\sigma))^2+2\mu(G(u)-G(u_{s_{1}^{\tau}}(\sigma)))}}
\\ &\quad +\int_{m(u_{s_{1}^{\tau}}(\sigma))}^{u_{s_{0}^{\tau}}(\sigma)} \dfrac{\mathrm{d}u}{\sqrt{(v_{s_{1}^{\tau}}(\sigma))^2+2\mu(G(u)-G(u_{s_{1}^{\tau}}(\sigma)))}},
\\
\ell^{1}_{3} &:= 2 \int_{m(u_{s_{1}^{\tau}}(\sigma))}^{u_{s_{1}^{\tau}}(\sigma)} \dfrac{\mathrm{d}u}{\sqrt{(v_{s_{1}^{\tau}}(\sigma))^2+2\mu(G(u)-G(u_{s_{1}^{\tau}}(\sigma)))}}.
\end{align*}
We notice that $0<\ell^{1}_{1,2}<\ell^{1}_{3}<+\infty$. Moreover, since $m(u_{s_{0}^{\tau}}(\sigma))=m(u_{s_{1}^{\tau}}(\sigma))$ and $(v_{s_{0}^{\tau}}(\sigma))^2-2\mu G(u_{s_{0}^{\tau}}(\sigma))=(v_{s_{1}^{\tau}}(\sigma))^2-2\mu G(u_{s_{1}^{\tau}}(\sigma))$, we have that
\begin{equation*}
\ell_{3}^{0} = \ell_{1,2}^{1}.
\end{equation*}

\smallskip
\noindent
\textit{Case~4.} Let $s\in (s_{1}^{\tau},1)$. 
In this case, we have one intersection point between $\Gamma_{1}$ and the level line of~\eqref{eq-energy-mu} passing through $(u_{s}(\sigma),v_{s}(\sigma))$, that is $(x_{1}(s),y_{1}(s))=(u_{s}(\sigma),-v_{s}(\sigma))$. Therefore, we define only $T_{1}$ as in \eqref{def-T123}, which is continuous and can be equivalently written as
\begin{equation}\label{def-T1_4.2}
T_{1}(s)= 2 \int_{m(u_{s}(\sigma))}^{u_{s}(\sigma)} \dfrac{\mathrm{d}u}{\sqrt{(v_{s}(\sigma))^2+2\mu(G(u)-G(u_{s}(\sigma)))}}.
\end{equation}
By arguing as in Case~3, it is easy to prove that
\begin{equation*}
\lim_{s\to (s_{1}^{\tau})^{+}} T_{1}(s) = \ell^{1}_{3}
\end{equation*}
and, using Proposition~\ref{pr-2.4}~$(ii)$ and Proposition~\ref{pr-3.2}~$(i)$, that
\begin{equation}\label{eq-aux-time2}
\lim_{s\to 1^{-}} T_{1}(s) = \dfrac{2}{\sqrt{\mu}}\arctan\left(\dfrac{\sqrt{\lambda} \tanh(\sqrt{\lambda}\sigma)}{\sqrt{\mu}}\right) =: \mathcal{L}_{1}.
\end{equation}

\medskip

We conclude the analysis of the case $\lambda\in[\lambda^{*},+\infty)$ by investigating further properties of the above time-maps $T_{i}$, with $i=1,2,3$, which will be crucial in the next section.

First, we prove that
\begin{equation}\label{eq:order_L0_L1}
\lim_{s\to 0^{+}} T_{1}(s) = \mathcal{L}_{0} > \mathcal{L}_{1} = \lim_{s\to 1^{-}} T_{1}(s).
\end{equation}
Precisely, from~\eqref{eq-aux-time1} and~\eqref{eq-aux-time2}, we show that
\begin{equation*}
\dfrac{\lambda \sigma}{\mu} > \dfrac{1}{\sqrt{\mu}}\arctan\left(\dfrac{\sqrt{\lambda} \tanh(\sqrt{\lambda}\sigma)}{\sqrt{\mu}}\right), \quad \text{for all $\mu,\lambda,\sigma\in(0,+\infty)$.}
\end{equation*}
Accordingly, we consider the function $\eta\colon(0,+\infty)\to\mathbb{R}$ of class $\mathcal{C}^{\infty}$ defined as
\begin{equation*}
\eta(x) = x\arctan\left(\dfrac{\sqrt{\lambda} \tanh(\sqrt{\lambda}\sigma)}{x}\right).
\end{equation*}
For all $x\in(0,+\infty)$ we have
\begin{align*}
&\eta'(x)
=\arctan\left(\dfrac{\sqrt{\lambda} \tanh(\sqrt{\lambda}\sigma)}{x}\right) 
- \dfrac{\sqrt{\lambda} \tanh(\sqrt{\lambda}\sigma) x}{\lambda \tanh^{2}(\sqrt{\lambda}\sigma)+x^{2}},
\\
&\eta''(x)
= - \dfrac{2(\sqrt{\lambda} \tanh(\sqrt{\lambda}\sigma))^{3}}{(\lambda \tanh^{2}(\sqrt{\lambda}\sigma)+x^{2})^{2}}.
\end{align*}
From $\lim_{x\to+\infty}\eta'(x)=0$ and $\eta''(x)<0$ for all $x\in(0,+\infty)$, we infer that $\eta$ is a strictly increasing function.
Since $\lim_{x\to + \infty} \eta(x) = \sqrt{\lambda} \tanh(\sqrt{\lambda}\sigma)$, in order to conclude, we only need to prove that
\begin{equation*}
\sqrt{\lambda} \tanh(\sqrt{\lambda}\sigma) < \lambda \sigma.
\end{equation*}
This is obvious from the fact that $\tanh(z)<z$ for all $z>0$.

\medskip

To conclude the case $\lambda\in[\lambda^{*},+\infty)$, we extend the connection times $T_{i}(s)$, $i=1,2,3$, as follows. We set $T_1(0)=\mathcal{L}_0$, $T_1(s_0^{\tau})=\ell_{1,2}^0$, $T_1(s_1^{\tau})=\ell_{1,2}^1$, and $T_1(1)=\mathcal{L}_1$, so that $T_1$ is lower-semicontinuous in $D_1:=[0,s_0^{\mathcal{M}})\cup(s_1^{\mathcal{M}},1]$. Moreover, by setting $T_2(s_0^{\tau})=\ell_{1,2}^0$, $T_2(s_1^{\tau})=\ell_{1,2}^1$, $T_3(s_0^{\tau})=\ell_{3}^0$, and $T_3(s_1^{\tau})=\ell_{3}^1$, we have that $T_i$ is continuous in $D_i:=(0,s_0^{\mathcal{M}})\cup(s_1^{\mathcal{M}},s_1^{\tau}]$, with $i=2,3$.
In addition, we observe that
\begin{equation} \label{eq-order-Ti-general}
T_{1}(s)\leq T_{2}(s)\leq T_{3}(s), \quad \text{for all $s\in[0,1]\cap D_2$,}
\end{equation}
with strict inequalities for $s\neq s_{0}^{\tau}$ and $s\neq s_{1}^{\tau}$.

At last, by exploiting the symmetry of the problem (cf.~Remark~\ref{rem-2.1}) and \eqref{def-T123}, we have that
\begin{equation}\label{eq-symmetry}
\begin{aligned}
&\mathrm{Im}\bigl{(} T_{2}|_{(0,s_{0}^{\tau})} \bigr{)} =
\mathrm{Im}\bigl{(} T_{1}|_{(s_{0}^{\tau},s_{0}^{\mathcal{M}})} \bigr{)},
\\
&\mathrm{Im}\bigl{(} T_{3}|_{(0,s_{0}^{\tau})} \bigr{)} =
\mathrm{Im}\bigl{(} T_{1}|_{(s_{1}^{\mathcal{M}},s_{1}^{\tau})} \bigr{)},
\\
&\mathrm{Im}\bigl{(} T_{3}|_{(s_{0}^{\tau},s_{0}^{\mathcal{M}})} \bigr{)} =
\mathrm{Im}\bigl{(} T_{2}|_{(s_{1}^{\mathcal{M}},s_{1}^{\tau})} \bigr{)}.
\end{aligned}
\end{equation}

\subsection{The case $\lambda \in (0,\lambda^{*})$}\label{section-4.2}

Let $\lambda \in (0,\lambda^{*})$ be fixed. The properties of $\Gamma_{0}$ described in Corollary~\ref{cor-2.1} and the ones of $\Gamma_{1}$ described in Corollary~\ref{cor-2.2} ensure the existence of $\tilde{\mu}(\lambda)>0$ such that for $\mu=\tilde{\mu}(\lambda)$ the curve $\mathcal{M}_{\mu}$ is tangent to $\Gamma_{0}$ and $\Gamma_{1}$.

If $\mu\in(0,\tilde{\mu}(\lambda))$, then there are at least two intersections between $\Gamma_{0}$ and $\mathcal{M}_{\mu}$ in $(0,1)\times\mathbb{R}$ (as in the case $\lambda\in[\lambda^{*},+\infty)$). Accordingly, let $s_{0}^{\mathcal{M}},s_{1}^{\mathcal{M}}\in(0,1)$ with $s_{0}^{\mathcal{M}}<s_{1}^{\mathcal{M}}$ be such that \eqref{eq-4.1} holds.
In principle, there could be more than two values $s\in(0,1)$ giving intersections between $\Gamma_{0}$ and $\mathcal{M}_{\mu}$; if it is the case, we consider only the closest to $0$ and to $1$, respectively.
As a consequence, when $\mu\in(0,\tilde{\mu}(\lambda))$, the situation is exactly that described in Section~\ref{section-4.1} (for $\lambda\in[\lambda^{*},+\infty)$) and represented in Figures~\ref{fig-05} and~\ref{fig-06}.

If $\mu=\tilde{\mu}(\lambda)$, then $s_{0}^{\mathcal{M}}=s_{1}^{\mathcal{M}}=:s^{\mathcal{M}}$ and the curve $\Gamma_{0}$ is tangent to $\mathcal{M}_{\mu}$ at $\left(u_{s^{\mathcal{M}}}(\sigma),v_{s^{\mathcal{M}}}(\sigma)\right)$. The behavior of the time-maps differs from the one described in Section~\ref{section-4.1} (for $\lambda\in[\lambda^{*},+\infty)$) since the gap between the two vertical asymptotes $s=s_{0}^{\mathcal{M}}$ and $s=s_{1}^{\mathcal{M}}$ is dropped, thus producing a unique asymptote $s=s^{\mathcal{M}}$. 

The difference between the cases $\lambda \in (0,\lambda^{*})$ and $\lambda\in[\lambda^{*},+\infty)$ is evident when $\mu>\tilde{\mu}(\lambda)$, namely when there are no intersections between $\Gamma_{0}\cup\Gamma_{1}$ and $\mathcal{M}_{\mu}$ in $(0,1)\times\mathbb{R}$. 
Recalling the definition of the function $h_{\mu}\colon [0,1]\to\mathbb{R}$ given in \eqref{def-hs} and its sign properties, we deduce that $h_{\mu}(s)<0$ for all $s\in(0,1]$ and, as before, $s=1$ is a global minimum point (see Figure~\ref{fig-07}).

In general, since $h_{\mu}$ could be strictly decreasing in $[0,1]$, for all $s\in(0,1)$, we can only have one intersection between $\Gamma_{1}$ and the level line of~\eqref{eq-energy-mu} passing through $(u_{s}(\sigma),v_{s}(\sigma))$, that is $(u_{s}(\sigma),-v_{s}(\sigma))$. If that is the case, for all $s\in(0,1)$, we define $T_{1}$ as in \eqref{def-T1_4.2},
whose graph is depicted in Figure~\ref{fig-noisola}. As in Section~\ref{section-4.1}, we have \eqref{eq:order_L0_L1}.

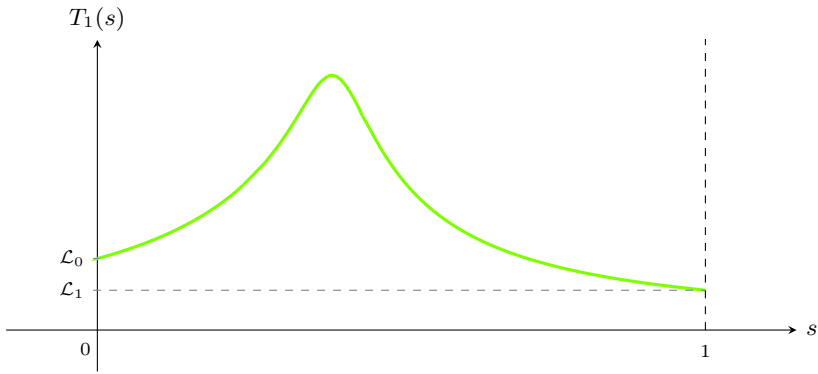
\begin{figure}[htb]
\begin{tikzpicture}
\begin{axis}[
  tick label style={font=\scriptsize},
  axis y line=middle, 
  axis x line=middle,
  xtick={0},
  ytick={0},
  xticklabels={},
  yticklabels={},
  xlabel={\small $s$},
  ylabel={\small $T_{1}(s)$},
every axis x label/.style={
    at={(ticklabel* cs:1.0)},
    anchor=west,
},
every axis y label/.style={
    at={(ticklabel* cs:1.0)},
    anchor=south,
},
  width=12cm,
  height=6cm,
  xmin=-0.15,
  xmax=1.15,
  ymin=-1,
  ymax=7] 
\addplot [color=fst-green,line width=1.2pt,smooth] coordinates {(0.436, 5.16194) (0.445726, 4.91447) (0.45461, 4.67408) (0.463537, 4.44493) (0.472509, 4.22903) (0.481529, 4.02702) (0.490601, 3.83872) (0.499726, 3.66352) (0.508907, 3.50056) (0.518147, 3.34894) (0.527449, 3.20771) (0.536816, 3.076) (0.54625, 2.95299) (0.555754, 2.83789) (0.565331, 2.73002) (0.574984, 2.62875) (0.584716, 2.53352) (0.59453, 2.44382) (0.604428, 2.3592) (0.614415, 2.27923) (0.624492, 2.20357) (0.634664, 2.13186) (0.644933, 2.06381) (0.655302, 1.99915) (0.665775, 1.93763) (0.676356, 1.87904) (0.687046, 1.82316) (0.697851, 1.76982) (0.708773, 1.71884) (0.719816, 1.67008) (0.730984, 1.62338) (0.74228, 1.57863) (0.753708, 1.5357) (0.765272, 1.49448) (0.776975, 1.45488) (0.788822, 1.4168) (0.800817, 1.38015) (0.812963, 1.34486) (0.825266, 1.31085) (0.837729, 1.27805) (0.850356, 1.24641) (0.863153, 1.21585) (0.876124, 1.18633) (0.889273, 1.1578) (0.902605, 1.1302) (0.916125, 1.10349) (0.929838, 1.07763) (0.94375, 1.05258) (0.957865, 1.0283) (0.972189, 1.00476) (1, 0.96)};
\addplot [color=fst-green,line width=1.2pt,smooth] coordinates {(0.379, 6.12121) (0.384531, 6.14342) (0.393185, 6.11633) (0.401864, 6.01611) (0.410571, 5.85323) (0.419308, 5.64481) (0.428078, 5.40917) (0.436, 5.16194)};
\addplot [color=fst-green,line width=1.2pt,smooth] coordinates {(0.24, 3.5) (0.273198, 3.99792) (0.281748, 4.1474) (0.290292, 4.30619) (0.298832, 4.47473) (0.30737, 4.65324) (0.315908, 4.84144) (0.32445, 5.03832) (0.332997, 5.24159) (0.341552, 5.44704) (0.350117, 5.64769) (0.358695, 5.83299) (0.367288, 5.98855) (0.379, 6.12121)}; 
\addplot [color=fst-green,line width=1.2pt,smooth] coordinates {(0,1.728) (0.000999551, 1.72815) (0.0109462, 1.76897) (0.0208063, 1.81107) (0.0305828, 1.85455) (0.0402787, 1.89945) (0.049897, 1.94583) (0.0594407, 1.99378) (0.0689125, 2.04338) (0.0783154, 2.0947) (0.0876522, 2.14786) (0.0969256, 2.20295) (0.106139, 2.26008) (0.115294, 2.31938) (0.124394, 2.38097) (0.133441, 2.44501) (0.142439, 2.51165) (0.15139, 2.58106) (0.160296, 2.65343) (0.169161, 2.72899) (0.177986, 2.80795) (0.186775, 2.89057) (0.195529, 2.97715) (0.204252, 3.06798) (0.212946, 3.16342) (0.221613, 3.26386) (0.230256, 3.36971) (0.24, 3.5)}; 
\node[label={220:{\scriptsize{0}}}] at (axis cs:0.02,0.1) {};
\node[label={270:{\scriptsize{1}}}] at (axis cs:1,0.1) {};
\node[label={180:{\scriptsize{$\mathcal{L}_{0}$}}}] at (axis cs:0.01,1.728) {};
\node[label={180:{\scriptsize{$\mathcal{L}_{1}$}}}] at (axis cs:0.01,0.96) {};
\addplot [mark=none,dashed,color=gray] coordinates {(-0.007,0.96) (1,0.96)};
\addplot [mark=none,dashed,color=gray] coordinates {(-0.007,1.728) (0.003,1.728)};
\addplot [mark=none,dashed]coordinates {(1,0) (1,12)};
\end{axis}
\end{tikzpicture}
\caption{For $\lambda\in(0,\lambda^{*})$ and $\mu>\tilde{\mu}(\lambda)$, qualitative graph of $T_{1}$, which is the time necessary to connect $\Gamma_{0}$ to $\Gamma_{1}$, as a function of the initial data $u(0)=s$ of~\eqref{eq-initial0}.}   
\label{fig-noisola}
\end{figure}

We enhance the qualitative description of the case $\mu>\tilde{\mu}(\lambda)$ dealing with $\mu$ sufficiently close to $\tilde{\mu}(\lambda)$. Preliminarily, we observe that if $\mu=\tilde{\mu}(\lambda)$ the function $h_{\mu}$ vanishes for $s=0$ and for $s=s_{0}^{\mathcal{M}}=s_{1}^{\mathcal{M}}=s^{\mathcal{M}}$, and so $h_{\mu}|_{[0,s^{\mathcal{M}}]}$ has a global minimum point. Therefore, by continuity, if we slightly increase $\mu$, $h_{\mu}$ has a local maximum point $s_{1}^{\omega}\in(0,1)$ corresponding to a small perturbation of the point $s^{\mathcal{M}}$, and the function $h_{\mu}|_{[0,s_{1}^{\omega}]}$ has a global minimum point $s=s_{0}^{\tau}$. 
As in Section~\ref{section-4.1}, we investigate the case in which $s_{0}^{\tau}$ is the unique local minimum of $h_{\mu}$ in $(0,1)$ (and so $s_{1}^{\omega}$ is the unique local maximum point of $h_{\mu}$). More complicated situations can be investigated by adapting the analysis here performed.
We call $s_{0}^{\omega}$ the point in $[0,s_{1}^{\omega}]$ such that $h_{\mu}(s_{0}^{\omega})=h_{\mu}(s_{1}^{\omega})$.
Since $s=1$ is a global minimum point of $h_{\mu}$, we define $s_{1}^{\tau}$ the point in $[s_{1}^{\mathcal{M}},1]$ such that $h_{\mu}(s_{0}^{\tau})=h_{\mu}(s_{1}^{\tau})$. 
We stress that
\begin{equation*}
0 < s_{0}^{\omega} < s_{0}^{\tau} < s_{1}^{\omega} < s_{1}^{\tau} < 1
\end{equation*}
and also that the level line $H_{\mu}(u,v)=H_{\mu}(u_{s}(\sigma),v_{s}(\sigma))$ is tangent to the curve $\Gamma_{0}$ in $(u_{s_{0}^{\tau}}(\sigma),v_{s_{0}^{\tau}}(\sigma))$ and in $(u_{s_{1}^{\omega}}(\sigma),v_{s_{1}^{\omega}}(\sigma))$.
See Figure~\ref{fig-07} for a graphical representation.

\begin{figure}[htb]
\begin{tikzpicture}
\begin{axis}[
  tick label style={font=\scriptsize},
  axis y line=middle, 
  axis x line=middle,
  xtick={0,0.0270902,0.278743,0.655019,0.868187},
  ytick={0},
  xticklabels={},
  yticklabels={},
  xlabel={\small $s$},
  ylabel={\small $h_{\mu}(s)$},
every axis x label/.style={
    at={(ticklabel* cs:1.0)},
    anchor=west,
},
every axis y label/.style={
    at={(ticklabel* cs:1.0)},
    anchor=south,
},
  width=6cm,
  height=6cm,
  xmin=-0.2,
  xmax=1.2,
  ymin=-2,
  ymax=1]
\addplot [color=fst-orange,line width=1pt,smooth] coordinates {(0, 0.) (0.05, -0.224447) (0.1, -0.435037) (0.15, -0.611684) (0.2, -0.738611) (0.25, -0.805952) (0.3, -0.810752) (0.35, -0.757271) (0.4, -0.656549) (0.45, -0.525284) (0.5, -0.384138) (0.55, -0.255641) (0.6, -0.161901) (0.65, -0.122359) (0.7, -0.151817) (0.75, -0.258935) (0.8, -0.445351) (0.85, -0.705514) (0.9, -1.02725) (0.95, -1.393) (1, -1.78158)};
\addplot [mark=none,dashed,color=black] coordinates {(1,-2) (1,2)};
\addplot [mark=none,dashed,color=gray] coordinates {(0.278743,0) (0.278743,-0.816243)};
\addplot [mark=none,dashed,color=gray] coordinates {(0.655019,0) (0.655019,-0.122004)};
\addplot [mark=none,dashed,color=gray] coordinates {(0.868187,0) (0.868187,-0.816243)};
\addplot [mark=none,dashed,color=gray] coordinates {(0,-0.816243) (1,-0.816243)};
\addplot [mark=none,dashed,color=gray] coordinates {(0,-0.122004) (1,-0.122004)};
\node[label={220:{\scriptsize{0}}}] at (axis cs:0.04,0.1) {};
\node[label={310:{\scriptsize{1}}}] at (axis cs:0.96,0.1) {};
\node[label={90:{\scriptsize{$s_{0}^{\omega}$}}}] at (axis cs:0.0670902,-0.1) {};
\node[label={90:{\scriptsize{$s_{0}^{\tau}$}}}] at (axis cs:0.278743,-0.1) {};
\node[label={90:{\scriptsize{$s_{1}^{\omega}$}}}] at (axis cs:0.655019,-0.1) {};
\node[label={90:{\scriptsize{$s_{1}^{\tau}$}}}] at (axis cs:0.868187,-0.1) {};
\end{axis}
\end{tikzpicture}
\qquad\qquad
\begin{tikzpicture}
\begin{axis}[
  tick label style={font=\scriptsize},
  axis y line=middle, 
  axis x line=middle,
  xtick={1},
  ytick={0},
  xticklabels={},
  yticklabels={},
  xlabel={\small $u$},
  ylabel={\small $v$},
every axis x label/.style={
    at={(ticklabel* cs:1.0)},
    anchor=west,
},
every axis y label/.style={
    at={(ticklabel* cs:1.0)},
    anchor=south,
},
  width=6cm,
  height=6cm,
  xmin=-0.2,
  xmax=1.2,
  ymin=-12,
  ymax=12]
\addplot [mark=none,dashed,color=black] coordinates {(1,-12) (1,12)};
\addplot [color=fst-red,line width=0.8pt,smooth] coordinates {(0,0) (0.0333333, 0.353815) (0.0666667, 0.987827) (0.1, 1.79072) (0.133333, 2.71948) (0.166667, 3.74743) (0.2, 4.6) (0.3,6.6) (0.4,7.5) (0.6, 7.8) (0.8,7.9) (1,8)};
\addplot [color=fst-red,line width=0.8pt,smooth] coordinates {(0,0) (0.0333333, -0.353815) (0.0666667, -0.987827) (0.1, -1.79072) (0.133333, -2.71948) (0.166667, -3.74743) (0.2, -4.6) (0.3,-6.6) (0.4,-7.5) (0.6, -7.8) (0.8,-7.9) (1,-8)};
\addplot [color=fst-blue,line width=0.8pt,smooth] coordinates {(0,0) (0.1, -0.3) (0.25,-0.9) (0.3,-1.6) (0.25,-2.8) (0.25,-3.8) (0.3,-4.8) (0.4,-5.7) (0.5,-5.9) (0.6,-5.8) (0.7,-5.2) (0.8,-3.8) (0.9,-2) (1,0)};
\addplot [color=fst-purple,line width=0.8pt,smooth] coordinates {(0,0) (0.1, 0.3) (0.25,0.9) (0.3,1.6) (0.25,2.8) (0.25,3.8) (0.3,4.8) (0.4,5.7) (0.5,5.9) (0.6,5.8) (0.7,5.2) (0.8,3.8) (0.9,2) (1,0)};
\addplot [color=fst-gray,line width=0.8pt,smooth,dashed] coordinates {(1, -5) (0.8, -5) (0.5,-4.3) (0.4, -3.8) (0.32, -2.3) (0.29, 0) (0.32, 2.3) (0.4, 3.8) (0.5,4.3) (0.8, 5) (1, 5)};
\addplot [color=fst-gray,line width=0.8pt,smooth] coordinates {(1, -6) (0.8, -6) (0.4, -5) (0.25, -2.3) (0.22, 0) (0.25, 2.3) (0.4, 5) (0.8, 6) (1, 6)};
\addplot [color=fst-gray,line width=0.8pt,smooth,dashed] coordinates {(1, -7) (0.8, -7) (0.4, -6.2) 
(0.22, -3.2) (0.12, 0) (0.22, 3.2) (0.4, 6.2) (0.8, 7) (1, 7)};
\node[label={220:{\scriptsize{0}}}] at (axis cs:0.04,0.2) {};
\node[label={310:{\scriptsize{1}}}] at (axis cs:0.96,0.2) {};
\node[label={0:{\scriptsize{$\Gamma_{0}$}}}] at (axis cs:0.79,-3.5) {};
\node[label={0:{\scriptsize{$\Gamma_{1}$}}}] at (axis cs:0.79,3.5) {};
\node[label={90:{\scriptsize{$\mathcal{M}_{\mu}$}}}] at (axis cs:0.81,7.5) {};
\node[label={0:{\scriptsize{$P(s_{0}^{\tau})$}}}] at (axis cs:0.24,-1.5) {};
\node[label={90:{\scriptsize{$P(s_{1}^{\tau})$}}}] at (axis cs:0.64,-5.8) {};
\node[label={90:{\scriptsize{$P(s_{0}^{\omega})$}}}] at (axis cs:0.015,1.9) {};
\node[label={270:{\scriptsize{$P(s_{1}^{\omega})$}}}] at (axis cs:0.13,-2.2) {};
\addplot [color=black,line width=0.5pt] coordinates {(0.03, 3) (0.125,-0.4)};
\node[circle,fill,inner sep=0.8pt,fst-black] at (axis cs:0.125,0.4) {};
\node[circle,fill,inner sep=0.8pt,fst-black] at (axis cs:0.125,-0.4) {};
\node[circle,fill,inner sep=0.8pt,fst-black] at (axis cs:0.26,4.2) {};
\node[circle,fill,inner sep=0.8pt,fst-black] at (axis cs:0.26,-4.2) {};
\node[circle,fill,inner sep=0.8pt,fst-black] at (axis cs:0.3,1.5) {};
\node[circle,fill,inner sep=0.8pt,fst-black] at (axis cs:0.3,-1.5) {};
\node[circle,fill,inner sep=0.8pt,fst-black] at (axis cs:0.73,4.85) {};
\node[circle,fill,inner sep=0.8pt,fst-black] at (axis cs:0.73,-4.85) {};
\node[circle,fill,inner sep=0.8pt,fst-black] at (axis cs:0,0) {};
\node[circle,fill,inner sep=0.8pt,fst-black] at (axis cs:1,0) {};
\end{axis}
\end{tikzpicture}
\caption{For $\lambda\in(0,\lambda^{*})$, qualitative representations of the graph of the function $h_{\mu}$ defined in~\eqref{def-hs} (left) and of $\Gamma_0$ (blue), $\Gamma_1$ (violet), $\mathcal{M}_{\mu}$ (pink) along with some level lines of~\eqref{eq-energy-mu} (gray) in the $(u,v)$-plane (right). We set $P(s)=(u_{s}(\sigma),v_{s}(\sigma))$.} 
\label{fig-07}
\end{figure}

We are in position to study the connection times in the case $\mu>\tilde{\mu}(\lambda)$ with $\mu$ sufficiently close to $\tilde{\mu}(\lambda)$. If $s\in(0,s_{0}^{\omega})\cup(s_{1}^{\tau},1)$, we can define only the time $T_{1}(s)$ to reach $\Gamma_{1}$ from $(u_{s}(\sigma),v_{s}(\sigma))$, exactly as in \eqref{def-T1_4.2}. It follows that $T_{1}$ satisfies \eqref{eq:order_L0_L1} together with
\begin{equation*}
\lim_{s\to (s_{0}^{\omega})^{-}} T_{1}(s) =: \kappa_{0}\in(0,+\infty), \quad
\lim_{s\to (s_{1}^{\tau})^{+}} T_{1}(s) =: \kappa_{1}\in(0,+\infty).
\end{equation*}
If $s\in(s_{0}^{\omega},s_{0}^{\tau})\cup(s_{0}^{\tau},s_{1}^{\omega})\cup(s_{1}^{\omega},s_{1}^{\tau})$, we can define the times $T_{i}(s)$, for $i=1,2,3$, exactly as in Cases~1, 2, 3 discussed in Section~\ref{section-4.1} (cf.,~\eqref{def-T123}). 
We stress that they are bounded and some properties of symmetry, analogous to the ones in Section~\ref{section-4.1}, hold.
The following properties can be proved straightforwardly
\begin{align}
&\lim_{s\to 0^{+}} T_{1}(s) = \mathcal{L}_0,&
&\lim_{s\to 1^{-}} T_{1}(s) =\mathcal{L}_1,&
\\
&\lim_{s\to (s_{0}^{\omega})^{+}} T_{1}(s) = \kappa_{0},  &
&\lim_{s\to (s_{0}^{\omega})^{+}} T_{2}(s) = \lim_{s\to (s_{0}^{\omega})^{+}} T_{3}(s) =: \kappa_{2},&
\\
&\lim_{s\to s_{0}^{\tau}} T_{1}(s) = \lim_{s\to s_{0}^{\tau}} T_{2}(s) =: \kappa_{3},&
&\lim_{s\to s_{0}^{\tau}} T_{3}(s) = \kappa_{2},&\label{eq-tab}
\\
&\lim_{s\to s_{1}^{\omega}} T_{1}(s) = \kappa_{2},&
&\lim_{s\to s_{1}^{\omega}} T_{2}(s) = \lim_{s\to s_{1}^{\omega}} T_{3}(s) =: \kappa_{4},&
\\
&\lim_{s\to (s_{1}^{\tau})^{-}} T_{1}(s) = \lim_{s\to (s_{1}^{\tau})^{-}} T_{2}(s) = \kappa_{2},&
&\lim_{s\to (s_{1}^{\tau})^{-}} T_{3}(s) = \kappa_{1},&
\end{align}
where $\kappa_{i}\in(0,+\infty)$. Reasoning as in Section~\ref{section-4.1}, we can deduce that $\kappa_2<\kappa_1$. In Figure~\ref{fig-08}, we summarize the behavior of the time-maps $T_i$, with $i=1,2,3$, analyzed above. In particular, the case $\mu\in(\tilde{\mu}(\lambda),+\infty)$ with $\mu$ sufficiently close to $\tilde{\mu}(\lambda)$ provides the presence of a \textit{loop} in addition to the curve in Figure~\ref{fig-06}. 

Also in the case $\lambda\in(0,\lambda^{*})$, we set $T_1(0)=\mathcal{L}_0$, $T_1(s_0^{\tau})=\kappa_3$, $T_1(s_1^{\tau})=\kappa_2$, and $T_1(1)=\mathcal{L}_1$, so that $T_1$ is lower-semicontinuous in $D_1:=[0,1]$. Moreover, we extend by continuity the connection times $T_{i}(s)$, $i=2,3$, according to~\eqref{eq-tab} and we define their domains $D_i:=[s_0^{\omega},s_1^{\tau}]$, $i=2,3$. Therefore, \eqref{eq-order-Ti-general} holds true also in this case.
In addition, recalling that $\kappa_2<\kappa_1$, we observe that, for every $i=1,2,3$, $T_{i}\left(D_{i}\right)$ is an interval in $(0,+\infty)$. From~\eqref{eq-tab}, we can easily infer that $\bigcup_{i=1,2,3}T_i(D_i)$ is an interval too.

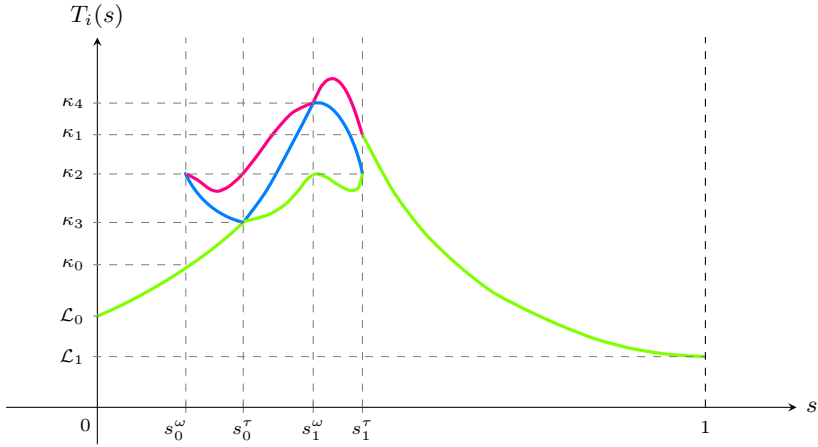
\begin{figure}[htb]
\begin{tikzpicture}
\begin{axis}[
  tick label style={font=\scriptsize},
  axis y line=middle, 
  axis x line=middle,
  xtick={0,0.1455,0.24,0.355,0.436},
  ytick={0},
  xticklabels={},
  yticklabels={},
  xlabel={\small $s$},
  ylabel={\small $T_{i}(s)$},
every axis x label/.style={
    at={(ticklabel* cs:1.0)},
    anchor=west,
},
every axis y label/.style={
    at={(ticklabel* cs:1.0)},
    anchor=south,
},
  width=12cm,
  height=7cm,
  xmin=-0.15,
  xmax=1.15,
  ymin=-0.7,
  ymax=7] 
\addplot [color=fst-green,line width=1.2pt,smooth] coordinates {(0.0000,1.7280) (0.0241,1.8480) (0.0491,1.9870) (0.0747,2.1420) (0.1005,2.3110) (0.1261,2.4920) (0.1513,2.6830) (0.1756,2.8810) (0.1987,3.0850) (0.2203,3.2920) (0.2400,3.5000)};
\addplot [color=fst-green,line width=1.2pt,smooth] coordinates {(0.4360,5.1620) (0.4862,4.1) (0.5384,3.3) (0.5923,2.7) (0.6478,2.2) (0.7045,1.85) (0.7623,1.5460) (0.8210,1.2880) (0.8803,1.1090) (0.9400,1.0020) (1.0000,0.9600)};
\addplot [color=fst-blue,line width=1.2pt,smooth] coordinates {(0.2400,3.5000) (0.2568,3.7540) (0.2686,3.9400) (0.2775,4.0930) (0.2855,4.2460) (0.2946,4.4350) (0.3061,4.6820) (0.3193,4.9670) (0.3327,5.2610) (0.3449,5.5320) (0.3545,5.7500)};
\addplot [color=fst-red,line width=1.2pt,smooth] coordinates {(0.3545,5.7500) (0.3665,6.0440) (0.3780,6.1930) (0.3887,6.2210) (0.3986,6.1540) (0.4077,6.0170) (0.4157,5.8360) (0.4227,5.6340) (0.4284,5.4380) (0.4329,5.2720) (0.4360,5.1620)};
\addplot [color=fst-blue,line width=1.2pt,smooth] coordinates {(0.3550,5.7600) (0.3681,5.7610) (0.3803,5.7000) (0.3916,5.5870) (0.4017,5.4370) (0.4108,5.2600) (0.4186,5.0700) (0.4251,4.8780) (0.4302,4.6980) (0.4339,4.5410) (0.4360,4.4200)};
\addplot [color=fst-green,line width=1.2pt,smooth] coordinates {(0.2400,3.5160) (0.2805,3.6420) (0.3093,3.8440) (0.3294,4.0880) (0.3445,4.3100) (0.3592,4.4180) (0.3770,4.3550) (0.3963,4.2100) (0.4147,4.1080) (0.4290,4.1510) (0.4360,4.4200)};
\addplot [color=fst-blue,line width=1.2pt,smooth] coordinates {(0.2400,3.5000) (0.2275,3.5430) (0.2157,3.5970) (0.2046,3.6620) (0.1942,3.7370) (0.1844,3.8230) (0.1753,3.9190) (0.1668,4.0260) (0.1590,4.1430) (0.1519,4.2710) (0.1455,4.4100) (0.1460,4.430)};
\addplot [color=fst-red,line width=1.2pt,smooth] coordinates {(0.1466,4.4180) (0.1639,4.31) (0.1757,4.21) (0.1861,4.1240) (0.1990,4.0920) (0.2158,4.1770) (0.2366,4.3890) (0.2612,4.7400) (0.2893,5.1810) (0.3201,5.5710) (0.3528,5.7600)};
\node[label={220:{\scriptsize{0}}}] at (axis cs:0.02,0.1) {};
\node[label={270:{\scriptsize{1}}}] at (axis cs:1,0.1) {};
\node[label={270:{\scriptsize{$s_{0}^{\omega}$}}}] at (axis cs:0.129,0.1) {};
\node[label={270:{\scriptsize{$s_{0}^{\tau}$}}}] at (axis cs:0.24,0.1) {};
\node[label={270:{\scriptsize{$s_{1}^{\omega}$}}}] at (axis cs:0.355,0.1) {};
\node[label={270:{\scriptsize{$s_{1}^{\tau}$}}}] at (axis cs:0.436,0.1) {};
\node[label={180:{\scriptsize{$\mathcal{L}_{0}$}}}] at (axis cs:0.01,1.728) {};
\node[label={180:{\scriptsize{$\mathcal{L}_{1}$}}}] at (axis cs:0.01,0.96) {};
\node[label={180:{\scriptsize{$\kappa_{0}$}}}] at (axis cs:0.01,2.7) {};
\node[label={180:{\scriptsize{$\kappa_{3}$}}}] at (axis cs:0.01,3.5) {};
\node[label={180:{\scriptsize{$\kappa_{1}$}}}] at (axis cs:0.01,5.16194) {};
\node[label={180:{\scriptsize{$\kappa_{4}$}}}] at (axis cs:0.01,5.76) {};
\node[label={180:{\scriptsize{$\kappa_{2}$}}}] at (axis cs:0.01,4.4180) {};
\addplot [mark=none,dashed,color=gray] coordinates {(-0.007,2.7) (0.1455,2.7)};
\addplot [mark=none,dashed,color=gray] coordinates {(-0.007,0.96) (1,0.96)};
\addplot [mark=none,dashed,color=gray] coordinates {(-0.007,1.728) (0.003,1.728)};
\addplot [mark=none,dashed,color=gray] coordinates {(-0.007,3.5) (0.24,3.5)};
\addplot [mark=none,dashed,color=gray] coordinates {(-0.007,5.16194) (0.436,5.16194)};
\addplot [mark=none,dashed,color=gray] coordinates {(-0.007,5.76) (0.36,5.76)};
\addplot [mark=none,dashed,color=gray] coordinates {(-0.007,4.418) (0.436,4.418)};
\addplot [mark=none,dashed]coordinates {(1,0) (1,12)};
\addplot [mark=none,dashed,color=gray]coordinates {(0.1455,0) (0.1455,7)};
\addplot [mark=none,dashed,color=gray] coordinates {(0.24,0) (0.24,7)};
\addplot [mark=none,dashed,color=gray] coordinates {(0.355,0) (0.355,7)};
\addplot [mark=none,dashed,color=gray] coordinates {(0.436,0) (0.436,7)};
\end{axis}
\end{tikzpicture}
\caption{For $\lambda\in(0,\lambda^{*})$ and $\mu>\tilde{\mu}(\lambda)$, qualitative graphs of $T_{i}$, with $i=1,2,3$, which are the times necessary to connect $\Gamma_{0}$ to $\Gamma_{1}$, as functions of the initial data $u(0)=s$ of~\eqref{eq-initial0}: $T_1$~(green), $T_2$ (blue), and $T_3$ (pink). The existence of the \textit{loop} is ensured only for $\mu$ near $\tilde{\mu}(\lambda)$.}   
\label{fig-08}
\end{figure}

\section{Proofs of Theorem~\ref{th-intro1} and Theorem~\ref{th-intro2}}\label{section-5}

In this section, we prove Theorems~\ref{th-intro1} and~\ref{th-intro2}. For the former, the strategy of the proof is to show that no connection from $\Gamma_{0}$ to $\Gamma_{1}$ along the level lines of~\eqref{eq-energy-mu} is possible in time $1-2\sigma$, which is the length of the interval $(\sigma,1-\sigma)$. For the latter, instead, we prove that the connection times $T_{i}$, $i=1,2,3$, defined in Section~\ref{section-4} allow us to connect $\Gamma_{0}$ to $\Gamma_{1}$ in several ways. For convenience, we point out explicitly the dependence on $\mu$ of the connection times and of all related quantities.

Before passing to the proofs, we need to introduce some ``barriers'', $\mathfrak{B}_{-}$ and $\mathfrak{B}_{+}$, in the $(u,v)$-plane which will control the connection times (see Figure~\ref{fig-09}).

\begin{figure}[htb]
	\begin{tikzpicture}
	\begin{axis}[
	tick label style={font=\scriptsize},
	axis y line=middle, 
	axis x line=middle,
	xtick={1},
	ytick={0},
	xticklabels={},
	yticklabels={},
	xlabel={\small $u$},
	ylabel={\small $v$},
	every axis x label/.style={
		at={(ticklabel* cs:1.0)},
		anchor=west,
	},
	every axis y label/.style={
		at={(ticklabel* cs:1.0)},
		anchor=south,
	},
	width=6cm,
	height=6cm,
	xmin=-0.2,
	xmax=1.2,
	ymin=-12,
	ymax=12]
	\addplot [mark=none,dashed,color=black] coordinates {(1,-12) (1,12)};
	\addplot [mark=none,dashed,color=black] coordinates {(0.618034,-11.46) (0.618034,11.46)};
	\addplot [color=fst-red,line width=0.8pt,smooth] coordinates {(0,0) (0.0333333, 0.353) (0.0666667, 0.987827) (0.1, 1.79072) (0.133333, 2.71948) (0.166667, 4) (0.2, 5.6) (0.3,9.6) (0.4,10.9) (0.6, 11.8) (0.8,11.9) (1,11.9)};
	\addplot [color=fst-red,line width=0.8pt,smooth] coordinates {(0,0) (0.0333333, -0.353) (0.0666667, -0.987827) (0.1, -1.79072) (0.133333, -2.71948) (0.166667, -4) (0.2, -5.6) (0.3, -9.6) (0.4, -10.9) (0.6, -11.8) (0.8, -11.9) (1, -11.9)};
	\addplot [color=fst-green,line width=0.8pt,smooth] coordinates {(0, 0) (0.0125, 0.0015625) (0.025, 0.00625) (0.0375, 0.0140625) (0.05, 0.025) (0.0625, 0.0390625) (0.075, 0.05625) (0.0875, 0.0765625) (0.1, 0.1) (0.1125, 0.126563) (0.125, 0.15625) (0.1375, 0.189063) (0.15, 0.225) (0.1625, 0.264063) (0.175, 0.30625) (0.1875, 0.351563) (0.2, 0.4) (0.2125, 0.451563) (0.225, 0.50625) (0.2375, 0.564063) (0.25, 0.625) (0.2625, 0.689063) (0.275, 0.75625) (0.2875, 0.826562) (0.3, 0.9) (0.3125, 0.976563) (0.325, 1.05625) (0.3375, 1.13906) (0.35, 1.225) (0.3625, 1.31406) (0.375, 1.40625) (0.3875, 1.50156) (0.4, 1.6) (0.4125, 1.70156) (0.425, 1.80625) (0.4375, 1.91406) (0.45, 2.025) (0.4625, 2.13906) (0.475, 2.25625) (0.4875, 2.37656) (0.5, 2.5) (0.5125, 2.62656) (0.525, 2.75625) (0.5375, 2.88906) (0.55, 3.025) (0.5625, 3.16406) (0.575, 3.30625) (0.5875, 3.45156) (0.6, 3.6) (0.6125, 3.75156) (0.618034, 3.81966) (0.618034, 3.81966) (0.625, 3.75) (0.65, 3.5) (0.675, 3.25) (0.7, 3.) (0.725, 2.75) (0.75, 2.5) (0.775, 2.25) (0.8, 2.) (0.825, 1.75) (0.85, 1.5) (0.875, 1.25) (0.9, 1.) (0.925, 0.75) (0.95, 0.5) (0.975, 0.25) (1,0)};
	\addplot [color=fst-green,line width=0.8pt,smooth] coordinates {(0, 0) (0.0125, 0.0015625) (0.025, -0.00625) (0.0375, -0.0140625) (0.05, -0.025) (0.0625, -0.0390625) (0.075, -0.05625) (0.0875, -0.0765625) (0.1, -0.1) (0.1125, -0.126563) (0.125, -0.15625) (0.1375, -0.189063) (0.15, -0.225) (0.1625, -0.264063) (0.175, -0.30625) (0.1875, -0.351563) (0.2, -0.4) (0.2125, -0.451563) (0.225, -0.50625) (0.2375, -0.564063) (0.25, -0.625) (0.2625, -0.689063) (0.275, -0.75625) (0.2875, -0.826562) (0.3, -0.9) (0.3125, -0.976563) (0.325, -1.05625) (0.3375, -1.13906) (0.35, -1.225) (0.3625, -1.31406) (0.375, -1.40625) (0.3875, -1.50156) (0.4, -1.6) (0.4125, -1.70156) (0.425, -1.80625) (0.4375, -1.91406) (0.45, -2.025) (0.4625, -2.13906) (0.475, -2.25625) (0.4875, -2.37656) (0.5, -2.5) (0.5125, -2.62656) (0.525, -2.75625) (0.5375, -2.88906) (0.55, -3.025) (0.5625, -3.16406) (0.575, -3.30625) (0.5875, -3.45156) (0.6, -3.6) (0.6125, -3.75156) (0.618034, -3.81966) (0.618034, -3.81966) (0.625, -3.75) (0.65, -3.5) (0.675, -3.25) (0.7, -3.) (0.725, -2.75) (0.75, -2.5) (0.775, -2.25) (0.8, -2.) (0.825, -1.75) (0.85, -1.5) (0.875, -1.25) (0.9, -1.) (0.925, -0.75) (0.95, -0.5) (0.975, -0.25) (1,0)};
	\addplot [color=fst-yellow,line width=0.8pt,smooth] coordinates {(0, 0) (0.0125, 0.0046875) (0.025, 0.01875) (0.0375, 0.0421875) (0.05, 0.075) (0.0625, 0.117188) (0.075, 0.16875) (0.0875, 0.229687) (0.1, 0.3) (0.1125, 0.379688) (0.125, 0.46875) (0.1375, 0.567188) (0.15, 0.675) (0.1625, 0.792188) (0.175, 0.91875) (0.1875, 1.05469) (0.2, 1.2) (0.2125, 1.35469) (0.225, 1.51875) (0.2375, 1.69219) (0.25, 1.875) (0.2625, 2.06719) (0.275, 2.26875) (0.2875, 2.47969) (0.3, 2.7) (0.3125, 2.92969) (0.325, 3.16875) (0.3375, 3.41719) (0.35, 3.675) (0.3625, 3.94219) (0.375, 4.21875) (0.3875, 4.50469) (0.4, 4.8) (0.4125, 5.10469) (0.425, 5.41875) (0.4375, 5.74219) (0.45, 6.075) (0.4625, 6.41719) (0.475, 6.76875) (0.4875, 7.12969) (0.5, 7.5) (0.5125, 7.87969) (0.525, 8.26875) (0.5375, 8.66719) (0.55, 9.075) (0.5625, 9.49219) (0.575, 9.91875) (0.5875, 10.3547) (0.6, 10.8) (0.6125, 11.2547) (0.618034, 11.459) (0.618034, 11.459) (0.625, 11.25) (0.65, 10.5) (0.675, 9.75) (0.7, 9.) (0.725, 8.25) (0.75, 7.5) (0.775, 6.75) (0.8, 6.) (0.825, 5.25) (0.85, 4.5) (0.875, 3.75) (0.9, 3.) (0.925, 2.25) (0.95, 1.5) (0.975, 0.75) (1,0)};
	\addplot [color=fst-yellow,line width=0.8pt,smooth] coordinates {(0, 0) (0.0125, -0.0046875) (0.025, -0.01875) (0.0375, -0.0421875) (0.05, -0.075) (0.0625, -0.117188) (0.075, -0.16875) (0.0875, -0.229687) (0.1, -0.3) (0.1125, -0.379688) (0.125, -0.46875) (0.1375, -0.567188) (0.15, -0.675) (0.1625, -0.792188) (0.175, -0.91875) (0.1875, -1.05469) (0.2, -1.2) (0.2125, -1.35469) (0.225, -1.51875) (0.2375, -1.69219) (0.25, -1.875) (0.2625, -2.06719) (0.275, -2.26875) (0.2875, -2.47969) (0.3, -2.7) (0.3125, -2.92969) (0.325, -3.16875) (0.3375, -3.41719) (0.35, -3.675) (0.3625, -3.94219) (0.375, -4.21875) (0.3875, -4.50469) (0.4, -4.8) (0.4125, -5.10469) (0.425, -5.41875) (0.4375, -5.74219) (0.45, -6.075) (0.4625, -6.41719) (0.475, -6.76875) (0.4875, -7.12969) (0.5, -7.5) (0.5125, -7.87969) (0.525, -8.26875) (0.5375, -8.66719) (0.55, -9.075) (0.5625, -9.49219) (0.575, -9.91875) (0.5875, -10.3547) (0.6, -10.8) (0.6125, -11.2547) (0.618034, -11.459) (0.618034, -11.459) (0.625, -11.25) (0.65, -10.5) (0.675, -9.75) (0.7, -9.) (0.725, -8.25) (0.75, -7.5) (0.775, -6.75) (0.8, -6.) (0.825, -5.25) (0.85, -4.5) (0.875, -3.75) (0.9, -3.) (0.925, -2.25) (0.95, -1.5) (0.975, -0.75) (1,0)};
	\addplot [color=fst-blue,line width=0.8pt,smooth] coordinates {(0,0) (0.1, -0.25) (0.25,-0.9) (0.3,-1.3) (0.33,-1.8) (0.37,-2.9) (0.4,-3.8) (0.43, -4.9) (0.46,-5.4) (0.5,-5.7) (0.6,-5.8) (0.7,-5.2) (0.8,-3.8) (0.9,-2) (1,0)};
	\addplot [color=fst-purple,line width=0.8pt,smooth] coordinates {(0,0) (0.1, 0.25) (0.25,0.9) (0.3,1.3) (0.33,1.8) (0.37,2.9) (0.4,3.8) (0.43, 4.9) (0.46,5.4) (0.5,5.7) (0.6,5.8) (0.7,5.2) (0.8,3.8) (0.9,2) (1,0)};
	\node[label={220:{\scriptsize{0}}}] at (axis cs:0.04,0.2) {};
	\node[label={310:{\scriptsize{1}}}] at (axis cs:0.96,0.2) {};
	\node[label={270:{\scriptsize{$x^{*}$}}}] at (axis cs:0.7,1) {};
	\node[label={0:{\scriptsize{$\Gamma_{0}$}}}] at (axis cs:0.58,-6.4) {};
	\node[label={0:{\scriptsize{$\Gamma_{1}$}}}] at (axis cs:0.58,6.4) {};
	\node[label={0:{\scriptsize{$\mathfrak{B}_{+}$}}}] at (axis cs:0.39,-1.5) {};
	\node[label={270:{\scriptsize{$\mathfrak{B}_{-}$}}}] at (axis cs:0.8,-8) {};
	\node[label={270:{\scriptsize{$\mathcal{M}_{\mu}$}}}] at (axis cs:0.13,-5.2) {};
	\end{axis}
	\end{tikzpicture}
	\caption{For $\lambda\in(0,\lambda^{*})$, qualitative representation in the $(u,v)$-plane of the curves $\Gamma_0$ and $\Gamma_1$, the manifold $\mathcal{M}_{\mu}$, and the curves $\mathfrak{B}_{-}$ and $\mathfrak{B}_{+}$.}  
	\label{fig-09}
\end{figure}
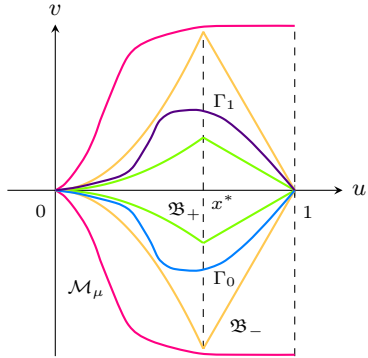

\smallskip
\noindent
\textit{Construction of ``interior barriers'' for all $\lambda>0$.} Let $\lambda> 0$ be fixed. According to the behavior of $\Gamma_{0}$ near $(0,0)$ and $(1,0)$ described in Proposition~\ref{pr-2.2}, we fix $k_{+}=k_{+}(\lambda)>0$ sufficiently small such that the curve
\begin{equation*}
\mathfrak{B}_{+}=\mathfrak{B}_{+}(\lambda) := \bigl{\{} (x,-k_{+}(\lambda)\min\{x^{2},1-x\}) \colon x\in[0,1]\bigr{\}},
\end{equation*}
lies between the curve $\Gamma_{0}=\Gamma_{0}(\lambda)$ and the segment $[0,1]\times\{0\}$. Observe that $\mathfrak{B}_{+}$ is the graph of the maximum function between the parabola $\{(x,-k_{+}x^{2}) \colon x\in[0,1]\}$ and the line $\{(x,-k_{+}(1-x))\colon x\in[0,1]\}$ which intersect for $x=x^{*}:=\frac{\sqrt{5}-1}{2}$. In particular, $\mathfrak{B}_{+}$ connects the points $(0,0)$ and $(1,0)$ in the $(u,v)$-plane.

Thanks to~\eqref{def-stable-manifold}, for sufficiently small $\mu$ the curve $\mathcal{M}_{\mu}$ intersects $\mathfrak{B}_{+}$ in two points in $(0,1)\times(-\infty,0)$, whose abscissas are denoted by $x_{0}^{\mathcal{M}}=x_{0}^{\mathcal{M}}(\mu)$ and $x_{1}^{\mathcal{M}}=x_{1}^{\mathcal{M}}(\mu)$, in such a way that $x_{0}^{\mathcal{M}}<x^{*}<x_{1}^{\mathcal{M}}$.

Let us define now, for all $x\in(0,x_{0}^{\mathcal{M}})\cup(x_{1}^{\mathcal{M}},1)$, the time $T_{+}(x,\mu)$ to connect the point $(x,-k_{+}\min\{x^{2},1-x\})\in\mathfrak{B}_{+}$ to $(x,k_{+}\min\{x^{2},1-x\})$, which is the symmetric point with respect to the $u$-axis, along the level lines of \eqref{eq-energy-mu}. Thanks to Propositions~\ref{pr-3.1} and~\ref{pr-3.2} we have that $T_{+}(\cdot,\mu)$ is strictly increasing in $(0,x_{0}^{\mathcal{M}})$ and strictly decreasing in $(x_{1}^{\mathcal{M}},1)$. Moreover, we extend $T_{+}(x,\mu)$ by continuity at $x=0$ and $x=1$. Now, by reasoning as in the proof of \eqref{eq:order_L0_L1}, one can prove that
\begin{equation*}
\min_{x\in[0,x_{0}^{\mathcal{M}})\cup(x_{1}^{\mathcal{M}},1]} T_{+}(x,\mu)=T_{+}(1,\mu)=\frac{1}{\sqrt{\mu}}\arctan\frac{k_{+}}{\sqrt{\mu}}.
\end{equation*}
Since the right-hand side converges to $+\infty$ as $\mu\to 0^{+}$, this implies that there exists $\mu_{+}=\mu_{+}(\lambda)>0$ such that
\begin{equation}\label{eq:limit_T_mu-A1}
T_{+}(x,\mu)>1-2\sigma, \quad \text{for all $\mu\in(0,\mu_{+})$ and $x\in[0,x_{0}^{\mathcal{M}}(\mu))\cup(x_{1}^{\mathcal{M}}(\mu),1]$}.
\end{equation}
Next, we compare the connection times from $\Gamma_{0}$ to $\Gamma_{1}$ introduced in Section~\ref{section-4} with $T_{+}$. We observe that, for sufficiently small $\mu$, such connection times look like in Figure~\ref{fig-06}. Moreover, for all $s\in(0,s_0^{\mathcal{M}}(\mu))\cup(s_{1}^{\mathcal{M}}(\mu),1)$, we denote by $\zeta_{s}$ the smallest abscissa of the intersections of $\mathfrak{B}_{+}$ with the level line of \eqref{eq-energy-mu} passing through $(u_{s}(\sigma),v_{s}(\sigma))$. Since $\zeta_{s}<u_{s}(\sigma)$, we have
\begin{equation}\label{eq:limit_T_mu-A2}
T_{1}(s,\mu)>T_{+}(\zeta_{s},\mu), \quad \text{for all $s\in(0,s_{0}^{\mathcal{M}}(\mu))\cup(s_{1}^{\mathcal{M}}(\mu),1)$}.
\end{equation}
Finally, by combining~\eqref{eq:limit_T_mu-A1},  \eqref{eq:limit_T_mu-A2}, and~\eqref{eq-order-Ti-general}, we conclude that
for all $\lambda>0$, there exists $\mu_{+}=\mu_{+}(\lambda)>0$ such that
\begin{equation}\label{eq:limit_T_mu-A}
T_{i}(s,\mu)>1-2\sigma, \quad \text{for all $\mu\in(0,\mu_{+})$ and  $s\in D_{i}(\mu)$,}
\end{equation}
for all $i=1,2,3$, where we recall that $D_{i}(\mu)$ is the domain of $T_i$. 

\smallskip
\noindent
\textit{Construction of ``exterior barriers'' for $\lambda\in(0,\lambda^{*})$.} According to the behavior of $\Gamma_{0}$ near $(0,0)$ and $(1,0)$ described in Proposition~\ref{pr-2.2}, we fix $k_{-}=k_{-}(\lambda)>0$ sufficiently large such that $\Gamma_{0}=\Gamma_{0}(\lambda)$ lies between the curve
\begin{equation*}
\mathfrak{B}_{-}=\mathfrak{B}_{-}(\lambda) := \bigl{\{} (x,-k_{-}(\lambda)\min\{x^{2},1-x\}) \colon x\in[0,1]\bigr{\}}
\end{equation*}
and the segment $[0,1]\times\{0\}$. Observe that, similarly as before, $\mathfrak{B}_{-}$ is the graph of the maximum function between the parabola $\{(x,-k_{-}x^{2}) \colon x\in[0,1]\}$ and the line $\{(x,-k_{-}(1-x))\colon x\in[0,1]\}$ which intersect at 
\begin{equation*}
(x^{*},y^{*}):=\biggl{(} \dfrac{\sqrt{5}-1}{2}, -k_{-} \dfrac{3-\sqrt{5}}{2}\biggr{)}.
\end{equation*}
In particular, $\mathfrak{B}_{-}$ connects the points $(0,0)$ and $(1,0)$ in the $(u,v)$-plane (see Figure~\ref{fig-09}).

Recalling the definition of $\mathcal{M}_{\mu}$ given in \eqref{def-stable-manifold} and property \eqref{Mnear0}, by
Proposition~\ref{pr-2.2}, for every $\mu>0$ there exists a neighborhood $\mathcal{U}_0(\mu)=[0,x_{0}(\mu))\times(-\varepsilon(\mu),\varepsilon(\mu))$ of $(0,0)$ such that $\mathfrak{B}_{-}\cap\mathcal{U}_0(\mu)$ lies in the region between $\mathcal{M}_{\mu}$ and $[0,1]\times\{0\}$. We fix $\hat{\mu}>0$ and we stress that the neighborhood $\mathcal{U}_0(\hat{\mu})$ satisfies the above property for every $\mu\geq\hat{\mu}$. At this stage, we fix $x_{0}(\hat{\mu})$ (which does not depend on $\mu$) and for every $\mu\geq\hat{\mu}$ and every  $u\in[x_{0}(\hat{\mu}),1]$, the corresponding points $(u,v_{\mu})\in \mathcal{M}_{\mu}$ satisfy
\begin{equation*}
|v_{\mu}|=\sqrt{2\mu G(u)}\to+\infty, \quad \text{as $\mu\to+\infty$.}
\end{equation*}
From this discussion we deduce the existence of $\bar{\mu}(\lambda)>0$ such that for every $\mu>\bar{\mu}(\lambda)$ the curves $\mathcal{M}_{\mu}$ and $\mathfrak{B}_{-}$ do not intersect in $(0,1]\times\mathbb{R}$. As a consequence, for the same range of $\mu$, $\mathcal{M}_{\mu}$ does not intersect $\Gamma_{0}$ and $\Gamma_{1}$ in $(0,1]\times\mathbb{R}$ as well.

We consider such a configuration, and define, for all $x\in(0,1)$, the time $T_{-}(x,\mu)$ to connect the point $(x,-k_{-}\min\{x^{2},1-x\})\in\mathfrak{B}_{-}$ to the symmetric point $(x,k_{-}\min\{x^{2},1-x\})$ along the level lines of \eqref{eq-energy-mu}. Thanks to Propositions~\ref{pr-3.1} and~\ref{pr-3.2}, we have that $T_{-}(\cdot,\mu)$ is strictly increasing in $(0,x^*]$ and strictly decreasing in $[x^*,1)$. Moreover, we also extend $T_-$ by continuity at $x=0$ and $x=1$. Thus, for all $x\in[0,1]$,
\begin{equation*}
T_{-}(x,\mu) \leq T_{-}(x^{*},\mu) = 2\int_{m(x^{*})}^{x^{*}}\frac{\mathrm{d}u}{\sqrt{(y^{*})^2+2\mu(G(u)-G(x^{*}))}}.
\end{equation*}
We observe that this integral converges to $0$ as $\mu\to+\infty$. Indeed, the integrand converges to $0$ and, using the conservation of the energy $H_{\mu}$ (cf., \eqref{eq-energy-mu}), it is easy to see that the level line passing through $(x^{*},y^{*})$ approaches a vertical line as $\mu\to +\infty$, thus $m(x^{*})\to x^{*}$ as $\mu\to+\infty$. We conclude that
\begin{equation}\label{eq:limit_T_mu-B2}
\lim_{\mu\to+\infty} T_{-}(\cdot,\mu)=0 \quad \text{uniformly in $[0,1]$.}
\end{equation}

Then, we define the maximal connection time from $\Gamma_{0}$ to $\Gamma_{1}$ (we refer to the constructions and notations of Section~\ref{section-4.2}). If $T_{1}$ is the unique connection time between $\Gamma_{0}$ and $\Gamma_{1}$ (cf., Figure~\ref{fig-noisola}), we set
\begin{equation*}
T_{\max}(s,\mu):=T_1(s,\mu), \quad \text{for all $s\in[0,1]$}.
\end{equation*}
Otherwise, if more connection times are present, as in Figure~\ref{fig-08}, we set
\begin{equation*}
T_{\max}(s,\mu):=
\begin{cases}
\, T_{1}(s,\mu), & \text{if $s\in[0,s_{0}^{\omega})\cup(s_{1}^{\tau},1]$,} \\
\, T_{3}(s,\mu), & \text{if $s\in[s_{0}^{\omega},s_{1}^{\tau}]$.}
\end{cases}
\end{equation*}
We observe that, in both cases, $T_{\max}(\cdot,\mu)$ is upper semi-continuous in $[0,1]$ and, moreover,
\begin{equation}\label{eq:order-Tmax}
T_{i}(s,\mu)\leq T_{\max}(s,\mu), \quad \text{for all $s\in[0,1]\cap D_{i}(\mu)$}.
\end{equation}
for all $i=1,2,3$.

Finally, we compare $T_{\max}$ and $T_{-}$: for all $s\in(0,1)$, we denote by $\zeta_{s}$ the largest abscissa of the intersections of $\mathfrak{B}_{-}$ with the level line of \eqref{eq-energy-mu} passing through $(u_{s}(\sigma),v_{s}(\sigma))$. Since $\zeta_{s}>u_{s}(\sigma)$, we have
\begin{equation}\label{eq:limit_T_mu-B3}
T_{\max}(s,\mu)<T_{-}(\zeta_{s},\mu), \quad \text{for all $s\in(0,1)$}.
\end{equation}
By combining~\eqref{eq:limit_T_mu-B2}, \eqref{eq:order-Tmax}, and~\eqref{eq:limit_T_mu-B3}, we conclude that, for all $\lambda\in(0,\lambda^{*})$, there exists $\mu_{-}=\mu_{-}(\lambda)>0$ such that
\begin{equation}\label{eq:limit_T_mu-B}
T_{i}(s,\mu)<1-2\sigma, \quad \text{for all $\mu>\mu_{-}$ and $s\in[0,1]\cap D_{i}(\mu)$,}
\end{equation}
for all $i=1,2,3$.

Moreover, we can adapt the previous arguments to the case $\lambda\in[\lambda^{*},+\infty)$ as follows. First of all, by recalling the quantities $s_{0}$ and $s_{1}$ introduced in Proposition~\ref{pr-2.1} (where for $\lambda=\lambda^{*}$, $s_{0}=s_{1}=s^{*}$), we have 
\begin{equation}\label{eq-lim-s}
\begin{aligned}
&\lim_{\mu\to+\infty}s_0^{\mathcal{M}}(\mu)= s_0, &&\lim_{\mu\to+\infty}s_1^{\mathcal{M}}(\mu)= s_1,\\
&\lim_{\mu\to+\infty}s_0^\tau(\mu)=: \bar{s}_0, &&\lim_{\mu\to+\infty}s_1^\tau(\mu)=: \bar{s}_1,
\end{aligned}
\end{equation}
with $\bar{s}_0\in(0,s_0)$ and $\bar{s}_1\in(s_1,1)$. Then, we consider the barrier $\mathfrak{B}_{-}$ as above, and, by taking $k_{-}$ arbitrarily large, its intersection points with $\Gamma_0$ can be made arbitrarily close to $(-\xi_{0},0)$ and $(-\xi_{1},0)$ (cf., the notation introduced in Corollary~\ref{cor-2.1}). With these constructions, it is possible to show that, for $\lambda\in[\lambda^{*},+\infty)$,
\begin{equation}\label{eq:limit_T_mu-C}
\lim_{\mu\to+\infty}T_{i}(\cdot,\mu)=0 \quad \text{locally uniformly in $D_{i,\infty}$,}
\end{equation}
for all $i=1,2,3$, where $D_{1,\infty}:=[0,s_{0})\cup(s_{1},1]$ and $D_{2,\infty}=D_{3,\infty}:=(0,s_{0})\cup(s_{1},\bar{s}_1]$. Notice that $D_{i,\infty}$ are the limiting domains of $T_i(\cdot,\mu)$ as $\mu\to+\infty$.

\smallskip

We are now ready to give the proofs of Theorems~\ref{th-intro1} and~\ref{th-intro2}.

\begin{proof}[Proof of Theorem~\ref{th-intro1}] $(i)$ For all $\lambda>0$, we claim that
\begin{equation*}
\mu_0^{*}(\lambda):=\min \biggl{\{} \mu >0 \colon \min_{s}T_1(s,\mu)= 1-2\sigma\biggr{\}}
\end{equation*}
is well-defined and positive. Indeed, the set $\{ \mu >0 \colon \min_{s}T_1(s,\mu)= 1-2\sigma\}$ is nonempty, bounded, bounded away from $0$ and closed thanks to~\eqref{eq:limit_T_mu-A}, \eqref{eq:limit_T_mu-B}, \eqref{eq:limit_T_mu-C} and the continuity of  $\mu\mapsto T_1(s,\mu)$ (notice that all the quantities in \eqref{def-T123} depend continuously on $\mu$). Then, we show that
\begin{equation}\label{eq:minT1-musmall}
\min_{s} T_1(s,\mu)>1-2\sigma, \quad \text{for all $\mu\in(0,\mu_0^{*}(\lambda))$},
\end{equation}
which, together with \eqref{eq-order-Ti-general} implies that problem \eqref{eq-main} has no solution, since no connection from $\Gamma_{0}$ to $\Gamma_{1}$ is possible in time $1-2\sigma$. To show this, let $\bar{\mu}\in(0,\mu_0^{*}(\lambda))$ be fixed and assume by contradiction that $\min_{s} T_1(s,\bar{\mu})\leq 1-2\sigma$. The equality cannot hold, because it would contradict the minimality of $\mu_0^{*}(\lambda)$. If $\min_{s} T_1(s,\bar{\mu})< 1-2\sigma$, instead, thanks to \eqref{eq:limit_T_mu-A} and the continuity in $\mu$, there would be a value of $\mu$ smaller than $\bar{\mu}$ (and thus smaller than $\mu_0^{*}(\lambda)$) for which $\min_{s} T_1(s,\mu)= 1-2\sigma$, against the minimality of $\mu_0^{*}(\lambda)$ again.
	
\smallskip
\noindent
$(ii)$ For all $\lambda\in(0,\lambda^{*})$, we claim that
\begin{equation*}
\mu_{0}^{**}(\lambda):=\max \biggl{\{} \mu >0 \colon \max_{s} T_{\max}(s,\mu) = 1-2\sigma\biggr{\}}
\end{equation*}
is well-defined, finite and satisfies $\mu_{0}^{**}(\lambda)>\mu_{0}^{*}(\lambda)$. The existence follows as above, while~\eqref{eq:limit_T_mu-B} gives that $\mu_0^{**}(\lambda)\leq\mu_{-}(\lambda)<+\infty$. If we assume by contradiction that  $\mu_{0}^{**}(\lambda)=\mu_{0}^{*}(\lambda)$, then $T_{\max}\left(s,\mu_{0}^{*}(\lambda)\right)=T_{1}\left(s,\mu_{0}^{*}(\lambda)\right)=1-2\sigma$ for all $s\in[0,1]$, which contradicts \eqref{eq:order_L0_L1}. If, instead, it was  $\mu_{0}^{**}(\lambda)<\mu_{0}^{*}(\lambda)$, by \eqref{eq:minT1-musmall} we would have
\begin{equation*}
1-2\sigma<\min_{s}T_{1}(s,\mu_{0}^{**}(\lambda))\leq\max_{s}T_{\max}(s,\mu_{0}^{**}(\lambda))=1-2\sigma,
\end{equation*}
again a contradiction. Now, we claim that
\begin{equation*}
\max_{s} T_{\max}(s,\mu)<1-2\sigma, \quad \text{for all $\mu>\mu_0^{**}(\lambda)$},
\end{equation*}
which, together with \eqref{eq:order-Tmax}, implies that problem \eqref{eq-main} has no solution, since no connection from $\Gamma_{0}$ to $\Gamma_{1}$ is possible in time $1-2\sigma$. To show this, let $\bar{\mu}>\mu_0^{**}(\lambda)$ and assume that $\max_{s} T_{\max}(s,\bar{\mu})\geq 1-2\sigma$. Equality cannot hold, because it would contradict the maximality of $\mu_0^{**}(\lambda)$. If, instead, $\max_{s} T_{\max}(s,\bar{\mu})> 1-2\sigma$, thanks to \eqref{eq:limit_T_mu-B} and the continuity in $\mu$, there would be a value of $\mu$ greater than $\bar{\mu}$ (and thus greater than $\mu_0^{**}(\lambda)$) for which $\max_{s} T_{\max}(s,\mu)= 1-2\sigma$, against the maximality of $\mu_0^{**}(\lambda)$ again.
\end{proof}

\begin{proof}[Proof of Theorem~\ref{th-intro2}] We start by setting, for all $\lambda>0$,
\begin{equation}
\mu_1^{*}(\lambda):=\max \biggl{\{} \mu >0 \colon \min_{s}T_1(s,\mu)= 1-2\sigma\biggr{\}}, \label{eq-mu1}
\end{equation}
which immediately gives $\mu_{0}^{*}(\lambda)\leq\mu_{1}^{*}(\lambda)$, and
\begin{equation}
\mu_2^{*}(\lambda):=\frac{2\lambda\sigma}{1-2\sigma}. \label{eq-mu2}
\end{equation}
We observe that, with such a definition, \eqref{eq-aux-time1} guarantees that
\begin{equation}\label{eq:order-L0-interval}
\mathcal{L}_{0}=\mathcal{L}_{0}(\mu)
\begin{cases}
\, <1-2\sigma, & \text{if $\mu>\mu_{2}^{*}(\lambda)$,} \\
\, =1-2\sigma, & \text{if $\mu=\mu_{2}^{*}(\lambda)$,} \\
\, >1-2\sigma, & \text{if $\mu<\mu_{2}^{*}(\lambda)$.}
\end{cases}
\end{equation}
In addition, we show that $\mu_1^{*}(\lambda)<\mu_2^{*}(\lambda)$ for all $\lambda>0$. Indeed, if by contradiction $\mu_1^{*}(\lambda)\geq\mu_2^{*}(\lambda)$ for some $\lambda>0$, then, \eqref{eq:order_L0_L1} and~\eqref{eq:order-L0-interval} would give $\mathcal{L}_{1}(\mu_1^{*}(\lambda))<\mathcal{L}_{0}(\mu_1^{*}(\lambda))\leq1-2\sigma$. Thus, $T_{1}(s,\mu_1^{*}(\lambda))<1-2\sigma$ for $s\sim 1$, against the fact that $\min_{s}T_{1}(s,\mu_1^{*}(\lambda))=1-2\sigma$.	
	
Moreover, for all $\lambda\in(0,\lambda^{*})$, we set
\begin{equation}\label{eq-mu2**}
\mu_{2}^{**}(\lambda):=\min \biggl{\{} \mu >0 \colon \max_{s} T_{\max}(s,\mu) = 1-2\sigma\biggr{\}},
\end{equation}
which immediately gives $\mu_{2}^{**}(\lambda)\leq\mu_{0}^{**}(\lambda)$,
and we show that $\mu_{2}^{*}(\lambda)<\mu_{2}^{**}(\lambda)$. Indeed, if by contradiction $\mu_2^{*}(\lambda)\geq\mu_2^{**}(\lambda)$ for some $\lambda\in(0,\lambda^{*})$, then \eqref{eq:order-L0-interval} would give $\mathcal{L}_{0}(\mu_2^{**}(\lambda))\geq1-2\sigma$. Thus, \eqref{eq:5_T1'(0)} implies $T_{\max}(s,\mu_2^{**}(\lambda))\geq T_{1}(s,\mu_2^{**}(\lambda))>1-2\sigma$ for $s\sim 0$, against the fact that $\max_{s}T_{\max}(s,\mu_2^{**}(\lambda))=1-2\sigma$.
	
Once we have introduced the quantities above, we prove the statements of this theorem.
	
\smallskip

First, dealing with case $(i)$, we consider a fixed $\lambda\in(0,\lambda^*)$.

\smallskip
\noindent
$(i.a)$ We prove that
\begin{equation}\label{eq:minT1-mularge}
\min_{s} T_{1}(s,\mu)<1-2\sigma, \quad \text{for all $\mu>\mu_{1}^{*}(\lambda)$}.
\end{equation}
Fix $\bar{\mu}>\mu_{1}^{*}(\lambda)$ and assume by contradiction that $\min_{s} T_{1}(s,\bar{\mu})>1-2\sigma$ (equality is excluded by the maximality of $\mu_{1}^{*}(\lambda)$). Thanks to~\eqref{eq:limit_T_mu-B} and the continuity in $\mu$, there exists $\mu>\bar{\mu}$ (thus $\mu>\mu_{1}^{*}(\lambda)$) such that $\min_{s} T_{1}(s,\mu)=1-2\sigma$, against the maximality of $\mu_{1}^{*}(\lambda)$. Thus~\eqref{eq:minT1-mularge} is proved. Now, using~\eqref{eq:limit_T_mu-A}, a similar contradiction argument allows us to show that
\begin{equation}\label{eq:maxTmax-musmall}
\max_{s\in[0,1]} T_{\max}(s,\mu)>1-2\sigma, \quad \text{for all $\mu<\mu_{2}^{**}(\lambda)$}.
\end{equation}
By combining~\eqref{eq:minT1-mularge}, \eqref{eq:maxTmax-musmall}, and since, in accord with Section~\ref{section-4.2},
\begin{equation}\label{eq:image_interval}
\bigcup_{i=1,2,3}T_{i}\left(D_{i}(\mu),\mu\right)=\left[\min_{s}T_1(s,\mu),\max_{s}T_{\max}(s,\mu)\right].
\end{equation}
we infer, for all $\mu\in(\mu^{*}_{1}(\lambda),\mu^{**}_{2}(\lambda))$, the existence of at least one value $\xi_1^{\mu}\in(0,1)$ such that $T_{i}(\xi_1^{\mu},\mu)=1-2\sigma$ for some $i\in\{1,2,3\}$. Thus, problem~\eqref{eq-main} has a solution $u$ such that $u(0)=s\in\{\xi_{1}^{\mu}\}$.
		
\smallskip
\noindent
$(i.b)$ Fix $\mu\in(\mu_{2}^{*},\mu_{2}^{**})$. Since $\mu>\mu_{2}^{*}$, by~\eqref{eq:order_L0_L1} and~\eqref{eq:order-L0-interval}, there exists $\bar{s}=\bar{s}(\mu)>0$, with $\bar{s}\sim 0$, such that
\begin{equation}\label{eq:T1-mu-greater-mu2*}
T_{1}(s,\mu)<1-2\sigma, \quad \text{for all $s\in(0,\bar{s})\cup(1-\bar{s},1)$}.
\end{equation}
This, together with~\eqref{eq:maxTmax-musmall} and~\eqref{eq:image_interval}, guarantees the existence of at least two values $\xi_{2,1}^{\mu},\xi_{2,2}^{\mu}$ such that
\begin{equation}\label{2sol-T1}
\begin{aligned}
&0<\xi_{2,1}^{\mu} < \xi_{2,2}^{\mu} < 1, \\
&T_i(\xi_{2,1}^{\mu},\mu)=T_j(\xi_{2,2}^{\mu},\mu)=1-2\sigma, \quad\text{for some $i,j\in\{1,2,3\}$}.
\end{aligned}
\end{equation}
As a consequence, for $\mu\in(\mu^{*}_{2}(\lambda),\mu^{**}_{2}(\lambda))$, problem~\eqref{eq-main} has two solutions $u$ such that $u(0)=s\in\{\xi_{2,1}^{\mu},\xi_{2,2}^{\mu}\}$. The proof of~$(i.b)$ is complete.
	
\smallskip

Next, dealing with case $(ii)$, we consider a fixed $\lambda\in[\lambda^*,+\infty)$.

\smallskip
\noindent
$(ii.a)$ We observe that \eqref{eq:minT1-mularge} can be obtained in this case as above using~\eqref{eq:limit_T_mu-C}. Moreover, since
\begin{equation}\label{eq:aux-asymptotes}
\lim_{s\to(s_{0}^{\mathcal{M}})^-}T_{1}(s,\mu)=\lim_{s\to(s_{1}^{\mathcal{M}})^+}T_{1}(s,\mu)=+\infty \quad \text{and} \quad \ell_{1,2}^{1}(\mu)<\ell_{3}^{1}(\mu)
\end{equation}
(cf., Figure~\ref{fig-06}), by continuity there exists, for all $\mu>\mu^{*}_{1}(\lambda)$, at least one value $\xi_1^{\mu}\in(0,s_{0}^{\mathcal{M}}(\mu))\cup(s_{1}^{\mathcal{M}}(\mu),1)$ such that $T_{1}(\xi_1^{\mu},\mu)=1-2\sigma$, and so problem~\eqref{eq-main} has a solution $u$ such that $u(0)=s\in\{\xi_{1}^{\mu}\}$.
	
\smallskip
\noindent
$(ii.b)$ Take any $\mu>\mu_{2}^{*}$. Then, \eqref{eq:T1-mu-greater-mu2*} holds as above and, thanks to~\eqref{eq:aux-asymptotes}, we obtain the existence of at least two values $\xi_{2,1}^{\mu},\xi_{2,2}^{\mu}$ such that
\begin{equation*}
0<\xi_{2,1}^{\mu} <s_{0}^{\mathcal{M}}(\mu)\leq s_{1}^{\mathcal{M}}(\mu)< \xi_{2,2}^{\mu} < 1 \quad \text{and} \quad T_{1}(\xi_{2,j}^{\mu},\mu)=1-2\sigma,\,\, j=1,2,
\end{equation*}
which completes the proof in this case.

\smallskip
\noindent	
$(ii.c)$ Set
\begin{equation} \label{eq-mu4}
\mu_{4}^{*}(\lambda):=\max \Bigl{\{}\mu_{2}^{*}(\lambda),\, \max \bigl{\{} \mu>0 \colon \ell^{0}_{1,2}(\mu) = 1-2\sigma \bigr{\}} \Bigr{\}},
\end{equation}
which is well-defined, finite, and obviously satisfies $\mu_{2}^{*}(\lambda)\leq\mu_{4}^{*}(\lambda)$. Observe that the finiteness is a consequence of
\begin{equation}\label{eq:limit_l120}
\lim_{\mu\to+\infty}\ell_{1,2}^{0}(\mu)=0.
\end{equation}
This follows from \eqref{eq:limit_T_mu-C} and the fact that $s_{0}^{\tau}(\mu)$ is bounded away from $0$ and $s_{0}$ as $\mu\to+\infty$, since the level lines of \eqref{eq-energy-mu} approach vertical lines as $\mu\to+\infty$, thus $s_{0}^{\tau}(\mu)$ converges, as $\mu\to+\infty$, to the value of $s$ for which $\Gamma_{0}$ has a vertical tangent in the $(u,v)$-plane.  Moreover, we have
\begin{equation*}
\min_{s\in(0,s_{0}^{\mathcal{M}})}T_{2}(s,\mu)\leq\ell_{1,2}^{0}(\mu)<1-2\sigma \; \text{ and } \;  \mathcal{L}_{1}(\mu)<\mathcal{L}_{0}(\mu)<1-2\sigma, \quad \text{for all $\mu>\mu_{4}^{*}$.}
\end{equation*}
Indeed, the second relation follows since $\mu_{4}^{*}\geq\mu_{2}^{*}$, while the first one from~\eqref{eq:limit_T_mu-A} and~\eqref{eq:limit_l120}. Then, recalling that
\begin{equation*}
\lim_{s\to 0^{+}}T_2(s,\mu)=\lim_{s\to (s_0^\mathcal{M})^{-}}T_2(s,\mu)=+\infty,
\end{equation*}
from the continuity of $T_2(\cdot,\mu)$ in $(0,s_0^\mathcal{M})$, we obtain, for all $\mu>\mu_{4}^{*}$, the existence of two values $\xi_{4,1}^{\mu},\xi_{4,2}^{\mu}$ such that
\begin{equation*}
0<\xi_{4,1}^{\mu}<s_{0}^{\tau}<\xi_{4,2}^{\mu}<s_{0}^{\mathcal{M}} \quad \text{and} \quad T_2(\xi_{4,1}^{\mu},\mu)=T_2(\xi_{4,2}^{\mu},\mu)=1-2\sigma.
\end{equation*}
Moreover, since $\mu^{*}_4(\lambda)\geq\mu^{*}_2(\lambda)$, arguing as in the proof of~$(ii.b)$ and using the facts that $T_{1}(\xi_{4,2}^{\mu},\mu)<1-2\sigma$ and $\lim_{s\to(s_{0}^{\mathcal{M}})^{-}} T_{1}(s,\mu)=+\infty$, we have the existence of one value $\xi_{4,3}^{\mu}$ such that
\begin{equation}\label{eq-xi4-1}
0 < \xi_{4,1}^{\mu}<s_{0}^{\tau}<\xi_{4,2}^{\mu}<\xi_{4,3}^{\mu}<s_{0}^{\mathcal{M}}\quad \text{and} \quad T_1(\xi_{4,3}^{\mu},\mu)=1-2\sigma.
\end{equation}
Next, as in the proof of $(ii.a)$, i.e., using the facts that $T_{1}\left((s_{1}^{\mathcal{M}},1],\mu\right) \supseteq [\mathcal{L}_{1},+\infty)$ and $\mathcal{L}_{1}<1-2\sigma$ we have the existence of one value $\xi_{4,4}^{\mu}$ such that
\begin{equation}\label{eq-xi4-2}
s_{1}^{\mathcal{M}} < \xi_{4,4}^{\mu} < 1 \quad \text{and} \quad T_1(\xi_{4,4}^{\mu},\mu)=1-2\sigma.
\end{equation}
As a consequence, for $\mu>\mu^{*}_{4}(\lambda)$, problem~\eqref{eq-main} has four solutions $u$ such that $u(0)=s\in\{\xi_{4,1}^{\mu},\xi_{4,2}^{\mu},\xi_{4,3}^{\mu},\xi_{4,4}^{\mu}\}$. We notice that the values $\xi_{4,3}^{\mu},\xi_{4,4}^{\mu}$ can be taken to coincide with $\xi_{2,1}^{\mu},\xi_{2,2}^{\mu}$ defined in the proof of $(ii.b)$, respectively. The proof of~$(ii.c)$ is thus complete.
	
\smallskip
\noindent
$(ii.d)$ Set
\begin{equation}\label{eq-mu8}
\mu_{8}^{*}(\lambda):=\max \Bigl{\{}\mu_{2}^{*}(\lambda),\, \max \bigl{\{} \mu>0 \colon \ell^{0}_{3}(\mu) = 1-2\sigma \bigr{\}} \Bigr{\}},
\end{equation}
which is well-defined, finite, and satisfies  $\mu_{4}^{*}(\lambda)\leq\mu_{8}^{*}(\lambda)$ since $\ell^{0}_{1,2}(\mu)<\ell^{0}_{3}(\mu)$.  Observe that the finiteness is a consequence of
\begin{equation}
\lim_{\mu\to+\infty}\ell_{3}^{0}(\mu)=0,
\end{equation}
which can be proved by reasoning as for~\eqref{eq:limit_l120}.

For each $\mu>\mu^{*}_8(\lambda)$, we can infer that 
\begin{align*}
&\min_{s\in(0,s_0^\mathcal{M})}T_3(s,\mu) \leq \ell_{3}^{0}(\mu) <1-2\sigma, 
\\
&\min_{s\in (s_{1}^{\mathcal{M}},s_{1}^{\tau}]} T_{1}(s,\mu) \leq \min_{s\in (s_{1}^{\mathcal{M}},s_{1}^{\tau}]} T_{2}(s,\mu) \leq \ell_{1,2}^{1}(\mu)=\ell_{3}^{0}(\mu) < 1-2\sigma.
\end{align*}
Recalling that
\begin{equation*}
\lim_{s\to 0^{+}}T_3(s,\mu)=\lim_{s\to (s_0^\mathcal{M})^{-}}T_3(s,\mu)=
\lim_{s\to (s_{1}^{\mathcal{M}})^{+}}T_1(s,\mu)=\lim_{s\to (s_{1}^{\mathcal{M}})^{+}} T_2(s,\mu)=+\infty,
\end{equation*}
from the continuity of $T_3(\cdot,\mu)$ in $(0,s_0^\mathcal{M})$, of $T_{1}(\cdot,\mu)$ in $(s_{1}^{\mathcal{M}},s_{1}^{\tau}]$, and of $T_{2}(\cdot,\mu)$ in $(s_{1}^{\mathcal{M}},s_{1}^{\tau}]$, we immediately obtain that there exist four values $\xi_{8,2}^{\mu},\xi_{8,3}^{\mu},\xi_{8,6}^{\mu},\xi_{8,7}^{\mu}$ such that
\begin{align*}
&0<\xi_{8,2}^{\mu}<s_{0}^{\tau}<\xi_{8,3}^{\mu}<s_{0}^{\mathcal{M}} \quad \text{and} 
\quad T_3(\xi_{8,2}^{\mu},\mu)=T_3(\xi_{8,3}^{\mu},\mu)=1-2\sigma,
\\
&s_{1}^{\mathcal{M}}<\xi_{8,6}^{\mu}<\xi_{8,7}^{\mu}<s_{1}^{\tau} \quad \text{and} 
\quad T_1(\xi_{8,6}^{\mu},\mu)=T_2(\xi_{8,7}^{\mu},\mu)=1-2\sigma.
\end{align*}
Moreover, since
\begin{equation*}
\begin{aligned}
&\quad T_{2}(\xi_{8,2}^{\mu},\mu)<1-2\sigma, \qquad T_{1}(\xi_{8,3}^{\mu},\mu)<T_{2}(\xi_{8,3}^{\mu},\mu)<1-2\sigma<T_{3}(\xi_{8,7}^{\mu},\mu) \\
&\lim_{s\to 0^{+}}T_2(s,\mu)=\lim_{s\to (s_{0}^{\tau})^{-}}T_1(s,\mu)=+\infty, \quad \mathcal{L}_{1}(\mu)<\mathcal{L}_{0}(\mu)<1-2\sigma,
\end{aligned}
\end{equation*}
arguing as in the proof of~$(ii.c)$, we have, for all $\mu>\mu^*_8$, the existence of four values $\xi_{8,1}^{\mu},\xi_{8,4}^{\mu},\xi_{8,5}^{\mu},\xi_{8,8}^{\mu}$ satisfying
\begin{equation*}
\begin{aligned}
&0<\xi_{8,1}^{\mu}<\xi_{8,2}^{\mu}<s_{0}^{\tau}<\xi_{8,3}^{\mu}<\xi_{8,4}^{\mu}<\xi_{8,5}^{\mu}<s_{0}^{\mathcal{M}}\leq s_{1}^{\mathcal{M}}<\xi_{8,6}^{\mu}<\xi_{8,7}^{\mu}<\xi_{8,8}^{\mu}<1,
\\
&T_2(\xi_{8,1},\mu)=T_2(\xi_{8,4}^{\mu},\mu)=T_{1}(\xi_{8,5}^{\mu},\mu)=T_{i}(\xi_{8,8}^{\mu},\mu)=1-2\sigma,
\end{aligned}
\end{equation*}
where $i=3$, provided $\xi_{8,8}^{\mu}\in(s_{1}^{\mathcal{M}},s_{1}^{\tau}]$, and $i=1$, provided $\xi_{8,8}^{\mu}\in(s_{1}^{\tau},1]$. In particular, they can be taken to coincide with $\xi_{4,1}^{\mu},\xi_{4,2}^{\mu},\xi_{4,3}^{\mu},\xi_{4,4}^{\mu}$ introduced in the proof of $(ii.c)$, respectively. As a consequence, for $\mu>\mu^{*}_{8}(\lambda)$, problem~\eqref{eq-main} has eight solutions $u$ such that $u(0)=s\in\{\xi_{8,1}^{\mu},\xi_{8,2}^{\mu},\xi_{8,3}^{\mu},\xi_{8,4}^{\mu},\xi_{8,5}^{\mu},\xi_{8,6}^{\mu},\xi_{8,7}^{\mu},\xi_{8,8}^{\mu}\}$. The proof of~$(ii.d)$ is thus complete.
\end{proof}

\begin{remark}[Conjecture]\label{rem-conjecture}
We conjecture that the values $\mu^{*}_1(\lambda)$ and $\mu^{**}_2(\lambda)$ in Theorems~\ref{th-intro1} and \ref{th-intro2} coincide with $\mu^{*}_0(\lambda)$ and $\mu^{**}_0(\lambda)$, respectively. To obtain such a result, it would be sufficient, for example, to prove some properties of monotonicity with respect to $\mu$ for the connection times $T_i$, with $i=1,2,3$, introduced and analyzed in Section~\ref{section-4}.
\hfill$\lhd$
\end{remark}

\section{Bifurcation diagrams in the $\left(\mu,u(0)\right)$-plane}\label{section-6}

In this section, we provide a description of all possible bifurcation diagrams concerning the number of solutions of problem~\eqref{eq-main}. We thus plot, for every fixed $\lambda\in(0,+\infty)$, the initial data $u(0)=s$ in~\eqref{eq-initial1}, that identifies a solution of~\eqref{eq-main}, against the main bifurcation parameter $\mu\in(0,+\infty)$. 
To this purpose, we exploit the behavior of the connection times $T_i$ defined in Section~\ref{section-4}, and so we divide the discussion into two parts corresponding to the cases $\lambda\in[\lambda^*,+\infty)$ and $\lambda\in(0,\lambda^*)$. Overall, we will show that three different topological diagrams arise.

\subsection{The case $\lambda\in[\lambda^*,+\infty)$}\label{section-6.1}

Let $\lambda\in[\lambda^*,+\infty)$ be fixed. We aim to show that the minimal bifurcation diagram to problem~\eqref{eq-main} with respect to $\mu\in(0,+\infty)$ behaves as in Figure~\ref{fig-bifurcation-1}. We therefore introduce the continuous functions
\begin{equation}\label{fun-f}
\begin{aligned}
&f_{1,2}^l(s,\mu) :=
\begin{cases}
\, T_{1}(s,\mu), &\text{if $s\in[0,s_{0}^{\tau}(\mu)]$,} \\
\, T_{2}(s,\mu), &\text{if $s\in[s_{0}^{\tau}(\mu),s_{0}^{\mathcal{M}}(\mu))$,} 
\end{cases}
\\
&f_{2,1}^l(s,\mu) :=
\begin{cases}
\, T_{2}(s,\mu), &\text{if $s\in(0,s_{0}^{\tau}(\mu)]$,} \\
\, T_{1}(s,\mu), &\text{if $s\in[s_{0}^{\tau}(\mu),s_{0}^{\mathcal{M}}(\mu))$,} 
\end{cases}
\\
&f_{3,3}^l(s,\mu) := T_{3}(s,\mu), \quad \text{if $s\in(0,s_{0}^{\mathcal{M}}(\mu))$,} 
\\
&f_{1}^r(s,\mu) :=T_{1}(s,\mu), \quad\text{if $s\in(s_{1}^{\mathcal{M}}(\mu),s_{1}^{\tau}(\mu))$,} 
\\
&f_{2}^r(s,\mu) :=T_{2}(s,\mu), \quad\text{if $s\in(s_{1}^{\mathcal{M}}(\mu),s_{1}^{\tau}(\mu)]$,} 
\\
&f_{3,1}^r(s,\mu) :=
\begin{cases}
\, T_{3}(s,\mu), &\text{if $s\in(s_{1}^{\mathcal{M}}(\mu),s_{1}^{\tau}(\mu)]$,} \\
\, T_{1}(s,\mu), &\text{if $s\in(s_{1}^{\tau}(\mu),1]$,}
\end{cases}
\end{aligned}
\end{equation}
(see Figure~\ref{fig-06}). Throughout this section, we also assume that:
\begin{quote}
\textit{the functions defined in~\eqref{fun-f} have at most one local extremum point  in the $s$-variable.}
\end{quote}
This assumption is suggested by numerical computations of the graphs of the connection times $T_i,$ $i=1,2,3$. In particular, it implies that the functions $f^l_{1,2}$, $f^r_{2}$, and $f^r_{3,1}$ are strictly monotone in $s$. As we shall see, they also lead to a bifurcation diagram made of eight unbounded continuous branches $b_i(\mu)$, with $i=1,\dots,8$. If these additional assumptions for the functions in~\eqref{fun-f} are dropped, there may be more turning points in the bifurcation diagrams compared to the minimal configuration we are going to discuss. 

By exploiting the symmetry properties in~\eqref{eq-symmetry}, under this additional hypothesis we also deduce that $s_{0}^{\tau}(\mu)$ is the critical point of $f^l_{2,1}(\cdot,\mu)$ for all $\mu>0$. By denoting the critical points of $f^l_{3,3}(\cdot,\mu)$ and $f_{1}^r(\cdot,\mu)$ through $s_{0}^{*}(\mu)$ and $s_1^*(\mu)$, respectively, thanks to~\eqref{eq-symmetry}, we obtain that $0<s_{0}^{*}(\mu)\leq s_{0}^{\tau}(\mu)$ (by the strict monotonicity of $f^r_{2}(\cdot,\mu)$), $s_{1}^{\mathcal{M}}(\mu)< s_{1}^{*}(\mu) \leq s_{1}^{\tau}(\mu)$, and $f^l_{3,3}(s_0^*(\mu),\mu)=f_{1}^r(s_1^*(\mu),\mu)$. Next, let us take $\mu^{*}_1=\mu^{*}_1(\lambda)$, $\mu^*_2=\mu^*_2(\lambda)$, and $\mu^*_4=\mu^*_4(\lambda)$ as the values defined in~\eqref{eq-mu1},~\eqref{eq-mu2}, and~\eqref{eq-mu4}, respectively.
We also define $\mu^*_8=\mu^*_8(\lambda)$ similarly as in~\eqref{eq-mu8} where $\ell_{3}^{0}(\mu)$ is replaced by $f^l_{3,3}(s_{0}^{*}(\mu),\mu)$. 

The additional assumptions considered here allow us to improve the results in Theorem~\ref{th-intro2} by proving in a similar way that $0<\mu^*_1<\mu^*_2<\mu^*_4<\mu^*_8$ and 
\begin{itemize}
\item for every $\mu>\mu^{*}_1$, there is exactly one value $\xi_{1}^\mu\in(0,1)$ such that $f^r_{3,1}(\xi_{1}^\mu,\mu)=1-2\sigma$;
\item for every $\mu>\mu^{*}_2$, there are exactly two values $\xi_{2,i}^\mu\in(0,1)$, $i=1,2$, ordered as in~\eqref{2sol-T1}, and such that $f_{1,2}^l(\xi_{2,1}^\mu,\mu)=f^r_{3,1}(\xi_{2,2}^\mu,\mu)=1-2\sigma$ with $\xi_{2,2}^\mu=\xi_{1}^\mu$;
\item for every $\mu>\mu^{*}_4$, there are exactly four values $\xi_{4,i}^\mu\in(0,1)$, $i=1,\dots,4$, ordered as in~\eqref{eq-xi4-1}-\eqref{eq-xi4-2}, and such that $f^l_{2,1}(\xi_{4,1}^\mu,\mu)=f_{1,2}^l(\xi_{4,2}^\mu,\mu)=f^l_{2,1}(\xi_{4,3}^\mu,\mu)=f^r_{3,1}(\xi_{4,4}^\mu,\mu)=1-2\sigma$ with $\xi_{4,2}^\mu=\xi_{2,1}^\mu$, and $\xi_{4,4}^\mu=\xi_{2,2}^\mu$;
\item for every $\mu>\mu^{*}_8$, there are exactly eight values $\xi_{8,i}^\mu\in(0,1)$, $i=1,\dots,8$, with
$0<\xi_{8,1}^{\mu}<\xi_{8,2}^{\mu}<s_{0}^{*}(\mu)<\xi_{8,3}^{\mu}<\xi_{8,4}^{\mu}<\xi_{8,5}^{\mu}<s_{0}^{\mathcal{M}}(\mu)\leq s_{1}^{\mathcal{M}}(\mu)<\xi_{8,6}^{\mu}<\xi_{8,7}^{\mu}<\xi_{8,8}^{\mu}<1,$ such that
$f^l_{2,1}(\xi_{8,1}^\mu,\mu)=f^l_{3,3}(\xi_{8,2}^\mu,\mu)=f^l_{3,3}(\xi_{8,3}^\mu,\mu)=f_{1,2}^l(\xi_{8,4}^\mu,\mu)=f^l_{2,1}(\xi_{8,5}^\mu,\mu)=f_{1}^{r}(\xi_{8,6}^\mu,\mu)=f_{2}^{r}(\xi_{8,7}^\mu,\mu)=f^r_{3,1}(\xi_{8,8}^\mu,\mu)=1-2\sigma$, and $\xi_{8,1}^\mu=\xi_{4,1}^\mu$, $\xi_{8,4}^\mu=\xi_{4,2}^\mu$, $\xi_{8,5}^\mu=\xi_{4,3}^\mu$, $\xi_{8,8}^\mu=\xi_{4,4}^\mu$.
\end{itemize}

Taking into account the solutions $\xi^\mu_{i,j}\in(0,1)$, with $i\in\{1,2,4,8\}$ and $j\in\{1,\dots,i\}$, we can univocally determine the solutions of~\eqref{eq-main} as $\mu$ varies. Hence, we define the branches $b_i(\mu)$, for $i=1,\dots,8$, of the bifurcation diagram through the following functions:
\begin{align*}
&b_{1}(\mu) :=
\begin{cases}
\, s_0^\tau(\mu^*_4), &\text{if $\mu=\mu^*_4$,} \\
\, \xi_{4,1}^\mu, &\text{if $\mu\in(\mu^*_4,+\infty)$,} \\
\end{cases}
&&b_{2}(\mu) :=
\begin{cases}
\, s_0^*(\mu^*_8), &\text{if $\mu=\mu^*_8$,} \\
\, \xi_{8,2}^\mu, &\text{if $\mu\in(\mu^*_8,+\infty)$,} \\
\end{cases}
\\
&b_{3}(\mu) :=
\begin{cases}
\, s_0^*(\mu^*_8), &\text{if $\mu=\mu^*_8$,} \\
\, \xi_{8,3}^\mu, &\text{if $\mu\in(\mu^*_8,+\infty)$,} \\
\end{cases}
&&b_{4}(\mu) :=
\begin{cases}
\, 0, &\text{if $\mu=\mu^*_2$,} \\
\, \xi_{2,1}^\mu, &\text{if $\mu\in(\mu^{*}_2,+\infty)$,} 
\end{cases}
\\
&b_{5}(\mu) :=
\begin{cases}
\, s_0^\tau(\mu^*_4), &\text{if $\mu=\mu^*_4$,} \\
\, \xi_{4,3}^\mu, &\text{if $\mu\in(\mu^*_4,+\infty)$,} \\
\end{cases}
&&b_{6}(\mu) :=
\begin{cases}
\, s_1^*(\mu^*_8), &\text{if $\mu=\mu^*_8$,} \\
\, \xi_{8,6}^\mu, &\text{if $\mu\in(\mu^*_8,+\infty)$,} \\
\end{cases}
\\
&b_{7}(\mu) :=
\begin{cases}
\, s_1^*(\mu^*_8), &\text{if $\mu=\mu^*_8$,} \\
\, \xi_{8,7}^\mu, &\text{if $\mu\in(\mu^*_8,+\infty)$,} \\
\end{cases}
&&b_{8}(\mu) :=
\begin{cases}
\, 1, &\text{if $\mu=\mu^{*}_1$,} \\
\, \xi_{1}^\mu, &\text{if $\mu\in(\mu^{*}_1,+\infty)$.}
\end{cases}
\end{align*}

Thanks to the continuity of the functions in~\eqref{fun-f}, we have that
\begin{align*}
&\lim_{\mu\to(\mu^*_1)^{+}}\xi_{1}^{\mu}=1, \qquad\lim_{\mu\to(\mu^*_2)^{+}}\xi_{2,1}^{\mu}=0,\\
&\lim_{\mu\to\mu^*_4}\xi_{2,1}^{\mu}=\lim_{\mu\to(\mu^*_4)^{+}}\xi_{4,1}^{\mu}=\lim_{\mu\to(\mu^*_4)^{+}}\xi_{4,3}^{\mu}=s_0^\tau(\mu^*_4),\\
&\lim_{\mu\to(\mu^*_8)^{+}}\xi_{8,2}^{\mu}=\lim_{\mu\to(\mu^*_8)^{+}}\xi_{8,3}^{\mu}=s_0^*(\mu^*_8),\\
&\lim_{\mu\to(\mu^*_8)^{+}}\xi_{8,6}^{\mu}=\lim_{\mu\to(\mu^*_8)^{+}}\xi_{8,7}^{\mu}=s_1^*(\mu^*_8),
\end{align*}
and so the branches $b_i(\mu)$, for $i=1,\dots,8$, are continuous functions in their domains. 

We now investigate the behavior of $b_i(\mu)$ as $\mu\to+\infty.$ To this purpose, by recalling~\eqref{eq-lim-s} and~\eqref{eq:limit_T_mu-C}, we deduce that $\lim_{\mu\to+\infty}\xi_{8,i}^\mu\in\{0, s_0, s_1\}$ for $i=1,\dots,8$. 
Moreover, from the order of the points $\xi_{8,i}^{\mu}$, we obtain
\begin{align*}
&\lim_{\mu\to+\infty}\xi_{8,1}^{\mu}=\lim_{\mu\to+\infty}\xi_{8,2}^{\mu}=0,\\
&\lim_{\mu\to+\infty}\xi_{8,3}^{\mu}=\lim_{\mu\to+\infty}\xi_{8,4}^{\mu}=\lim_{\mu\to+\infty}\xi_{8,5}^{\mu}=s_0, \\
&\lim_{\mu\to+\infty}\xi_{8,6}^{\mu}=\lim_{\mu\to+\infty}\xi_{8,7}^{\mu}=\lim_{\mu\to+\infty}\xi_{8,8}^{\mu}=s_1.
\end{align*}
Concerning the above limits, we clarify that for $\xi_{8,3}^{\mu}\in[s_{0}^{*}(\mu),s_0^{\mathcal{M}}(\mu))$ since the other ones follow straightforwardly. If by contradiction we suppose that $\liminf_{\mu\to+\infty}\xi_{8,3}^\mu=:\hat{s}<s_0$, then, thanks to the monotonicity assumptions on $f_{3,3}$, it would follow that $T_3(s,\mu)\geq1-2\sigma$ for all sufficiently large $\mu$ and $s\in[\hat{s},s_0^{\mathcal{M}}(\mu))$; indeed, otherwise, $f_{3,3}(\cdot,\mu)$ would have another local minimum point for a certain $s>\xi_{8,3}^\mu>s_0^*(\mu)$. But this is impossible since $T_3(\cdot,\mu)\to 0$ as $\mu\to+\infty$, locally uniformly in $(0,s_0)$.
As a consequence, it follows that: $b_1(\mu),b_2(\mu)\to 0$; $b_3(\mu),b_4(\mu),b_5(\mu)\to s_0$; and $b_6(\mu),b_7(\mu),b_8(\mu)\to s_1$, as $\mu\to+\infty$.

\begin{figure}[htb]
\centering
\begin{tikzpicture}
\begin{axis}[
  tick label style={font=\scriptsize},
  axis y line=middle, 
  axis x line=middle,
  xtick={0,30.727,172.780,400,1165},
  ytick={0,0.22,0.79,1},
  xticklabels={0,\scriptsize{$\mu_{1}^{*}$},\scriptsize{$\mu_{2}^{*}$},\scriptsize{$\mu_{4}^{*}$},\scriptsize{$\mu_{8}^{*}$}},
  yticklabels={0,\scriptsize{$s_{0}$},\scriptsize{$s_{1}$},1},
  xlabel={\small $\mu$},
  ylabel={\small $u(0)$},
every axis x label/.style={
    at={(ticklabel* cs:1.0)},
    anchor=west,
},
every axis y label/.style={
    at={(ticklabel* cs:1.0)},
    anchor=south,
},
  width=10.5cm,
  height=5cm,
  xmin=-60,
  xmax=2000,
  ymin=-0.05,
  ymax=1.1]
\addplot [color=black,line width=0.8pt,smooth] coordinates {(30.7300,1.0000) (66.5800,0.9901) (142.0000,0.9777) (255.6000,0.9634) (405.9000,0.9477) (591.5000,0.9313) (811.1000,0.9146) (1063.0000,0.8984) (1346.0000,0.8831) (1659.0000,0.8695) (2000.0000,0.8580)};
\addplot [color=black,line width=0.8pt,smooth] coordinates {(172.8000,0.0000) (217.3000,0.0136) (266.4000,0.0266) (320.1000,0.0392) (378.6000,0.0513) (441.9000,0.0628) (510.0000,0.0739) (583.0000,0.0844) (661.0000,0.0944) (743.9000,0.1038) (831.8000,0.1128) (924.9000,0.1211) (1023.0000,0.1289) (1126.0000,0.1361) (1235.0000,0.1427) (1349.0000,0.1488) (1468.0000,0.1542) (1593.0000,0.1591) (1723.0000,0.1634) (1859.0000,0.1670) (2000,0.17)};
\addplot [color=darktangerine,line width=0.8pt,smooth] coordinates {(2000.0000,0.0260) (1428.0000,0.0273) (992.5000,0.0280) (683.4000,0.0306) (491.1000,0.0378) (406.0000,0.0520) (424.3000,0.0748) (566.4000,0.1037) (858.4000,0.1352) (1327.0000,0.1658) (1998.0000,0.1920)};
\addplot [color=darkpastelgreen,line width=0.8pt,smooth] coordinates {(2000.0000,0.8080) (1698.0000,0.8234) (1466.0000,0.8396) (1304.0000,0.8547) (1210.0000,0.8668) (1185.0000,0.8740) (1226.0000,0.8751) (1331.0000,0.8709) (1497.0000,0.8631) (1721.0000,0.8529) (2000.0000,0.8420)};
\addplot [color=darkpastelgreen,line width=0.8pt,smooth] coordinates {(2000.0000,0.0500) (1654.0000,0.0535) (1414.0000,0.0577) (1274.0000,0.0630) (1193.0000,0.0722) (1166.0000,0.0860) (1179.0000,0.1001) (1219.0000,0.1103) (1306.0000,0.1178) (1559.0000,0.1286) (2000.0000,0.1440)};
\addplot [color=black,dashed] coordinates {(0,1) (10000,1)};
\addplot [color=black,dashed] coordinates {(0,0.79) (10000,0.79)};
\addplot [color=black,dashed] coordinates {(0,0.22) (10000,0.22)};
\end{axis}
\end{tikzpicture} 
\caption{For $\lambda\in[\lambda^*,+\infty)$ fixed, a minimal qualitative bifurcation diagram for~\eqref{eq-main} with $\mu$ as bifurcation parameter. The topological configuration involves: two unbounded branches bifurcating from $0$ and $1$ (black) and three unbounded branches (yellow, green) originating from a supercritical pitchfork bifurcation and two supercritical turning points, respectively.
} 
\label{fig-bifurcation-1}
\end{figure}
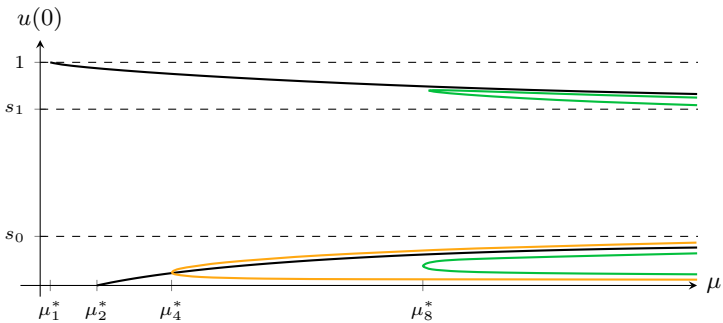

At last, we notice that the branches $b_4$ and $b_8$ bifurcate supercritically from~$0$ at $\mu=\mu^*_2$ and from $1$ at $\mu=\mu^{*}_1$, respectively. Instead, the branches $b_2$ and $b_3$ merge together at $\mu^*_8$, so that $\left(\mu^*_8,s_0^*(\mu^*_8)\right)$ is a supercritical turning point by the definition of $\mu^*_8$. Analogously, the branches $b_6$ and $b_7$ merge together at $\mu^*_8$, and so $\left(\mu^*_8,s_1^*(\mu^*_8)\right)$ is a supercritical turning point too. Concerning $b_1$, $b_4$, and $b_5$, we observe that these branches merge together at $\mu^*_4$ and one can easily show that $\left(\mu_4^*, s_0^\tau(\mu^*_4)\right)$ is a supercritical pitchfork bifurcation point since $s_0^\tau(\mu^*_4)$ is the global minimum of the function $f_{2,1}^l(\cdot,\mu^*_4)$.

\subsection{The case $\lambda \in (0,\lambda^{*})$}

Let $\lambda \in (0,\lambda^{*})$ be fixed. Here, we aim to show that any minimal bifurcation diagram of problem~\eqref{eq-main} with respect to $\mu\in(0,+\infty)$ has at least a bounded branch that joins $0$ to $1$ as in Figure~\ref{fig-bifurcation-2}. Let us define the time needed to move, along the level lines associated with~\eqref{eq-energy-mu}, from a point $(u_s(\sigma),v_s(\sigma))$ of the curve $\Gamma_0$ to its symmetric with respect to the axis $\{v=0\}$:
\begin{equation*}
T_{sym}(s,\mu)= 2 \int_{m(u_{s}(\sigma))}^{u_{s}(\sigma)} \dfrac{\mathrm{d}u}{\sqrt{(v_{s}(\sigma))^2+2\mu(G(u)-G(u_{s}(\sigma)))}}.
\end{equation*}
We notice that this map is defined, respectively, for $s\in(0,s_0^{\mathcal{M}}(\mu))\cup(s_1^{\mathcal{M}}(\mu),1)$, if $\mu\in(0,\tilde{\mu}(\lambda))$; for $s\in(0,1)\setminus\{s^{\mathcal{M}}(\mu)\}$, if $\mu=\tilde{\mu}(\lambda)$; and for $s\in(0,1)$, if $\mu\in(\tilde{\mu}(\lambda),+\infty)$. As above, we extend by continuity $T_{sym}$ in $s=0$ and $s=1$. Furthermore, the map $T_{sym}(s,\mu)$ is continuous in $\mu$ for $s$ belonging to a compact set of its domain. 

\begin{figure}[htb]
\centering
\begin{tikzpicture}
\begin{axis}[
  tick label style={font=\scriptsize},
  axis y line=middle, 
  axis x line=middle,
  xtick={0, 15.4, 30.0200},
  ytick={0,1},
  xticklabels={0,\scriptsize{$\mu_{1}^{*}$},\scriptsize{$\mu_{2}^{*}$}},
  yticklabels={0,1},
  xlabel={\small $\mu$},
  ylabel={\small $u(0)$},
every axis x label/.style={
    at={(ticklabel* cs:1.0)},
    anchor=west,
},
every axis y label/.style={
    at={(ticklabel* cs:1.0)},
    anchor=south,
},
  width=3.5cm,
  height=5cm,
  xmin=-9,
  xmax=70,
  ymin=-0.05,
  ymax=1.1]
\addplot [color=black,line width=0.8pt, smooth] coordinates {(15.4000,0.9993) (15.9600,0.9907) (16.5700,0.9815) (17.2200,0.9716) (17.9200,0.9611) (18.6500,0.9500) (19.4300,0.9382) (20.2500,0.9258) (21.1100,0.9128) (22.0100,0.8991) (22.9500,0.8847) (23.9400,0.8698) (24.9700,0.8542) (26.0400,0.8379) (27.1100,0.8215) (28.1600,0.8054) (29.1600,0.7896) (30.1400,0.7741) (31.0800,0.7588) (31.9800,0.7438) (32.8600,0.7292) (33.7000,0.7148) (34.5000,0.7007) (35.2800,0.6869) (36.0100,0.6734) (36.7200,0.6602) (37.3900,0.6472) (38.0300,0.6346) (38.6300,0.6222) (39.2000,0.6102) (39.7400,0.5984) (40.2400,0.5869) (40.7100,0.5757) (41.1500,0.5648) (41.5500,0.5542) (41.9200,0.5439) (42.2600,0.5338) (42.5600,0.5241) (42.8300,0.5146) (43.0600,0.5055) (43.2700,0.4965) (43.4700,0.4876) (43.6500,0.4789) (43.8200,0.4702) (43.9700,0.4616) (44.1200,0.4532) (44.2400,0.4448) (44.3600,0.4365) (44.4600,0.4284) (44.5400,0.4203) (44.6100,0.4124) (44.6700,0.4045) (44.7200,0.3968) (44.7500,0.3891) (44.7600,0.3816) (44.7600,0.3741) (44.7500,0.3668) (44.7300,0.3595) (44.6900,0.3524) (44.6300,0.3453) (44.5700,0.3384) (44.4900,0.3316) (44.3900,0.3248) (44.2800,0.3182) (44.1600,0.3117) (44.0200,0.3052) (43.8700,0.2988) (43.7100,0.2922) (43.5300,0.2855) (43.3300,0.2787) (43.1200,0.2718) (42.8900,0.2647) (42.6500,0.2575) (42.4000,0.2501) (42.1200,0.2427) (41.8400,0.2351) (41.5400,0.2273) (41.2200,0.2195) (40.8900,0.2115) (40.5400,0.2034) (40.1800,0.1952) (39.8100,0.1868) (39.4200,0.1783) (39.0100,0.1697) (38.5900,0.1610) (38.1500,0.1521) (37.7000,0.1431) (37.2300,0.1340) (36.7500,0.1247) (36.2600,0.1153) (35.7500,0.1058) (35.2200,0.0962) (34.7000,0.0867) (34.1900,0.0775) (33.7100,0.0688) (33.2500,0.0604) (32.8100,0.0523) (32.3900,0.0447) (31.9900,0.0374) (31.6100,0.0305) (31.2500,0.0240) (30.9100,0.0178) (30.5900,0.0120) (30.2900,0.0066) (30.0200,0.0016)};
\addplot [color=black,dashed] coordinates {(0,1) (200,1)};
\end{axis}
\end{tikzpicture} 
\;
\begin{tikzpicture}
\begin{axis}[
  tick label style={font=\scriptsize},
  axis y line=middle, 
  axis x line=middle,
  xtick={0,22.2600,67.3600,170},
  ytick={0,1},
  xticklabels={0,\scriptsize{$\mu_{1}^{*}$},\scriptsize{$\mu_{2}^{*}$},\scriptsize{$\mu_{4}^{*}$}},
  yticklabels={0,1},
  xlabel={\small $\mu$},
  ylabel={\small $u(0)$},
every axis x label/.style={
    at={(ticklabel* cs:1.0)},
    anchor=west,
},
every axis y label/.style={
    at={(ticklabel* cs:1.0)},
    anchor=south,
},
  width=5.5cm,
  height=5cm,
  xmin=-20,
  xmax=280,
  ymin=-0.05,
  ymax=1.1]
\addplot [color=black,line width=0.8pt,smooth] coordinates {(22.2600,0.9999) (22.3600,0.9994) (22.4600,0.9989) (22.5600,0.9985) (22.6600,0.9980) (22.7600,0.9975) (22.8600,0.9970) (22.9600,0.9966) (23.0600,0.9961) (23.1600,0.9956) (23.2600,0.9952) (23.3600,0.9947) (23.4600,0.9942) (23.6600,0.9933) (23.8600,0.9924) (24.0600,0.9914) (24.2600,0.9905) (24.4600,0.9896) (24.6600,0.9887) (24.8600,0.9878) (25.0600,0.9869) (25.2600,0.9861) (25.4600,0.9852) (25.9600,0.9830) (26.4600,0.9808) (26.9600,0.9787) (27.4600,0.9766) (27.9600,0.9745) (28.4600,0.9725) (28.9600,0.9705) (29.4600,0.9685) (29.9600,0.9665) (30.4600,0.9646) (31.4600,0.9607) (32.4600,0.9570) (33.4600,0.9533) (34.4600,0.9497) (35.4600,0.9462) (36.4500,0.9427) (37.4500,0.9393) (38.4500,0.9360) (39.4500,0.9327) (40.4500,0.9295) (42.4500,0.9232) (44.4500,0.9171) (46.4500,0.9111) (48.4500,0.9053) (50.4500,0.8997) (52.4500,0.8942) (54.4500,0.8889) (56.4500,0.8836) (58.4500,0.8785) (60.4500,0.8735) (64.4500,0.8637) (68.4500,0.8542) (72.4500,0.8451) (76.4500,0.8362) (80.4500,0.8276) (84.4500,0.8191) (88.4500,0.8109) (92.4500,0.8028) (96.4500,0.7948) (100.5000,0.7870) (105.5000,0.7774) (110.5000,0.7679) (115.5000,0.7585) (120.5000,0.7492) (125.5000,0.7400) (130.5000,0.7308) (135.5000,0.7217) (140.5000,0.7125) (145.5000,0.7033) (150.5000,0.6940) (155.5000,0.6847) (160.5000,0.6752) (165.5000,0.6656) (170.5000,0.6557) (175.5000,0.6456) (180.5000,0.6351) (185.5000,0.6242) (190.5000,0.6128) (195.5000,0.6007) (200.5000,0.5876) (205.5000,0.5732) (210.5000,0.5568) (215.5000,0.5369) (218.0000,0.5244) (219.2000,0.5170) (220.5000,0.5083) (221.7000,0.4969) (222.3000,0.4889) (222.6000,0.4833) (222.8000,0.4794) (222.9000,0.4768) (222.9000,0.4751) (223.0000,0.4727) (223.0000,0.4705) (223.0000,0.4699) (223.0000,0.4693) (223.0000,0.4692) (223.0000,0.4691) (223.0000,0.4689) (223.0000,0.4687) (223.0000,0.4686) (223.0000,0.4685) (223.0000,0.4685) (223.0000,0.4684) (223.0000,0.4684) (223.0000,0.4683) (223.0000,0.4683) (223.0000,0.4682) (223.0000,0.4682) (223.0000,0.4681) (223.0000,0.4678) (223.0000,0.4676) (223.0000,0.4673) (223.0000,0.4672) (223.0000,0.4670) (223.0000,0.4668) (223.0000,0.4667) (223.0000,0.4666) (223.0000,0.4664) (223.0000,0.4663) (223.0000,0.4653) (223.0000,0.4649) (222.9000,0.4645) (222.9000,0.4642) (222.9000,0.4639) (222.9000,0.4636) (222.9000,0.4633) (222.9000,0.4630) (222.9000,0.4628) (222.9000,0.4625) (222.8000,0.4587) (222.8000,0.4573) (222.7000,0.4560) (222.7000,0.4549) (222.6000,0.4538) (222.6000,0.4528) (222.5000,0.4518) (222.5000,0.4509) (222.4000,0.4501) (222.4000,0.4493) (221.9000,0.4424) (221.6000,0.4395) (221.4000,0.4369) (221.1000,0.4345) (220.9000,0.4323) (220.6000,0.4301) (220.4000,0.4281) (220.1000,0.4262) (219.9000,0.4243) (219.6000,0.4225) (219.1000,0.4191) (218.6000,0.4159) (218.1000,0.4129) (217.6000,0.4100) (217.1000,0.4072) (216.6000,0.4046) (216.1000,0.4020) (215.6000,0.3995) (215.1000,0.3971) (214.6000,0.3948) (214.1000,0.3925) (213.6000,0.3903) (213.1000,0.3881) (212.6000,0.3860) (212.1000,0.3840) (211.6000,0.3819) (211.1000,0.3799) (210.6000,0.3780) (210.1000,0.3760) (209.6000,0.3741) (209.1000,0.3723) (208.6000,0.3704) (208.1000,0.3686) (207.6000,0.3669) (207.1000,0.3651) (206.6000,0.3634) (206.1000,0.3616) (205.6000,0.3599) (205.1000,0.3583) (204.6000,0.3566) (204.1000,0.3550) (203.6000,0.3533) (203.1000,0.3517) (202.6000,0.3501) (202.1000,0.3486) (201.6000,0.3470) (201.1000,0.3454) (200.6000,0.3439) (200.1000,0.3424) (199.6000,0.3409) (198.6000,0.3379) (197.6000,0.3349) (196.6000,0.3320) (195.6000,0.3292) (194.6000,0.3263) (193.6000,0.3235) (192.6000,0.3208) (191.6000,0.3180) (190.6000,0.3153) (189.6000,0.3127) (187.6000,0.3074) (185.6000,0.3021) (183.6000,0.2970) (181.6000,0.2920) (179.6000,0.2870) (177.6000,0.2820) (175.6000,0.2771) (173.6000,0.2723) (171.6000,0.2675) (169.6000,0.2627) (167.6000,0.2579) (165.6000,0.2532) (163.6000,0.2485) (161.6000,0.2438) (159.6000,0.2392) (157.6000,0.2345) (155.6000,0.2299) (153.6000,0.2252) (151.6000,0.2206) (149.6000,0.2160) (145.6000,0.2067) (141.6000,0.1975) (137.6000,0.1882) (133.6000,0.1788) (129.6000,0.1694) (125.6000,0.1599) (121.6000,0.1503) (117.6000,0.1406) (113.6000,0.1307) (109.6000,0.1207) (105.6000,0.1105) (101.6000,0.1002) (97.6100,0.0896) (93.6100,0.0788) (89.6100,0.0677) (85.6100,0.0564) (83.6100,0.0506) (81.6100,0.0447) (79.6100,0.0388) (77.6100,0.0327) (75.6100,0.0266) (74.6100,0.0235) (73.6100,0.0203) (72.6100,0.0172) (71.6100,0.0140) (70.6100,0.0108) (70.1100,0.0092) (69.6100,0.0075) (69.1100,0.0059) (68.6100,0.0043) (68.1100,0.0026) (67.8600,0.0018) (67.6100,0.0009) (67.4900,0.0005) (67.4200,0.0003) (67.3600,0.0001)};
\addplot [color=magenta,line width=0.8pt, smooth] coordinates {(200.9000,0.4413) (201.1000,0.4420) (201.3000,0.4426) (201.5000,0.4432) (201.7000,0.4438) (201.9000,0.4443) (202.1000,0.4448) (202.3000,0.4453) (202.5000,0.4457) (202.7000,0.4460) (202.9000,0.4464) (203.1000,0.4466) (203.3000,0.4469) (203.5000,0.4470) (203.7000,0.4472) (203.9000,0.4473) (204.1000,0.4473) (204.3000,0.4474) (204.5000,0.4473) (204.7000,0.4473) (204.9000,0.4472) (205.1000,0.4470) (205.3000,0.4468) (205.5000,0.4466) (205.7000,0.4463) (205.9000,0.4460) (206.1000,0.4457) (206.3000,0.4453) (206.5000,0.4449) (206.7000,0.4444) (206.9000,0.4440) (207.1000,0.4434) (207.3000,0.4429) (207.5000,0.4423) (207.7000,0.4416) (207.9000,0.4409) (208.1000,0.4402) (208.3000,0.4395) (208.5000,0.4387) (208.7000,0.4378) (208.9000,0.4369) (209.1000,0.4360) (209.3000,0.4350) (209.5000,0.4339) (209.7000,0.4328) (209.9000,0.4317) (210.1000,0.4304) (210.3000,0.4291) (210.5000,0.4278) (210.7000,0.4263) (210.9000,0.4247) (211.1000,0.4231) (211.3000,0.4213) (211.5000,0.4193) (211.7000,0.4172) (211.9000,0.4148) (212.1000,0.4122) (212.3000,0.4091) (212.5000,0.4053) (212.6000,0.4030) (212.7000,0.4003) (212.8000,0.3987) (212.8000,0.3967) (212.9000,0.3956) (212.9000,0.3942) (212.9000,0.3934) (212.9000,0.3924) (212.9000,0.3912) (212.9000,0.3904) (212.9000,0.3899) (212.9000,0.3893) (212.9000,0.3889) (212.9000,0.3886) (213.0000,0.3883) (213.0000,0.3881) (213.0000,0.3879) (213.0000,0.3877) (213.0000,0.3876) (213.0000,0.3875) (213.0000,0.3874) (213.0000,0.3874) (213.0000,0.3874) (213.0000,0.3874) (213.0000,0.3874) (213.0000,0.3873) (213.0000,0.3873) (213.0000,0.3873) (213.0000,0.3873) (213.0000,0.3873) (213.0000,0.3873) (213.0000,0.3873) (213.0000,0.3872) (213.0000,0.3871) (213.0000,0.3870) (213.0000,0.3870) (213.0000,0.3869) (213.0000,0.3868) (213.0000,0.3868) (213.0000,0.3867) (213.0000,0.3867) (213.0000,0.3866) (212.9000,0.3862) (212.9000,0.3861) (212.9000,0.3859) (212.9000,0.3858) (212.9000,0.3857) (212.9000,0.3855) (212.9000,0.3854) (212.9000,0.3853) (212.9000,0.3852) (212.9000,0.3851) (212.9000,0.3850) (212.9000,0.3848) (212.9000,0.3846) (212.9000,0.3845) (212.9000,0.3843) (212.9000,0.3842) (212.9000,0.3841) (212.9000,0.3839) (212.9000,0.3838) (212.9000,0.3837) (212.9000,0.3826) (212.9000,0.3822) (212.9000,0.3818) (212.9000,0.3814) (212.9000,0.3810) (212.9000,0.3807) (212.9000,0.3804) (212.9000,0.3801) (212.9000,0.3797) (212.9000,0.3795) (212.8000,0.3749) (212.7000,0.3731) (212.7000,0.3715) (212.6000,0.3700) (212.6000,0.3687) (212.5000,0.3674) (212.5000,0.3662) (212.4000,0.3650) (212.4000,0.3639) (212.3000,0.3628) (211.8000,0.3537) (211.6000,0.3497) (211.3000,0.3461) (211.1000,0.3427) (210.8000,0.3394) (210.6000,0.3363) (210.3000,0.3333) (210.1000,0.3303) (209.8000,0.3275) (209.6000,0.3247) (208.6000,0.3143) (208.1000,0.3093) (207.6000,0.3044) (207.1000,0.2996) (206.6000,0.2949) (206.1000,0.2903) (205.6000,0.2858) (205.1000,0.2813) (204.6000,0.2768) (204.1000,0.2725) (203.1000,0.2640) (202.1000,0.2560) (201.1000,0.2486) (200.1000,0.2420) (199.1000,0.2364) (198.1000,0.2316) (197.1000,0.2278) (196.1000,0.2246) (195.1000,0.2222) (194.1000,0.2202) (193.1000,0.2186) (192.1000,0.2175) (191.1000,0.2166) (190.1000,0.2160) (189.1000,0.2157) (188.1000,0.2155) (187.1000,0.2155) (186.1000,0.2157) (185.1000,0.2161) (184.1000,0.2167) (183.1000,0.2174) (182.1000,0.2182) (181.1000,0.2193) (180.1000,0.2205) (179.1000,0.2218) (178.1000,0.2234) (177.1000,0.2252) (176.1000,0.2273) (175.1000,0.2297) (174.1000,0.2325) (173.1000,0.2359) (172.1000,0.2402) (171.6000,0.2428) (171.1000,0.2460) (170.8000,0.2479) (170.6000,0.2502) (170.3000,0.2531) (170.2000,0.2550) (170.1000,0.2561) (170.1000,0.2575) (170.0000,0.2583) (170.0000,0.2593) (170.0000,0.2599) (170.0000,0.2607) (170.0000,0.2612) (170.0000,0.2617) (170.0000,0.2621) (170.0000,0.2624) (170.0000,0.2627) (170.0000,0.2629)};
\addplot [color=magenta,line width=0.8pt, smooth] coordinates {(200.9000,0.4413) (200.7000,0.4405) (200.5000,0.4398) (200.3000,0.4390) (200.1000,0.4381) (199.9000,0.4373) (199.7000,0.4364) (199.5000,0.4355) (199.3000,0.4345) (199.1000,0.4336) (198.9000,0.4326) (198.7000,0.4316) (198.5000,0.4306) (198.3000,0.4296) (198.1000,0.4285) (197.9000,0.4275) (197.7000,0.4264) (197.5000,0.4254) (197.3000,0.4243) (197.1000,0.4232) (196.9000,0.4222) (196.7000,0.4211) (196.5000,0.4200) (196.3000,0.4189) (196.1000,0.4178) (195.9000,0.4167) (195.7000,0.4156) (195.5000,0.4146) (195.3000,0.4135) (195.1000,0.4124) (194.9000,0.4113) (194.7000,0.4102) (194.5000,0.4092) (194.3000,0.4081) (194.1000,0.4070) (193.9000,0.4060) (193.7000,0.4049) (193.5000,0.4039) (193.3000,0.4028) (193.1000,0.4018) (192.9000,0.4007) (192.4000,0.3981) (191.9000,0.3955) (191.4000,0.3930) (190.9000,0.3905) (190.4000,0.3879) (189.9000,0.3854) (189.4000,0.3830) (188.9000,0.3805) (188.4000,0.3781) (187.9000,0.3757) (186.9000,0.3708) (185.9000,0.3661) (184.9000,0.3613) (183.9000,0.3566) (182.9000,0.3519) (181.9000,0.3471) (180.9000,0.3423) (179.9000,0.3375) (178.9000,0.3326) (177.9000,0.3276) (176.9000,0.3225) (175.9000,0.3171) (174.9000,0.3115) (173.9000,0.3056) (172.9000,0.2991) (171.9000,0.2918) (171.4000,0.2876) (170.9000,0.2828) (170.7000,0.2800) (170.4000,0.2768) (170.3000,0.2749) (170.2000,0.2726) (170.1000,0.2713) (170.1000,0.2697) (170.0000,0.2687) (170.0000,0.2676) (170.0000,0.2668) (170.0000,0.2659) (170.0000,0.2653) (170.0000,0.2649) (170.0000,0.2647) (170.0000,0.2643) (170.0000,0.2641) (170.0000,0.2640) (170.0000,0.2638) (170.0000,0.2636) (170.0000,0.2635) (170.0000,0.2635) (170.0000,0.2635) (170.0000,0.2635)};
\addplot [color=black,dashed] coordinates {(0,1) (500,1)};
\end{axis}
\end{tikzpicture} 
\;
\begin{tikzpicture}
\begin{axis}[
  tick label style={font=\scriptsize},
  axis y line=middle, 
  axis x line=middle,
  xtick={0,25.4100,97.37,224.7,425.7},
  ytick={0,1},
  xticklabels={0,\scriptsize{$\mu_{1}^{*}$},\scriptsize{$\mu_{2}^{*}$},\scriptsize{$\mu_{4}^{*}$},\scriptsize{$\mu_{8}^{*}$}},
  yticklabels={0,1},
  xlabel={\small $\mu$},
  ylabel={\small $u(0)$},
every axis x label/.style={
    at={(ticklabel* cs:1.0)},
    anchor=west,
},
every axis y label/.style={
    at={(ticklabel* cs:1.0)},
    anchor=south,
},
  width=5.5cm,
  height=5cm,
  xmin=-35,
  xmax=950,
  ymin=-0.05,
  ymax=1.1]
\addplot [color=black,line width=0.8pt,smooth] coordinates {(25.4100,0.9999) (25.5100,0.9997) (25.6100,0.9994) (25.7100,0.9992) (25.8100,0.9989) (25.9100,0.9987) (26.1100,0.9982) (26.3100,0.9976) (26.5100,0.9972) (26.7100,0.9967) (26.9100,0.9962) (27.1100,0.9957) (27.3100,0.9952) (27.5100,0.9947) (27.7100,0.9942) (27.9100,0.9937) (28.4100,0.9926) (28.9100,0.9914) (29.4100,0.9902) (29.9100,0.9891) (30.4100,0.9880) (30.9100,0.9869) (31.4100,0.9858) (31.9100,0.9847) (32.4100,0.9836) (32.9100,0.9826) (33.9100,0.9805) (34.9100,0.9785) (35.9100,0.9765) (36.9100,0.9746) (37.9100,0.9727) (38.9100,0.9709) (39.9100,0.9690) (40.9100,0.9673) (41.9100,0.9655) (42.9100,0.9638) (44.9100,0.9605) (46.9100,0.9572) (48.9100,0.9541) (50.9100,0.9511) (52.9100,0.9482) (54.9100,0.9453) (56.9100,0.9425) (58.9100,0.9398) (60.9100,0.9372) (62.9100,0.9347) (67.9100,0.9285) (72.9100,0.9227) (77.9100,0.9172) (82.9100,0.9119) (87.9100,0.9069) (92.9100,0.9021) (97.9100,0.8974) (102.9000,0.8930) (107.9000,0.8887) (112.9000,0.8845) (122.9000,0.8766) (132.9000,0.8691) (142.9000,0.8620) (152.9000,0.8553) (162.9000,0.8488) (172.9000,0.8426) (182.9000,0.8367) (192.9000,0.8310) (202.9000,0.8255) (212.9000,0.8201) (232.9000,0.8099) (252.9000,0.8002) (272.9000,0.7910) (292.9000,0.7821) (312.9000,0.7736) (332.9000,0.7654) (352.9000,0.7574) (372.9000,0.7496) (392.9000,0.7420) (412.9000,0.7345) (442.9000,0.7236) (472.9000,0.7128) (502.9000,0.7022) (532.9000,0.6917) (562.9000,0.6811) (592.9000,0.6705) (622.9000,0.6597) (652.9000,0.6486) (682.9000,0.6372) (712.9000,0.6252) (742.9000,0.6124) (772.9000,0.5983) (802.9000,0.5823) (832.9000,0.5624) (847.9000,0.5493) (855.4000,0.5410) (859.2000,0.5359) (862.9000,0.5296) (864.8000,0.5257) (866.7000,0.5207) (867.6000,0.5172) (868.1000,0.5148) (868.3000,0.5132) (868.5000,0.5108) (868.6000,0.5104) (868.6000,0.5098) (868.6000,0.5091) (868.6000,0.5084) (868.6000,0.5082) (868.6000,0.5082) (868.6000,0.5081) (868.6000,0.5080) (868.6000,0.5080) (868.6000,0.5080) (868.6000,0.5079) (868.6000,0.5079) (868.6000,0.5079) (868.6000,0.5079) (868.6000,0.5078) (868.6000,0.5078) (868.6000,0.5077) (868.6000,0.5077) (868.6000,0.5077) (868.6000,0.5077) (868.6000,0.5077) (868.6000,0.5077) (868.6000,0.5077) (868.6000,0.5076) (868.6000,0.5076) (868.6000,0.5076) (868.6000,0.5075) (868.6000,0.5074) (868.6000,0.5074) (868.6000,0.5073) (868.6000,0.5073) (868.6000,0.5073) (868.6000,0.5072) (868.6000,0.5072) (868.6000,0.5072) (868.6000,0.5071) (868.6000,0.5067) (868.6000,0.5065) (868.6000,0.5064) (868.6000,0.5062) (868.6000,0.5061) (868.6000,0.5060) (868.6000,0.5059) (868.6000,0.5058) (868.6000,0.5057) (868.6000,0.5056) (868.5000,0.5042) (868.4000,0.5036) (868.4000,0.5032) (868.3000,0.5027) (868.3000,0.5023) (868.2000,0.5020) (868.2000,0.5016) (868.1000,0.5013) (868.1000,0.5010) (868.0000,0.5007) (867.0000,0.4962) (866.5000,0.4945) (866.0000,0.4930) (865.5000,0.4916) (865.0000,0.4904) (864.5000,0.4892) (864.0000,0.4881) (863.5000,0.4870) (863.0000,0.4860) (862.5000,0.4851) (860.5000,0.4816) (858.5000,0.4785) (856.5000,0.4757) (854.5000,0.4731) (852.5000,0.4706) (850.5000,0.4683) (848.5000,0.4662) (846.5000,0.4641) (844.5000,0.4621) (842.5000,0.4602) (837.5000,0.4557) (832.5000,0.4515) (827.5000,0.4475) (822.5000,0.4438) (817.5000,0.4402) (812.5000,0.4368) (807.5000,0.4335) (802.5000,0.4303) (797.5000,0.4272) (792.5000,0.4242) (782.5000,0.4185) (772.5000,0.4129) (762.5000,0.4076) (752.5000,0.4024) (742.5000,0.3974) (732.5000,0.3925) (722.5000,0.3877) (712.5000,0.3830) (702.5000,0.3784) (692.5000,0.3739) (672.5000,0.3649) (652.5000,0.3561) (632.5000,0.3475) (612.5000,0.3389) (592.5000,0.3304) (572.5000,0.3218) (552.5000,0.3133) (532.5000,0.3047) (512.5000,0.2959) (492.5000,0.2871) (462.5000,0.2736) (432.5000,0.2596) (402.5000,0.2451) (372.5000,0.2299) (342.5000,0.2139) (312.5000,0.1968) (282.5000,0.1784) (252.5000,0.1583) (222.5000,0.1361) (192.5000,0.1112) (162.5000,0.0827) (147.5000,0.0667) (140.0000,0.0581) (132.5000,0.0492) (128.8000,0.0445) (125.0000,0.0397) (121.3000,0.0348) (117.5000,0.0298) (113.8000,0.0247) (111.9000,0.0220) (110.0000,0.0193) (108.2000,0.0166) (106.3000,0.0139) (104.4000,0.0111) (102.5000,0.0082) (101.6000,0.0068) (100.7000,0.0054) (99.7200,0.0039) (99.2500,0.0032) (98.7800,0.0024) (98.3100,0.0017) (97.8400,0.0009) (97.6100,0.0006) (97.3700,0.0002)};
\addplot [color=magenta,line width=0.8pt, smooth] coordinates {(330.9000,0.2377) (332.9000,0.2389) (334.9000,0.2400) (336.9000,0.2411) (338.9000,0.2421) (340.9000,0.2432) (342.9000,0.2443) (344.9000,0.2454) (346.9000,0.2464) (348.9000,0.2475) (350.9000,0.2486) (355.9000,0.2512) (360.9000,0.2537) (365.9000,0.2563) (370.9000,0.2588) (375.9000,0.2613) (380.9000,0.2638) (385.9000,0.2662) (390.9000,0.2686) (395.9000,0.2710) (400.9000,0.2734) (410.9000,0.2781) (420.9000,0.2828) (430.9000,0.2873) (440.9000,0.2919) (450.9000,0.2963) (460.9000,0.3008) (470.9000,0.3052) (480.9000,0.3095) (490.9000,0.3139) (500.9000,0.3182) (520.9000,0.3268) (540.9000,0.3353) (560.9000,0.3439) (580.9000,0.3525) (600.9000,0.3612) (620.9000,0.3701) (640.9000,0.3791) (660.9000,0.3885) (680.9000,0.3982) (700.9000,0.4084) (720.9000,0.4193) (740.9000,0.4312) (760.9000,0.4447) (780.9000,0.4612) (790.9000,0.4718) (800.9000,0.4863) (805.9000,0.4995) (806.6000,0.5027) (806.9000,0.5052) (807.0000,0.5073) (807.1000,0.5077) (807.1000,0.5083) (807.1000,0.5087) (807.1000,0.5091) (807.1000,0.5092) (807.1000,0.5093) (807.1000,0.5093) (807.1000,0.5094) (807.1000,0.5094) (807.1000,0.5094) (807.1000,0.5095) (807.1000,0.5095) (807.1000,0.5095) (807.1000,0.5095) (807.1000,0.5095) (807.1000,0.5095) (807.1000,0.5095) (807.1000,0.5095) (807.1000,0.5095) (807.1000,0.5095) (807.1000,0.5095) (807.1000,0.5095) (807.1000,0.5095) (807.1000,0.5095) (807.1000,0.5095) (807.1000,0.5095) (807.1000,0.5095) (807.1000,0.5095) (807.1000,0.5095) (807.1000,0.5095) (807.1000,0.5095) (807.1000,0.5096) (807.1000,0.5096) (807.1000,0.5096) (807.1000,0.5096) (807.1000,0.5096) (807.1000,0.5096) (807.1000,0.5096) (807.1000,0.5096) (807.1000,0.5096) (807.1000,0.5096) (807.1000,0.5096) (807.1000,0.5097) (807.1000,0.5097) (807.1000,0.5097) (807.1000,0.5097) (807.1000,0.5097) (807.1000,0.5097) (807.1000,0.5098) (807.1000,0.5098) (807.1000,0.5099) (807.1000,0.5100) (807.1000,0.5101) (807.1000,0.5101) (807.1000,0.5102) (807.1000,0.5102) (807.1000,0.5103) (807.1000,0.5103) (807.1000,0.5104) (807.1000,0.5104) (807.1000,0.5108) (807.1000,0.5111) (807.1000,0.5113) (807.0000,0.5116) (807.0000,0.5118) (807.0000,0.5119) (807.0000,0.5121) (807.0000,0.5123) (807.0000,0.5124) (807.0000,0.5125) (806.9000,0.5137) (806.8000,0.5142) (806.8000,0.5146) (806.7000,0.5150) (806.7000,0.5154) (806.6000,0.5157) (806.6000,0.5161) (806.5000,0.5164) (806.5000,0.5167) (806.4000,0.5170) (805.9000,0.5194) (805.7000,0.5204) (805.4000,0.5214) (805.2000,0.5222) (804.9000,0.5231) (804.7000,0.5238) (804.4000,0.5245) (804.2000,0.5252) (803.9000,0.5259) (803.7000,0.5265) (801.7000,0.5310) (799.7000,0.5346) (797.7000,0.5378) (795.7000,0.5407) (793.7000,0.5433) (791.7000,0.5458) (789.7000,0.5481) (787.7000,0.5503) (785.7000,0.5523) (783.7000,0.5543) (778.7000,0.5589) (773.7000,0.5631) (768.7000,0.5671) (763.7000,0.5708) (758.7000,0.5743) (753.7000,0.5776) (748.7000,0.5808) (743.7000,0.5839) (738.7000,0.5869) (733.7000,0.5898) (723.7000,0.5953) (713.7000,0.6006) (703.7000,0.6056) (693.7000,0.6105) (683.7000,0.6152) (673.7000,0.6197) (663.7000,0.6242) (653.7000,0.6285) (643.7000,0.6327) (633.7000,0.6369) (613.7000,0.6450) (593.7000,0.6529) (573.7000,0.6606) (553.7000,0.6682) (533.7000,0.6757) (513.7000,0.6832) (493.7000,0.6907) (473.7000,0.6982) (453.7000,0.7058) (443.7000,0.7096) (438.7000,0.7116) (433.7000,0.7137) (431.2000,0.7148) (429.9000,0.7153) (429.8000,0.7154) (429.6000,0.7155) (429.5000,0.7156) (429.4000,0.7157) (429.3000,0.7157) (429.3000,0.7157) (429.3000,0.7157) (429.3000,0.7157) (429.3000,0.7158) (429.3000,0.7158) (429.3000,0.7158) (429.3000,0.7158) (429.3000,0.7158) (429.3000,0.7158) (429.3000,0.7158) (429.3000,0.7158) (429.3000,0.7158) (429.3000,0.7158) (429.3000,0.7158) (429.3000,0.7158) (429.3000,0.7158) (429.3000,0.7158) (429.3000,0.7158) (429.3000,0.7158) (429.3000,0.7158) (429.3000,0.7158) (429.3000,0.7158) (429.3000,0.7158) (429.3000,0.7158) (429.3000,0.7158) (429.3000,0.7158) (429.3000,0.7158) (429.3000,0.7158) (429.3000,0.7158) (429.3000,0.7158) (429.3000,0.7158) (429.3000,0.7158) (429.3000,0.7158) (429.3000,0.7158) (429.3000,0.7158) (429.3000,0.7158) (429.3000,0.7158) (429.3000,0.7158) (429.3000,0.7158) (429.3000,0.7158) (429.3000,0.7158) (429.3000,0.7158) (429.3000,0.7158) (429.3000,0.7158) (429.3000,0.7158) (429.3000,0.7158) (429.3000,0.7158) (429.3000,0.7158) (429.3000,0.7158) (429.3000,0.7158) (429.3000,0.7158) (429.3000,0.7158) (429.4000,0.7158) (429.4000,0.7158) (429.4000,0.7158) (429.4000,0.7158) (429.4000,0.7158) (429.4000,0.7158) (429.4000,0.7158) (429.4000,0.7158) (429.4000,0.7158) (429.5000,0.7158) (429.6000,0.7158) (429.6000,0.7158) (429.7000,0.7158) (429.7000,0.7158) (429.8000,0.7158) (429.8000,0.7158) (429.9000,0.7157) (429.9000,0.7157) (430.0000,0.7157) (430.5000,0.7156) (430.7000,0.7155) (431.0000,0.7155) (431.2000,0.7154) (431.5000,0.7153) (431.7000,0.7153) (432.0000,0.7152) (432.2000,0.7151) (432.5000,0.7151) (432.7000,0.7150) (434.7000,0.7144) (436.7000,0.7138) (438.7000,0.7131) (440.7000,0.7125) (442.7000,0.7118) (444.7000,0.7112) (446.7000,0.7105) (448.7000,0.7099) (450.7000,0.7092) (452.7000,0.7085) (457.7000,0.7069) (462.7000,0.7052) (467.7000,0.7035) (472.7000,0.7018) (477.7000,0.7001) (482.7000,0.6984) (487.7000,0.6968) (492.7000,0.6951) (497.7000,0.6934) (502.7000,0.6917) (512.7000,0.6883) (522.7000,0.6849) (532.7000,0.6815) (542.7000,0.6780) (552.7000,0.6746) (562.7000,0.6711) (572.7000,0.6677) (582.7000,0.6642) (592.7000,0.6607) (602.7000,0.6572) (622.7000,0.6500) (642.7000,0.6427) (662.7000,0.6353) (682.7000,0.6276) (702.7000,0.6196) (722.7000,0.6113) (742.7000,0.6026) (762.7000,0.5933) (782.7000,0.5832) (802.7000,0.5721) (822.7000,0.5591) (842.7000,0.5430) (852.7000,0.5323) (857.7000,0.5256) (862.7000,0.5166) (865.2000,0.5100) (866.5000,0.5048) (867.1000,0.4996) (867.4000,0.4977) (867.6000,0.4984) (867.7000,0.4991) (867.9000,0.4999) (868.0000,0.5008) (868.2000,0.5018) (868.4000,0.5030) (868.5000,0.5046) (868.6000,0.5059) (868.6000,0.5071) (868.6000,0.5074) (868.6000,0.5075) (868.6000,0.5075) (868.6000,0.5077) (868.6000,0.5077) (868.6000,0.5078) (868.6000,0.5078) (868.6000,0.5078) (868.6000,0.5078) (868.6000,0.5078) (868.6000,0.5078) (868.6000,0.5079)};
\addplot [color=magenta,line width=0.8pt, smooth] coordinates {(330.9000,0.2377) (328.9000,0.2366) (326.9000,0.2355) (324.9000,0.2344) (322.9000,0.2332) (320.9000,0.2321) (318.9000,0.2309) (316.9000,0.2297) (314.9000,0.2286) (312.9000,0.2274) (310.9000,0.2262) (305.9000,0.2232) (300.9000,0.2200) (295.9000,0.2169) (290.9000,0.2136) (285.9000,0.2102) (280.9000,0.2068) (275.9000,0.2032) (270.9000,0.1995) (265.9000,0.1956) (260.9000,0.1915) (250.9000,0.1826) (240.9000,0.1720) (235.9000,0.1657) (230.9000,0.1577) (228.4000,0.1525) (227.2000,0.1490) (226.6000,0.1469) (225.9000,0.1440) (225.6000,0.1418) (225.5000,0.1397) (225.4000,0.1383) (225.4000,0.1383) (225.3000,0.1383) (225.3000,0.1383) (225.2000,0.1382) (225.2000,0.1382) (225.2000,0.1382) (225.1000,0.1381) (225.1000,0.1381) (225.0000,0.1381) (225.0000,0.1380) (225.0000,0.1380) (224.9000,0.1380) (224.9000,0.1380) (224.9000,0.1379) (224.8000,0.1379) (224.8000,0.1379) (224.7000,0.1378) (224.7000,0.1378) (224.7000,0.1378)};
\addplot [color=magenta,line width=0.8pt, smooth] coordinates {(330.9000,0.0775) (332.9000,0.0771) (334.9000,0.0767) (336.9000,0.0763) (338.9000,0.0759) (340.9000,0.0756) (342.9000,0.0752) (344.9000,0.0748) (346.9000,0.0745) (348.9000,0.0741) (350.9000,0.0737) (355.9000,0.0729) (360.9000,0.0721) (365.9000,0.0713) (370.9000,0.0705) (375.9000,0.0697) (380.9000,0.0690) (385.9000,0.0683) (390.9000,0.0676) (395.9000,0.0669) (400.9000,0.0663) (410.9000,0.0651) (420.9000,0.0639) (430.9000,0.0628) (440.9000,0.0617) (450.9000,0.0607) (460.9000,0.0598) (470.9000,0.0589) (480.9000,0.0580) (490.9000,0.0572) (500.9000,0.0564) (520.9000,0.0549) (540.9000,0.0536) (560.9000,0.0523) (580.9000,0.0511) (600.9000,0.0501) (620.9000,0.0491) (640.9000,0.0482) (660.9000,0.0474) (680.9000,0.0466) (700.9000,0.0459) (720.9000,0.0453) (740.9000,0.0448) (760.9000,0.0444) (780.9000,0.0442) (790.9000,0.0442) (800.9000,0.0444) (805.9000,0.0447) (806.6000,0.0448) (806.9000,0.0449) (807.0000,0.0450) (807.1000,0.0450) (807.1000,0.0451) (807.1000,0.0451) (807.1000,0.0451) (807.1000,0.0451) (807.1000,0.0451) (807.1000,0.0451) (807.1000,0.0451) (807.1000,0.0451) (807.1000,0.0451) (807.1000,0.0451) (807.1000,0.0451) (807.1000,0.0451) (807.1000,0.0451) (807.1000,0.0451) (807.1000,0.0451) (807.1000,0.0451) (807.1000,0.0451) (807.1000,0.0451) (807.1000,0.0451) (807.1000,0.0451) (807.1000,0.0451) (807.1000,0.0451) (807.1000,0.0451) (807.1000,0.0451) (807.1000,0.0451) (807.1000,0.0451) (807.1000,0.0451) (807.1000,0.0451) (807.1000,0.0451) (807.1000,0.0451) (807.1000,0.0451) (807.1000,0.0451) (807.1000,0.0451) (807.1000,0.0451) (807.1000,0.0451) (807.1000,0.0451) (807.1000,0.0451) (807.1000,0.0451) (807.1000,0.0451) (807.1000,0.0451) (807.1000,0.0451) (807.1000,0.0451) (807.1000,0.0451) (807.1000,0.0451) (807.1000,0.0451) (807.1000,0.0451) (807.1000,0.0451) (807.1000,0.0451) (807.1000,0.0451) (807.1000,0.0451) (807.1000,0.0451) (807.1000,0.0451) (807.1000,0.0451) (807.1000,0.0452) (807.1000,0.0452) (807.1000,0.0452) (807.1000,0.0452) (807.1000,0.0452) (807.1000,0.0452) (807.1000,0.0452) (807.1000,0.0452) (807.1000,0.0452) (807.0000,0.0452) (807.0000,0.0452) (807.0000,0.0452) (807.0000,0.0452) (807.0000,0.0453) (807.0000,0.0453) (807.0000,0.0453) (806.9000,0.0453) (806.8000,0.0453) (806.8000,0.0454) (806.7000,0.0454) (806.7000,0.0454) (806.6000,0.0454) (806.6000,0.0454) (806.5000,0.0455) (806.5000,0.0455) (806.4000,0.0455) (805.9000,0.0456) (805.7000,0.0457) (805.4000,0.0457) (805.2000,0.0458) (804.9000,0.0459) (804.7000,0.0459) (804.4000,0.0459) (804.2000,0.0460) (803.9000,0.0460) (803.7000,0.0461) (801.7000,0.0464) (799.7000,0.0467) (797.7000,0.0469) (795.7000,0.0472) (793.7000,0.0474) (791.7000,0.0476) (789.7000,0.0478) (787.7000,0.0480) (785.7000,0.0482) (783.7000,0.0484) (778.7000,0.0490) (773.7000,0.0495) (768.7000,0.0499) (763.7000,0.0504) (758.7000,0.0509) (753.7000,0.0514) (748.7000,0.0519) (743.7000,0.0524) (738.7000,0.0529) (733.7000,0.0534) (723.7000,0.0544) (713.7000,0.0554) (703.7000,0.0564) (693.7000,0.0575) (683.7000,0.0586) (673.7000,0.0598) (663.7000,0.0610) (653.7000,0.0622) (643.7000,0.0635) (633.7000,0.0649) (613.7000,0.0678) (593.7000,0.0710) (573.7000,0.0746) (553.7000,0.0787) (533.7000,0.0834) (513.7000,0.0889) (493.7000,0.0956) (473.7000,0.1041) (453.7000,0.1159) (443.7000,0.1244) (438.7000,0.1301) (433.7000,0.1381) (431.2000,0.1443) (429.9000,0.1494) (429.8000,0.1504) (429.6000,0.1515) (429.5000,0.1530) (429.4000,0.1542) (429.3000,0.1550) (429.3000,0.1557) (429.3000,0.1558) (429.3000,0.1560) (429.3000,0.1562) (429.3000,0.1565) (429.3000,0.1565) (429.3000,0.1565) (429.3000,0.1566) (429.3000,0.1566) (429.3000,0.1566) (429.3000,0.1566) (429.3000,0.1566) (429.3000,0.1566) (429.3000,0.1566) (429.3000,0.1566) (429.3000,0.1566) (429.3000,0.1566) (429.3000,0.1566) (429.3000,0.1566) (429.3000,0.1566) (429.3000,0.1566) (429.3000,0.1566) (429.3000,0.1566) (429.3000,0.1566) (429.3000,0.1566) (429.3000,0.1566) (429.3000,0.1566) (429.3000,0.1566) (429.3000,0.1566) (429.3000,0.1566) (429.3000,0.1566) (429.3000,0.1567) (429.3000,0.1567) (429.3000,0.1567) (429.3000,0.1567) (429.3000,0.1568) (429.3000,0.1568) (429.3000,0.1568) (429.3000,0.1568) (429.3000,0.1568) (429.3000,0.1568) (429.3000,0.1570) (429.3000,0.1570) (429.3000,0.1571) (429.3000,0.1572) (429.3000,0.1573) (429.3000,0.1573) (429.3000,0.1574) (429.3000,0.1574) (429.3000,0.1575) (429.3000,0.1575) (429.3000,0.1579) (429.4000,0.1582) (429.4000,0.1584) (429.4000,0.1586) (429.4000,0.1588) (429.4000,0.1590) (429.4000,0.1592) (429.4000,0.1593) (429.4000,0.1595) (429.4000,0.1596) (429.5000,0.1608) (429.6000,0.1613) (429.6000,0.1617) (429.7000,0.1622) (429.7000,0.1625) (429.8000,0.1629) (429.8000,0.1632) (429.9000,0.1636) (429.9000,0.1639) (430.0000,0.1642) (430.5000,0.1667) (430.7000,0.1678) (431.0000,0.1688) (431.2000,0.1697) (431.5000,0.1705) (431.7000,0.1713) (432.0000,0.1721) (432.2000,0.1729) (432.5000,0.1736) (432.7000,0.1742) (434.7000,0.1791) (436.7000,0.1831) (438.7000,0.1866) (440.7000,0.1899) (442.7000,0.1929) (444.7000,0.1957) (446.7000,0.1983) (448.7000,0.2008) (450.7000,0.2032) (452.7000,0.2055) (457.7000,0.2110) (462.7000,0.2160) (467.7000,0.2207) (472.7000,0.2252) (477.7000,0.2294) (482.7000,0.2335) (487.7000,0.2375) (492.7000,0.2413) (497.7000,0.2450) (502.7000,0.2486) (512.7000,0.2555) (522.7000,0.2621) (532.7000,0.2684) (542.7000,0.2745) (552.7000,0.2805) (562.7000,0.2863) (572.7000,0.2919) (582.7000,0.2975) (592.7000,0.3029) (602.7000,0.3083) (622.7000,0.3188) (642.7000,0.3291) (662.7000,0.3394) (682.7000,0.3496) (702.7000,0.3599) (722.7000,0.3704) (742.7000,0.3811) (762.7000,0.3923) (782.7000,0.4041) (802.7000,0.4169) (822.7000,0.4314) (842.7000,0.4490) (852.7000,0.4604) (857.7000,0.4675) (862.7000,0.4768) (865.2000,0.4836) (866.5000,0.4889) (867.1000,0.4940) (867.4000,0.4977) (867.6000,0.4984) (867.7000,0.4991) (867.9000,0.4999) (868.0000,0.5008) (868.2000,0.5018) (868.4000,0.5030) (868.5000,0.5046) (868.6000,0.5059) (868.6000,0.5071) (868.6000,0.5074) (868.6000,0.5075)};
\addplot [color=magenta,line width=0.8pt, smooth] coordinates {(330.9000,0.0775) (328.9000,0.0779) (326.9000,0.0783) (324.9000,0.0788) (322.9000,0.0792) (320.9000,0.0796) (318.9000,0.0801) (316.9000,0.0805) (314.9000,0.0810) (312.9000,0.0815) (310.9000,0.0820) (305.9000,0.0832) (300.9000,0.0845) (295.9000,0.0859) (290.9000,0.0874) (285.9000,0.0889) (280.9000,0.0906) (275.9000,0.0923) (270.9000,0.0942) (265.9000,0.0963) (260.9000,0.0985) (250.9000,0.1038) (240.9000,0.1106) (235.9000,0.1151) (230.9000,0.1211) (228.4000,0.1254) (227.2000,0.1284) (226.6000,0.1303) (225.9000,0.1330) (225.6000,0.1351) (225.5000,0.1371) (225.4000,0.1383) (225.4000,0.1383) (225.3000,0.1383) (225.3000,0.1383) (225.2000,0.1382) (225.2000,0.1382) (225.2000,0.1382) (225.1000,0.1381) (225.1000,0.1381) (225.0000,0.1381)};
\addplot [color=black,dashed] coordinates {(0,1) (1250,1)};
\end{axis}
\end{tikzpicture} 
\caption{For $\lambda\in(0,\lambda^*)$ fixed, minimal qualitative bifurcation diagrams of~\eqref{eq-main} with $\mu$ as bifurcation parameter. Depending on $\lambda$, different topological configurations may appear: only a bounded connected branch (black) connecting $0$ to $1$ producing one to two solutions (left) or a connected branch crossed by a loop (magenta) which increases the number of solutions up to either four (center) or eight (right).} 
\label{fig-bifurcation-2}
\end{figure}
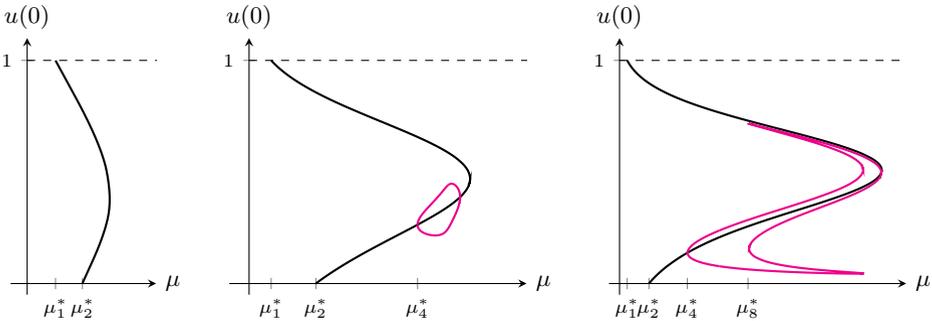

Next, when $\mu\in(0,\tilde{\mu}(\lambda)]$, $T_{sym}$ is an unbounded function that coincides with $f^{l}_{1,2}$ in $[0,s_0^{\mathcal{M}}(\mu))\times(0,\tilde{\mu}(\lambda)]$ and with $f^r_{3,1}$ in $(s_1^{\mathcal{M}}(\mu),1]\times(0,\tilde{\mu}(\lambda)]$ (cf.,~\eqref{fun-f} and recall that, for $\mu=\tilde{\mu}(\lambda)$, $s_0^{\mathcal{M}}(\mu)=s_1^{\mathcal{M}}(\mu)=s^{\mathcal{M}}(\mu)$). On the other hand, when $\mu\in(\tilde{\mu}(\lambda),+\infty)$, $T_{sym}(s,\mu)$ coincides either with $f_1(s,\mu):=T_1(s,\mu)$
when the configuration of the connection times is as in Figure~\ref{fig-noisola}, or with 
\begin{equation*}
f_{1,2,3}(s,\mu) :=
\begin{cases}
\, T_{1}(s,\mu), &\text{if $s\in[0,s_{0}^{\tau}(\mu)]\cup(s_1^{\tau}(\mu),1]$,} \\
\, T_{2}(s,\mu), &\text{if $s\in[s_{0}^{\tau}(\mu),s_{1}^{\omega}(\mu)]$,}\\
\, T_{3}(s,\mu), &\text{if $s\in[s_{1}^{\omega}(\mu),s_{1}^{\tau}(\mu)]$,}\\
\end{cases}
\end{equation*}
when the configuration is as in Figure~\ref{fig-08}. Accordingly, the map $T_{sym}$ is a bounded function provided $\mu>\tilde{\mu}(\lambda)$. In order to explain the transition between an unbounded and bounded configuration, we recall, from Section~\ref{section-4}, that $s_1^{\omega}(\mu)\to s^\mathcal{M}(\tilde{\mu}(\lambda))$ as $\mu\to(\tilde{\mu}(\lambda))^{+}$. We thus have $\kappa_4(\mu)\to+\infty$ as $\mu\to(\tilde{\mu}(\lambda))^{+}$ which leads to a breakdown in continuity for the function $T_{sym}(\cdot, \tilde{\mu}(\lambda))$ at $s=s^\mathcal{M}(\tilde{\mu}(\lambda))$.

Throughout this section, suggested as above by numerical computations, we also assume that:
\begin{quote}
\textit{the functions $f_{1}$ and $f_{1,2,3}$ have exactly one local maximum point in the $s$-variable.}
\end{quote}
Next, let us take $\mu^{*}_1=\mu^{*}_1(\lambda)$ and $\mu^*_2=\mu^*_2(\lambda)$ as the values defined in~\eqref{eq-mu1} and~\eqref{eq-mu2}, respectively.
We define $\mu^{**}_2=\mu^{**}_2(\lambda)$ similarly as in~\eqref{eq-mu2**}:
\begin{equation*}
\mu_{2}^{**}(\lambda)= \min \biggl{\{} \mu>\tilde{\mu}(\lambda) \colon \max_{s} T_{sym}(s,\mu) = 1-2\sigma\biggr{\}}.
\end{equation*}
We notice that $\mu_1^*<\mu_2^*<\mu_2^{**}$. Arguing as in Section~\ref{section-6.1}, we can infer that:
\begin{itemize}
\item for every $\mu\in(\mu^{*}_1,\mu^{*}_2]\cup\{\mu^{**}_2\}$, there is exactly one value $\xi_{1}^\mu\in(0,1)$ such that $T_{sym}(\xi_{1}^\mu,\mu)=1-2\sigma$;
\item for every $\mu\in(\mu^{*}_2,\mu^{**}_2)$, there are exactly two values $\xi_{2,i}^\mu\in(0,1)$, $i=1,2,$ such that $\xi_{2,1}^\mu<\xi_{2,2}^\mu:=\xi_{1}^\mu$ and $T_{sym}(\xi_{2,i}^\mu,\mu)=1-2\sigma$.
\end{itemize}
We stress that these values coincide with those in Section~\ref{section-6.1}, provided that $\mu\in(0,\tilde{\mu}(\lambda)]$. We can thus ensure the existence of two branches:
\begin{align*}
&\tilde{b}_{1}(\mu) :=
\begin{cases}
\, 0, &\text{if $\mu=\mu^*_2$,} \\
\, \xi_{2,1}^\mu, &\text{if $\mu\in(\mu^*_2,\mu^{**}_2)$,} \\
\end{cases}
&\tilde{b}_{2}(\mu) :=
\begin{cases}
\, 1, &\text{if $\mu=\mu^*_1$,} \\
\, \xi_{1}^\mu, &\text{if $\mu\in(\mu^*_1,\mu^{**}_2]$.} \\
\end{cases}
\end{align*}
These branches bifurcate supercritically from $0$ at $\mu=\mu^*_2$ and from $1$ at $\mu=\mu^*_1$, respectively. Since for every $\mu>\tilde{\mu}(\lambda)$ the function $T_{sym}$ is a bounded continuous function in $[0,1]$, then $\tilde{b}_{1}$ and $\tilde{b}_{2}$ merge together at $\mu^{**}_2$, where a subcritical turning point arises. Thus, we have shown that, for $\lambda\in(0,\lambda^*)$, the branches bifurcating from $0$ and $1$ belong to the same connected component.

\begin{remark}\label{rem-6.1}
As observed in Section~\ref{section-4}, when $\mu\in(\tilde{\mu}(\lambda),+\infty)$ and it is sufficiently close to $\tilde{\mu}(\lambda)$, a loop in the connection times appears around $T_{sym}$. If the level $1-2\sigma$ intersects such a loop, we obtain a topologically different configuration for the bifurcation diagrams to problem~\eqref{eq-main} which involves a corresponding loop surrounding the branches $\tilde{b}_1$ and $\tilde{b}_2$, see Figure~\ref{fig-bifurcation-2}~(center and right). Moreover, depending on the number of maxima and minima of the connection times forming the loop (see Figure~\ref{fig-08}), there might be some additional turning points on the loop in the bifurcation diagram, as shown in Figure~\ref{fig-bifurcation-2}~(right). Actually, by continuity, such a situation should occur for $\lambda\in(0,\lambda^*)$ sufficiently close to $\lambda^*$.
\hfill$\lhd$
\end{remark}

\begin{remark}\label{rem-6.2}
In this work, we have considered only a symmetric step-wise weight function. For completeness, we point out that our analysis can be adapted, in the spirit of the one done in \cite{LGTe-14}, also for non-symmetric step-wise weights. The main differences in the results regard the structure of the bifurcation diagrams constructed in this section. Indeed, the secondary bifurcation points on the branches break and give rise to separate connected components (see also~\cite{Te-15} for a similar behavior).
\hfill$\lhd$
\end{remark}

\section{High multiplicity for $\mu=K\lambda$, with large $\lambda$: Proof of Theorem~\ref{th-intro3}}\label{section-7}

In the previous sections, the positive parameters $\lambda$ and $\mu$, defining $a_{\lambda,\mu}$ in \eqref{eq-weight}, have been taken independent of each other. In this section, instead, we take $\mu=K\lambda$, with $K>0$, and deal with problem~\eqref{eq-aux-intro} where the weight $\tilde{a}$ is defined as in~\eqref{eq-weight-intro} in order to prove Theorem~\ref{th-intro3}.
For ease of notation, we do not explicitly write the dependence on $\lambda$ when it is clear from the context.

\begin{proof}[Proof of Theorem~\ref{th-intro3}]
Without loss of generality, we take $\lambda \in[\lambda^{*},+\infty)$, since we aim to determine the asymptotic behavior, as $\lambda\to+\infty$, of the quantities introduced in the previous sections. 
		
\smallskip
\noindent
\textit{Proof of $(i)$.} Let $K>0$. We recall that the continuous map $f^r_{3,1}$ in~\eqref{fun-f} satisfies 
\begin{equation*}
\lim_{s\to (s_1^{\mathcal{M}})^{+}}f^r_{3,1}(s,K\lambda)=+\infty, \qquad \lim_{s\to 1^{-}}f^r_{3,1}(s,K\lambda)=\mathcal{L}_1(\lambda),
\end{equation*}
(see also Figure \ref{fig-06}). Thus, we conclude by noticing that, for $\mu=K\lambda$ and $\lambda\to+\infty$, \eqref{eq-aux-time2} ensures that $\mathcal{L}_1(\lambda)\to 0$.

\smallskip
\noindent
\textit{Proof of $(ii)$.} Let $K>\frac{2\sigma}{1-2\sigma}$. From~\eqref{eq-aux-time1}, with $\mu=K\lambda$, we have
\begin{equation*}
\lim_{s\to 0^{+}} T_{1}(s)=\mathcal{L}_0 =  \dfrac{2 \sigma}{K}<1-2\sigma.
\end{equation*}
The properties of the connection times in Section~\ref{section-4.1} guarantee the existence of at least one solution of problem~\eqref{eq-aux-intro}, corresponding to some $s\in(0,s_0^{\mathcal{M}})$, for all $\lambda\in[\lambda^{*},+\infty)$. Moreover, thanks to \eqref{eq:order_L0_L1}, we have, again for all $\lambda\in[\lambda^{*},+\infty)$, another solution corresponding to some $s\in(s_1^{\mathcal{M}},1)$.

\smallskip
\noindent

For the proof of $(iii)$ and $(iv)$, we need some intermediate technical results on the asymptotic behavior of $\Gamma_0(\lambda)$ and the connection times that we present through the following steps.

\smallskip
\noindent
\textit{Step~1. Asymptotic behavior of $s_{0}(\lambda)$ and $s_1(\lambda)$.}
Let $s_{0}=s_{0}(\lambda)$, $s_1=s_1(\lambda)$, and $s^{*}$ be the points in the interval $(0,1)$ introduced in Proposition~\ref{pr-2.1}. We stress that $s^{*}$ does not depend on $\lambda$ and $0<s_{0}<s^{*}<s_{1}<1$.
	
Given $\lambda\in[\lambda^{*},+\infty)$, due to the monotonicity of the function $s\mapsto T_{0}(s,\lambda)$ defined in \eqref{eq-formulaT_0}, we have that $s_{0}$ and $s_1$ are the unique values $s$ such that $T_{0}(s)=\sigma$ in $(0,s^{*})$ and in $(s^{*},1)$, respectively. Since, for fixed $s\in(0,1)$, the function $\lambda\mapsto T_{0}(s,\lambda)$ is strictly decreasing, we have that $s_{0}=s_{0}(\lambda)$ is strictly decreasing and $s_{1}=s_{1}(\lambda)$ is strictly increasing. Moreover, the function $\lambda\mapsto T_{0}(s,\lambda)$ converges to $0$ locally uniformly for $s\in(0,1)$ as $\lambda\to+\infty$; therefore, we have
\begin{equation}\label{eq-7.s01}
\lim_{\lambda\to+\infty}s_{0}(\lambda)=0 \qquad \text{and} \qquad 	\lim_{\lambda\to+\infty} s_1(\lambda)=1.
\end{equation}
	
\smallskip
\noindent
\textit{Step~2. Asymptotic behavior of $s_{0}^{\mathcal{M}}(\lambda)$ and of the corresponding solution $u_{s_{0}^{\mathcal{M}}(\lambda)}$.} We consider the point $s_{0}^{\mathcal{M}}=s_{0}^{\mathcal{M}}(\lambda)$, defined as the first value of $s\in(0,s_{0})$ such that $(u_{s}(\sigma),v_{s}(\sigma)) \in \Gamma_{0}\cap\mathcal{M}_{\mu}$ (see~Section~\ref{section-4.1}).
From the definition of $\mathcal{M}_{\mu}$ given in \eqref{def-stable-manifold}, we have
\begin{equation*}
(v_{s_{0}^{\mathcal{M}}}(\sigma))^2-2K\lambda G(u_{s_{0}^{\mathcal{M}}}(\sigma))=0,
\end{equation*}
while the energy conservation in $[0,\sigma]$ (cf.,~\eqref{eq-energy-lambda}) implies
\begin{equation*}
(v_{s_{0}^{\mathcal{M}}}(\sigma))^2+2\lambda G(u_{s_{0}^{\mathcal{M}}}(\sigma))=2\lambda G(s_{0}^{\mathcal{M}}).
\end{equation*}
From the above equalities, we deduce that
\begin{equation}\label{eq-7.sh-impl}
G(s_{0}^{\mathcal{M}})-(1+K)G(u_{s_{0}^{\mathcal{M}}}(\sigma))=0.
\end{equation}
We stress the dependence of $u_{s_{0}^{\mathcal{M}}}(\sigma)$ on $\lambda$ arising from~\eqref{eq-initial0} and $s_{0}^{\mathcal{M}}=s_{0}^{\mathcal{M}}(\lambda)$ by writing $u_{s_{0}^{\mathcal{M}}(\lambda)}(\sigma,\lambda)$.
From the fact that $0 < u_{s_{0}^{\mathcal{M}}}(\sigma)<s_{0}^{\mathcal{M}}<s_{0}$ and thanks to~\eqref{eq-7.s01}, if we divide \eqref{eq-7.sh-impl} by $(s_{0}^{\mathcal{M}})^3$, we get
\begin{equation}\label{eq-7.xsh}
L_0^{\mathcal{M}} :=\lim_{\lambda\to+\infty} \dfrac{u_{s_{0}^{\mathcal{M}}(\lambda)}(\sigma,\lambda)}{s_{0}^{\mathcal{M}}(\lambda)}=\left(\frac{1}{1+K}\right)^{\!\frac{1}{3}}.
\end{equation}
Finally, from \eqref{eq-2.t} with $t=\sigma$ and $s=s_{0}^{\mathcal{M}}$, it follows
\begin{equation*}
l_0^{\mathcal{M}}:=\lim_{\lambda\to+\infty}\lambda s_{0}^{\mathcal{M}}(\lambda)=\left(\frac{I(L_0^{\mathcal{M}})}{\sqrt{2}\sigma}\right)^{\!2},
\end{equation*}
where we have set
\begin{equation}
\label{eq:defI}
I(r):= \sqrt{3} \int_{r}^{1} \dfrac{\mathrm{d}\xi}{\sqrt{1-\xi^{3}}}.
\end{equation}
	
\smallskip
\noindent
\textit{Step~3. Asymptotic behavior of the points $\theta s_{0}^{\mathcal{M}}(\lambda)$, with $\theta\in(0,1)$, and of the corresponding solutions $u_{\theta s_{0}^{\mathcal{M}}(\lambda)}$.}
Let us now consider a generic point $\hat{s}(\theta,\lambda):=\theta s_{0}^{\mathcal{M}}(\lambda)$, with $\theta\in(0,1)$ independent of $\lambda$. From Step~2, we have
\begin{equation}\label{eq-7.s}
\hat{l}(\theta):=\lim_{\lambda\to+\infty}\lambda \hat{s}(\theta,\lambda)=\theta l_0^{\mathcal{M}} \in(0,l_0^{\mathcal{M}} ),
\end{equation}
and, from \eqref{eq-2.t} with $t=\sigma$ and $s=\hat{s}(\theta,\lambda)$, it follows that
\begin{equation*}
\hat{L}(\theta):=\lim_{\lambda\to+\infty}\frac{u_{\hat{s}(\theta,\lambda)}(\sigma,\lambda)}{\hat{s}(\theta,\lambda)}
\end{equation*}
satisfies
\begin{equation}\label{eq-7.l}
\hat{l}(\theta)=\biggl{(}\frac{I(\hat{L}(\theta))}{\sqrt{2}\sigma}\biggr{)}^{\!2}.
\end{equation}
The previous relation, together with the implicit function theorem and the properties of $I(r)$, gives that the function $\theta\mapsto\hat{L}(\theta)$ is $\mathcal{C}^1(0,1)$, strictly decreasing, and satisfies
\begin{equation} \label{eq:lim_hat_L_theta}
\lim_{\theta \to 0^{+}} \hat{L}(\theta)=1, \qquad	\lim_{\theta \to1^{-}} \hat{L}(\theta)=L_0^{\mathcal{M}}=\left(\frac{1}{1+K}\right)^{\!\frac{1}{3}}.
\end{equation}
	
Moreover, from \eqref{eq-energy-lambda} and \eqref{eq-energy-mu}, in $(\sigma,1-\sigma)$ the energy of the level line of~\eqref{eq-energy-lambda} passing through $(u_{\hat{s}(\theta,\lambda)}(\sigma,\lambda),v_{\hat{s}(\theta,\lambda)}(\sigma),\lambda)$ is
\begin{equation*}
h_{K\lambda}(\hat{s}(\theta,\lambda))=2\lambda G(\hat{s}(\theta,\lambda))-2\lambda(1+K)G(u_{\hat{s}(\theta,\lambda)}(\sigma,\lambda))
\end{equation*} 
where $h_{K\lambda}$ is defined as in~\eqref{def-hs}, and it satisfies
\begin{equation}\label{eq-7.sign}
\lim_{\lambda\to+\infty}\frac{3h_{K\lambda}(\hat{s}(\theta,\lambda))}{2\lambda (\hat{s}(\theta,\lambda))^3}=1-(1+K)(\hat{L}(\theta))^3<1-(1+K)(L_0^{\mathcal{M}})^3=0.
\end{equation}
	
\smallskip
\noindent
\textit{Step~4. Characterization of the number of intersections between $\Gamma_0$ and the level lines of~\eqref{eq-energy-mu} for large $\lambda$.}
Let $\hat{s}(\theta,\lambda):=\theta s_{0}^{\mathcal{M}}(\lambda)$, with $\theta\in(0,1)$, as in Step~3, and assume now that $\hat{s}(\theta,\lambda) < s_{0}^{\tau}(\lambda)$. Then, there exists a point $\tilde{s}(\theta,\lambda)\in(s_{0}^{\tau}(\lambda),s_{0}^{\mathcal{M}}(\lambda))$ such that $h_{K\lambda}(\hat{s}(\theta,\lambda))=h_{K\lambda}(\tilde{s}(\theta,\lambda))$, that is
\begin{equation}\label{eq-7.energy}
G(\hat{s}(\theta,\lambda))-(1+K)G(u_{\hat{s}(\theta,\lambda)}(\sigma,\lambda)) = G(\tilde{s}(\theta,\lambda))-(1+K)G(u_{\tilde{s}(\theta,\lambda)}(\sigma,\lambda)).
\end{equation}
The same discussion can be performed for $\hat{s}(\theta,\lambda)\in\left( s_{0}^{\tau}(\lambda),s_{0}^{\mathcal{M}}(\lambda)\right)$, with the difference that, in such a case, $\tilde s(\theta,\lambda)$ lies in $(0,s_0^\tau(\lambda))$.
	
Again, from~\eqref{eq-2.t} with $t=\sigma$ and $s=\tilde{s}(\theta,\lambda)$, we have
\begin{equation}\label{eq-7.s2impl}
\sigma=\frac{1}{\sqrt{2\lambda \hat{s}(\theta,\lambda) \frac{\tilde{s}(\theta,\lambda)}{ \hat{s}(\theta,\lambda)}}} \int_{\frac{u_{\tilde{s}(\theta,\lambda)}(\sigma,\lambda)}{\tilde{s}(\theta,\lambda)}}^{1} \frac{\mathrm{d}\xi}{\sqrt{\frac{1-\xi^{3}}{3}-\tilde{s}(\theta,\lambda)\frac{1-\xi^{4}}{4}}}.
\end{equation}
Since the integral in \eqref{eq-7.s2impl} is bounded and \eqref{eq-7.s} holds, it entails that $\liminf$ and $\limsup$ of $\tilde{s}(\theta,\lambda)/\hat{s}(\theta,\lambda)$, as $\lambda\to+\infty$, are positive real numbers. As a consequence, by dividing \eqref{eq-7.energy} by $(\hat{s}(\theta,\lambda))^3$, we also obtain that $\liminf$ and $\limsup$ of $u_{\tilde{s}(\theta,\lambda)}(\sigma,\lambda)/\tilde{s}(\theta,\lambda)$, as $\lambda\to+\infty$, are positive real numbers. Let us denote
\begin{align*}
\ell^-(\theta)&:=\liminf_{\lambda\to+\infty}\frac{\tilde{s}(\theta,\lambda)}{\hat{s}(\theta,\lambda)}, & 
\ell^{+}(\theta)&:=\limsup_{\lambda\to+\infty}\frac{\tilde{s}(\theta,\lambda)}{\hat{s}(\theta,\lambda)}, \\
\tilde L^-(\theta)&:=\liminf_{\lambda\to+\infty}\frac{u_{\tilde{s}(\theta,\lambda)}(\sigma,\lambda)}{\tilde{s}(\theta,\lambda)}, &
\tilde L^{+}(\theta)&:=\limsup_{\lambda\to+\infty}\frac{u_{\tilde{s}(\theta,\lambda)}(\sigma,\lambda)}{\tilde{s}(\theta,\lambda)}.
\end{align*}
By taking the $\liminf$ and $\limsup$ in \eqref{eq-7.s2impl}, we obtain
\begin{equation*}
\frac{I(\tilde L^-(\theta))}{\sqrt{2 \hat{l}(\theta)\, \ell^-(\theta)}}=\sigma=\frac{I(\tilde L^{+}(\theta))}{\sqrt{2 \hat{l}(\theta)\, \ell^{+}(\theta)}}.
\end{equation*}
Suppose by contradiction that $\ell^-(\theta)<\ell^{+}(\theta)$: this would imply that $I(\tilde L^{+}(\theta))>I(\tilde L^-(\theta))$ and, since $I$ is strictly decreasing, that $\tilde L^{+}(\theta)<\tilde L^-(\theta)$, which is impossible. Then, $\ell^-(\theta)=\ell^{+}(\theta)$ and $\tilde L^{+}(\theta)=\tilde L^-(\theta)$, that is the quantities
\begin{equation*}
\ell(\theta):=\lim_{\lambda\to+\infty}\frac{\tilde{s}(\theta,\lambda)}{\hat{s}(\theta,\lambda)}, \qquad
\tilde L(\theta):=\lim_{\lambda\to+\infty}\frac{u_{\tilde{s}(\theta,\lambda)}(\sigma,\lambda)}{\tilde{s}(\theta,\lambda)}
\end{equation*}
exist and, by taking the limits in \eqref{eq-7.energy} divided by $(\hat{s}(\theta,\lambda))^3$ and in \eqref{eq-7.s2impl}, they satisfy
\begin{equation*}
1-(1+K)(\hat{L}(\theta))^3=(\ell(\theta))^3 \left(1-(1+K)(\tilde L(\theta))^3\right), 
\qquad
\sigma=\dfrac{I(\tilde L(\theta))}{\sqrt{2\hat{l}(\theta)\, \ell(\theta)}}.
\end{equation*}
From the second equality and \eqref{eq-7.l}, we get
\begin{equation*}
\ell(\theta)=\left(\frac{I(\tilde L(\theta))}{\sqrt{2\hat{l}(\theta)}\,\sigma}\right)^{\!2}=\biggl{(}\frac{I(\tilde L(\theta))}{I(\hat{L}(\theta))}\biggr{)}^{\!2}
\end{equation*}
and, by substituting in the first one, we have
\begin{equation*}
\bigl{(}I(\tilde L(\theta))\bigr{)}^6 \bigl{(}1-(1+K)(\tilde{L}(\theta))^3 \bigr{)}=\bigl{(}I(\hat{L}(\theta))\bigr{)}^6 \bigl{(}1-(1+K) (\hat{L}(\theta))^3\bigr{)}<0,
\end{equation*}
where the inequality comes from \eqref{eq-7.sign}.
	
We study now the differentiable function $f_{K}\colon (L_{0}^{\mathcal{M}},1) \to \mathbb{R}$, defined as
\begin{equation*}
f_K(r):=\bigl{(}I(r)\bigr{)}^6\bigl{(}1-(1+K)r^3\bigr{)}.
\end{equation*}
First, recalling \eqref{eq-7.xsh}, we notice that $f_{K}(r)<0$ for all $r\in(L_{0}^{\mathcal{M}},1)$, and $f_{K}(r)\to0$ as $r\to (L_{0}^{\mathcal{M}})^{+}$ and as $r\to1^{-}$. Therefore, $f_K$ has a minimum point. We claim that it is unique. Indeed, since $I(r)>0$ for all $r\in(0,1)$, the zeros of
\begin{equation*}
f'_K(r)=3(I(r))^5 \bigl{(} 2I'(r)(1-(1+K)r^3)-I(r)(1+K)r^2 \bigr{)}
\end{equation*}
are the solutions of $w_1(r)=w_2(r)$, where
\begin{equation*}
w_1(r):=2\,\frac{I'(r)}{I(r)}, \qquad 
w_2(r):=\frac{(1+K)r^2}{1-(1+K)r^3}.
\end{equation*}
For all $r\in(L_{0}^{\mathcal{M}},1)$, since $I''(r)=-\frac{r^2}{2}\bigl{(}\frac{1-r^3}{3}\bigr{)}^{-\frac{3}{2}}<0$, we have
\begin{equation}\label{der-w}
\begin{aligned}
&w_1'(r)=2\,\frac{I''(r)I(r)-(I'(r))^2}{(I(r))^2}<0,
\\
&w_2'(r)=\frac{(1+K)(2+(1+K)r^3)r}{(1-(1+K)r^3)^2}>0.
\end{aligned}
\end{equation}
Therefore, the negative differentiable function $f_K$ has a unique critical point.
Consequently, we deduce that the equation
\begin{equation*}
f_K(r)=\rho, \quad \text{with $M:=\min\{f_K(r)\colon r\in(L_{0}^{\mathcal{M}},1)\}<\rho<0$,}
\end{equation*}
has exactly two solutions. Finally, by exploiting the notation used in Step~3, the continuity of $\theta\mapsto\hat{L}(\theta)$, the definition of $f_K$, and \eqref{eq:lim_hat_L_theta}, we have that
\begin{equation*}
\bigl{\{}f_K(\hat{L}(\theta))\colon\theta\in(0,1)\bigr{\}}=[M,0).
\end{equation*}
This analysis shows that, for $\rho\in(M,0)$, there exist, for sufficiently large $\lambda$, two families of distinct points $s^-(\lambda)$, $s^{+}(\lambda)$ such that the corresponding (distinct) points on $\Gamma_0(\lambda)$ lie on the same level line of~\eqref{eq-energy-mu} and the limits for $\lambda\to+\infty$ of $u_{s^-(\lambda)}(\sigma,\lambda)/s^-(\lambda)$ and $u_{s^{+}(\lambda)}(\sigma,\lambda)/s^{+}(\lambda)$ are the two solutions of $f_K(r)=\rho$ lying in $(L_{0}^{\mathcal{M}},1)$.

\smallskip
\noindent
\textit{Step~5. Asymptotic behavior of $s_{0}^{\tau}(\lambda)$ and of the corresponding solution.}
The analysis done in Step~4 guarantees that the point in $(L_{0}^{\mathcal{M}},1)$ where $f_K$ achieves its minimum, which will be denoted by $L_{0}^{\tau}$, corresponds to the tangent level line of~\eqref{eq-energy-mu}, in the sense that
\begin{equation}\label{eq-7.Ltau}
L_{0}^{\tau} =\lim_{\lambda\to+\infty}\frac{u_{s_{0}^{\tau}(\lambda)}(\sigma,\lambda)}{s_{0}^{\tau}(\lambda)}.
\end{equation}
With the notation introduced in Step~4, we have that $w_{1}(L_{0}^{\tau})=w_{2}(L_{0}^{\tau})$, that is 
\begin{equation}\label{eq-7.Ltau-impl}
2\frac{I'(L_{0}^{\tau})}{I(L_{0}^{\tau})}=\frac{(1+K)(L_{0}^{\tau})^2}{1-(1+K)(L_{0}^{\tau})^3}.
\end{equation}
As above, we remark that $0 < u_{s_{0}^{\tau}(\lambda)}(\sigma,\lambda)<s_{0}^{\tau}(\lambda)<s_{0}(\lambda)$, thus all these quantities tend to $0$ as $\lambda\to+\infty$, and we have
\begin{equation}\label{eq-7.stau}
l_{0}^{\tau}:=\lim_{\lambda\to+\infty}\lambda s_{0}^{\tau}(\lambda)=\biggl{(}\frac{I(L_{0}^{\tau})}{\sqrt{2}\sigma}\biggr{)}^{\!2}.
\end{equation}
According to the notation introduced in Section~\ref{section-3}, we now denote by $m(u_{s_{0}^{\tau}}(\sigma))$ the abscissa of the intersection point between the level line of~\eqref{eq-energy-mu} through $(u_{s_{0}^{\tau}}(\sigma),v_{s_{0}^{\tau}}(\sigma))$ and the $u$-axis. We exploit the conservation of the energies (cf.,~\eqref{eq-energy-lambda} and \eqref{eq-energy-mu}) to deduce
\begin{equation*}
G(s_{0}^{\tau})-(1+K)G(u_{s_{0}^{\tau}}(\sigma))=-KG(m(u_{s_{0}^{\tau}}(\sigma))).
\end{equation*}
By dividing this relation by $(s_{0}^{\tau})^3$ and passing to the limit as $\lambda\to+\infty$, we have
\begin{equation}\label{eq-7.Mtau}
M_{0}^{\tau}:=\lim_{\lambda\to+\infty}\frac{m(u_{s_{0}^{\tau}(\lambda)}(\sigma,\lambda))}{s_{0}^{\tau}(\lambda)}=\biggl{(}\frac{(1+K)(L_{0}^{\tau})^3-1}{K}\biggr{)}^{\!\frac{1}{3}}.
\end{equation}
	
\smallskip
\noindent
\textit{Step~6. Asymptotic behavior of $\ell_{1,2}^{0}(\lambda)$.}
Recalling the definition \eqref{def-l120} of $\ell_{1,2}^{0}$, that is the common limit of the connection times $T_{1}$ and $T_{2}$ as $s\to s_{0}^{\tau}$, we investigate how $\ell_{1,2}^{0}$ depends on $\lambda$.
Observing that, from the conservation of the energy \eqref{eq-energy-mu}, we have
\begin{equation*}
(v_{s_{0}^{\tau}}(\sigma))^2-2K\lambda G(u_{s_{0}^{\tau}}(\sigma)) = -2K\lambda G(m(u_{s_{0}^{\tau}}(\sigma))),
\end{equation*}
from \eqref{def-l120} we deduce
\begin{align}
\lim_{\lambda\to+\infty} \ell_{1,2}^{0} &=
\lim_{\lambda\to+\infty}\frac{2}{\sqrt{2K\lambda}}
\int_{m(u_{s_{0}^{\tau}}(\sigma))}^{u_{s_{0}^{\tau}}(\sigma)}\frac{\mathrm{d}u}{\sqrt{G(u)-G(m(u_{s_{0}^{\tau}}(\sigma)))}} \\
&=\lim_{\lambda\to+\infty}\frac{2}{\sqrt{2K\lambda s_{0}^{\tau}}}
\int_{\frac{m(u_{s_{0}^{\tau}}(\sigma))}{s_{0}^{\tau}}}^{\frac{u_{s_{0}^{\tau}}(\sigma)}{s_{0}^{\tau}}}\frac{\mathrm{d}\tilde{u}}{\sqrt{\frac{G(s_{0}^{\tau}\tilde{u})-G(m(u_{s_{0}^{\tau}}(\sigma)))}{(s_{0}^{\tau})^3}}} \\
&=\frac{2}{\sqrt{2Kl_{0}^{\tau}}}
\int_{M_{0}^{\tau}}^{L_{0}^{\tau}}\frac{\mathrm{d}\tilde{u}}{\sqrt{\frac{\tilde{u}^3-(M_{0}^{\tau})^3}{3}}} \\
&=\frac{2\sigma}{I(L_{0}^{\tau})\sqrt{K}}
\int_{M_{0}^{\tau}}^{L_{0}^{\tau}}\frac{\mathrm{d}\tilde{u}}{\sqrt{\frac{\tilde{u}^3-(M_{0}^{\tau})^3}{3}}} =: \Theta_1(K), \label{eq-7.Theta1}
\end{align}
where, to get the last equality, we have used \eqref{eq-7.stau}. 
	
\smallskip
\noindent
\textit{Step~7. Asymptotic behavior of $\ell_{3}^{0}(\lambda)$.}
Recalling the definition \eqref{def-l30} of $\ell_{3}^{0}$, that is the limit of the connection time $T_{3}$ as $s\to s_{0}^{\tau}$, we now investigate how $\ell_{3}^{0}$ depends on $\lambda$. Arguing as above, we have
\begin{equation}\label{eq:l30}
\begin{aligned}
\ell_{3}^{0} 
&=\frac{2}{\sqrt{2K\lambda}}\int_{m(u_{s_{0}^{\tau}}(\sigma))}^{u_{s_{0}^{\tau}}(\sigma)}\dfrac{\mathrm{d}u}{\sqrt{G(u)-G(m(u_{s_{0}^{\tau}}(\sigma)))}}
\\
&\quad +\frac{1}{\sqrt{2K\lambda}}\int_{u_{s_{0}^{\tau}}(\sigma)}^{u_{s_{1}^{\tau}}(\sigma)}\dfrac{\mathrm{d}u}{\sqrt{G(u)-G(m(u_{s_{0}^{\tau}}(\sigma)))}},
\end{aligned}
\end{equation}
where $s_{1}^{\tau}\in(s_1,1)$ is the value of $s\in(0,1)$ giving the other intersection between the level line $H_{\mu}(u,v)=H_{\mu}(u_{s_0^{\tau}}(\sigma),v_{s_0^{\tau}}(\sigma))$ and $\Gamma_0$, apart from $(u_{s_{0}^{\tau}}(\sigma),v_{s_{0}^{\tau}}(\sigma))$. First of all, thanks to \eqref{eq-7.s01}, we observe that
\begin{equation}\label{eq-7.stau+}
\lim_{\lambda\to+\infty}s_{1}^{\tau}(\lambda)=1.
\end{equation}
Moreover, as in \eqref{eq-7.energy}, the conservation of the energies implies that
\begin{equation*}
G(s_{0}^{\tau})-(1+K)G(u_{s_{0}^{\tau}}(\sigma)) = G(s_{1}^{\tau})-(1+K)G(u_{s_{1}^{\tau}}(\sigma)),
\end{equation*}
thus \eqref{eq-7.Ltau}, \eqref{eq-7.stau}, and \eqref{eq-7.stau+} give that $L_{1}^{\tau}:=\lim_{\lambda\to+\infty}u_{s_{1}^{\tau}(\lambda)}(\sigma,\lambda)$ satisfies
\begin{equation*}
G(L_{1}^{\tau})=\frac{G(1)}{1+K} \in (0,G(1)),
\end{equation*}
and, so, $L_{1}^{\tau}\in(0,1)$. As a consequence, from \eqref{eq:l30} we have
\begin{align}
\lim_{\lambda\to+\infty}\ell_{3}^{0}
&=\lim_{\lambda\to+\infty}\frac{2}{\sqrt{2K\lambda s_{0}^{\tau}}}\int_{\frac{m(u_{s_{0}^{\tau}}(\sigma))}{s_{0}^{\tau}}}^{\frac{u_{s_{0}^{\tau}}(\sigma)}{s_{0}^{\tau}}}\frac{\mathrm{d}\tilde{u}}{\sqrt{\frac{G(s_{0}^{\tau}\tilde{u})-G(m(u_{s_{0}^{\tau}}(\sigma)))}{(s_{0}^{\tau})^3}}} \\
&\quad+\lim_{\lambda\to+\infty}\frac{1}{\sqrt{2K\lambda s_{0}^{\tau}}}\int_{\frac{u_{s_{0}^{\tau}}(\sigma)}{s_{0}^{\tau}}}^{\frac{u_{s_{1}^{\tau}}(\sigma)}{s_{0}^{\tau}}}\frac{\mathrm{d}\tilde{u}}{\sqrt{\frac{G(s_{0}^{\tau}\tilde{u})-G(m(u_{s_{0}^{\tau}}(\sigma)))}{(s_{0}^{\tau})^3}}} \\
&=\Theta_1(K)+\frac{\sigma}{I(L_{0}^{\tau})\sqrt{K}}
\int_{L_{0}^{\tau}}^{+\infty}\frac{\mathrm{d}\tilde{u}}{\sqrt{\frac{\tilde{u}^3-(M_{0}^{\tau})^3}{3}}} =: \Theta_2(K). \label{eq-7.Theta2}
\end{align}
	
\smallskip
\noindent
\textit{Proof of $(iii)$.}
We analyze the behavior of $\Theta_1(K)$ defined in~\eqref{eq-7.Theta1} as $K\to+\infty$.

First of all, we rewrite~\eqref{eq-7.Ltau-impl} as
\begin{equation}\label{eq-L0tau}
	\frac{1}{1+K}=\left(L_{0}^{\tau}(K)\right)^2\left(L_{0}^{\tau}(K)+\frac{I(L_{0}^{\tau}(K))}{2I'(L_{0}^{\tau}(K))}\right)
	\end{equation} 	
and assume by contradiction that $\liminf_{K\to+\infty}L_{0}^{\tau}(K)=0$. Then, the $\liminf$ for $K\to+\infty$ of the last factor in~\eqref{eq-L0tau} would be equal to $I(0)/(2I'(0))<0$. This would imply that, for some sequence of $K$ converging to $+\infty$, the right-hand side in~\eqref{eq-L0tau} is negative, contradicting the positivity of the left-hand side. The same reasoning guarantees that $\limsup_{K\to+\infty}L_{0}^{\tau}(K)>0$. Moreover, by letting $K\to+\infty$ in~\eqref{eq-L0tau}, we have that both $\liminf$ and $\limsup$ satisfy 
\begin{equation*}
2\frac{I'(r)}{I(r)}=-\frac{1}{r}.
\end{equation*}
This equation has a unique solution $\bar{L}$ thanks to~\eqref{der-w}. Thus $\lim_{K\to+\infty}L_{0}^{\tau}(K)=\bar{L}$. Furthermore, thanks to \eqref{eq-7.Mtau}, also $\lim_{K\to+\infty}M_{0}^{\tau}(K)=\bar L$. From \eqref{eq-7.Theta1} we conclude that 
\begin{equation*}
\lim_{K\to+\infty}\Theta_1(K)=0.
\end{equation*}
Next, we claim that
\begin{equation} \label{eq:claim_Theta1}
\lim_{K\to 0^{+}}\Theta_1(K)=+\infty.
\end{equation}
Once this is proved, by the continuity of $\Theta_1(K)$, there exists $K_4>0$ such that
\begin{equation}\label{eq-7.K4}
\Theta_1(K_4)=1-2\sigma \quad \text{ and } \quad \Theta_1(K)<1-2\sigma, \quad \text{for all $K>K_4$.}
\end{equation}
With this definition, we have that, for all $K>K_4$, $\lim_{\lambda\to+\infty} \ell_{1,2}^{0}<1-2\sigma$, and the properties of the connection times shown in Section~\ref{section-4.1} guarantee the existence of at least four solutions of problem~\eqref{eq-aux-intro}, once $K>K_4$ has been fixed and $\lambda$ is sufficiently large (depending on $K$).
	
To conclude this step, it only remains to prove \eqref{eq:claim_Theta1}. First of all, since, from Step~5, $L_0^{\tau}(K)\in(L_0^{\mathcal{M}},1)$, \eqref{eq-7.xsh} implies that $\lim_{K\to 0^+}L_0^{\tau}(K)=1$. Then, thanks to the definition \eqref{eq-7.Theta1}, since $I$ is continuous and $I(1)=0$, it will be enough to show that
\begin{equation*}
\int_{M_0^{\tau}(K)}^{L_0^{\tau}(K)}\frac{\mathrm{d}\tilde u}{\sqrt{\tilde u^3-(M_0^\tau(K))^3}}
\end{equation*}
is bounded away from zero, as $K\to 0^+$. This will follow if we prove that
\begin{equation} \label{eq:lim_M0tau}
\lim_{K\to 0^+}M_0^{\tau}(K)\in[0,1).
\end{equation}
From \eqref{eq-7.Mtau} and L'H\^{o}pital's rule, we have
\begin{align}
\lim_{K\to 0^{+}}(M_0^{\tau}(K))^3&=\lim_{K\to 0^+}\frac{(1+K)(L_0^\tau(K))^3-1}{K}\\
&=\lim_{K\to 0^{+}}(L_0^\tau(K))^3 + 3(1+K) (L_0^\tau(K))^2 (L_0^\tau(K))',
\end{align}
provided that the latter limit exists. Thus, to get \eqref{eq:lim_M0tau}, it will suffice to prove that $\lim_{K\to 0^+}(L_0^\tau(K))'$ is a negative real number. Using \eqref{eq:defI}, we rewrite~\eqref{eq-L0tau} as
\begin{equation*}
\frac{1}{1+K}=(L_0^{\tau}(K))^3-\frac{1}{2\sqrt{3}}I(L_0^{\tau}(K))\sqrt{1-(L_0^{\tau}(K))^3}(L_0^{\tau}(K))^2,
\end{equation*}
we differentiate with respect to $K$ 
\begin{align}
-\frac{1}{(1+K)^2}&=(L_0^\tau(K))'\left(3(L_0^{\tau}(K))^2+\frac{1}{2}(L_0^\tau(K))^2+\frac{\sqrt{3}I(L_0^{\tau}(K))(L_0^{\tau}(K))^4}{4\sqrt{1-(L_0^{\tau}(K))^3}}\right) \\
&\quad-(L_0^\tau(K))'\frac{L_0^{\tau}(K)}{\sqrt{3}}I(L_0^{\tau}(K))\sqrt{1-(L_0^{\tau}(K))^3},
\end{align}
and, by taking the limit as $K\to 0^+$ and using that $\lim_{r\to 1^{-}}\frac{I(r)}{\sqrt{1-r}}=2$, we obtain that $\lim_{K\to 0^+}(L_0^\tau(K))'=-\frac{1}{4}$, which concludes the proof of our claim.

\smallskip
\noindent
\textit{Proof of $(iv)$.}
We analyze the behavior of $\Theta_2(K)$ defined in \eqref{eq-7.Theta2} as $K\to+\infty$. By reasoning as in the proof of~$(iii)$, we have that
\begin{equation*}
\lim_{K\to+\infty}\Theta_2(K)=0, \qquad \lim_{K\to0^{+}}\Theta_2(K)=+\infty,
\end{equation*}
thus, by continuity, there exists $K_8$ such that
\begin{equation}\label{eq-7.K8}
\Theta_2(K_8)=1-2\sigma \quad \text{ and } \quad \Theta_2(K)<1-2\sigma, \quad \text{for all $K>K_8$.}
\end{equation}
With this definition, we have that, for all $K>K_8$, $\lim_{\lambda\to+\infty} \ell_{3}^{0}<1-2\sigma$, and the properties of the connection times shown in Section~\ref{section-4.1} guarantee the existence of at least eight solutions of  problem~\eqref{eq-aux-intro}, once $K>K_8$ has been fixed and $\lambda$ is sufficiently large (depending on $K$).
	
In addition, from \eqref{eq-7.Theta2} and \eqref{eq-7.K4}, we have that
\begin{equation*}
\Theta_2(K_4)>\Theta_1(K_4)=1-2\sigma;
\end{equation*}
hence, by \eqref{eq-7.K8}, we conclude that $K_8>K_4$.
\end{proof}

We point out that $K_4$ and $K_8$ only depend on $\sigma$, as it is clear from their definition (cf.,~\eqref{eq-7.Theta1}, \eqref{eq-7.Theta2}, \eqref{eq-7.K4}, and \eqref{eq-7.K8}). 

We conclude this section by stating Theorem~\ref{th-intro3} under a dual viewpoint, which consists in fixing $K>0$ and letting $\sigma\in\left(0,\frac{1}{2}\right)$ vary to obtain our multiplicity results. 

\begin{corollary}\label{th-intro3bis}
Let $\tilde{a}$ be as in~\eqref{eq-weight-intro} with fixed $K>0$. Then, there exist two values $\sigma_4(K)$, $\sigma_8(K)$, with $0<\sigma_8(K)<\sigma_4(K)$, such that the following assertions hold:
\begin{enumerate}
\item[$(i)$] for all $\sigma\in\left(0,\frac{1}{2}\right)$, there exists $\hat\lambda_1^{*}(\sigma)>0$ such that problem~\eqref{eq-aux-intro} admits at least one solution for all $\lambda>\hat\lambda_1^{*}(\sigma)$;
\item[$(ii)$] for all $\sigma\in\left(0,\frac{K}{2(K+1)}\right)$,  problem~\eqref{eq-aux-intro} admits at least two solutions for all $\lambda>\lambda^*$, where $\lambda^*$ is as in Theorem~\ref{th-intro1};
\item[$(iii)$] for all $\sigma\in(0,\sigma_4(K))$, there exists $\hat\lambda^{*}_4(\sigma)>0$ such that problem~\eqref{eq-aux-intro} admits at least four solutions for all $\lambda>\hat\lambda^{*}_4(\sigma)$;
\item[$(iv)$] for all $\sigma\in(0,\sigma_8(K))$, there exists $\hat\lambda^{*}_8(\sigma)>0$ such that problem \eqref{eq-aux-intro} admits at least eight solutions for all $\lambda>\hat\lambda^{*}_8(\sigma)$.
\end{enumerate}
\end{corollary}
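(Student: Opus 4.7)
The plan is to observe that Corollary~\ref{th-intro3bis} is a direct reformulation of Theorem~\ref{th-intro3}, obtained by inverting the roles of the parameters $K$ and $\sigma$ in the asymptotic conditions $\Theta_i(K) < 1-2\sigma$ derived in Steps~6--7 of the proof of Theorem~\ref{th-intro3}. The whole argument hinges on the following observation: each $\Theta_i(K)$ is in fact \emph{linear in $\sigma$}. Indeed, inspection of Steps~5--7 shows that the limits $L_0^{\tau}$, $M_0^{\tau}$, $L_1^{\tau}$ solve equations (\eqref{eq-7.Ltau-impl}, \eqref{eq-7.Mtau} and its analogue) involving only $K$, whereas $\sigma$ enters only through $l_0^{\tau} = \left(I(L_0^{\tau})/(\sqrt{2}\sigma)\right)^2$. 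Substituting this into \eqref{eq-7.Theta1}--\eqref{eq-7.Theta2} yields
\begin{equation*}
\Theta_i(K) = \sigma\, \widetilde{\Theta}_i(K), \quad i=1,2,
\end{equation*}
with $\widetilde{\Theta}_i(K) > 0$ depending only on $K$, and $\widetilde{\Theta}_2(K) > \widetilde{\Theta}_1(K)$.

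With this at hand, I would prove the four assertions as follows. For $(i)$, the proof of Theorem~\ref{th-intro3}$(i)$ only uses that $\mathcal{L}_1(\lambda) = \frac{2}{\sqrt{K\lambda}}\arctan(\cdot) \to 0$ as $\lambda \to +\infty$, which holds for any $\sigma \in (0,\tfrac{1}{2})$; the argument produces $\hat\lambda_1^*(\sigma)$ directly. For $(ii)$, the bound $\mathcal{L}_0 = 2\sigma/K$ combined with \eqref{eq:order_L0_L1} gives two solutions whenever $\mathcal{L}_0 < 1-2\sigma$, i.e. $\sigma < K/(2(K+1))$, which is exactly the condition in~$(ii)$. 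For $(iii)$, rewriting the sufficient condition $\Theta_1(K) < 1-2\sigma$ as $\sigma(\widetilde{\Theta}_1(K)+2) < 1$ leads naturally to
\begin{equation*}
\sigma_4(K) := \frac{1}{\widetilde{\Theta}_1(K)+2} \in \left(0,\tfrac{1}{2}\right),
\end{equation*}
so that for any $\sigma \in (0,\sigma_4(K))$ one has $\lim_{\lambda\to+\infty}\ell_{1,2}^0(\lambda) < 1-2\sigma$, and the connection-time machinery of Section~\ref{section-4.1} supplies at least four solutions for $\lambda$ exceeding some $\hat\lambda_4^*(\sigma)$. For $(iv)$, the analogous computation with $\Theta_2$ yields $\sigma_8(K) := 1/(\widetilde{\Theta}_2(K)+2)$, and since $\widetilde{\Theta}_2(K) > \widetilde{\Theta}_1(K)$ we automatically obtain $\sigma_8(K) < \sigma_4(K)$.

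There is essentially no hard step: once the linearity $\Theta_i(K) = \sigma\widetilde{\Theta}_i(K)$ is recognised, the asymptotic analysis of Section~\ref{section-7} can be recycled verbatim, replacing ``fix $\sigma$ and let $K$ vary'' by ``fix $K$ and let $\sigma$ vary''. The only minor care needed is to verify that the value $\lambda^{*}$ in $(ii)$ (which itself depends on $\sigma$ via Proposition~\ref{pr-2.1}) is the right threshold, but this is immediate because in the proof of Theorem~\ref{th-intro3}$(ii)$ the only use of $\lambda \geq \lambda^*$ is to apply the phase-plane construction of Section~\ref{section-4.1}, valid for any $\sigma \in (0,\tfrac{1}{2})$ and $\lambda \geq \lambda^*(\sigma)$.
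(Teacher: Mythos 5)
Your proposal is correct and follows essentially the same route as the paper: the paper's proof of Corollary~\ref{th-intro3bis} also fixes $K$, reuses $\mathcal{L}_0=\tfrac{2\sigma}{K}$, $\mathcal{L}_1$, and the asymptotic quantities $\Theta_1,\Theta_2$ from the proof of Theorem~\ref{th-intro3}, and defines $\sigma_4,\sigma_8$ as the unique crossings of $\Theta_1(\sigma),\Theta_2(\sigma)$ with the decreasing level $1-2\sigma$. Your explicit observation that $L_0^{\tau},M_0^{\tau}$ depend only on $K$, so that $\Theta_i=\sigma\,\widetilde{\Theta}_i(K)$ and hence $\sigma_4(K)=\bigl(\widetilde{\Theta}_1(K)+2\bigr)^{-1}$, $\sigma_8(K)=\bigl(\widetilde{\Theta}_2(K)+2\bigr)^{-1}$, simply makes explicit the monotonicity that the paper uses implicitly to get uniqueness of $\sigma_4,\sigma_8$ and the ordering $\sigma_8<\sigma_4$.
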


\begin{proof}
Let $K>0$ be fixed. It is enough to apply the procedure exploited for the proof of Theorem~\ref{th-intro3}. Indeed, we consider the quantities $\mathcal{L}_0$ and $\mathcal{L}_1$ introduced in \eqref{eq-aux-time1} and \eqref{eq-aux-time2} respectively, which in this setting read as
\begin{equation}
\mathcal{L}_0=\frac{2\sigma}{K}, \qquad \mathcal{L}_1=\dfrac{2}{\sqrt{K\lambda}}\arctan\left(\dfrac{ \tanh(\sqrt{\lambda}\sigma)}{\sqrt{K}}\right).
\end{equation}
Our aim is to study how the previous quantities, as well as those in~\eqref{eq-7.Theta1} and~\eqref{eq-7.Theta2}, vary with respect to $\sigma$ and if they are above or below the level $1-2\sigma$. For this reason, we explicitly denote here the dependence of all these quantities on $\sigma$.
	
For point $(i)$, we observe that $\mathcal{L}_1$ is arbitrarily small for sufficiently large $\lambda$, while, for point $(ii)$, we recall that $\mathcal{L}_0>\mathcal{L}_1$, and observe that
\begin{equation*}
\mathcal{L}_0(\sigma)<1-2\sigma \quad \text{if and only if} \quad \sigma<\frac{K}{2(1+K)}.
\end{equation*}
For points $(iii)$ and $(iv)$, we have that the functions $\Theta_1(\sigma)$ and $\Theta_2(\sigma)$ satisfy
\begin{equation*}
\Theta_1(0)=0=\Theta_2(0), \qquad 0<\Theta_1(\sigma)<\Theta_2(\sigma), \quad \text{for all $\sigma\in(0,1/2)$.}
\end{equation*}
On the other hand, $1-2\sigma$ decreases from $1$ to $0$ as $\sigma$ increases from $0$ to $1/2$. Hence, there exists a unique $\sigma_4$ and a unique $\sigma_{8}$ such that
\begin{align*}
&\Theta_1(\sigma_4)=1-2\sigma_4 \quad \text{ and } \quad \Theta_1(\sigma)<1-2\sigma,\quad \text{for all $\sigma<\sigma_4$,} \\
&\Theta_2(\sigma_8)=1-2\sigma_8 \quad \text{ and } \quad \Theta_2(\sigma)<1-2\sigma,\quad \text{for all $\sigma<\sigma_8$.}
\end{align*}
At last, by reasoning as in the proof of Theorem~\ref{th-intro3}, we notice that $0<\sigma_{8}<\sigma_{4}$ and complete the proof of these statements using the behavior of the connection times shown in Section~\ref{section-4.1}.
\end{proof}

\begin{remark}\label{rem-7.1}
Both Theorem~\ref{th-intro3} and Corollary~\ref{th-intro3bis} assert that problem~\eqref{eq-aux-intro} admits at least two solutions, for all $\lambda\in[\lambda^{*},+\infty)$ provided that the mean value of the weight $\int_0^1\tilde{a}(t)\,\mathrm{d}t$ is negative. Moreover, there is a higher multiplicity, with at least four and eight solutions, provided that the mean is sufficiently negative and $\lambda$ is sufficiently large. These multiplicity results are compatible with those in \cite{BoFeSo-20,FeSo-18non}, where the existence of $3^{m}-1$ solutions of \eqref{eq-main} is obtained for a weight term $a_{\lambda,\mu}$ with $m$ intervals of positivity separated by intervals of negativity, a negative mean value, $\lambda>\lambda^{*}$ and $\mu$ sufficiently large.
\hfill$\lhd$
\end{remark}

\begin{remark}\label{rem-7.2}
We conjecture that, in Theorem~\ref{th-intro3}, $K_4$ is larger than $\frac{2\sigma}{1-2\sigma}$, which is the threshold appearing in point $(ii)$. To obtain such a result (by using the notation of the proof of Theorem~\ref{th-intro3}), it would be sufficient, for example, to prove that $\Theta_1(K)>\frac{2\sigma}{K}$, for all $K>0$, as numerical simulations suggest. In the light of Corollary~\ref{th-intro3bis}, this conjecture reads as $\sigma_4(K)<\frac{K}{2(K+1)}$, for all $K>0$.
\hfill$\lhd$
\end{remark}

\appendix
\section{Monotonicity of the time maps $T_{p}$ and $T_{l}$}\label{appendix-A}

This appendix contains the omitted details in the proofs of Propositions~\ref{pr-3.1} and~\ref{pr-3.2} concerning the properties of the time maps $T_{p}$ and $T_{l}$. To this purpose we maintain the notation introduced in Section~\ref{section-3}.

\begin{lemma}\label{le-A.1}
For every $x\in(0,x_{p})$ and $\xi\in(0,1)$, if $y(x)=-kx^{2}$, with $k>0$, and $N(x,\xi)$ is as in \eqref{eq-3.10}, then $\partial_\xi N(x,\xi)<0$.
\end{lemma}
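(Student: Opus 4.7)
The plan is to compute $\partial_\xi N(x,\xi)$ directly by the chain rule. Set $w = w(x,\xi) := x - (x-m(x))\xi$, so that $\partial_\xi w = -(x-m(x))$. The variable $\xi$ appears in~\eqref{eq-3.10} inside the terms $G(w)$ (multiplied by $2\mu(1-m'(x))$) and $g(w)(1-(1-m'(x))\xi)$ (multiplied by $-\mu(x-m(x))$). Differentiating, the $G(w)$-term produces $-2\mu(1-m'(x))(x-m(x))g(w)$, while differentiating the second contribution produces the $g'(w)$-piece $\mu(x-m(x))^2(1-(1-m'(x))\xi)g'(w)$ together with the $g(w)$-piece $+\mu(x-m(x))(1-m'(x))g(w)$. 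These two $g(w)$-contributions partially cancel, leaving
\begin{equation*}
\partial_\xi N(x,\xi) = \mu(x-m(x))\bigl[(x-m(x))(1-(1-m'(x))\xi)g'(w)-(1-m'(x))g(w)\bigr].
\end{equation*}
Since $\mu(x-m(x))>0$ by~\eqref{eq-3.4}, the lemma reduces to showing that the bracketed expression $\Phi(x,\xi)$ is strictly negative on $(0,x_p)\times(0,1)$.

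To analyze the sign of $\Phi$, I would use the two identities available in the parabolic case $y(x)=-kx^2$: the energy relation~\eqref{eq-3.5}, which now reads $k^2x^4=2\mu[G(x)-G(m(x))]$, and its differentiated version~\eqref{eq-3.11}, which rearranges to
\begin{equation*}
1-m'(x)=\frac{\mu g(m(x))-\mu g(x)+2k^2x^3}{\mu g(m(x))}.
\end{equation*}
Combining the two eliminates both $\mu$ and $k$, expressing $1-m'(x)$ as a rational function of $x$ and $m(x)$ alone. Substituting this into $\Phi(x,\xi)$, together with the explicit forms $g(s)=s^2(1-s)$ and $g'(s)=s(2-3s)$, and using that $w$ is affine in $\xi$, one reduces $\Phi(x,\xi)$ to a cubic polynomial in $\xi$ whose coefficients are rational functions of $x$ and $m(x)$ only.

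The main obstacle is the final sign verification. I would first evaluate $\Phi$ at the endpoints: at $\xi=1$ one has $w=m(x)$ and, after using~\eqref{eq-3.11} to eliminate $m'(x)g(m(x))$, the expression simplifies to a combination of $g(x)$, $g(m(x))$, $g'(m(x))$ and the quantity $2k^2x^3/\mu=4[G(x)-G(m(x))]/x$; the resulting polynomial inequality in $x$ and $m(x)$ can be verified using $G(x)>G(m(x))$ and $0<m(x)<x<1$. The analogous check at $\xi=0$ (where $w=x$) yields a polynomial expression in $x$ and $m(x)$ that reduces similarly via the energy identity. Interior negativity on $(0,1)$ then follows by studying $\partial_\xi^2\Phi$, which involves $g''(w)=2-6w$ whose sign is controlled on the relevant range, so that $\Phi(x,\cdot)$ is either monotone or has at most one interior critical point, forcing its values on $[0,1]$ to be dominated by those at the endpoints. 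If monotonicity is insufficient, a more robust fallback is to factor the cubic in $\xi$ explicitly and show its real roots lie outside $[0,1]$, reducing the claim to a finite system of polynomial inequalities in the two real parameters $x$ and $m(x)$ constrained by the energy relation.
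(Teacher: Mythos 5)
Your opening is exactly the paper's computation: the identity
\begin{equation*}
\partial_\xi N(x,\xi)=\mu(x-m(x))\Bigl[(x-m(x))\,g'(w)\bigl(1-(1-m'(x))\xi\bigr)-(1-m'(x))\,g(w)\Bigr],
\qquad w=x-(x-m(x))\xi,
\end{equation*}
is (A.1), and eliminating $m'(x)$ through \eqref{eq-3.5} and \eqref{eq-3.11} to land on a cubic in $\xi$ with coefficients depending only on $x$ and $m(x)$ is precisely how the paper forms $\tilde N_p(x,m,\xi)$. The gap is in the sign verification, which is the actual content of the lemma. First, your interior argument is unsound: $\Phi(x,\cdot)$ is a cubic in $\xi$, so it can have \emph{two} interior critical points, and then negativity at $\xi=0$ and $\xi=1$ does not dominate the interior. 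Concretely, $\partial_\xi\Phi=-(x-m(x))^2\,g''(w)\bigl(1-(1-m'(x))\xi\bigr)$, and neither factor has controlled sign: $w$ ranges over $(m(x),x)$, which can straddle the inflection $w=1/3$ of $g$, so $g''(w)=2-6w$ may change sign as $\xi$ runs over $(0,1)$; moreover, in the parabolic case $m'(x)$ need not be positive (unlike \eqref{eq-3.14} for the line), so $1-(1-m'(x))\xi$ may also vanish in $(0,1)$. Hence "monotone or one critical point, so endpoints dominate" fails, and your fallback — show the cubic has no root in $[0,1]$ — is not a proof but a restatement of the remaining task.

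Second, even the endpoint inequalities are not one-line consequences of $G(x)>G(m(x))$ and $0<m(x)<x<1$. The paper's route is to factor the cubic as $(x-m)\,w\,\hat N_p(x,m,\xi)$ with $w>0$, so that everything reduces to a quadratic $\hat N_p(x,m,\xi)=a(x,m)\xi^2+b(x,m)\xi+c(x,m)$, and then to prove, for \emph{all} $0<m<x<1$ (dropping the energy constraint, which simplifies the logic), that $a<0$, $c<0$, and that the discriminant is negative whenever $b>0$; since $a<0$ makes the parabola concave, the discriminant step is indispensable — endpoint negativity alone cannot exclude an interior maximum above zero. Each of these three facts requires a genuine argument (absence of interior critical points of the relevant polynomials in the triangle $\{0<m<x<1\}$, boundary sign checks, and a cubic-discriminant computation), and this is where essentially all the work of Lemma~\ref{le-A.1} lies; your proposal defers it entirely, so as written it does not establish the statement.
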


\begin{proof}
From \eqref{eq-3.10} we obtain that
\begin{equation}\label{eq-A.1}
\begin{aligned}
\frac{\partial_\xi N(x,\xi)}{\mu(x-m(x))}
&=-(1-m'(x))g(x-(x-m(x))\xi)\\
&\quad +(x-m(x))g'(x-(x-m(x))\xi)(1-(1-m'(x))\xi).
\end{aligned}
\end{equation}
By using \eqref{eq-parabola} and \eqref{eq-3.5}, relation \eqref{eq-3.11} becomes
\begin{equation*}
m'(x)=\frac{\mu g(x)-y(x)y'(x)}{\mu g(m(x))}=\frac{\mu g(x)-2k^2x^3}{\mu g(m(x))}
=\frac{xg(x)-4\left(G(x)-G(m(x))\right)}{x g(m(x))}
\end{equation*}
and, by substituting in \eqref{eq-A.1}, we get
\begin{equation*}
\begin{aligned}
&xg(m(x))\frac{\partial_\xi N(x,\xi)}{\mu(x-m(x))}=
\\
&=\Bigl{(}x(g(x)-g(m(x)))-4(G(x)-G(m(x)))\Bigr{)} g(x-(x-m(x))\xi)
\\
&\quad +(x-m(x))g'(x-(x-m(x))\xi)\Bigl{(}x g(m(x))+(x(g(x)-g(m(x)))
\\
&\hspace{200pt} -4(G(x)-G(m(x))))\xi \Bigr{)}.
\end{aligned}
\end{equation*}
By recalling that $0<m(x)<x<1$ for all $x\in(0,1)$, the previous relation shows that, in order to prove that $\partial_\xi N(x,\xi)<0$ for all $x\in(0,x_{p})$ and $\xi\in(0,1)$, it is sufficient to show that
\begin{equation*}
\tilde N_p(x,m,\xi)<0, \quad \text{for all $\xi\in(0,1)$, $x\in(0,1)$ and $m\in(0,x)$}, 
\end{equation*}
where
\begin{align*}
&\tilde N_p(x,m,\xi):=\Bigl{(}x(g(x)-g(m))-4(G(x)-G(m))\Bigr{)}g(x-(x-m)\xi)
\\
&+(x-m)g'(x-(x-m)\xi)\Bigl{(}xg(m)+\left(x(g(x)-g(m))-4(G(x)-G(m))\right)\xi\Bigr{)}.
\end{align*}
By using the expressions of $g$ and $G$, we have
\begin{equation}\label{eq-A.3}
3\tilde N_p(x,m,\xi)=(x-m)(x-(x-m)\xi)\hat N_p(x,m,\xi),
\end{equation}
where
\begin{align}
&\hat N_p(x,m,\xi):=a(x,m)\xi^2+b(x,m)\xi+c(x,m),&
\label{eq-A.4}
\\
&a(x,m):=2 \left(x- m \right)^2 \left(3 m^3 -4 m^2 - m x - x^2\right),&
\label{eq-A.5}
\\
&b(x,m):=(x -m) (-4 m^2 + 3 m^3 - m x + 13 m^2 x - 12 m^3 x - x^2 + m x^2 + x^3), \quad&
\label{eq-A.6}
\\
&c(x,m):=x (2 m^2 - 3 m^3 - m x - 5 m^2 x + 6 m^3 x - x^2 + m x^2 + x^3).&
\label{eq-A.7}
\end{align}
The first two factors in the right-hand side of \eqref{eq-A.3} are positive in the range of the variables that we are considering. We claim that the third one is negative, and, hence, conclude the proof.

As it is apparent from \eqref{eq-A.4}, $\hat N_p(x,m,\xi)$ is a parabola in the $\xi$-variable. We aim to show that:
\begin{itemize}
\item[$(i)$] $a(x,m)<0$,
\item[$(ii)$] $c(x,m)<0$,
\item[$(iii)$] if $b(x,m)>0$, then $\Delta(x,m)<0$, where $\Delta(x,m):=b(x,m)^2-4a(x,m)c(x,m)$ is the discriminant of the parabola.
\end{itemize}
The first point says that the parabola is concave. The second one says that the parabola has a negative value for $\xi=0$; thus, if the vertex has negative abscissa, the parabola is strictly decreasing and, hence, negative, for all $\xi\in(0,1)$. On the contrary, when the vertex has positive abscissa, the first and last points guarantee that the parabola lies always below the level $0$. As a consequence, if we show $(i)$, $(ii)$ and $(iii)$, our claim is proved.

To prove $(i)$, we can equivalently show that $f_1(x,m):=3 m^3-4 m^2  - m x - x^2<0$ in the region 
\begin{equation*}
\mathcal{R}:=\bigl{\{}(x,m) \colon 0<m<x<1 \bigr{\}}.
\end{equation*}
Since $\frac{\partial f_1}{\partial x}=-m-2x<0$ in $\mathcal{R}$, the function $f_1$ does not have critical points in $\mathcal{R}$. On the boundary of $\mathcal{R}$, we have
\begin{itemize}
\item $f_1(x,0)=-x^2<0$ for all $x\in(0,1)$;
\item $f_1(1,m)=3m^3-4m^2-m-1$, which is negative for all $m\in(0,1)$; indeed, it is a cubic with positive leading coefficient, it is negative for $m=0$, and its derivative vanishes for $m=-1/9$ and $m=1$, thus it is strictly decreasing for all $m\in(0,1)$;
\item $f_1(x,x)=3x^2(x-2)<0$ for all $x\in(0,1)$;
\end{itemize}
hence $f_1$ is negative in $\mathcal{R}$.

For point $(ii)$, we consider
\begin{equation*}
f_2(x,m):=2 m^2 - 3 m^3 - m x - 5 m^2 x + 6 m^3 x - x^2 + m x^2 + x^3
\end{equation*}
and, as above, we prove, first of all, that it does not have critical points in $\mathcal{R}$. Indeed, the relation $\frac{\partial f_2}{\partial x}=\frac{\partial f_2}{\partial m}$ has two roots
\begin{equation*}
x^{\pm}(m):=\frac{1}{4}\Bigl{(}1-12m+18m^2\pm\sqrt{\Delta_2(m)}\Bigr{)},
\end{equation*}
where $\Delta_2(m):=1 + 16 m + 148 m^2 - 480 m^3 + 324 m^4$. Moreover, $\Delta_2(m)$ has two zeros $m^\pm$ in $(0,1)$ with $\frac{1}{2} < m^- < m^{+} < 1$, and is positive in $(0,1)\setminus[m^-,m^{+}]$. Observe that $x^-(m)<x^{+}(m)$, and, in addition, it is easy to show that:
\begin{itemize}
	\item $x^-(m)<0$ for every $0<m<\frac{1}{2}$;
	\item $x^{+}(m)\leq\frac{1}{2}\leq m$ for every $\frac{1}{2}\leq m<m^-$;
	\item $x^-(m)>1$ for every $m^{+}\leq m<1$.
\end{itemize}
Therefore, for $\frac{\partial f_2}{\partial x}(x,m)=\frac{\partial f_2}{\partial m}(x,m)$ to hold in $\mathcal{R}$, $m$ has to lie in $\left(0,\frac{1}{2}\right)$, and $x$ has to be equal to $x^{+}(m)$. Moreover, the relation $\frac{\partial f_2}{\partial x}=0$ has only one positive root for $0<m<1$, which is
\begin{equation*}
x_0(m):=\frac{1}{3}\Bigl{(}1-m+\sqrt{1+m+16m^2-18m^3}\Bigr{)},
\end{equation*}
and it holds that $x_0(m)>x^{+}(m)$ for all $0<m<\frac{1}{2}$. This shows that $f_2$ cannot admit critical points in $\mathcal{R}$. As a second step, we study $f_2$ on the boundary of $\mathcal{R}$:
\begin{itemize}
\item $f_2(x,0)=x^2(x-1)<0$ for all $x\in(0,1)$;
\item $f_2(1,m)=3m^2(m-1)<0$ for all $m\in(0,1)$;
\item $f_2(x,x)=6x^3(x-1)<0$ for all $x\in(0,1)$.
\end{itemize}
This concludes the proof of point $(ii)$.

We pass to $(iii)$, thus we assume that $b(x,m)>0$, and, from \eqref{eq-A.6}, we get
\begin{equation*}
f_3(x,m):=-4 m^2 + 3 m^3 - m x + 13 m^2 x - 12 m^3 x - x^2 + m x^2 + x^3>0.
\end{equation*}
Our goal is to prove that $\Delta(x,m)<0$. Direct computations give
\begin{equation*}
\Delta(x,m)=(x-m)^2\delta_1(x,m)\delta_2(x,m),
\end{equation*}
where
\begin{align*}
\delta_1(x,m)&:=-4 m^2 + 3 m^3 - m x + m^2 x - x^2 + m x^2 + x^3, \\
\delta_2(x,m)&:=-4 m^2 + 3 m^3 - m x + 9 m^2 x - x^2 + 9 m x^2 + 9 x^3,
\end{align*}
thus we want to show that $\delta_1$ and $\delta_2$ have opposite sign. This follows since
\begin{equation}\label{eq-A.10}
\delta_1(x,m)<0 \quad \text{and} \quad 0< f_3(x,m)<\delta_2(x,m) \quad \text{in $\mathcal{R}$}.
\end{equation}
Indeed, for the first inequality, we recall that the discriminant of a cubic equation $ax^3+bx^2+cx+d$ is given by $18 abcd-4b^3d+b^2c^2-4ac^3-27a^2d^2$. Therefore, the discriminant of the cubic equation $x\mapsto\delta_1(x,m)$ is
\begin{equation*}
-3m^2 (5 + 4 m + 108 m^2 - 176 m^3 + 68 m^4),
\end{equation*}
which is negative since
\begin{equation*}
108 m^2 - 176 m^3 + 68 m^4=m^{2}(108 - 176 m + 68 m^2) > 0,
\end{equation*}
for all $m\in(0,1)$. This shows that the cubic $x\mapsto\delta_1(x,m)$ has a unique real root, which is greater than $1$, since $\delta_1(1,m)=3m^2(m-1)<0$ and $\delta_1(x,m)\to+\infty$ as $x\to+\infty$. This shows the first relation of \eqref{eq-A.10}. 
The second relation is equivalent to $4x (-m^2 + 3 m^3 + 2 m x + 2 x^2 )>0$. If we denote
\begin{equation*}
f_{4}(x,m):=-m^2 + 3 m^3 + 2 m x + 2 x^2,
\end{equation*}
we have that $\frac{\partial f_{4}}{\partial x}=4x+2m>0$, thus $f_{4}(x,m)$ has no critical points in $\mathcal{R}$. Moreover, on the boundary of such a region we have:
\begin{itemize}
\item $f_{4}(x,0)=2x^2>0$ for all $x\in(0,1)$;
\item $f_{4}(1,m)=2+2m-m^2+3m^3>2-m^{2}>0$ for all $m\in(0,1)$;
\item $f_{4}(x,x)=3x^2(x+1)>0$ for all $x\in(0,1)$.
\end{itemize}
Therefore, the second relation in \eqref{eq-A.10} follows, and the proof is complete.
\end{proof}

\begin{lemma}\label{le-A.2}
For every $x\in(x_{l},1)$ and $\xi\in(0,1)$, if $y(x)=-k(1-x)$, with $k>0$, and $N(x,\xi)$ is as in \eqref{eq-3.10}, then $\partial_\xi N(x,\xi)>0$.
\end{lemma}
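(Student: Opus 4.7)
The plan is to follow the same strategy as in the proof of Lemma~\ref{le-A.1}, but with the line $y(x) = -k(1-x)$ in place of the parabola, and to track how the sign of $\partial_\xi N$ flips. I would start from expression \eqref{eq-A.1}, which is independent of the particular choice of $y$. In the line case, $y(x)y'(x) = -k^2(1-x)$, and the energy relation \eqref{eq-3.5} reads $k^2(1-x)^2 = 2\mu(G(x) - G(m(x)))$. Using these in \eqref{eq-3.11} to eliminate $k^2$, one obtains
\begin{equation*}
m'(x) = \frac{g(x)(1-x) + 2(G(x) - G(m(x)))}{g(m(x))(1-x)},
\qquad
1 - m'(x) = \frac{P(x,m(x))}{g(m(x))(1-x)},
\end{equation*}
where $P(x,m) := (g(m)-g(x))(1-x) - 2(G(x)-G(m))$. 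Substituting into \eqref{eq-A.1} and multiplying by the positive factor $g(m(x))(1-x)/(\mu(x-m(x)))$ reduces the problem to showing $\tilde N_l(x,m,\xi) > 0$ for all $(x,m) \in \mathcal{R}$ and all $\xi \in (0,1)$, where
\begin{equation*}
\tilde N_l(x,m,\xi) := -P(x,m)\,g(x-(x-m)\xi) + (x-m)\,g'(x-(x-m)\xi)\bigl[g(m)(1-x) - P(x,m)\xi\bigr].
\end{equation*}

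Next, using the explicit forms $g(s)=s^2-s^3$ and $G(s)=s^3/3-s^4/4$, and after the same routine algebra that produced \eqref{eq-A.3}--\eqref{eq-A.7}, I expect to factor
\begin{equation*}
\tilde N_l(x,m,\xi) \;=\; \alpha\,(x-m)\,(x-(x-m)\xi)\,\hat N_l(x,m,\xi),
\end{equation*}
with $\alpha>0$ a constant and $\hat N_l(x,m,\xi) = \tilde a(x,m)\xi^2 + \tilde b(x,m)\xi + \tilde c(x,m)$ a quadratic in $\xi$. The first two factors are strictly positive on $\mathcal{R}\times(0,1)$, so everything reduces to verifying that the parabola $\hat N_l(x,m,\cdot)$ is positive on $(0,1)$.

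This in turn follows from the three properties dual to (i)--(iii) of the proof of Lemma~\ref{le-A.1}:
\begin{itemize}
\item[(i')] $\tilde a(x,m) > 0$ on $\mathcal{R}$ (the parabola is convex in $\xi$);
\item[(ii')] $\tilde c(x,m) > 0$ on $\mathcal{R}$ ($\hat N_l$ is positive at $\xi = 0$);
\item[(iii')] whenever $\tilde b(x,m) < 0$, the discriminant $\tilde b(x,m)^2 - 4\tilde a(x,m)\tilde c(x,m)$ is strictly negative.
\end{itemize}
Indeed, (i') and (ii') imply that if the vertex has non-positive abscissa then $\hat N_l$ is strictly increasing and positive on $[0,+\infty)$; otherwise $\tilde b(x,m) < 0$ and (iii') guarantees $\hat N_l$ has no real roots, hence it is positive everywhere. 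I would then also verify the consistency check $\hat N_l(x,m,1)=0$, which must hold because $N(x,1)\equiv 0$ by the same telescoping as in Lemma~\ref{le-A.1}; this identity provides a useful sanity check on the algebraic manipulations.

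Each of (i')--(iii') reduces to the sign of an explicit polynomial in $(x,m)$ on the open triangle $\mathcal{R} = \{0 < m < x < 1\}$. As in the proof of Lemma~\ref{le-A.1}, I would show that the relevant polynomials admit no critical points in $\mathcal{R}$ by ruling out simultaneous zeros of their partial derivatives, and then check their sign along the three boundary segments $\{m=0\}$, $\{x=1\}$, and $\{m=x\}$, on each of which the polynomial reduces to a function of a single variable whose sign can be established directly. The main obstacle will be point (iii'): as in the parabola case, where one had to discover the decomposition $\Delta = (x-m)^2\,\delta_1(x,m)\,\delta_2(x,m)$ with $\delta_1 < 0 < \delta_2$, the analogous identity for the line will require a nontrivial factorization of the discriminant, and the harder direction will likely be the auxiliary inequality between $\delta_2$ and the corresponding sign-constraint polynomial (the analog of $f_3 < \delta_2$), which may need an additional reduction such as estimating the cubic discriminant of $\delta_1$ as in \eqref{eq-A.10}.
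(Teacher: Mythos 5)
Your setup is exactly the paper's: starting from \eqref{eq-A.1}, eliminating $k^{2}$ via the energy relation \eqref{eq-3.5} in \eqref{eq-3.11} gives the same expression for $m'(x)$, and multiplying by $(1-x)g(m(x))$ produces precisely the paper's $\tilde N_{l}$, which after inserting $g$ and $G$ factors as in \eqref{eq-A.11} into $(x-m)\,(x-(x-m)\xi)$ times a quadratic in $\xi$; the proof then reduces, as you say, to sign checks of two-variable polynomials on $\mathcal{R}$ by the no-critical-points-plus-boundary method of Lemma~\ref{le-A.1}. Two corrections, however. First, your ``consistency check'' $\hat N_{l}(x,m,1)=0$ is false: the identity $N(x,1)=0$ (which does hold for the line as well, by \eqref{eq-3.10}--\eqref{eq-3.11}) concerns $N$ itself, while $\hat N_{l}$ encodes $\partial_\xi N$, so there is no reason for the coefficients to sum to zero; with the paper's normalization in \eqref{eq-A.11}, at $x=\tfrac12$, $m=\tfrac14$ one finds $a+b+c\approx -1.17\neq 0$, so running this check would wrongly suggest an algebra error. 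Second, the discriminant analysis you flag as the main obstacle is not needed in the line case: the middle coefficient $b(x,m)$ also has a fixed favorable sign on $\mathcal{R}$, so all three coefficients of the quadratic share one sign and its sign on $(0,1)$ (indeed on $(0,+\infty)$) is immediate; this is exactly how the paper concludes, proving $a<0$, $b<0$, $c<0$ in its convention and dispensing with the concavity/discriminant case analysis of Lemma~\ref{le-A.1}. Your conditional item (iii$'$) is then vacuously true, so your plan still goes through, but the analog of the factorization $\Delta=(x-m)^{2}\delta_{1}\delta_{2}$ never has to be found; the only real work is the three boundary-plus-critical-point sign checks.
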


\begin{proof}
By using \eqref{eq-line} and \eqref{eq-3.5}, relation \eqref{eq-3.11} now gives
\begin{align*}
m'(x)&=\frac{\mu g(x)-y(x)y'(x)}{\mu g(m(x))}=\frac{\mu g(x)+k^2(1-x)}{\mu g(m(x))}
\\ &=\frac{(1-x)g(x)+2\left(G(x)-G(m(x))\right)}{(1-x) g(m(x))}.
\end{align*}
and, by substituting in \eqref{eq-A.1}, we get
\begin{align*}
&(1-x)g(m(x))\frac{\partial_\xi N(x,\xi)}{\mu(x-m(x))}=
\\&= g(x-(x-m(x))\xi)\Bigl{(}(1-x)(g(x)-g(m(x)))+2(G(x)-G(m(x)))\Bigr{)}
\\&\quad +(x-m(x))g'(x-(x-m(x))\xi)\cdot
\\&\qquad \cdot\Bigl{(}(1-x) g(m(x))+((1-x)(g(x)-g(m(x)))+2(G(x)-G(m(x))))\xi\Bigr{)}.
\end{align*}
Thus, as in Lemma~\ref{le-A.1}, if we set
\begin{align*}
\tilde N_l(x,m,\xi)
&:=g(x-(x-m)\xi) \Bigl{(}(1-x)(g(x)-g(m))+2(G(x)-G(m))\Bigr{)}
\\
&\quad +(x-m)g'(x-(x-m)\xi)\cdot
\\
&\qquad \cdot\Bigl{(}(1-x)g(m)+((1-x)(g(x)-g(m))+2(G(x)-G(m)))\xi\Bigr{)},
\end{align*}
it is sufficient to show that $\tilde N_l(x,m,\xi)>0$ for all $\xi\in(0,1)$, $x\in(0,1)$ and $m\in(0,x)$. By using the expressions of $g$ and $G$, we have
\begin{equation}\label{eq-A.11}
-6\tilde N_l(x,m,\xi)=(x-m)(x-(x-m)\xi)\hat N_l(x,m,\xi),
\end{equation}
where
\begin{align*}
&\hat N_l(x,m,\xi):=a(x,m)\xi^2+b(x,m)\xi+c(x,m),
\\
&a(x,m):=2 (x- m)^2 \bigl{(}-6 m + 2 m^2 + 3 m^3 - 6 x + 8 m x - 3 m^2 x + 8 x^2 
\\ &\hspace{263pt} - 3 m x^2 - 3 x^3\bigr{)},
\\
&b(x,m):=(x-1) (x-m) \bigl{(}6 m + 16 m^2 - 21 m^3 + 6 x - 8 m x + 3 m^2 x - 8 x^2 
\\ &\hspace{263pt} + 3 m x^2 + 3 x^3\bigr{)},
\\
&c(x,m):=(x-1) \bigl{(}12 m^2 - 12 m^3 + 6 m x - 20 m^2 x + 15 m^3 x + 6 x^2 -8 m x^2
\\ &\hspace{195pt} + 3 m^2 x^2 - 8 x^3 + 3 m x^3 + 3 x^4\bigr{)}.
\end{align*}
The first two factors in \eqref{eq-A.11} are positive; the third one can be shown to be negative for the values of the parameters that we are considering with similar arguments as in the proof of Lemma~\ref{le-A.1}, i.e., by proving that $a(x,m)$ and $c(x,m)$ are negative and, here, that $b(x,m)$ is negative too.
\end{proof}

\section{Behavior of $T_1(s)$ near $s=0$}\label{appendix-B}

The purpose of this appendix is proving that the connection time $T_1(s)$ defined in \eqref{def-T1_4.2} is increasing in a right neighborhood of $s=0$. To do so, we will show that
\begin{equation}\label{eq:5_T1'(0)}
\lim_{s\to 0^{+}}T_1'(s)>0.
\end{equation}
As a consequence, all the subsequent expansions are meant to be valid for $s\to 0^{+}$. Starting from \eqref{def-T1_4.2}, direct transformations give
\begin{align}
T_1(s)
&=2\int_{0}^{u_s(\sigma)-m(u_s(\sigma))}\frac{\mathrm{d}\tilde{u}}{\sqrt{\left(v_s(\sigma)\right)^2+2\mu\left(G(\tilde{u}+m(u_s(\sigma)))-G(u_s(\sigma))\right)}} \\
&=2\int_{0}^{1}\frac{\mathrm{d}\xi}{\sqrt{\frac{\left(v_s(\sigma)\right)^2+2\mu\left(G(\left(u_s(\sigma)-m(u_s(\sigma))\right)\xi+m(u_s(\sigma)))-G(u_s(\sigma))\right)}{\left(u_s(\sigma)-m(u_s(\sigma))\right)^2}}}. \label{def-T1_App}
\end{align}
To achieve our goal, we will establish some asymptotic expansions of the quantities appearing in expression \eqref{def-T1_App}. We start the investigation with the terms depending only on the problem in the interval $[0,\sigma]$.
We start by proving that
\begin{equation}\label{eq:u_s-2nd_order-unif}
u_s(t)=s-\frac{\lambda t^2}{2}s^2+o(s^2), \qquad \text{for all $t\in [0,\sigma]$.}
\end{equation}
The first term in the development has already been proved in \eqref{eq-u_s-unif}. Moreover, from \eqref{eq-2.10}, for all $t\in [0,\sigma]$, we deduce that
\begin{equation}\label{eq:u_s-2nd_order_int}
\sqrt{2\lambda}t
=\int_{u_s(t)-s}^0\frac{\mathrm{d}\tilde{u}}{\sqrt{G(s)-G(\tilde{u}+s)}}
=\int_{\frac{u_s(t)-s}{s^2}}^0\frac{\mathrm{d}\xi}{\sqrt{\frac{G(s)-G(s^2\xi+s)}{s^4}}}.
\end{equation}
Since the radicand converges, for $\xi<0$, to $-\xi$ as $s\to 0^{+}$,	if we set
\begin{equation*}
l_2^-:=\liminf_{s\to 0^{+}} \frac{u_s(t)-s}{s^2}
\end{equation*}
and take the $\liminf$ in \eqref{eq:u_s-2nd_order_int}, the dominated convergence theorem gives that $l_2^-$ satisfies
\begin{equation} 
\sqrt{2\lambda} \, t
=\int_{l_2^-}^0\frac{\mathrm{d}\xi}{\sqrt{-\xi}}
=2\sqrt{-l_2^-}, \qquad \text{for all $t\in [0,\sigma]$.}
\end{equation}
Reasoning similarly with the $\limsup$, we obtain that
\begin{equation*} 
\lim_{s\to 0^{+}} \frac{u_s(t)-s}{s^2}=-\frac{\lambda t^2}{2}, \qquad \text{for all $t\in [0,\sigma]$.}
\end{equation*}
Therefore, \eqref{eq:u_s-2nd_order-unif} holds true.

The next step consists in proving that
\begin{equation} \label{eq:v_s-3_ord}
v_s(\sigma)=-\lambda\sigma s^2+\frac{\lambda\sigma}{3}\left(3+\lambda\sigma^2\right)s^3+o(s^3).
\end{equation}
We notice that the first term in the development has already been proved in~\eqref{limit-v_s-to-0}.
Moreover, we observe that~\eqref{eq:v_s-3_ord} is equivalent to
\begin{equation*} 
\lim_{s\to 0}\frac{u_s'(\sigma)+\lambda\sigma s^2}{s^3}=\frac{\lambda\sigma}{3}\left(3+\lambda\sigma^2\right).
\end{equation*}
By integrating~\eqref{eq-initial0} in $[0,\sigma]$, we obtain
\begin{equation*} 
\frac{u_s'(\sigma)+\lambda\sigma s^2}{s^3}=-\lambda \int_0^\sigma \frac{g\left(u_s(t)\right)-s^2}{s^3}\,\mathrm{d}t,
\end{equation*}
and, thanks to \eqref{eq:u_s-2nd_order-unif}, the integrand converges to $-1-\frac{\lambda t^2}{2}$, as $s\to 0^{+}$. Thus, the dominated convergence theorem ensures that
\begin{equation*} 
\lim_{s\to 0^{+}}\frac{u_s'(\sigma)+\lambda\sigma s^2}{s^3}=-\lambda \int_0^\sigma \left(-1-\frac{\lambda t^2}{2}\right)\,\mathrm{d}t=\lambda\left(\sigma+\frac{\lambda\sigma^3}{3}\right),
\end{equation*}
as desired.
Finally, to conclude this analysis in $[0,\sigma]$, we will improve the expansion \eqref{eq:u_s-2nd_order-unif} of $u_s(\sigma)$ by computing the value of $\alpha_3$ such that
\begin{equation} \label{eq:u_s-3_ord}
u_s(\sigma)=s-\frac{\lambda\sigma^2}{2}s^2+\alpha_3s^3+o(s^3).
\end{equation}
First of all, we observe that such value $\alpha_3$ exists since the differentiable dependence theorem with respect to parameters ensures that the function $s\mapsto u_s(\sigma)$ is $\mathcal{C}^{\infty}$. Then, we recall the energy conservation in $[0,\sigma]$, which reads
\begin{equation*}
(v_s(\sigma))^2+2\lambda G(u_s(\sigma))=2\lambda G(s)=2\lambda\biggl{(}\frac{s^3}{3}-\frac{s^4}{4}\biggr{)},
\end{equation*}
and, by considering \eqref{eq:u_s-3_ord} and \eqref{eq:v_s-3_ord}, we compute the corresponding expansion up to the fifth order, and so
\begin{equation*}
\alpha_3=\frac{\lambda\sigma^2}{12}\left(6+\lambda\sigma^2\right).
\end{equation*}
For notational convenience, we denote the coefficients in \eqref{eq:u_s-3_ord} and \eqref{eq:v_s-3_ord} as follows:
\begin{equation}\label{eq:coefficients}
\alpha_2:=-\frac{\lambda\sigma^2}{2}, \qquad \beta_2:=-\lambda\sigma, \qquad \beta_3:=\frac{\lambda\sigma}{3}\left(3+\lambda\sigma^2\right).
\end{equation}

We pass to study $m(u_s(\sigma))$, which, due to the energy conservation in $[\sigma,1-\sigma]$, satisfies 
\begin{equation}\label{eq:en2}
\left(v_s(\sigma)\right)^2-2\mu G(u_s(\sigma))=-2\mu G(m(u_s(\sigma))),
\end{equation}
and, again thanks to the differentiable dependence theorem with respect to parameters, it is a regular function of $s$; thus it can be written as
\begin{equation*}
m(u_s(\sigma))=m_1 s + m_2 s^2 + m_3 s^3 + o(s^3),
\end{equation*}
for some coefficients $m_1,m_2,m_3\in\mathbb{R}$. By plugging this expansion in \eqref{eq:en2}, using \eqref{eq:u_s-3_ord}, \eqref{eq:v_s-3_ord}, and \eqref{eq:coefficients} and equaling the coefficients of the same powers of $s$, we obtain
\begin{align}
m_1&=1, \\
m_2&=\alpha_2-\frac{\beta_2^2}{2\mu}=-\frac{\lambda(\lambda+\mu)\sigma^2}{2\mu}, \label{eq:m2} \\
m_3&=m_2-m_2^2-\alpha_2+\alpha_2^2+\alpha_3-\frac{\beta_2\beta_3}{\mu}
\\
&=\frac{\lambda^2\sigma^2}{12\mu^2}\left(6\mu-3\lambda^2\sigma^2-2\lambda\mu\sigma^2\right)+\frac{\lambda\sigma^2}{12}\left(6+\lambda\sigma^2\right).
\end{align}
We now set $N(s,\xi)$ to be the numerator in the square root in~\eqref{def-T1_App}, and $D(s)$ the denominator. Then, by differentiating \eqref{def-T1_App} with respect to $s$, we obtain
\begin{equation}\label{eq:T1'}
T_1'(s)=-\int_0^1 \left(\frac{N(s,\xi)}{D(s)}\right)^{\!-\frac{3}{2}}\frac{\partial_s N(s,\xi) D(s)-N(s,\xi)D'(s)}{\left(D(s)\right)^2} \,\mathrm{d}\xi.
\end{equation}
The expansions determined above give
\begin{equation}\label{eq:ND}
N(s,\xi)=n_4(\xi)s^4+n_5(\xi)s^5+o(s^5), \qquad D(s)=d_4s^4+d_5s^5+o(s^5),
\end{equation}
where
\begin{align}
n_4(\xi)&=\beta_2^2+2\mu\bigl{(}(\alpha_2-m_2)\xi+m_2-\alpha_2\bigr{)}=2\mu(\alpha_2-m_2)\xi=\lambda^2\sigma^2\xi, \\
n_5(\xi)&=2\beta_2\beta_3+2\mu\Bigl{(}\bigl{(}(\alpha_2-m_2)\xi+m_2\bigr{)}^2-\bigl{(}(\alpha_2-m_2)\xi+m_2\bigr{)}+(\alpha_3-m_3)\xi+m_3\Bigr{)} \\[-1em]
	&\quad -2\mu(\alpha_2^2-\alpha_2+\alpha_3), \\
d_4&=(\alpha_2-m_2)^2, \\
d_5&=2(\alpha_2-m_2)(\alpha_3-m_3),
\end{align}
(in the computation for $n_4(\xi)$ we have used \eqref{eq:m2}). Thus, the first factor in the integral of \eqref{eq:T1'} satisfies
\begin{equation*}
\lim_{s\to 0^{+}} \left(\frac{N(s,\xi)}{D(s)}\right)^{\!-\frac{3}{2}}=\left(\frac{\lambda\sigma}{2\mu}\right)^{\!3}\xi^{-\frac{3}{2}}.
\end{equation*}
Regarding the second factor, by differentiating the expressions \eqref{eq:ND} with respect to $s$, we see that the coefficient of $s^7$ in the numerator vanishes (it is equal to $4n_4(\xi)d_4-4n_4(\xi)d_4$), and the coefficient of $s^8$ is
\begin{equation*}
n_5(\xi)d_4-n_4(\xi)d_5=\frac{\lambda ^7 \xi  \sigma ^8
(3 \lambda  (\xi -3)-8 \mu)}{24 \mu ^3}.
\end{equation*}
As a consequence, by passing to the limit in \eqref{eq:T1'}, we obtain
\begin{align}
\lim_{s\to 0^{+}}T_1'(s)&=-\left(\frac{\lambda\sigma}{2\mu}\right)^3
\frac{\lambda ^7   \sigma ^8}{24 \mu ^3}\frac{16
\mu ^4}{\lambda ^8 \sigma ^8}
\int_0^1\xi^{-\frac{1}{2}}(3 \lambda  (\xi -3)-8 \mu)\,\mathrm{d}\xi \\
&=-\frac{\lambda^2\sigma^3}{12\mu^2}\left(3\lambda\int_0^1\sqrt{\xi}\,\mathrm{d}\xi-(9\lambda+8\mu)\int_0^1\frac{\mathrm{d}\xi}{\sqrt{\xi}}\right)\\
&=\frac{4\lambda^2(\lambda+\mu)\sigma^3}{3\mu^2}>0,
\end{align}
as desired.

\begin{remark}\label{rem-B.1}
We observe that, with the same asymptotic expansions, by passing to the limit in \eqref{def-T1_App}, we can recover the value of $\lim_{s\to 0^+} T_1(s)$ obtained in \eqref{eq-aux-time1}.
\hfill$\lhd$
\end{remark}

\section*{Acknowledgments}

This work has been designed during the visit of A.T. and E.S. at the Department of Mathematics, Computer Science and Physics of the University of Udine, whose members they thank for the very warm hospitality.

\bibliographystyle{elsart-num-sort}
\bibliography{FeSoTe-biblio}

\end{document}